\setlist{parsep=0pt,listparindent=\parindent}
\newcounter{problem}
\newenvironment{problem}[1][]
{\refstepcounter{problem} \medskip 
\noindent
\begin{tabular}{@{}p{\textwidth}} 
\\ \toprule 
 \textbf{Problem~\theproblem:}  #1 
   \\  \midrule
   }
   {\vspace{-0.6cm} \\ \bottomrule \end{tabular}  \medskip}
\newtheorem{theorem}{Theorem}
\newtheorem{lemma}{Lemma}
\newtheorem{proposition}{Proposition}
\newtheorem{definition}{Definition}
\newtheorem{assumption}{Assumption}
\newtheorem{remark}{Remark}
\newtheorem{example}{Example}
\newcommand{\ubar}[1]{\underaccent{\bar}{#1}}
\newcommand{\ones}{\mathbf 1}
\newcommand{\real}{\mathbb{R}}
\newcommand{\range}{{\mathcal R}}
\newcommand{\diag}{\mathop{\bf diag}}
\newcommand{\inv}{{\mathrm{\textbf{inv}}}}
\newcommand{\argmin}{\mathop{\rm argmin}}
\newcommand{\argmax}{\mathop{\rm argmax}}
\newcommand{\sign}{\mathop{\bf sign}}
\newcommand{\setdef}[2]{\left\{#1 \; | \; #2\right\}}
\newcommand{\spn}[1]{ \mathrm{span}\{#1\} }    
\newcommand{\gmax}{g_{\mathrm{max}}}
\newcommand{\weq}{w_{\mathrm{eq}}}
\newcommand{\weqlow}{w_{\mathrm{eq}}^l}
\newcommand{\wequp}{w_{\mathrm{eq}}^u}
\newcommand{\weqminus}{w_{\mathrm{eq}}^-}
\newcommand{\weqplus}{w_{\mathrm{eq}}^+}
\newcommand{\marginparold}{\alpha^*}
\newcommand{\weqopt}{\weq^{\mathrm{opt}}}
\newcommand{\wopt}{w^{\mathrm{opt}}}
\newcommand{\weqm}{w^*}
\newcommand{\trans}[1]{#1^{\mathrm{T}}}
\newcommand{\RR}{\mathbb{R}}
\newcommand{\NN}{\mathbb{N}}
\newcommand{\mc}{\mathcal}
\newcommand{\clower}{c^l}
\newcommand{\cupper}{c^u}
\newcommand{\wlower}{w^l}
\newcommand{\wupper}{w^u}
\newcommand{\zerobf}{\mathbf{0}}
\newcommand{\onebf}{\mathbf{1}}
\newcommand{\until}[1]{\{1,\dots,#1\}}
\newcommand{\param}{\nu}
\newcommand{\nongenset}{\mathbf{\Delta}_{NG}}
\newcommand{\map}[3]{#1: #2 \rightarrow #3}
\newcommand{\ds}{\displaystyle}
\newcommand{\eg}{{\it e.g.}}
\newcommand{\ie}{{\it i.e.}}
\newcommand{\wrt}{{\it w.r.t. }}
\title{Robustness of DC Power Networks \\ under Weight Control}
\author{Qin Ba\thanks{The authors are with the Sonny Astani Department of Civil and Environmental Engineering at the University of Southern California, Los Angeles, CA. \texttt{\{qba,ksavla\}@usc.edu}. They were supported in part by NSF CAREER ECCS Project No. 1454729. Partial results from this paper appeared as \cite{Ba.Savla:ACC16}. This paper contains proofs and other technical details missing in \cite{Ba.Savla:ACC16}, as well as several new results.} \qquad Ketan Savla}
\date{\today}
\begin{document}
\maketitle

\begin{abstract}
We study, possibly distributed, robust weight control policies for DC power networks that change link susceptances or \emph{weights} within specified operational range in response to balanced disturbances to the supply-demand vector. The margin of robustness for a given control policy is defined as the radius of the largest $\ell_1$ ball in the space of balanced disturbances under which the link flows can be asymptotically contained within their specified limits. For centralized control policies, there is no post-disturbance dynamics, and hence the control design as well as margin of robustness are obtained from solution to an optimization problem, referred to as the \emph{weight control problem}, which is non-convex in general. We establish relationship between feasible sets for DC power flow and associated network flow, which is used to establish an upper bound on the margin of robustness in terms of the min cut capacity. This bound is proven to be tight if the network is tree-like, or if the lower bound of the operation range of weight control is zero. An explicit expression for the flow-weight Jacobian is derived and is used to devise a projected sub-gradient algorithm to solve the relaxed weight control problem. An exact multi-level programming approach to solve the weight control problem for reducible networks, based on recursive application of equivalent bilevel formulation for relevant class of non-convex network optimization problems, is also proposed. The lower level problem in each recursion corresponds to replacing a sub-network by a (virtual) link with equivalent weight and capacities. The equivalent capacity function for tree-reducible networks is shown to possess a strong quasi-concavity property, facilitating easy solution to the multilevel programming formulation of the weight control problem. Robustness analysis for natural decentralized control policies that decrease weights on overloaded links, and increase weights on underloaded links with increasing flows is provided for parallel networks. 
Illustrative simulation results for a benchmark IEEE network are also included. 
\end{abstract}


\section{Introduction}
Robustness to man-made and natural disturbances is becoming an important consideration in the design and operation of critical infrastructure networks, such as the power grid, in part due to potential catastrophic consequences caused by ensuing cascading failures which can also affect other dependent systems.  
Disturbances to power networks are usually in the form of line failures and fluctuations in the supply-demand profile, \eg, due to renewables. From a control design perspective, the objective is to ensure that variations in power flow quantities caused by such external disturbances do not violate physical constraints such as exceeding line thermal limits or voltage collapse. The most well-studied control strategies range from load/frequency/voltage control to changing, usually shedding, of supply and demand, possibly combined with intentional islanding of smaller sections of a power network.

In this paper, we consider a DC model for (transmission) power networks, which is subject to balanced disturbances to the supply-demand vector, \ie, disturbance vectors whose entries add up to zero. Such disturbances can result, \eg, from the tripping of an active line. Alternately, one could attribute such disturbances to the residual of actual disturbances which can not be handled by other control means. We consider the relatively less studied control strategy that uses information about link flows and susceptances, or \emph{weights}, and disturbance, to change line weights in order to ensure that the line flows remain within prescribed limits. 
The control policies can be constrained in the available information, \eg, in the decentralized case, the controller on a given link has access only to information about the weight and flow on itself. The only dynamics in this paper are to be attributed to distributed control settings, where each controller has incomplete information, thereby leading to iterative control actions. 
The margin of robustness of a given control policy is defined as the radius of the largest $\ell_1$ ball in the space of balanced disturbances under which the link flows can be asymptotically contained within their specified limits. This notion of margin of robustness is related to \emph{system loadability}, \eg, see \cite{Klump.Overbye:97}, which quantifies deviations in supply-demand vector in terms of percentage of the nominal, \ie, pre-disturbance, value under which the system remains feasible. The objective of this paper is to compute margin of robustness of weight control strategies, and to design control policies which are provably maximally robust.

The weight control strategy in this paper is motivated by FACTS devices, which allow online control of line properties in power networks. These devices are typically expensive, with the cost depending on the range of operation, \eg, see \cite{Saravanan.Slochanal.ea:07}. This has motivated research on optimal placement of a given number of FACTS devices, \eg, see \cite{Gerbex.Cherkaoui.ea:01,Saravanan.Slochanal.ea:07}. In our framework, such economic aspects can be incorporated implicitly by constraining the control to change line weights within specified limits. The formal analysis in this paper is to be contrasted with previous work on coordinated control of FACTS devices, \eg, see \cite{hug2009decentralized}, to improve system loadability, or usage of FACTS devices to improve efficiency~\cite{Singh.David:01} and security~\cite{Gerbex.Cherkaoui.ea:03}. Use of FACTS devices in the context of the optimal power flow problem has also been explored, \eg, see \cite{Shao.Vittal:06}. However, none of these or related works, to the best of our knowledge,  provide formal performance guarantees.

%


The weight control problem in this paper is related to the so called impedance interdiction problem which has been studied for DC power flow models in \cite{bienstock2010nk}. The objective in such interdiction problems is to analyze the vulnerability of power networks against an adversary who changes the susceptances of power lines subject to budget constraints. The weight control problem in this paper can also be considered to be relaxation of the transmission switching and network topology optimization problem for power networks, \eg, see \cite{Hedman.Oren.ea:11}, where the objective is to choose a subset among all possible links, subject to connectivity constraints, that allow to transfer power between given load and supply nodes, subject to thermal capacity constraints. The non-triviality of this problem can be attributed to non-monotonicity of power flow with respect to changes in demand-supply profile or changes in graph topology of the network (see Example~\ref{eg:load-nonmonotonicity} for simple illustrations). In the topology control problem, the control actions associated with every link only take binary values corresponding to on/off status of the link, or equivalently corresponding to the weight of that link being equal to either zero or its nominal value. On the other hand, in our weight control problem, we allow a continuum of control actions that includes these two values.    
  
In the centralized case, when our model has no post-disturbance dynamics, computation of margin of robustness and design of robust weight control strategies can be posed as an optimization problem, referred to as the \emph{weight control problem}, which is non-convex in general. The solution to this problem also gives an upper bound on the margin of robustness for any, including decentralized, weight control policies. We establish relationship between feasible sets for DC power flow and associated network flow, which is used to establish an upper bound on the margin of robustness in terms of the min cut capacity. This bound is proven to be tight if the network is tree-like, or if the lower bound of the operation range of weight control is zero, \ie, allowing for disconnecting links.

We propose a projected sub-gradient algorithm to solve the weight control problem for multiplicative disturbances. A key component of this algorithm is the flow-weight Jacobian, \ie, a matrix whose elements give the sensitivities of link flows with respect to link weights. We provide an explicit expression for this Jacobian, and make connections with existing results that characterize change in link flows due to removal of links for DC power flow models, \eg, see \cite{Wood.Wollenberg:12,Lai.Low:Allerton13,Bienstock:16}.  

We also provide a multilevel programming approach to solve the weight control problem for reducible networks. Specifically, we first identify a class of network
 optimization problems which can be equivalently converted into a bilevel formulation over two sub-networks which have only two nodes in common, and one of which does not contain any supply-demand nodes, with the possible exception of the common nodes. The upper and lower level optimization problems, although both non-convex, are shown to have similar structure and lead to significant computational savings when solving the weight control problem using an exhaustive search method. Interestingly, the lower level problem can be interpreted as defining \emph{equivalent capacities} of the underlying sub-network for a given \emph{equivalent weight} of the same sub-network. While the latter is reminiscent of the notion of equivalent resistance from circuit theory, the former appears to be novel. While the equivalent capacities are expectedly dependent on the weight of the underlying network, a remarkable aspect of this definition is that this dependence can be expressed entirely in terms of the equivalent weight of the underlying sub-network. 

The proposed bilevel formulation can be applied recursively in a nested fashion to yield a multilevel framework, where each iteration of bilevel formulation results in additional computational savings. Further computational savings are possible when the network is tree-reducible, \ie, when it can be reduced to a tree by sequentially replacing series and parallel sub-networks with equivalent, in terms of weight and capacity, links. This is because series and parallel (meta-)networks are proven to admit an invariance of a certain strong quasi-concavity property from the equivalent capacity function of their constituent links to the equivalent capacity function of the network itself; and the strong quasi-concavity property of the equivalent capacity function is shown to facilitate its explicit computation.

  

We then study robustness properties of a couple of \emph{natural} decentralized control policies. Under the first controller, weight on an overloaded link is decreased if its weight is greater than the lower limit of the operation range. Such a controller is proven to be maximally robust for parallel networks if the initial weight is no less than a solution to the weight control problem. We then consider a second controller which augments the first controller by additionally increasing weight \emph{altrustically} on an underloaded link if the flow on it is increasing and if its weight is less than the upper limit of the operation range. Such a controller is proven to be maximally robust for parallel networks with two links. 



In summary, the paper makes several novel contributions. First, 
a novel robust control problem is formulated where the control strategy consists of changing link weights in response to possibly decentralized information about link flows, weights and disturbance. We then establish connection between the margin of robustness and cut capacities of associated flow network. Second, we provide an explicit expression for the flow-weight Jacobian. While being of independent interest, its utility is demonstrated in a projected gradient descent algorithm to solve the problem for multiplicative disturbances. Third, we identify a class of non-convex network optimization problems which can be equivalently formulated as bilevel problems, and identify conditions under which the robust control problem belongs to this class. The notion of equivalent capacity is introduced, which possibly of independent interest, is shown to correspond to the lower level problem, and is shown to possess a strong quasi-concavity property for series and parallel networks. The input-output invariance of this property for series and parallel networks allows efficient computation of equivalent capacity, and hence solution to the robust control problem, for tree-reducible networks. Fourth, we provide robustness guarantees for a couple of natural decentralized control policies for parallel networks. Illustrative simulations on an IEEE benchmark network are also provided.

The rest of the paper is organized as follows. Section~\ref{sec:problem-formulation} formally states the robust weight control problem, formulates an optimization problem for robust control design and margin of robustness computation for centralized control policies, and establishes connection with classical network flow problems. Section~\ref{sec:multiplicative} provides an explicit expression for the flow-weight Jacobian and a gradient descent algorithm that utilizes it. 
Section~\ref{sec:bi-level} describes a multi-level programming approach for solving the robust control problem for centralized control policies under multiplicative disturbances and introduces the definition of equivalent capacity function for a network. Section~\ref{sec:simple-prob-class} establishes input-output invariance of a strong quasi-concavity property for series and parallel network and utilizes it to efficiently solve the multilevel formulation of the robust control problem for tree reducible networks.  Section~\ref{sec:decentralized} provides robustness guarantees for a couple of natural decentralized weight control policies. Illustrative simulation results are provided in Section~\ref{sec:simulations}. Concluding remarks and comments on directions for future research are provided in Section~\ref{sec:conclusions}. A few technical lemmas are collected in the Appendix.

We conclude this section by defining a few key notations to be used throughout the paper. $\real$, $\real_{\geq 0}$, $\real_{>0}$, $\real_{\leq 0}$ and $\real_{< 0}$ will stand for real, non-negative real, strictly positive real, non-positive real, and strictly negative real, respectively, and $\NN$ denotes the set of natural numbers. $\zerobf$ and $\onebf$ will denote the vector of all zeros and all ones, respectively, where the size of the vector will be clear from the context. Given two vectors $a, b \in \real^n$, $a \leq b$ (resp., $a<b$) would imply $a_i \leq b_i$ (resp., $a_i<b_i$) for all $i \in \until{n}$. Given a vector $a \in \real^n$, $\diag(a)$ denotes a diagonal matrix, whose diagonal entries correspond to elements of $a$, and $ a_{B} $ denotes the sub-vector of $ a $ corresponding to the subset $ B \subset \until{n} $.
We refer the reader to standard textbooks on graph theory, \eg, \cite{Diestel:03}, for a thorough overview of key concepts and definitions for graphs -- we recall a few important ones here for the sake of completeness. 
A directed multigraph is the pair $(\mc V,\mc E)$ of a finite set $\mc V$ of
nodes, and of a multiset $\mc E$ of links consisting of ordered pairs of nodes
(\ie, we allow for parallel links between a pair of nodes). We adopt the
convention that a directed multigraph does not contain a self-loop. A simple
directed graph is a directed graph $(\mc V,\mc E)$ having no multiple edges or
self-loops. A directed path in a digraph is a sequence of vertices in which
there is a (directed) edge pointing from each vertex in the sequence to its
successor in the sequence. A directed cycle is a directed path (with at least
one edge) whose first and last vertices are the same. If $i=(v_1,v_2)\in\mc E$ is
a link, where $v_1, v_2 \in \mc V$, we shall write $\sigma(i)=v_1$ and
$\tau(i)=v_2$  for its tail and head node, respectively. 
The sets of outgoing  and incoming links of a node $v\in\mc V$ will be denoted by $\mc E^+_v:=\{i \in\mc E:\,\sigma(i)=v\}$ and $\mc E^-_v:=\{i \in\mc E:\,\tau(i)=v\}$ respectively. The $\sign$ function is defined as $\sign(x)=+1$ if $x>0$, $=-1$ if $x<0$, and $=0$ if $x=0$. Given a map $\map{f}{X}{Y}$, $\range(f)$ will denote the range of $f$. With slight abuse of notation, we will also use $\mc R(A)$ to denote the range space of matrix $A$. A function $\map{f}{X}{\real}$ is quasiconvex if, for all $x_1, x_2 \in X$ and $\theta \in [0,1]$, we have $f(\theta x_1 + (1-\theta) x_2) \leq \max \{f(x_1), f(x_2)\}$. $f$ is quasiconcave if $-f$ is quasiconvex.


\section{Problem Formulation}
\label{sec:problem-formulation}
In this section, we formulate the problem of robust weight control, and provide preliminary results. We start by reviewing the DC power flow model. 

\subsection{DC Power Flow Model}
\label{sec:dc-model}
In the DC power flow model, it is
assumed that the transmission lines are lossless and the voltage magnitudes at nodes are
constant at 1.0 unit. Power flow on links is bidirectional; however, it is convenient to model the graph topology of the power network by a \emph{directed} multigraph $\mc G = (\mc V, \mc E)$, where the directions assigned to the links are arbitrary. 
We make the following assumption on the graph topology throughout the paper.

\begin{assumption}
\label{ass:connectivity}
$\mc G$ is weakly connected.  
\end{assumption}

Assumption~\ref{ass:connectivity} is without loss of generality because the results of this paper can be applied to every connected component of $\mc G$. The graph
topology is associated with a node-link incidence matrix $A \in \{-1,0,+1\}^{\mc
  V \times \mc E}$ that is consistent with the directions of links in $\mc E$,
\ie, for all $v \in \mc V$ and $i \in \mc E$, $A_{v i}$ is equal to $-1$
if $v=\tau(i)$, is equal to $+1$ if $v=\sigma(i)$, and is equal to zero otherwise. The links are associated with a flow
vector $f \in \RR^{\mc E}$, and the nodes are associated with phase angles $\phi
\in \RR^{\mc V}$ and a supply-demand vector $p \in \RR^{\mc V}$. 
The sign of components of $f$ are to be
interpreted as being consistent with the directional convention chosen for links
in $\mc E$. The direction of the flow on link \( i \) is in the same or opposite
direction of link \( i \) for \( f_{i} > 0 \) and \( f_{i} < 0 \),
respectively. 
A component of $p$ is positive (resp., negative) if the corresponding node is associated with a generator (resp., load). 
Throughout the paper, we shall assume that the supply-demand vector is \emph{balanced}, \ie, 
\begin{assumption}
\label{ass:balanced}
$\onebf^T p=0$.
\end{assumption}
Assumptions~\ref{ass:connectivity} and \ref{ass:balanced} will be standing assumptions throughout the paper. We also associate with the network a vector $w \in \real_{>0}^{\mc E}$
whose components give link susceptances.  
Hereafter, we shall refer to the susceptances as \emph{weights} on the
links. We let $W=\diag(w) \in \real_{>0}^{\mc E \times \mc E}$ denote the matrix representation of $w$.
We define the Laplacian of a network as follows. 
\begin{definition}
\label{def:laplacian}
  Given a network with directed multigraph \( \mathcal{G}= (\mathcal{V}, \mathcal{E}) \) having 
  node-link incidence matrix $A \in \{-1,0,+1\}^{\mc
  V \times \mc E}$ and weight
  matrix \( W \in \real_{>0}^{\mc E \times \mc E}\), its weighted Laplacian matrix is defined as
\begin{equation*}
L_{\mc G}(W):= A W \trans{A}. 
\end{equation*}
\end{definition}
For brevity in notation, we shall drop explicit dependence of $L$ on $W$ or $\mc G$ when clear from the context. 
\begin{remark}
 \leavevmode
 \begin{enumerate}[label = (\alph*)]
 \item While the Laplacian matrix is usually defined for networks with simple directed graphs,
   Definition \ref{def:laplacian} considers the general directed multigraph setting. Several distinct networks can have the same Laplacian -- we elaborate on this in Section~\ref{subsec:lap} in the Appendix. 
\item The Laplacian of a network does not depend on the specific choice of directionality for
  links. 
 \end{enumerate}
\end{remark} 

In a DC power network, the quantities defined above are related by Kirchhoff's law and Ohm's law as follows: 
\begin{equation}
 \label{model}
 \begin{split}
  A f &=  p \\ 
 f &= W \trans{A} \phi
  \end{split}  
\end{equation}

The next result provides an explicit expression for $f$ satisfying \eqref{model}. 
\begin{lemma}
\label{lemma:flow-solution}
Consider a power network with directed multigraph $\mc G=(\mc V, \mc E)$ having 
  node-link incidence matrix $A \in \{-1,0,+1\}^{\mc
  V \times \mc E}$, line weights $w \in \real_{>0}^{\mc E}$ and supply-demand vector $p \in \real^{\mc V}$. There exists a unique $f \in \RR^{\mc E}$ satisfying \eqref{model}, and is given by:
\begin{equation}
\label{eq:sol}
f= W A^T L^{\dagger} p =: f^{\mathcal{G}}(w, p)
\end{equation}
where 
$L^{\dagger}$ is the Moore-Penrose pseudo-inverse of $L$.
\end{lemma}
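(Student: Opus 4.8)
The plan is to reduce the coupled system \eqref{model} to a single linear equation for the phase angles, solve it with the pseudo-inverse, and then show that the resulting flow is well-defined and unique even though the phase angles are not. First I would substitute the second equation of \eqref{model} into the first to obtain $A W \trans{A} \phi = L \phi = p$, so that every solution $f$ of \eqref{model} arises from some solution $\phi$ of $L \phi = p$ via $f = W \trans{A} \phi$, and conversely.

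Next I would establish solvability of $L\phi = p$. Since $W = \diag(w)$ with $w \in \real_{>0}^{\mc E}$ is positive definite, $L x = \zerobf \iff \trans{A} x = \zerobf$; and weak connectivity of $\mc G$ (Assumption~\ref{ass:connectivity}) forces any $x$ with $x_{\sigma(i)} = x_{\tau(i)}$ on every link to be constant, giving $\nullspace(\trans{A}) = \spn{\onebf}$. Hence $\nullspace(L) = \spn{\onebf}$, and since $L$ is symmetric, $\range(L) = \nullspace(L)^\perp = \{y : \trans{\onebf} y = 0\}$. Because $p$ is balanced (Assumption~\ref{ass:balanced}), we have $p \in \range(L)$, so a solution exists. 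The explicit choice $\phi = L^\dagger p$ works because $L L^\dagger$ is the orthogonal projector onto $\range(L)$, whence $p \in \range(L)$ gives $L L^\dagger p = p$; then $f = W \trans{A} L^\dagger p$ satisfies $A f = L L^\dagger p = p$ and satisfies the second equation of \eqref{model} by construction.

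Finally, uniqueness of $f$ follows from the fact that any two solutions $\phi_1, \phi_2$ of $L \phi = p$ differ by an element of $\nullspace(L) = \spn{\onebf}$, and $\trans{A} \onebf = \zerobf$ then forces $W \trans{A} (\phi_1 - \phi_2) = \zerobf$, so all admissible phase angles yield the same flow. The only place where care is needed is the singularity of $L$: one cannot simply invert it, and the argument hinges on two structural facts working in tandem. Balancedness is exactly what places $p$ in $\range(L)$ so that a solution exists and the pseudo-inverse reproduces $p$, while the column-sum structure of the incidence matrix ($\trans{A}\onebf = \zerobf$) is what collapses the non-uniqueness of $\phi$ into a harmless additive constant that the flow map $W\trans{A}(\cdot)$ annihilates. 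I expect this interplay to be the main obstacle to state cleanly; the linear-algebraic verifications themselves are routine.
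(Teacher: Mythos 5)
Your proposal is correct and follows essentially the same route as the paper's proof: reduce to $L\phi = p$, use balancedness of $p$ and $\nullspace(L)=\spn{\onebf}$ to solve via the pseudo-inverse, and invoke $\trans{A}\onebf=\zerobf$ to show the flow is independent of the choice of $\phi$. The only difference is that you prove $\nullspace(L)=\spn{\onebf}$ from connectivity directly, while the paper cites this as a standard fact.
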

\begin{proof}
Substituting the second equation into the first in (\ref{model}), we get $L \phi = p$. 
Note that $L$ is positive semidefinite and has rank $n-1$, and that the null space of $L$ and $L^{\dagger}$ is $\spn{\onebf}$\cite{von2007tutorial}. Since $p$ satisfies Assumption~\ref{ass:balanced}, $p$ is in the range space of $L$, and hence the solution to $L \phi = p$ is given by $\phi = L^{\dagger}p + \bar{\phi}\onebf $, where $ \bar{\phi} $ is an arbitrary scalar.
Since $ A^T \onebf = 0 $, the flow solution is
unique: $f = W \trans{A} \phi = W \trans{A} L^{\dagger}p$.
\end{proof}

We shall drop explicit dependence of $f$ on $\mc G$, $w$ or $p$ when clear from the context. 





\begin{remark}
\label{rem:flow-sol}
\leavevmode
\begin{enumerate}[label = (\alph*)]
\item In the proof of Lemma~\ref{lemma:flow-solution}, the particular phase angle solution $ L_{\mc G}^{\dagger}p $ is the minimum norm solution and is
orthogonal to $ \onebf$, and the flow solution in (\ref{eq:sol}) is the minimum
weighted norm solution satisfying the flow conservation constraint, \ie, the
first equation in  (\ref{model}) (see Section~\ref{sec:flow-sol-prop} in the Appendix for more details). 
\item In \cite{bienstock2010nk}, a result similar to \eqref{eq:sol} is provided as $f = W \trans{\tilde{A}} (\tilde{A} W \trans{\tilde{A}})^{-1} \tilde{p}$, where $ \tilde{A} $ and $ \tilde{p} $ are one row reduced versions of $ A $ and $ p $, respectively.
\end{enumerate}
\end{remark}

We are interested in \emph{feasible} flows, \ie, flows that satisfy the following lower and upper line capacity constraints
\begin{equation}
\label{eq:line-capacity}
\clower \leq f \leq \cupper
\end{equation}
We call a network \emph{feasible} if the flows on all its links are feasible. 
Throughout this paper, we make the following rather natural assumption on line capacities:
\begin{assumption}
\label{ass:zero-interior}
$\clower < \zerobf < \cupper$ 
\end{assumption}

The capacities $\clower$ and $\cupper$ are typically symmetrical about $\zerobf$. 
%
We adopt the following natural standing assumption throughout the paper.
  \begin{assumption}
\label{ass:initial-flow-feasible}
 The initial flow $f_0 = f(w_{0}, p_{0})$ satisfies \(c^{l}\le f_{0} \le c^{u} \).
  \end{assumption}



\subsection{Weight Control Policies and the Margin of Robustness}
We are interested in quantifying disturbances on the supply-demand vector under which
\eqref{eq:line-capacity} continues to be satisfied, using $w$ as control. Disturbances will be modeled by change in the supply-demand vector. We assume that the disturbance induces a one-shot change to the system (as opposed to being a process). Formally, under a disturbance, the supply-demand vector changes irreversibly, at time $t=0$, from a
nominal value $p_0$ to a value $p_{\Delta}=p_0 + \Delta$, with
$\onebf^T \Delta  = 0$, and hence $\onebf^T p_{\Delta}=0$. We emphasize that the disturbance happens only at $t=0$, and is not a process. Such a balanced
disturbance can be caused, \eg, by removal of a link from the network. The
network responds by changing the weights dynamically, which in turn also induces
dynamics in the line flows due to \eqref{eq:sol}. This dynamics can be written as: 
\begin{equation}
\label{eq:ss-model-decent-control}
\begin{aligned}
\dot{w}_{i}(t)& = u_{i}\left( \mc W(t), \mc F(t), \Delta \right) 
\end{aligned}
\end{equation}
where $\mc W(t)=\{w(\kappa): \kappa \in [0,t]\}$, and $\mc
F(t)=\{f(w(\kappa)): \kappa \in [0,t]\}$ are the
historical values of line weights and flows, respectively, through time $t$.
The weight control in \eqref{eq:ss-model-decent-control} is required to satisfy the following constraints
\begin{equation}
\label{eq:weight-constraints}
\zerobf \leq \wlower \leq w \leq \wupper
\end{equation}
where $\wlower$ and $\wupper$ are the lower and upper limits, respectively, for the operation range of the weight controller. 
The dynamical system \eqref{eq:ss-model-decent-control} will be called \emph{feasible} under a given disturbance $\Delta$ and control policy $u$ if \eqref{eq:line-capacity} is satisfied asymptotically. 
For a given network \( \mathcal{G} = (\mc V, \mc E) \) with initial weight $ w_{0}\in \real^{ \mathcal{E}}_{>0} $, link weight bounds $w^l \in \real_{\ge 0}^{\mc E}$ and $w^u \in \real_{>0}^{\mc E}$, link capacity bounds $c^l \in \real_{<0}^{\mc E}$ and $c^u \in \real_{>0}^{\mc E}$, and initial supply-demand vector $ p_{0} \in \real^{ \mathcal{V}} $, the \emph{margin of robustness} of a given control policy $u$ is defined as 
\begin{equation}
\label{eq:robustness-margin-def}
\begin{split}
R(u,\mc G, w_{0}, w^{l}, w^{u}, c^{l}, c^{u}, p_{0}):=\sup\{\beta \geq 0: \, \eqref{eq:ss-model-decent-control} \text{ is }  \text{feasible under } u \quad  \forall \, \Delta \text{ s. t. } \|\Delta\|_1 \leq \beta\}.
\end{split}
\end{equation}

The choice of the $\ell_1$ norm in \eqref{eq:robustness-margin-def} is justified because we consider only balanced disturbances, and therefore $\|\triangle\|_1$ is equal to twice the cumulative deviation in supply (or demand). The following example provides a simple illustration of the increase in margin
of robustness when the line weights are controllable.  

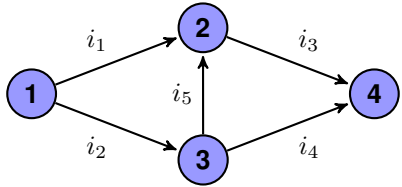
\begin{figure}[htbp]
\centering
\begin{tikzpicture}[->,>=stealth',shorten >=1pt,auto,node distance=0.4cm and 1.8cm,
                   thick,main node/.style={circle,draw,font=\sffamily\bfseries, fill=blue!40}]
 \node[main node] (1) {1};
 \node[main node] (2) [above right =of 1] {2};
 \node[main node] (3) [below right =of 1] {3};
 \node[main node] (4) [below right =of 2] {4};

 \path[every node/.style={font=\sffamily}]
   (1) edge node{$i_{1}$} (2)
   (1) edge node[below left]{$i_{2}$} (3)
   (3) edge node{\( i_{5} \)} (2)
   (2) edge node{\( i_{3} \)} (4)
   (3) edge node[below right]{\( i_{4} \)} (4);
\end{tikzpicture}
\caption{Network used in Examples~\ref{eg:topology-nonmonotonicity} and \ref{eg:load-nonmonotonicity}.}
\label{fig:load-nonmon}
\end{figure}

\begin{example}
\label{eg:topology-nonmonotonicity}
 Consider the network shown in Figure~\ref{fig:load-nonmon} with $\wupper = [1 \, \, \, 3 \, \, \, 1 \, \, \, 1 \, \, \, 1]^T$, $\wlower$ the same as $\wupper$, except for $\wlower_2=0$; $\cupper= -\clower= [1 \, \, \, 1 \, \, \, 1 \, \, \, 0.5 \, \, \, 1]^T $ and $p_0=[1 \, \, 0 \, \, \, 0 \, \, -1]^T$. The flow corresponding to weight $\wupper$ and load $ p_{0} $ is $f(\wupper,p_0)=[0.33 \, \, \,  0.67 \, \, \, 0.44 \, \, \, 0.56 \, \, \, 0.11]^T$ which is infeasible due to the excessive flow on link $ e_4 $. However, the flow under the same load $ p_{0} $ but with weight $\wlower$ is $f(\wlower,p_0)=[1.00 \, \, \, 0 \, \, \, 0.67 \, \, \, 0.33 \, \, \, -0.33]^T$ which is feasible. 
\end{example}

Choosing $w_i=0$ for some link $i$, \eg, for link $2$ in Example~\ref{eg:topology-nonmonotonicity}, corresponds to disconnecting that link. 
Such \emph{line tripping} strategies have been considered in the context of cascading failures \cite{you2003self, you2004slow, andersson2005causes}.

Our objectives in this paper are:  (i) to provide a framework for tractable computation of (approximations of)  
\begin{equation}
\label{eq:Rstar-def}
R^*(\mc U, \mc G, w_{0}, w^{l}, w^{u}, c^{l}, c^{u}, p_{0} ):=\sup_{u \in \mc U} \, R(u, \mc G, w_{0}, w^{l}, w^{u}, c^{l}, c^{u}, p_{0} )
\end{equation}
for a given class $\mc U$ of control policies, and (ii) to find $u^* \in \mc U$ such that $R(u^*,\mc G, w_{0}, w^{l}, w^{u}, c^{l}, c^{u}, p_{0})$ is a close approximation of, if not equal to $R^*(\mc U, \mc G, w_{0}, w^{l}, w^{u}, c^{l}, c^{u}, p_{0})$.
We shall drop explicit dependence of  $ R $ and  $ R^{*} $ on, $u$, $ \mathcal{U} $, $\mc G$, $w_{0}$, $w^{l}$, $ w^{u} $, $ c^{l} $,  $ c^{u} $, or $ p_{0} $ when clear from the context. 

In this paper, we specifically consider cases when $\mc U$ is the set of centralized or \emph{decentralized} control policies. The latter corresponds to control policies satisfying $u_i(\mc W(t),\mc F(t), \Delta ) \equiv u_i(\mc W_i(t), \mc F_i(t))$, \ie, controller on link $i$ has access to the historical values of weights and flows only on link $i$, and no information about disturbance. 

\subsection{Upper Bound on the Margin of Robustness}
\label{sec:upper-bound}
It is easy to see that $R^*(\mc U)$ with $\mc U$ being the set of centralized policies (that have access to information about disturbance $\Delta$) serves as an upper bound to $R^*(\mc U)$ for any class of control policies, including decentralized control policies. 

Under a centralized control policy, the new link weights $w^*(\Delta)$ in
response to disturbance $\Delta$ are chosen instantaneously, \ie, there is no
dynamics in $w$. The centralized policy then corresponds to setting
$w^*(\Delta)$ to be equal to any $w \in \real_{\ge 0}^{\mc E}$ satisfying: 
\begin{equation}
\label{eq:centralized}
\begin{aligned}
\clower  \leq f(w, p_{\Delta}) \leq \cupper \\
 \wlower  \le w \le \wupper 
\end{aligned}
\end{equation}
if \eqref{eq:centralized} is feasible, and (arbitrarily) equal to $\wupper$
otherwise. Here, \( f(w, p_{\Delta}) \) is as given in (\ref{eq:sol}). 
The margin of robustness of such a centralized policy can be easily seen to be equal to the solution of the following optimization problem: 
%

\begin{equation}
\label{eq:weight-control-original}
\nu^{*}(\mathcal{G}, w^{l}, w^{u}, c^{l}, c^{u},  p_{0}):= \underset{\delta: \, \|\delta\|_1=1, \, \onebf^T \delta = 0}{\text{min}} \; \nu(\delta, \mathcal{G}, w^{l}, w^{u}, c^{l}, c^{u}, p_{0}) 
\end{equation}
where 
\begin{equation}
\label{eq:weight-control-nu}
\begin{aligned}
\nu( \delta, \mathcal{G}, w^{l}, w^{u}, c^{l}, c^{u}, p_{0}) = \; \, \,& \underset{ w \in \real_{\geq 0}^{\mc E}; \, \mu \geq 0}{\text{max}} &&  \mu \\
& \text{subject to}  &&  
  \clower \leq f(w, p_{0}+ \mu \delta) \leq \cupper \\
& && \wlower \le w \le \wupper 
\end{aligned}
\end{equation} 
%
For brevity, the explicit dependence of $ \nu( \delta, \mathcal{G}, w^{l}, w^{u}, c^{l}, c^{u}, p_{0})  $ and $ \nu^{*}( \mathcal{G}, w^{l}, w^{u}, c^{l}, c^{u}, p_{0})  $ on $ \mathcal{G} $, $ w^{l} $, $ w^{u} $, $ c^{l} $, $ c^{u} $ or $ p_{0} $ is dropped when clear from the context. 
Notice while a control policy and its margin of robustness may depend on the initial weight $ w_{0} $, 
the upper bound, as defined in \eqref{eq:weight-control-original}-\eqref{eq:weight-control-nu} does not.
\eqref{eq:weight-control-original} differs from \eqref{eq:centralized} only in
parameterization of the set of disturbances in terms of disturbances on a unit
$\ell_1$-ball and magnitude $\param$. 
(\ref{eq:weight-control-nu})
only considers disturbances along the direction \( \delta \) and \(
\nu(\delta, \mathcal{G})  \) gives the maximal magnitude of such
disturbances under which the system can be made feasible within the
specified operation range on \( w \). (\ref{eq:weight-control-original}) then
considers all possible directions of balanced disturbances and 
$\param^*( \mathcal{G})$ is an upper bound on every, including decentralized, control policies as we show next. 

\begin{lemma}
\label{lem:upper-bound}
For a network with directed multigraph \( \mathcal{G} = (\mc V, \mc E) \), link weight bounds $w^l \in \real_{\ge0}^{\mc E}$ and $w^u \in \real_{>0}^{\mc E}$, link capacity bounds $c^l \in \real_{<0}^{\mc E}$ and $c^u \in \real_{>0}^{\mc E}$, and initial supply-demand vector $ p_{0} \in \real^{ \mathcal{V}}$, there exists a $\Delta \in \real^{\mc V}$ with $\|\Delta\|_1$ arbitrarily greater than $\param^*(\mc G)$ such that the system~\eqref{eq:ss-model-decent-control} is infeasible under every, including decentralized, control policy $u$.
\end{lemma}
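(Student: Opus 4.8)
The plan is to reduce the question of feasibility under an arbitrary (possibly decentralized, possibly dynamic) control policy to the purely \emph{static} question of whether some admissible weight vector can render the flow feasible, and then to exploit the definition of $\param^*$ to exhibit a disturbance for which no such weight exists. The crucial observation is that, by Lemma~\ref{lemma:flow-solution}, at every time $t$ the flow is determined \emph{instantaneously} by the current weight and post-disturbance supply-demand vector, i.e. $f(t) = f^{\mc G}(w(t), p_\Delta)$, and that every admissible control policy must respect the box constraint \eqref{eq:weight-constraints}, so that $w(t) \in [\wlower, \wupper]$ for all $t \ge 0$. Consequently, if \emph{no} weight in $[\wlower,\wupper]$ makes the flow feasible for $p_\Delta$, then $f(t)$ violates \eqref{eq:line-capacity} at \emph{every} time $t$, and in particular the system cannot be asymptotically feasible in the sense of \eqref{eq:robustness-margin-def}, regardless of how much information $u$ has or how $w(t)$ evolves.

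Second, I would construct the offending disturbance from the worst-case direction in \eqref{eq:weight-control-original}. Fix any $\beta > \param^*(\mc G)$. Since $\param^*(\mc G)$ is the infimum of $\param(\delta,\mc G)$ over the (compact) set $\{\delta : \|\delta\|_1 = 1,\ \onebf^T\delta = 0\}$, there is a balanced unit direction $\delta$ with $\param(\delta,\mc G) < \beta$ (the minimizer if it is attained, or a near-minimizer otherwise). Choosing any magnitude $\mu$ with $\param(\delta,\mc G) < \mu \le \beta$ and setting $\Delta := \mu\,\delta$ gives $\onebf^T\Delta = 0$ and $\|\Delta\|_1 = \mu$, which can be placed anywhere in $(\param^*(\mc G),\beta]$ and hence made to exceed $\param^*(\mc G)$ by an arbitrarily small margin.

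Third, I would invoke the definition \eqref{eq:weight-control-nu} of $\param(\delta,\mc G)$ as the \emph{maximal} magnitude for which the inner feasibility problem admits a solution $w \in [\wlower,\wupper]$, i.e. $\param(\delta,\mc G) = \sup\{\mu' \ge 0 : \exists\, w \in [\wlower,\wupper],\ \clower \le f^{\mc G}(w, p_0 + \mu'\delta) \le \cupper\}$. Because $\mu$ lies strictly above this supremum, it is not in the achievable set, so no admissible weight makes $f^{\mc G}(w, p_\Delta)$ feasible. Combining this with the first paragraph, for $\Delta = \mu\delta$ the flow violates \eqref{eq:line-capacity} at every time under every control policy $u$, so \eqref{eq:ss-model-decent-control} is infeasible. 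Since $\beta > \param^*(\mc G)$ was arbitrary, such a $\Delta$ exists with $\|\Delta\|_1$ exceeding $\param^*(\mc G)$ by an arbitrarily small amount, which is the claim.

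The main obstacle is conceptual rather than computational: making rigorous the reduction from dynamic, information-limited control to static weight feasibility. The essential point is that asymptotic feasibility is a \emph{necessary} requirement that is already defeated pointwise in time here, so that no convergence of $w(t)$ need be assumed, the flow being infeasible at \emph{all} times whenever $w(t)$ stays in $[\wlower,\wupper]$. A secondary technical care is the attainment of the minimum in \eqref{eq:weight-control-original} and of the maximum in \eqref{eq:weight-control-nu}; both are sidestepped by working with the strict inequality $\mu > \param(\delta,\mc G)$ and with near-minimizing directions, so the argument does not rely on either extremum being achieved.
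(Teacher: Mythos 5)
Your proposal is correct and follows essentially the same route as the paper: scale a (near-)worst-case direction $\delta$ to a magnitude strictly exceeding $\nu(\delta,\mc G)$, observe that no admissible static weight then renders the flow feasible, and conclude that no control policy—which can only ever select weights in $[\wlower,\wupper]$—can achieve feasibility. Your write-up is in fact somewhat more careful than the paper's, since you make explicit the reduction from dynamic, information-limited control to pointwise-in-time static infeasibility and you sidestep the attainment of the extrema in \eqref{eq:weight-control-original}--\eqref{eq:weight-control-nu}, which the paper's proof implicitly assumes.
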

\begin{proof}
Let $(\delta^*,\param^*)$ correspond to an optimal solution of \eqref{eq:weight-control-original}. It is easy to see that for $(\delta^*,\param^*(1+\epsilon))$, $\epsilon>0$, there is no feasible $w$ in \eqref{eq:weight-control-original}, and hence the system is infeasible under the perturbation $\Delta=(1+\epsilon) \param^* \delta^*$ under any control policy $u$. Since this is true for every $\epsilon>0$, this gives the lemma.
\end{proof}  

Lemma~\ref{lem:upper-bound} implies that 
$R^* \leq \param^*$, or equivalently, $R(u) \leq \param^*$ for all $u$. The next example shows that $\|\Delta\|_1 > R^*$ is not sufficient for infeasibility.
\begin{example}
\label{eg:load-nonmonotonicity}
Consider the network shown in Figure~\ref{fig:load-nonmon}, with  $\cupper = - \clower = 5.5 \, \onebf$, and $\wlower = \wupper = [1 \, \, \, 3 \, \, \, 3 \, \, \, 1 \, \, \, 1]^T = w$ (say). This implies that the only admissible control policy is the trivial $u \equiv \zerobf$. The flow corresponding to load $p_0=[8 \, \, 0 \, \, \, 0 \, \, -8]^T$ is $f(w,p_0) = [3.2 \, \, \, 4.8 \, \, \, 4.8 \, \, \, 3.2 \, \, \, 1.6]^T$ which is feasible. Consider two perturbations $\Delta_1 = [1.5 \, \, \, -0.5 \, \, \, 0.5 \, \, \, -1.5]^T$ and $\Delta_2=[2 \, \, -2 \, \, \, 2 \, \, -2]^T$. Note that $\|\Delta_1\|_1 = 4 < 8 = \|\Delta_2\|_1$. The flows under these perturbations are $f(w,p_{\Delta_1})=[3.95 \, \, \, 5.55 \, \, \, 5.55 \, \, \, 3.95 \, \, \, 2.1]^T$ and  $f(w,p_{\Delta_2})=[4.6 \, \, \, 5.4 \, \, \, 5.4 \, \, \, 4.6 \, \, \, 2.8]^T$. 
Since $f(w,p_{\Delta_1})$ is infeasible, $R^* \leq \|\Delta_1\|_1=4$. However, the flow under $\Delta_2$, whose norm is greater than $R^*$ is feasible. 
Note also that, element-wise, $\Delta_1$ and $\Delta_2$ have the same signs, and magnitude of $\Delta_1$ is smaller than $\Delta_2$. In other words, $\Delta_2$ dominates $\Delta_1$ element-wise, and yet the system is feasible under $\Delta_2$, but not under $\Delta_1$. Such non-monotonicity is directly attributable to non-monotonicities of flow distribution with respect to the supply-demand vector in power networks.
\end{example}

\section{Relationship between margin of robustness and min-cut capacity}
In the weight control problem (\ref{eq:weight-control-original})-(\ref{eq:weight-control-nu}), the flexibility of controlling weight enables us to adjust the flow
distribution over the networks to maximize the margin of robustness. This is similar to a
classical network flow problem of choosing a feasible flow distribution to optimize a given cost function. In this section, we formally investigate the relationship between the weight control and the network flow problem. We show that, under appropriate conditions, the weight control problem (\ref{eq:weight-control-nu}) is
equivalent to a network flow problem with the margin of robustness for a given disturbance being the
objective function. We use this relationship to make connections between the margin of robustness of a given DC power network and the min cut capacity of a certain associated flow network. These results are reminiscent of our previous
work in \cite{Como.Savla.ea:Part1TAC10,Como.Savla.ea:Part2TAC10} on robustness of transport networks.

We begin by exploring the relationship between feasible flow sets for network flow and weight controlled DC power networks. 

\subsection{Relationship between Feasible Flow Sets for Network Flow and
  Weight Controlled DC Power Network}
The difference between a flow network and a DC power network is in their
different physics: classical network flow has capacity and flow conservation constraints, whereas DC power networks have additional constraints in the form of Ohm's law. Let us define the set of \emph{feasible flow} for
flow networks, \( \mathcal{F}_{1} \),  and for weight controlled DC networks, \( \mathcal{F}_{2} \),  
as follows \footnote{In contrast to standard convention, we do not include non-negativity constraints in $\mc F_1$ since the underlying graph is undirected. The non-negativity constraints can be imposed on the directed graph formed by a simple \emph{extension}: for every undirected link in $\mc G$, there are two directed links in the extended directed graph.}: 
\begin{align*}
  \mathcal{F}_{1} &:= \{f \in \real^{\mc E}\,|\, {A}f = p, c^{l} \le f \le c^{u}\} \\
  \mathcal{F}_{2} &:= \{ f \in \real^{\mc E}\,|\, \exists w \in [w^l,w^u], \, \, \phi \in \real^{\mc V}, \text{ s.t. } {A}f = p, c^{l} \le f \le c^{u}, f = w\trans{A} \phi \}
\end{align*}
Since $ \mathcal{F}_{2} $ has additional constraints, it is straightforward to see that \(  \mathcal{F}_{2} \subseteq \mathcal{F}_{1} \). 

For a network with directed graph \( \mc G = ( \mathcal{V}, \mathcal{E} ) \), a cycle \(
\mathcal{C} \), is a subset of \( \mathcal{E} \) that forms a loop. \(
\mathcal{C} \) consists of forward link set \( \mathcal{C}_{F} \) and backward link
set \( \mathcal{C}_{B} \), where the \emph{forward links} and \emph{backward links} are the links 
along clockwise and counter-clockwise direction of \( \mathcal{C} \),
respectively \cite{bertsimas1997introduction}. 
We say that a flow \( f \in \real^{\mc E}\) contains a \emph{circulation} if there
exists a cycle \( \mathcal{C} \) such that \( f_{i} > 0 \) for all \( i\in
\mathcal{C}_{F} \) and \( f_{i}<0 \) for all \( i\in \mathcal{C}_{B} \). Let \(
\mathcal{F}_{0} := \{f \in \real^{ \mathcal{E}} \, |\, f \text{ does not contain a circulation}\}
\). We then have the following relationship between \(
\mathcal{F}_{0} \), \( \mathcal{F}_{1} \) and \( \mathcal{F}_{2} \). 

\begin{proposition}
\label{prop:feasible-flow-relation}
For a network with undirected multigraph \( \mathcal{G} = (\mc V, \mc E) \), link weight bounds $w^l \in \real_{\ge0}^{\mc E}$ and $w^u \in \real_{>0}^{\mc E}$, link capacity bounds $c^l \in \real_{<0}^{\mc E}$ and $c^u \in \real_{>0}^{\mc E}$, and supply-demand vector $ p \in \real^{ \mathcal{V}}$,
\begin{equation}
\label{eq:F2-subset-F1-cap-F0}
\mathcal{F}_{2} \subseteq \mathcal{F}_{1} \cap \mathcal{F}_{0}
\end{equation}
Moreover,
\begin{enumerate}
\item if $\mc G$ is a tree, then $\mc F_1 = \mc F_2$
\item if $w^l=0$, then $\mathcal{F}_{2} =
 \mathcal{F}_{1} \cap \mathcal{F}_{0}$
\end{enumerate}
\end{proposition}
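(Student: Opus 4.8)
The plan is to prove \eqref{eq:F2-subset-F1-cap-F0} first, and then obtain the two equalities by establishing the reverse inclusions $\mathcal{F}_1 \subseteq \mathcal{F}_2$ in the tree case and $\mathcal{F}_1 \cap \mathcal{F}_0 \subseteq \mathcal{F}_2$ in the case $w^l = 0$. Since $\mathcal{F}_2 \subseteq \mathcal{F}_1$ is immediate, the base inclusion reduces to showing $\mathcal{F}_2 \subseteq \mathcal{F}_0$. The mechanism underlying every step is Ohm's law $f = W\trans{A}\phi$, which gives $f_i = w_i(\phi_{\sigma(i)} - \phi_{\tau(i)})$: whenever $w_i > 0$ the sign of $f_i$ matches the sign of the phase drop $\phi_{\sigma(i)} - \phi_{\tau(i)}$, while $w_i = 0$ forces $f_i = 0$.

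To show $\mathcal{F}_2 \subseteq \mathcal{F}_0$, I would take $f \in \mathcal{F}_2$ with witnesses $(w,\phi)$ and argue by contradiction: if $f$ had a circulation on a cycle $\mathcal{C}$, then every link of $\mathcal{C}$ would carry nonzero flow, forcing $w_i > 0$ and hence aligning flow signs with phase drops. Traversing $\mathcal{C}$ clockwise, a forward link ($f_i > 0$) is crossed from $\sigma(i)$ to $\tau(i)$ with $\phi_{\sigma(i)} > \phi_{\tau(i)}$, and a backward link ($f_i < 0$) is crossed from $\tau(i)$ to $\sigma(i)$ with $\phi_{\tau(i)} > \phi_{\sigma(i)}$, so in both cases $\phi$ strictly decreases along the traversal; summing around the closed cycle would return to the start with a strictly smaller potential, a contradiction.

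For the tree case I would use that $A$ has full column rank, so $Af = p$ pins $f$ down uniquely and $\mathcal{F}_1$ is at most a single point. Given such an $f$, I would pick $w = w^u > 0$ and invoke that $\trans{A}$ restricts to a bijection from $\onebf^\perp$ onto $\real^{\mc E}$ on a tree (since $|\mc E| = |\mc V| - 1$) to solve $\trans{A}\phi = W^{-1}f$ uniquely; this realizes $f = W\trans{A}\phi$ with $w = w^u \in [w^l, w^u]$, so $f \in \mathcal{F}_2$ and therefore $\mathcal{F}_1 = \mathcal{F}_2$.

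The substance of the proposition is the reverse inclusion when $w^l = 0$, and this is the step I expect to be the main obstacle. Given $f \in \mathcal{F}_1 \cap \mathcal{F}_0$, I would orient each support link $i \in \supp(f)$ along its flow; the absence of a circulation is exactly the absence of a directed cycle in this orientation, so the oriented graph is acyclic. The key fact is that a finite acyclic directed graph admits a node potential $\phi$ that strictly decreases along every oriented link (built from a topological order), which is precisely the combinatorial-to-analytic bridge the no-circulation hypothesis is designed to supply. With such a $\phi$, the phase drop on each support link shares the sign of $f_i$, so $w_i := f_i/(\trans{A}\phi)_i > 0$ is well defined, and I set $w_i = 0$ where $f_i = 0$. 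This realizes $f = W\trans{A}\phi$ with $w \ge 0 = w^l$; the last thing to arrange is $w \le w^u$, which I would secure by rescaling $\phi \mapsto \lambda\phi$ for large $\lambda$, shrinking every positive weight to $w_i/\lambda$ while leaving $f$ unchanged, so that $\lambda \ge \max_{i \in \supp(f)} w_i/w_i^u$ (finite because $w^u > 0$) places $w$ in $[0, w^u]$. Throughout these realizability arguments the capacity bounds $c^l \le f \le c^u$ are never used directly — they are simply inherited from $f \in \mathcal{F}_1$.
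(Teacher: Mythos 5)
Your proof is correct and follows essentially the same route as the paper's: the telescoping phase-sum contradiction for $\mathcal{F}_2 \subseteq \mathcal{F}_0$, the full column rank of the tree incidence matrix to solve for $\phi$, and a topological ordering of the flow-oriented acyclic graph followed by a rescaling to enforce $w \le w^u$. If anything, you are slightly more careful than the paper at the edge cases (links with $w_i = 0$ in the circulation argument, zero-flow links handled via $\supp(f)$, and the compensating rescaling of $\phi$ made explicit), so there is nothing to fix.
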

\begin{proof}
Since \( \mathcal{F}_{2} \subseteq \mathcal{F}_{1} \), in order to prove \eqref{eq:F2-subset-F1-cap-F0}, it is sufficient to prove
  that \( f \in \mathcal{F}_{0} \) for all \( f \in \mathcal{F}_{2} \), \ie, a
  feasible flow for a DC network does not contain a circulation. This is proven by contradiction as follows. For a flow \( f \in \mathcal{F}_{2} \), suppose there exists a circulation on a cycle \( \mathcal{C}
  \). Applying Ohm's law on all the links in \( \mathcal{C} \), we then get that
$f_{i}/w_{i} = \phi_{\sigma(i)}  -\phi_{\tau(i)}$ for all $i \in
\mathcal{C}_{F}$, and $-f_{i}/w_{i} = \phi_{\sigma(i)} - \phi_{\tau(i)}$ for all $i \in \mathcal{C}_{B}$. Taking summation over all links in $\mc C$, we then get that 
%
\begin{equation*}
0 < \sum_{i\in \mathcal{C}_{F}} f_{i}/w_{i} - \sum_{i\in \mathcal{C}_{B}}
f_{i}/w_{i}  = \sum_{i\in \mathcal{C}_{F}} \phi_{\tau(i)}  + \sum_{i\in
  \mathcal{C}_{B}} \phi_{\sigma(i)} - \sum_{i\in \mathcal{C}_{F}}\phi_{\sigma(i)}
- \sum_{i\in \mathcal{C}_{B}}\phi_{\tau(i)}  = 0. 
\end{equation*}
where the inequality is due to the definition of circulation, and the last equality to zero is due to the definition of a cycle. This leads to a contradiction. 

In order to prove (1), it is sufficient to prove that \( \mathcal{F}_{1} \subseteq \mathcal{F}_{2} \), \ie, \( f \in \mathcal{F}_{2} \)
for any \( f \in \mathcal{F}_{1} \) for a tree network. Pick arbitrary \( f \in
\mathcal{F}_{1} \) and \( w \in [w^{l}, w^{u}] \). It is sufficient to show
that the constraint \( f = w\trans{A} \phi \) is satisfied for some \( \phi
\in \real^{\mc V}\). Let \( \bar{A} \) be the subvector and submatrix of $ \phi $ and $ A $ respectively with the
first row removed. Since $\mc G$ is a tree, \( A \) has independent columns, and \( \bar{A} \) is full rank. Let $ \bar{\phi} := (\trans{\bar{A}})^{-1} w^{-1} f  \in \real^{|\mc V|-1}$. It is then easy to see that \( f = w\trans{A} \phi \) is satisfied for $\phi:=[0 \, \, \bar{\phi}^T]^T$. 

In order to prove (1), it is sufficient to prove that \(\mathcal{F}_{1} \cap \mathcal{F}_{0} \subseteq \mathcal{F}_{2}  \). Pick arbitrary \( f \in  \mathcal{F}_{1} \cap \mathcal{F}_{0} \). To prove \( f\in \mathcal{F}_{2} \) is to show there exist \( w\in [w^{l},
w^{u}] \) and \( \phi \) such that the constraint \( f = w \trans{A}\phi \) is satisfied. We now construct such \( w \) and \( \phi \) as follows. Maintain the directions of links with positive flow
and reverse the directions of links with negative flow. Since \( f\in
\mathcal{F}_{0} \), there is no directed cycle in the network with the new
direction assigned. Hence, there exists a topological ordering of the nodes in $\mc V$. Pick a strictly decreasing sequence $(\phi_1, \ldots, \phi_{|\mc V|})$, and assign it the nodes as per the topological ordering. Let $\tilde{w}_i:=f_{i}/( \phi_{\sigma(i)} -
  \phi_{\tau(i)} ) >0$ for all $i \in \mc E$. Finally, choose the link weights as: $w= \eta \tilde{w}$, where $\eta=\min_{i \in \mc E}
  w_{i}^{u}/w_{i} > 0$.
\end{proof}

\begin{remark}
\label{rem:tree-flow}
  If the underlying undirected graph \( \mathcal{G} \) of a network is a tree, then the flow
solution to (\ref{model}) is uniquely determined by the 
flow conservation equation \( Af = p \) and hence changing weight \( w \) does not
affect the value of \( f \). Therefore, as we show in Section~\ref{sec:min-cut}, the weight control
problem (\ref{eq:weight-control-nu})  of a such a network reduces to a network flow problem. 
\end{remark}

\subsection{Relating Margin of Robustness to Min-Cut Capacity}
\label{sec:min-cut}
Proposition~\ref{prop:feasible-flow-relation} implies that, for a network whose underlying graph is a tree, 
(\ref{eq:weight-control-original})-(\ref{eq:weight-control-nu}) 
is equivalent to:
\begin{equation}
  \label{opt:tree-weight-control}
  \begin{aligned}
    \nu_{0}^{*} := \underset{\delta: \,\|\delta\|_1=1, \, \onebf^T \delta = 0}{\text{min}} \nu_{0}(\delta)
  \end{aligned}
\end{equation}
where
\begin{equation}
\label{eq:tree-weight-control-nu}
 \begin{aligned}
\nu_{0}(\delta) := \; \, \,& \underset{ \mu \geq 0, \, f}{\text{max}} &&  \mu \\
& \text{subject to}  &&  Af = p_0 + \mu \delta  \\
&&&  \clower \leq f \leq \cupper \\
\end{aligned}
\end{equation}

If the underlying graph is not a tree, a feasible flow \(
f \in \mathcal{F}_{1} \) can contain circulations, i.e., \( f \notin \mathcal{F}_{0} \), and hence  \( f \notin \mathcal{F}_{2} \) by Proposition~\ref{prop:feasible-flow-relation}. In this case, it is possible to eliminate circulations from $f$ to obtain a \( \tilde{f} \in \mathcal{F}_{1} \cap \mathcal{F}_{0}
\) as follows. Set $\tilde{f}=f$. While $\tilde{f}$ contains a circulation for some cycle $\mc C$, update $\tilde{f}=\tilde{f}- \min_{i \in \mc C} \tilde{f}_i \, \onebf_{\mc C}$, where $\onebf_{\mc C}$ is a
  binary vector containing one for entries corresponding to $\mc C$, and zero otherwise. Moreover, it is easy to see that, if $(\mu,f)$ is feasible for \eqref{eq:tree-weight-control-nu}, then $(\mu,\tilde{f})$ is also feasible. Proposition~\ref{prop:feasible-flow-relation} implies that the flow obtained by removing circulation satisfies \( \tilde{f}\in \mathcal{F}_{2} \) if \( w^{l} = 0 \). 
%
Therefore,
(\ref{eq:weight-control-original})-(\ref{eq:weight-control-nu})  is
equivalent to (\ref{opt:tree-weight-control})-(\ref{eq:tree-weight-control-nu}) 
when \( w^{l} =0 \). 

In summary, if the underlying graph of a network is a tree or \( w^{l} =0 \), then the nonconvex problem
(\ref{eq:weight-control-original})-(\ref{eq:weight-control-nu}) is equivalent to 
(\ref{opt:tree-weight-control})-(\ref{eq:tree-weight-control-nu}), whose inner problem (\ref{eq:tree-weight-control-nu}) is convex. Indeed, (\ref{eq:tree-weight-control-nu})
is a classical network flow problem and can be solved efficiently for a given disturbance \( \delta \). However, computational tractability of 
the minimax problem
(\ref{opt:tree-weight-control})-(\ref{eq:tree-weight-control-nu}) is not readily apparent. The next result establishes a useful property of $\nu_0(\delta)$, which in turn will lead to an efficient solution methodology for (\ref{opt:tree-weight-control})-(\ref{eq:tree-weight-control-nu}). 
%
\begin{lemma}
  \label{lem:tree-concave-nu}
 For a network with directed multigraph \( \mathcal{G} = (\mc V, \mc E) \), link
 capacity bounds $c^l \in \real_{<0}^{\mc E}$ and $c^u \in \real_{>0}^{\mc E}$,
 and initial supply-demand vector $ p_{0} \in \real^{ \mathcal{V}} $,
  \(\nu_{0}(\delta) \) 
defined in (\ref{eq:tree-weight-control-nu}) is quasiconcave. 
\end{lemma}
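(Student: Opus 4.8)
The plan is to prove quasiconcavity by showing that every superlevel set $S_\alpha := \{\delta : \nu_0(\delta) \ge \alpha\}$ is convex, which is equivalent to quasiconcavity of $\nu_0$. Since the constraints $Af = p_0$, $\clower \le f \le \cupper$ are feasible --- the initial flow $f_0$ certifies this by Assumption~\ref{ass:initial-flow-feasible} --- we have $\nu_0(\delta)\ge 0$ for every $\delta$, so $S_\alpha$ is the whole domain and trivially convex for $\alpha \le 0$. It therefore remains to treat $\alpha > 0$.

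The first key step is to observe that, for a fixed $\delta$, the set $M_\delta := \{\mu \ge 0 : \exists f,\ Af = p_0 + \mu\delta,\ \clower \le f \le \cupper\}$ of feasible magnitudes is a closed interval of the form $[0,\nu_0(\delta)]$ (or $[0,\infty)$ when $\delta = 0$). Convexity of $M_\delta$ is immediate: if $\mu_1,\mu_2 \in M_\delta$ with flows $f_1,f_2$, then $\lambda f_1 + (1-\lambda) f_2$ certifies $\lambda\mu_1 + (1-\lambda)\mu_2 \in M_\delta$, using linearity of $Af$ and convexity of the box $[\clower,\cupper]$. Closedness (and attainment of the supremum $\nu_0(\delta)$ whenever it is finite) follows because $M_\delta$ is the projection onto the $\mu$-coordinate of the polyhedron $\{(\mu,f): Af = p_0+\mu\delta,\ \clower \le f \le \cupper,\ \mu\ge 0\}$, and projections of polyhedra are polyhedra. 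Consequently, for $\alpha > 0$, $\delta \in S_\alpha$ if and only if $\mu = \alpha$ itself is feasible, i.e.\ iff $\alpha\delta \in \{Af - p_0 : \clower \le f \le \cupper\}$.

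This last characterization makes convexity of $S_\alpha$ transparent: the set $\{Af - p_0 : \clower \le f \le \cupper\}$ is the image of the box $[\clower,\cupper]$ under the affine map $f \mapsto Af - p_0$, hence convex, and therefore $S_\alpha = \tfrac1\alpha\{Af - p_0 : \clower \le f \le \cupper\}$ is convex. Equivalently, and this is the structural heart of the argument, given $\delta_1,\delta_2 \in S_\alpha$ with associated flows $f_1,f_2$ realizing magnitude $\alpha$, the convex combination $f = \theta f_1 + (1-\theta) f_2$ satisfies $Af = p_0 + \alpha(\theta\delta_1 + (1-\theta)\delta_2)$ together with $\clower \le f \le \cupper$, so $\theta\delta_1 + (1-\theta)\delta_2 \in S_\alpha$. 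The crucial device is to evaluate both disturbances at the \emph{same} magnitude $\alpha$: this turns the otherwise bilinear term $\mu\delta$ into the linear term $\alpha\delta$, at which point the feasible-flow constraint is jointly affine in $(\delta,f)$.

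The main, and essentially only, obstacle is the attainment issue packaged into the first step: the definition of $\nu_0(\delta)$ is a supremum, so a priori $\nu_0(\delta)\ge\alpha$ only yields feasible magnitudes arbitrarily close to the supremum rather than at $\mu=\alpha$ exactly. The interval/closedness property of $M_\delta$ is precisely what licenses selecting a flow at $\mu=\alpha$ and gluing the two realizations together. I would also remark that $\nu_0(\delta)=+\infty$ can occur only at $\delta=0$ --- since $\{Af-p_0 : \clower \le f \le \cupper\}$ is bounded, $p_0+\mu\delta$ must leave it as $\mu\to\infty$ whenever $\delta\neq0$ --- and this case is absorbed without change, because then $\mu=\alpha$ is feasible for every finite $\alpha$. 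In particular $\nu_0$ is finite with its maximum attained on the domain $\|\delta\|_1 = 1$ relevant to \eqref{opt:tree-weight-control}.
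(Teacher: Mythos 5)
Your proof is correct and rests on the same core device as the paper's: evaluating both disturbance directions at a common magnitude (your $\alpha$, the paper's $u_1^*=\min\{\nu_0(\delta_1),\nu_0(\delta_2)\}$) so that the bilinear term $\mu\delta$ becomes linear, and then taking the convex combination $\theta f_1+(1-\theta)f_2$ of the two certifying flows. Your superlevel-set packaging, the identification of $S_\alpha$ as a scaled affine image of the box $[\clower,\cupper]$, and the explicit closedness-of-projection argument are only a reorganization of the paper's direct verification, though they do make the attainment of the supremum (which the paper handles implicitly via the polyhedrality of the feasible set) cleaner and more explicit.
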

\begin{proof}
  Given arbitrary \( \delta_{1} \) and \(\delta_{2}\), we show that \(
  \nu_{0}(\theta \delta_{1}+ (1- \theta)\delta_{2}) \ge
  \min\{\nu_{0}(\delta_{1}), \nu_{0}(\delta_{2})\} \) for all \( \theta \in [0, 1] \). Let \( u^{*}_{1} =
  \nu_{0}(\delta_{1}) \) and \( u_{2}^{*}=
  \nu_{0}(\delta_{2})  \). Without loss of generality, assume
  \( u^{*}_{1} \le u_{2}^{*} \) and we need to prove \(
  \nu_{0}(\theta \delta_{1}+ (1- \theta)\delta_{2}) \ge
  u_{1}^{*} \). It is sufficient to show that \( u = u_{1}^{*} \) is feasible to
  (\ref{eq:tree-weight-control-nu}) when \( \delta = \theta \delta_{1} +
  (1-\theta)\delta_{2} \). 

When \( \delta = \theta \delta_{1} +   (1-\theta)\delta_{2} \), \( u =
u_{1}^{*} \), the equality constraint becomes 
\begin{align*}
  A f &= p_{0} + u_{1}^{*} (\theta \delta_{1} + (1-\theta) \delta_{2}) = \theta
  (p_{0}+  u_{1}^{*} \delta_{1}) + (1-\theta_{2}) (p_{0} +u_{1}^{*}
  \delta_{2}) \\
&= \theta A f_{1}^{*} + (1-\theta) A f'_{2}
\end{align*}
where \( f_{1}^{*} \) and \( f'_{2} \) are some flow on the network under
disturbed supply-demand vector \( p_{0} + u_{1}^{*} \delta_{1} \) and \(
p_{0}+ u_{1}^{*} \delta_{2} \), respectively. By setting \( f = \theta f_{1}^{*} + (1-\theta) f'_{2}
\), the flow conservation constraint is satisfied. For feasibility of $ ( \theta \delta_{1} +   (1-\theta)\delta_{2}, u_{1}^{*}) $, what remains to be shown is that  such  \( f \) satisfies the
capacity constraint. It is sufficient to show that there exist \( f_{1}^{*} \) and \( f'_{2} \)
that are feasible. $ f_{1}^{*} $ can be selected as the optimal solution to (\ref{eq:tree-weight-control-nu}) corresponding to $ u_{1}^{*} $ and hence feasible. In order to see that there exists feasible
\( f'_{2} \), note that the feasible set of (\ref{eq:tree-weight-control-nu})
is a polyhedron, \(\nu = 0\), \( f= f_{0} \) and \( \nu = \nu_{2}^{*} \) and \(
f= f_{2}^{*} \) are feasible, where $ f_{2}^{*} $ is the optimal flow solution corresponding to $ u_{2}^{*} $, and \( u_{1}^{*} \le u_{2}^{*}
\) is convex combination of 0 and \( u_{2}^{*} \). Therefore, \(
\nu_{0}(\theta \delta_{1}+ (1- \theta)\delta_{2}, \mathcal{G}_{t} ) \ge 
  u_{1}^{*} \) and \( \nu_{0}(\delta) \) is quasiconcave. 
  \end{proof}

\begin{lemma}
\label{lem:extreme-pt-optimal}
Consider a network with directed multigraph \( \mathcal{G} = (\mc V, \mc E) \), link
 capacity bounds $c^l \in \real_{<0}^{\mc E}$ and $c^u \in \real_{>0}^{\mc E}$,
 and initial supply-demand vector $ p_{0} \in \real^{ \mathcal{V}} $. Then, $\nu_0^*$ defined in \eqref{opt:tree-weight-control} is equal to $\min_{\delta \in \Delta_0} \nu_0(\delta)$, where \( \Delta_{0} :=
 \{\delta \in \real^{\mathcal{V}} \, |\, \exists \, s, t \in \mathcal{V}, \delta_{s} = 1/2, \delta_{t} = -1/2,
 \delta_{v} = 0 \, \, \forall \, v \in \mathcal{V}\setminus \{s, t\} \} \), and $\nu_0(\delta)$ is as defined in \eqref{eq:tree-weight-control-nu}.
\end{lemma}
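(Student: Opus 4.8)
The plan is to exploit the quasiconcavity of $\nu_0$ established in Lemma~\ref{lem:tree-concave-nu} together with the fact that every balanced unit disturbance is a convex combination of the elementary source--sink disturbances collected in $\Delta_0$. Write $D := \{\delta \in \real^{\mc V} : \|\delta\|_1 = 1,\ \onebf^T \delta = 0\}$ for the feasible set of the outer minimization in \eqref{opt:tree-weight-control} and, for $v \in \mc V$, let $e_v \in \real^{\mc V}$ denote the corresponding standard basis vector, so that every element of $\Delta_0$ has the form $\tfrac12(e_s - e_t)$ for some $s \neq t$. Since $\|\tfrac12(e_s - e_t)\|_1 = 1$ and $\onebf^T \tfrac12(e_s - e_t) = 0$, we have $\Delta_0 \subseteq D$, and hence $\min_{\delta \in \Delta_0} \nu_0(\delta) \geq \nu_0^*$ trivially. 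The content of the lemma is therefore the reverse inequality, which I would obtain by decomposing an arbitrary $\delta \in D$ over $\Delta_0$ and invoking quasiconcavity.

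The key step is the decomposition. Given $\delta \in D$, partition the nonzero coordinates of $\mc V$ into $S := \{v : \delta_v > 0\}$ and $T := \{v : \delta_v < 0\}$. From $\onebf^T \delta = 0$ and $\|\delta\|_1 = 1$ it follows that $\sum_{s \in S} \delta_s = \sum_{t \in T} (-\delta_t) = \tfrac12$. I then treat $\{\delta_s\}_{s \in S}$ as supplies and $\{-\delta_t\}_{t \in T}$ as demands, each totaling $\tfrac12$, and construct any feasible transportation plan $\{x_{st} \geq 0\}$ satisfying $\sum_{t \in T} x_{st} = \delta_s$ for every $s \in S$ and $\sum_{s \in S} x_{st} = -\delta_t$ for every $t \in T$ (for instance, by a greedy northwest-corner assignment). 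A direct check of components then gives $\delta = \sum_{s \in S,\, t \in T} x_{st}(e_s - e_t) = \sum_{s,t} (2 x_{st})\,\tfrac12(e_s - e_t)$, where the coefficients $2 x_{st} \geq 0$ sum to $2 \sum_{s,t} x_{st} = 2 \cdot \tfrac12 = 1$. Thus $\delta$ is a convex combination of the points $\tfrac12(e_s - e_t) \in \Delta_0$.

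To finish, I would extend the two-point quasiconcavity inequality of Lemma~\ref{lem:tree-concave-nu} to finite convex combinations by a routine induction, yielding $\nu_0\bigl(\sum_k \lambda_k \delta^{(k)}\bigr) \geq \min_k \nu_0(\delta^{(k)})$ whenever $\lambda_k \geq 0$ and $\sum_k \lambda_k = 1$. Applying this to the decomposition above gives $\nu_0(\delta) \geq \min_{\delta' \in \Delta_0} \nu_0(\delta')$ for every $\delta \in D$, whence $\nu_0^* = \min_{\delta \in D} \nu_0(\delta) \geq \min_{\delta' \in \Delta_0} \nu_0(\delta')$. Combined with the trivial reverse inequality from $\Delta_0 \subseteq D$, this yields $\nu_0^* = \min_{\delta \in \Delta_0} \nu_0(\delta)$, as claimed.

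I expect the only genuinely delicate point to be the transportation decomposition in the second paragraph, specifically verifying that the constructed coefficients are nonnegative, sum to one, and reproduce $\delta$ coordinate-wise; everything else, namely the membership $\Delta_0 \subseteq D$, the inductive extension of quasiconcavity, and the two opposing inequalities, is bookkeeping.
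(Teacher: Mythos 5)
Your proposal is correct and follows essentially the same route as the paper: both show that every $\delta$ with $\|\delta\|_1=1$ and $\onebf^T\delta=0$ is a convex combination of points in $\Delta_0$ and then invoke the quasiconcavity of $\nu_0$ from Lemma~\ref{lem:tree-concave-nu} (the paper packages your inductive extension as Lemma~\ref{lem:min-quasi-concave} on minimizing a quasiconcave function over a polytope). The only cosmetic difference is in how the convex combination is produced — you use a transportation plan between the positive and negative parts of $\delta$, while the paper uses a greedy peeling of the smallest-magnitude nonzero entry — and both constructions are valid.
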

\begin{proof}
The feasible set \( \{\delta \in \real^{\mathcal{V}} \,|\, \|\delta\|_{1}=1, \trans{\onebf} \delta =0 \} \) for (\ref{opt:tree-weight-control}) is a polytope. 
We now show that \( \{\delta \in \real^{\mathcal{V}} \,|\, \|\delta\|_{1}=1, \trans{\onebf} \delta =0 \} \)  is the
convex hull of set \( \Delta_{0} \). The result then follows by using Lemma~\ref{lem:tree-concave-nu}, and Lemma~\ref{lem:min-quasi-concave} (in the Appendix).


Pick an arbitrary
\( \delta \in \real^{\mathcal{V}} \) with \(  \|\delta\|_{1}=1 \) and \(
\trans{\onebf} \delta =0 \). We now show that there exist $ \{\eta_{k}\} $ and $\{\delta^{0}_{k}\} $ satisfying $\eta_k \geq 0$ and $\delta^{0}_{k} \in \Delta_{0}$ for all $k$, and $\sum_k \eta_k =\|\delta\|_{1}= 1$. Let $\tilde{\delta}=\delta$, and $k=1$. While $\tilde{\delta} \neq \zerobf$, do the following. Let \( \mathcal{V}^{+} := \{ v \; |\; \tilde{\delta}_{v} >0\} \), \(
  \mathcal{V}^{-} := \{ v \; |\; \tilde{\delta}_{v} <0\} \), and pick $v^* \in \argmin_{v \in \mc V^+ \cup \mc V^-}$, and let $\eta_k := 2 | \tilde{\delta}_{v^*}|$. If $v^* \in \mc V^+$, then let $\tilde{\delta}_{v^*}=\tilde{\delta}_{v^*}-\eta_k/2$, pick arbitrary $v' \in \mc V^-$, and let $\tilde{\delta}_{v'}=\tilde{\delta}_{v'}+\eta_k/2$. $\delta^0_k$ is then chosen such that $\delta^0_{k,v^*}=1/2$, $\delta^0_{k,v'}=-1/2$, and $\delta^0_{k,v}=0$ for all $v \in \mc V \setminus \{v^*,v'\}$. One can similarly choose $\delta^0_k$ when $v^* \in \mc V^-$. We then set $k=k+1$, and repeat the process for selecting $\delta^0_k$ and $\eta_k$ while $\tilde{\delta} \neq \zerobf$.
\end{proof}

Lemma \ref{lem:extreme-pt-optimal} implies that, in order to solve (\ref{opt:tree-weight-control})-(\ref{eq:tree-weight-control-nu}), it is sufficient to consider a finite number of disturbance directions $ \delta \in \Delta_{0} $, each with only one positive and one negative component. Then a naive solution strategy to compute $\nu^*$ for a network with tree topology or $w^l=0$ is to solve (\ref{eq:tree-weight-control-nu}) for all the disturbance directions in $ \Delta_{0} $ and then take the minimum. However, by using  the Max-Flow-Min-Cut theorem, \eg, \cite[Theorem 8.6]{Korte.Vygen:02}, one can execute this step in a simpler way as we describe next. In order to do this, we first construct a flow network associated with the given network, where we recall the standing Assumption~\ref{ass:initial-flow-feasible}.

\begin{definition}[Associated Flow Network]
\label{def:expanded-graph}
Consider a network with directed multigraph \( \mathcal{G} = (\mc V, \mc E) \), link weight bounds
$w^l \in \real_{\ge 0}^{\mc E}$ and $w^u \in \real_{>0}^{\mc E}$, link
capacity bounds $c^l \in \real_{<0}^{\mc E}$ and $c^u \in \real_{>0}^{\mc E}$, initial supply-demand vector $ p_{0} \in \real^{ \mathcal{V}}$, and initial weights $w_0 \in [w^l,w^u]$. Let $f_0$ be the corresponding initial flow, as given by \eqref{eq:sol}. The associated flow network $(\mc G^{\mathrm{fl}}, c^{\mathrm{fl}})$ consists of a directed graph $\mc G^{\mathrm{fl}}=(\mc V, \mc E^{\mathrm{fl}})$, where $\mathcal{E}^{\mathrm{fl}}$
is the union of $\mc E$ and as well as reversed versions of links in $\mc E$, and link capacities $c^{\mathrm{fl}}$ defined
as $c^{\mathrm{fl}}_{i}:=c^u_{i} - f_{0, i}$ if $i \in \mc E$, and
$c^{\mathrm{fl}}_{i}:=-c^l_{i}+ f_{0, i}$ if $i \in \mathcal{E}^{\mathrm{fl}} \setminus \mc E$. Assumption \ref{ass:initial-flow-feasible} imply that $c^{\mathrm{fl}} \geq 0$.
\end{definition}
A \emph{cut} in \(
\mathcal{G}^{\mathrm{fl}} \) is
a partition of the node set \( \mathcal{V} \) into two nonempty subsets: \(
\mathcal{V}_{c} \) and its complement \( \mathcal{V} \setminus \mathcal{V}_{c}
\) \cite{Bertsekas:98} \footnote{ A cut is denoted as \( \mathcal{V}_{c} -
\mathcal{V}\setminus \mathcal{V}_{c} \) cut. It is uniquely
  determined by and determines a node set \( \mathcal{V}_{c} \). The partition is
  ordered in the sense that the cut \( \mathcal{V}_{c} - \mathcal{V}\setminus
  \mathcal{V}_{c} \) is distinct from the cut \( \mathcal{V} \setminus
  \mathcal{V}_{c} - \mathcal{V}_{c} \).}. \emph{Cut capacity} is a function \(
\map{C}{2^{\mathcal{V}}\setminus \{ \emptyset \cup \mathcal{V}\} \times \real_{\ge 0}^{\mc E}}{\real_{\ge
    0} } \) over the cuts and flow capacities and defined as: 
\begin{equation*}
C(\mathcal{V}_{c},c^{\mathrm{fl}})  = \sum_{i: \sigma(i) \in \mc V_c, \, \tau(i) \notin \mc V_c} c^{\mathrm{fl}}_i
\end{equation*}
The min-cut capacity \( C_{\min}(\mathcal{G}^{\mathrm{fl}},c^{\mathrm{fl}}) \) of \(
\mathcal{G}^{\mathrm{fl}} \) is the minimum cut capacity among all cuts in \(
\mathcal{G}^{\mathrm{fl}} \), \ie, \( C_{\min}(\mathcal{G}^{\mathrm{fl}},c^{\mathrm{fl}}) =
\min_{\emptyset \subsetneq \mathcal{V}_{c} \subsetneq \mathcal{V} } C(\mathcal{V}_{c},c^{\mathrm{fl}}) \). 
The next proposition relates the margin of robustness 
to the min-cut capacity of the associated flow network. 

\begin{proposition}
\label{prop:margin-min-cut-upper-bound}
Consider a network with directed multigraph \( \mathcal{G}= (\mathcal{V}, \mathcal{E})
  \), link weight bounds $w^l \in \real_{\ge 0}^{\mc E}$ and $w^u \in \real_{>0}^{\mc E}$, link
capacity bounds $c^l \in \real_{<0}^{\mc E}$ and $c^u \in \real_{>0}^{\mc E}$, initial supply-demand vector $ p_{0} \in \real^{ \mathcal{V}}$, and initial link weights $w_0 \in [w^l,w^u]$. Then,
its margin of robustness $\nu^{*}(\mc G)$ is upper bounded as $\nu^{*}(\mathcal{G}) \le 2 C_{\min}(\mathcal{G}^{\mathrm{fl}},c^{\mathrm{fl}})$, 
where $(\mathcal{G}^{\mathrm{fl}},c^{\mathrm{fl}})$ is the associated flow network (cf. Definition~\ref{def:expanded-graph}). Moreover, if $\mc G$ is a tree or $w^l=0$, then $\nu^{*}(\mathcal{G}) = 2 C_{\min} (\mathcal{G}^{\mathrm{fl}},c^{\mathrm{fl}})$. In particular, if $\mc G$ is a tree, then $\nu^{*}(\mathcal{G}) = 2 \min_{i \in \mathcal{E}} \{ f_{0, i} - c_{i}^{l},
c_{i}^{u} - f_{0, i} \}$, where $f_0$ is the initial flow, as given by \eqref{eq:sol}.
\end{proposition}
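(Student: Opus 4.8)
The plan is to prove the three assertions in sequence: derive the general upper bound from the inclusion $\mathcal{F}_2 \subseteq \mathcal{F}_1$, establish the exact value of the network-flow relaxation $\nu_0^*$ via the Max-Flow-Min-Cut theorem, and then specialize to trees.

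First, for the upper bound $\nu^{*}(\mathcal{G}) \le 2 C_{\min}$, I would observe that any $(\mu, w)$ feasible for the inner problem $\nu(\delta)$ in (\ref{eq:weight-control-nu}) yields a flow $f(w, p_0 + \mu\delta)$ that satisfies $A f = p_0 + \mu\delta$ and $c^l \le f \le c^u$; that is, $(\mu, f)$ is feasible for the relaxed inner problem $\nu_0(\delta)$ in (\ref{eq:tree-weight-control-nu}). This is exactly the inclusion $\mathcal{F}_2 \subseteq \mathcal{F}_1$ of Proposition~\ref{prop:feasible-flow-relation}, so $\nu(\delta) \le \nu_0(\delta)$ for every $\delta$ and hence $\nu^* \le \nu_0^*$. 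It therefore suffices to prove the purely network-flow identity $\nu_0^* = 2 C_{\min}(\mathcal{G}^{\mathrm{fl}}, c^{\mathrm{fl}})$, which holds unconditionally.

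For that identity, I would use Lemma~\ref{lem:extreme-pt-optimal} to reduce $\nu_0^*$ to $\min_{\delta \in \Delta_0} \nu_0(\delta)$, where each $\delta \in \Delta_0$ corresponds to a single source $s$ (with $\delta_s = 1/2$) and sink $t$ (with $\delta_t = -1/2$). Fixing such a $\delta$, I would substitute $g := f - f_0$, so that the equality constraint $A f = p_0 + \mu\delta$ becomes $A g = \mu\delta$ (a flow routing $\mu/2$ units from $s$ to $t$ in $\mathcal{G}$) and the box constraint $c^l \le f \le c^u$ becomes $-(f_0 - c^l) \le g \le c^u - f_0$. Splitting each signed link flow $g_i$ onto the forward and reverse arcs of the associated flow network (Definition~\ref{def:expanded-graph})---using the forward arc with capacity $c^u_i - f_{0,i} = c^{\mathrm{fl}}_i$ when $g_i \ge 0$, and the reverse arc with capacity $f_{0,i} - c^l_i$ when $g_i < 0$---puts feasible flows of (\ref{eq:tree-weight-control-nu}) in correspondence with feasible $s$--$t$ flows of value $\mu/2$ in $(\mathcal{G}^{\mathrm{fl}}, c^{\mathrm{fl}})$ (canceling any simultaneous flow on both arcs of a link to recover the net-flow bounds). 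Hence $\nu_0(\delta) = 2 v_{st}$, where $v_{st}$ is the maximum $s$--$t$ flow value, and by the Max-Flow-Min-Cut theorem \cite[Theorem 8.6]{Korte.Vygen:02} this equals the minimum $s$--$t$ cut capacity. Minimizing over $\delta \in \Delta_0$, i.e., over all ordered pairs $(s,t)$, and noting that $C_{\min}$ equals the minimum over $s,t$ of the minimum $s$--$t$ cut capacity, yields $\nu_0^* = 2 C_{\min}$, and therefore $\nu^* \le 2 C_{\min}$.

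For the equality claims, when $\mathcal{G}$ is a tree or $w^l = 0$ the text has already established that (\ref{eq:weight-control-original})--(\ref{eq:weight-control-nu}) is equivalent to (\ref{opt:tree-weight-control})--(\ref{eq:tree-weight-control-nu}), i.e., $\nu^* = \nu_0^*$, so the upper bound is attained and $\nu^* = 2 C_{\min}$. Finally, for the tree specialization I would argue that the global min cut is attained by a \emph{single-edge} cut: removing any edge $i$ of the tree induces a bipartition crossed only by $i$, whose two orderings have capacities $c^u_i - f_{0,i}$ and $f_{0,i} - c^l_i$; meanwhile any cut crossing several edges has capacity equal to a sum of nonnegative arc capacities (Assumption~\ref{ass:initial-flow-feasible} gives $c^{\mathrm{fl}} \ge 0$), hence is at least the smallest single-edge value. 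Thus $C_{\min} = \min_{i \in \mathcal{E}} \min\{c^u_i - f_{0,i},\, f_{0,i} - c^l_i\}$ and $\nu^* = 2\min_{i} \{f_{0,i} - c^l_i,\, c^u_i - f_{0,i}\}$. The main obstacle I anticipate is the flow-network correspondence in the second step: one must carefully verify that the box constraints on the signed residual flow $g = f - f_0$ translate exactly into arc-capacity constraints on $\mathcal{G}^{\mathrm{fl}}$, in particular tracking the factor $1/2$ arising from $\delta_s = 1/2$ and justifying the cancellation of flow placed simultaneously on a forward/reverse arc pair so that the net flow obeys the intended bounds in both directions.
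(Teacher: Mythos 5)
Your proposal is correct and follows essentially the same route as the paper: reduce to the network-flow relaxation, restrict to two-point disturbances via Lemma~\ref{lem:extreme-pt-optimal}, apply the Max-Flow-Min-Cut theorem for each source--sink pair, and observe that single-edge cuts realize the minimum on a tree. The only cosmetic difference is that you obtain the general upper bound directly from $\mathcal{F}_2 \subseteq \mathcal{F}_1$ (giving $\nu(\delta) \le \nu_0(\delta)$ pointwise), whereas the paper derives it by relaxing $w^l$ to $\zerobf$ and reusing the equality case; you also spell out the residual-flow and arc-splitting correspondence that the paper leaves implicit in its citation of the Max-Flow-Min-Cut theorem.
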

\begin{proof}
We first prove the equality for the case when $\mc G$ is a tree or $w^l=0$. In this case, \(
\nu^{*}(\mathcal{G}) = \nu_{0}^{*} \), and hence it is equivalent to proving \(
\nu_{0}^{*} = 2(\mathcal{G}^{\mathrm{fl}},c^{\mathrm{fl}}) \). Following Lemma
\ref{lem:extreme-pt-optimal}, for a given \( \delta \in \Delta_{0} \) with \( \delta_{s} = 1/2
\), \( \delta_{t} = -1/2 \) and \(\delta_{v} = 0\) for all \( v\in
\mathcal{V}\setminus \{s, t\} \), the Max-Flow-Min-Cut
theorem, \eg \cite[Theorem 8.6]{Korte.Vygen:02}, implies that \( \nu_{0}(\delta)=
2 \min_{\mathcal{V}_{c}: s\in \mathcal{V}_{c}, t \notin \mathcal{V}_{c} }
C(\mathcal{V}_{c},c^{\mathrm{fl}})\). Therefore, \( \nu^{*}_{0} =   \underset{\delta \in \Delta_{0}}{\text{min}} \nu_{0}(\delta) = 2
\min_{\emptyset \subsetneq \mathcal{V}_{c} \subsetneq \mathcal{V} }
C(\mathcal{V}_{c},c^{\mathrm{fl}}) = 2 C_{\min}(\mathcal{G}^{\mathrm{fl}},c^{\mathrm{fl}}) \) .

It is easy to see that $\nu^*(\mc G)$ is upper bounded by the margin of robustness for a network with the same attributes for $(\mc G, w^u, c^l, c^u, p_0, w_0)$ and $w^l=0$ (since it expands the feasible set in \eqref{eq:weight-control-nu}). We have already shown in the previous paragraph that the latter is equal to $2 C_{\min}(\mathcal{G}^{\mathrm{fl}},c^{\mathrm{fl}})$.

 
The exact expression of $\nu^*(\mc G)$ when $\mc G$ is a tree follows from the fact that, in this case, each link separates the
network, and hence \( C_{\min}(\mathcal{G}^{\mathrm{fl}},c^{\mathrm{fl}}) = \min_{i\in \mathcal{E}^{\mathrm{fl}}} c^{\mathrm{fl}}_{i} \).
\end{proof}

There exists an extensive literature on efficient computation of min-cut
capacity, which can be used to provide upper bound or exact characterization of the margin of robustness under special cases, as per Proposition~\ref{prop:margin-min-cut-upper-bound}. However, computing the exact value of margin of robustness in the general case requires solution to the non-convex
problem \eqref{eq:weight-control-original}. In Sections~\ref{sec:multiplicative} and \ref{sec:bi-level}, we propose methodologies to compute this margin for more general networks: we provide a projected gradient descent algorithm
(Section~\ref{subsec:projected-subgradient}) for multiplicative disturbances, and a multilevel programming approach (Section~\ref{sec:bi-level})  for nongenerative disturbances. 



\section{The Multiplicative Disturbance Case}
\label{sec:multiplicative}
In this section, we restrict our attention to the class  of disturbances that are multiplicative. 
Formally, we let the set of $\delta$ over which the minimum is taken in \eqref{eq:weight-control-original} be $\{p_0/\|p_0\|_1,-p_0/\|p_0\|_1\}$. Let $\nu^*_M$ denote the corresponding solution to \eqref{eq:weight-control-original} for such a restriction of $\delta$. 
For $ \delta = p_{0}/\|p_0\|_1 $ and $ \delta = -p_{0}/\|p_0\|_1 $, the set of disturbed supply-demand vectors can be parameterized as $(1+ \mu/\|p_0\|_1)p_0$ and $(1- \mu/\|p_0\|_1)p_0$, respectively. Therefore, letting $ \alpha = 1 + \mu/\|p_0\|_1 $ and $ \alpha =  \mu/\|p_0\|_1 -1 $, respectively, for these two cases, solution to (\ref{eq:weight-control-original}) can be obtained from: 
\begin{equation}
\label{eq:weight-control-param}
\begin{aligned}
&  \underset{w \in \real_{>0}^{\mc E}; \, \alpha \geq 0}{\text{max}}  &&  \alpha \\
& \text{subject to}  &&  
  \clower \leq \alpha f(w, p_{0}) \leq \cupper \\
& && \wlower \le w \le \wupper 
\end{aligned}
\end{equation} 
and a counterpart of \eqref{eq:weight-control-param} where $p_0$ is replaced with $-p_0$ as follows. 
Let $ \alpha^{*}_{+} $ denote the optimal solution to \eqref{eq:weight-control-param}, and let $ \alpha^{*}_{-} $ denote the optimal solution to the counterpart of \eqref{eq:weight-control-param} where $p_0$ is replaced with $-p_0$. Then, $\nu_M^*$ can be written as:
\begin{equation}
\label{eq:nu-alpha-relationship}
\param_M^* = \|p_0\|_1 \min \{ \alpha_+^* -1, \alpha_-^*+1\}.
\end{equation} 
The assumed feasibility of the pre-disturbance state of the network (cf. Assumption~\ref{ass:initial-flow-feasible}) implies that $ \alpha_+^* \geq 1$, and hence \eqref{eq:nu-alpha-relationship} is well-defined.
\begin{remark}
When the flow capacities are symmetrical, \ie, $|c^{l}| = |c^{u}|$, we have $
\alpha_{+}^{*} = \alpha_{-}^{*}$\, , and (\ref{eq:nu-alpha-relationship}) is reduced
to $ \param^* =\|p_0\|_1(\nu_+^* -1) $. For the general case of asymmetrical flow
capacities, $ \alpha_{+}^{*} \neq  \alpha_{-}^{*} $. Small disturbances in the $ -p_{0} $ direction decrease the supply and demand and hence the link flows, and are therefore favorable. However, if \(
\alpha_{-}^{*}< \alpha_{+}^{*}-2 \), then \eqref{eq:nu-alpha-relationship} implies that the margin of robustness under disturbances in the $- p_0$ direction is less than that under disturbances in the $+p_0$ direction.
\end{remark} 

We now present a gradient descent algorithm as a solution methodology for \eqref{eq:weight-control-param} which is nonconvex in general. The descent direction depends on flow-weight Jacobian, which we discuss next. In particular, we provide an exact expression for the flow-weight Jacobian which could be of independent interest.

\subsection{The Flow-weight Jacobian}
\label{sec:jacobian-exact}
Let $J(w)=\left[\frac{\partial f(w)}{\partial w} \right] \in \real^{\mc E \times \mc E}$ be the flow-weight Jacobian for the flow function $f(w)$ in \eqref{eq:sol}. We provide an explicit expression for $J(w)$ in the next result, whose proof depends on \cite[Theorem 4.3]{golub1973differentiation}.  For the sake of completeness, we reproduce this result from \cite{golub1973differentiation} and also provide a concise proof in Appendix \ref{sec:laplacian-derivative}. 
%

\begin{proposition}
\label{prop:Jacobian}
For a network with directed multigraph $\mc G =(\mc V, \mc E)$ with node-link incidence matrix $A$, link weights $w \in \real_{>0}^{\mc E}$, and supply-demand vector $p \in \real^{\mc V}$, the flow-weight Jacobian is given by:
%
\begin{equation}
\label{eq:jocobi}
J(w) =  (I - W A^T L^\dagger A) \diag(A^TL^\dagger p)
\end{equation} 
\end{proposition}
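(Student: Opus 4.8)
The plan is to differentiate the closed-form flow expression $f = W\trans{A} L^{\dagger} p$ from Lemma~\ref{lemma:flow-solution} directly, tracking the two ways in which $w$ enters: explicitly through $W = \diag(w)$, and implicitly through the pseudo-inverse $L^{\dagger}$, since $L = A W \trans{A}$. First I would fix a link $k$ and differentiate with respect to the scalar $w_k$. Writing $E_{kk}$ for the matrix with a single $1$ in position $(k,k)$, one has $\partial W / \partial w_k = E_{kk}$ and $\partial L / \partial w_k = A E_{kk} \trans{A} = a_k \trans{a_k}$, where $a_k = A e_k$ is the column of $A$ corresponding to link $k$ (its incidence vector). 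By the product rule, $\partial f / \partial w_k = E_{kk}\, \trans{A} L^{\dagger} p + W \trans{A}\, (\partial L^{\dagger}/\partial w_k)\, p$.

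The crux is evaluating $\partial L^{\dagger}/\partial w_k$. For a general matrix the derivative of the Moore--Penrose pseudo-inverse is \emph{not} simply $-L^{\dagger}(\partial L/\partial w_k)L^{\dagger}$; the formula of \cite[Theorem 4.3]{golub1973differentiation} (reproduced in Appendix~\ref{sec:laplacian-derivative}) carries two additional correction terms, one containing the factor $(\partial L/\partial w_k)(I - LL^{\dagger})$ and the other the factor $(I - L^{\dagger}L)(\partial L/\partial w_k)$. The main obstacle --- and the one genuinely nontrivial step --- is to show that both correction terms vanish here. This I would do using two facts recalled in the proof of Lemma~\ref{lemma:flow-solution}: (i) $L$ is symmetric with $\nullspace(L)=\spn{\onebf}$, so $L^{\dagger}=\trans{(L^{\dagger})}$ and $I - LL^{\dagger} = I - L^{\dagger}L$ equals the orthogonal projector onto $\spn{\onebf}$; and (ii) every column of $A$ sums to zero, i.e. $\trans{A}\onebf = 0$. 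Fact (ii) gives $(\partial L/\partial w_k)\,\onebf = a_k(\trans{a_k}\onebf) = 0$, and by symmetry $\trans{\onebf}(\partial L/\partial w_k)=0$; hence both correction factors above vanish, and the formula collapses to the invertible-case form $\partial L^{\dagger}/\partial w_k = -L^{\dagger} a_k \trans{a_k} L^{\dagger}$.

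Substituting this back, and abbreviating $\psi := \trans{A} L^{\dagger} p$ (so that $f = W\psi$ and $\psi_k = \trans{a_k} L^{\dagger} p$), I would obtain
\begin{equation*}
\frac{\partial f}{\partial w_k} = \psi_k\, e_k - W \trans{A} L^{\dagger} a_k\,\psi_k = \psi_k\,(I - W \trans{A} L^{\dagger} A)\, e_k,
\end{equation*}
using $E_{kk}\psi = \psi_k e_k$ and $a_k = A e_k$. Since the left-hand side is the $k$-th column of $J(w)$ and $\psi_k$ is the $k$-th diagonal entry of $\diag(\trans{A} L^{\dagger} p)$, assembling the columns over all $k$ yields $J(w) = (I - W \trans{A} L^{\dagger} A)\,\diag(\trans{A}L^{\dagger} p)$, which is exactly \eqref{eq:jocobi}. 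The remaining manipulations are routine bookkeeping; the entire weight of the argument rests on the vanishing of the correction terms via $\trans{A}\onebf = 0$.
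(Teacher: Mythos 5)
Your proposal is correct and follows essentially the same route as the paper's proof: differentiate the closed-form flow $f = W\trans{A}L^{\dagger}p$, invoke the Golub--Pereyra formula for $\partial L^{\dagger}/\partial w_k$, and observe that both correction terms vanish because $\trans{A}\onebf = 0$ and $I - LL^{\dagger} = I - L^{\dagger}L$ is the orthogonal projector onto $\spn{\onebf}$. The only detail worth stating explicitly is that $L(w)$ has constant rank $|\mc V|-1$ for all $w \in \real_{>0}^{\mc E}$, which is the hypothesis under which that differentiation formula is valid.
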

\begin{proof}
The Laplacian $ L(w) = A W A^T $ is Fr\'echet differentiable~\cite{behmardi2008introduction} with respect to $w_i$ for all $i \in \mc E$. 
Indeed, the corresponding derivative is given by 
\begin{equation}
\label{eq:lap-der}
\frac{\partial L}{\partial w_i} =\frac{\partial (AWA^T)}{\partial w_i}  =  a_i \trans{a}_i
\end{equation}
where $ a_i $ is the $i$-th column of matrix $ A$.
Since $L(w)$ is a Laplacian, it has a constant rank $=|\mc V|-1$ for all $w \in \real_{>0}^{\mc E}$. 
Therefore, Theorem~\ref{th:derivative_pseudoinverse} in the Appendix implies that the derivative of $L^\dagger $ is given by:
\begin{equation}
\label{eq:L-der-wi}
\frac{\partial L^\dagger}{\partial w_i}  = - L^\dagger \frac{\partial L}{\partial w_i} L^\dagger + L^\dagger\trans{{L^\dagger}}  \frac{\partial L^T}{\partial w_i}   (I - L L^\dagger) + (I - L^\dagger L) \frac{\partial L^T}{\partial w_i}  \trans{{L^\dagger}} L^\dagger
\end{equation}
In order to simplify \eqref{eq:L-der-wi}, using singular value decomposition, one can write $L L^{\dagger}=L^{\dagger} L = U U^T$, where $U$ is a $n \times (n-1)$ orthogonal matrix, whose columns are all orthogonal to $\onebf$, where $n=|\mc V|$. Therefore, $I-LL^{\dagger}$ and $I-L^{\dagger} L$ are both projection matrices onto $\onebf$. That is, $I-L L^{\dagger}=\onebf_{n \times n}/n = I-L^{\dagger} L$, where $\onebf_{n \times n}$ is a matrix all of whose entries are one. Therefore, using \eqref{eq:lap-der}, and noting that $ \trans{a_i}\ones =0 $,
\begin{equation}
\label{eq:lap-svd}
 \frac{\partial \trans{L}}{\partial w_i} (I - L L^\dagger)  = a_i \trans{a_i} \frac{\ones_{n\times n}}{n} = 0 = (I - L^\dagger L) \frac{\partial \trans{L}}{\partial w_i}
 \end{equation}
 Substituting \eqref{eq:lap-der} and \eqref{eq:lap-svd} in \eqref{eq:L-der-wi}, we get that 
\begin{equation}
\label{eq:derivative-pinv-laplacian}
\frac{\partial L^\dagger}{\partial w_i}  = - L^\dagger \frac{\partial L}{\partial w_i} L^\dagger = - L^\dagger a_i \trans{a_i} L^\dagger 
\end{equation}

Therefore, the $i$-th column of the Jacobian is: 
\begin{align}
J_i(w) &= \frac{\partial f(w) }{\partial w_i}   =  \frac{\partial W }{\partial w_i} A^T L^\dagger p + W A^T \frac{\partial L^\dagger }{\partial w_i}  p \nonumber \\
\label{eq:Jacobian-column-i}
 & = \trans{a_i} L^\dagger p e_i  -  W A^T  L^\dagger a_i \trans{a_i} L^\dagger p 
\end{align}
where $ e_i $ is the vector whose $i$-th component is equal to one, and all other entries are zero. 
When written in matrix form, \eqref{eq:Jacobian-column-i} gives \eqref{eq:jocobi}.
\end{proof}

\begin{remark} 
\leavevmode
\label{rem:Jacobian-interpretation}
\begin{enumerate}[label = (\alph*)]
\item The expression for the $i$-th column of Jacobian, as given in
  \eqref{eq:Jacobian-column-i}, has the following useful interpretation.
  Substituting $\trans{a_i} L^\dagger p = f_i/w_i$ in
  \eqref{eq:Jacobian-column-i}, we get that
\begin{equation}
\label{eq:Jacobian_column}
J_i(w)
	 = \frac{f_i}{w_i} e_i -   W A^T L^\dagger \frac{f_i}{w_i} a_i  
\end{equation}
%
Recall that the entries of the column $J_i(w)$ give the sensitivities of flows on various links with respect to change in weight on link $i$. The first term on the right hand side of \eqref{eq:Jacobian_column} is non-zero only when computing sensitivity of flow on link $i$ with respect to changes in $w_i$, and hence is local in nature. The non-locality in the sensitivity comes from the second term, which is equal to the flow distribution in the network corresponding to power injection of magnitude $f_i/w_i$ at the tail node $\sigma(i)$, and power withdrawal of the same magnitude from the head node  $\tau(i)$.
%
\item Using \eqref{eq:Jacobian_column}, one can show that 
\begin{equation*}
\begin{split}
J w = \sum_{i \in \mc E} w_i J_i(w) = \sum_{i \in \mc E}  f_i(e_i - W A^T  L^\dagger a_i) & = f - W A^T  L^\dagger A f \\ & = f - W A^T L^\dagger p  = 0
\end{split}
\end{equation*}
where the fourth and fifth equalities follow from \eqref{model} and \eqref{eq:sol} respectively. Since $i$-th row of $ J $ is the gradient of $f_i(w)$, this implies that the gradient of $f_i(w)$, $i \in \mc E$, is orthogonal to the radial direction $ w $. In other words, the link flows are invariant under uniform scaling of the link weights. 

\end{enumerate}
\end{remark}

Computing sensitivity of link flows with respect to link weights, via \eqref{eq:jocobi}, requires considerable computation, especially for large networks. However, some entries of the Jacobian in \eqref{eq:jocobi} exhibit sign-definiteness, as stated in the next result. 
\begin{proposition}
  \label{lem:sign-of-flow-change}
  For a network with directed multigraph $\mc G=(\mc V, \mc  E)$, weights $w \in
  \real_{>0}^{\mc E}$, and supply-demand vector $p \in \real^{\mc V}$, the flow-weight Jacobian in
    \eqref{eq:jocobi} satisfies the following for all $i \in \mc E$: $\sign(J_{ki}(w)) \in \sign(f_i) \cup \{0\}$ for all $ k \in \{i\} \cup \mathcal{E}_{\sigma(i)}^{-}\cup \mathcal{E}_{\tau(i)}^{+} $ and 
$\sign(J_{ki}(w)) \in -\sign(f_i) \cup \{0\}$ for all $ k \in \{ \mathcal{E}_{\sigma(i)}^{+} \cup \mathcal{E}_{\tau(i)}^{-} \} \setminus \{i\}$.
\end{proposition}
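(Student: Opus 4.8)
The plan is to exploit the factored form of the Jacobian column from \eqref{eq:Jacobian_column} and reduce the claim to a discrete maximum principle for the potential induced by a single source--sink current injection. First I would pull out the common scalar and write $J_i(w) = \tfrac{f_i}{w_i}\, g$, where $g := e_i - W\trans{A} L^\dagger a_i \in \real^{\mc E}$ is independent of $f_i$. Since $w_i>0$, this yields $\sign(J_{ki}(w)) = \sign(f_i)\,\sign(g_k)$ for every $k$ (and the whole column vanishes when $f_i=0$, consistent with the case $\sign(f_i)=0$). Hence it suffices to show $g_k \ge 0$ for $k \in \{i\} \cup \mathcal{E}_{\sigma(i)}^{-} \cup \mathcal{E}_{\tau(i)}^{+}$ and $g_k \le 0$ for $k \in \{\mathcal{E}_{\sigma(i)}^{+} \cup \mathcal{E}_{\tau(i)}^{-}\}\setminus\{i\}$.

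Next I would introduce the potential $\phi := L^\dagger a_i$, so that $g_k = [e_i]_k - w_k(\phi_{\sigma(k)} - \phi_{\tau(k)})$, and establish the key structural fact that $\phi$ attains its maximum at $\sigma(i)$ and its minimum at $\tau(i)$; this is the heart of the argument. Since $a_i \perp \onebf$, $\phi$ solves $L\phi = a_i$, i.e.\ the net current out of every node other than $\sigma(i)$ and $\tau(i)$ is zero, while it equals $+1$ at $\sigma(i)$ and $-1$ at $\tau(i)$. Writing $[L\phi]_v = \sum_{k:\, v \in \{\sigma(k),\tau(k)\}} w_k(\phi_v - \phi_{v'})$, the sum over links incident to $v$ with $v'$ the opposite endpoint, and using $w_k>0$, a standard maximum-principle argument applies: letting $S$ be the set of maximizers of $\phi$, if $\sigma(i)\notin S$ then every $v\in S$ has $[L\phi]_v\le 0$, which forces all its incident terms to vanish and hence all neighbours of $v$ into $S$; weak connectivity (Assumption~\ref{ass:connectivity}) then gives $S=\mc V$, contradicting $L\phi = a_i \ne \zerobf$. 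Therefore $\sigma(i)\in S$, and the same argument on $-\phi$ shows $\tau(i)$ minimizes $\phi$.

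With this extremal property the signs of $g_k$ for links incident to $\sigma(i)$ or $\tau(i)$ follow by inspection (here $k\ne i$, so $[e_i]_k=0$): for $k\in\mathcal{E}_{\sigma(i)}^{+}\setminus\{i\}$ one has $\sigma(k)=\sigma(i)$, so $g_k = -w_k(\phi_{\sigma(i)}-\phi_{\tau(k)})\le 0$; for $k\in\mathcal{E}_{\sigma(i)}^{-}$, $g_k = w_k(\phi_{\sigma(i)}-\phi_{\sigma(k)})\ge 0$; and the two cases at $\tau(i)$ are identical using that $\phi_{\tau(i)}$ is minimal. The diagonal entry $k=i$ needs separate care because both terms of $g$ contribute: $g_i = 1 - w_i(\phi_{\sigma(i)}-\phi_{\tau(i)}) = 1 - \hat f_i$, where $\hat f := W\trans{A}\phi$ is the flow driven by the unit injection $a_i$. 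I would bound $\hat f_i$ via flow conservation at $\sigma(i)$: by the maximum principle each outgoing component of $\hat f$ at $\sigma(i)$ is nonnegative and each incoming component nonpositive, so the net outflow $1$ is a sum of nonnegative terms, one of which is $\hat f_i$; hence $0\le \hat f_i \le 1$ and $g_i\ge 0$. (Equivalently, $w_i\,\trans{a_i} L^\dagger a_i$ is the product of the susceptance of link $i$ and the effective resistance between its endpoints, which lies in $[0,1]$.) Combining all cases with $\sign(J_{ki})=\sign(f_i)\,\sign(g_k)$ gives the claim.

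The main obstacle is the maximum principle in the second paragraph; once the extremal locations of $\phi$ are pinned down, the rest is routine sign bookkeeping, the only mild subtlety being the separate, conservation-based treatment of $k=i$, where the local term $e_i$ interferes with the sign of the non-local term.
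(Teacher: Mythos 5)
Your proof is correct and follows essentially the same route as the paper's: both interpret the non-local term of $J_i(w)$ as the flow induced by a single source--sink injection across link $i$, use the fact that the resulting potential is maximized at $\sigma(i)$ and minimized at $\tau(i)$ to read off the signs on incident links, and handle the diagonal entry via flow conservation at $\sigma(i)$. The only difference is that you supply an explicit connectivity-based proof of the maximum principle, which the paper merely asserts as "easy to see."
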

\begin{proof}
We provide proof for the case when $f_i>0$; the case when $f_i \leq 0$ follows along similar lines. (\ref{eq:Jacobian_column}) implies that  
\begin{equation}
\label{eq:Jacobian-rewriting}
  J_{ii} w_{i} = f_{i} -f_{i} w_{i} \trans{a}_{i} L^{\dagger} a_{i}, \qquad 
  J_{ki} w_{i} = -f_{i} w_{k} \trans{a}_{k} L^{\dagger} a_{i} 
\end{equation}
for all $k \neq i$ characterized in the lemma.
\begin{figure}[ht]
  \centering
\begin{tikzpicture}[->,>=stealth',shorten >=1pt,auto,node distance=0.5cm and 1.8cm,
                   thick,main node/.style={circle,draw,font=\sffamily\bfseries,
                     fill=blue!40}, external/.style = {coordinate}]
 \node[main node] (1) {1};
 \node[main node] (2) [above right =of 1] {2};
 \node[main node] (3) [below right =of 1] {3};
 \node[main node] (4) [below right =of 2] {4};
 \node[external] (input)[below  = 0.6cm of 3] {};
 \node[external] (output)[above  = 0.6cm of 2] {};

 \path[every node/.style={font=\sffamily}]
   (1) edge node[above]{$k_{1}$} node[below, sloped, red]{\( \rightarrow \)} (2)
   (1) edge node[below]{$k_{2}$} node[above, sloped, red]{\( \leftarrow \)} (3)
   (3) edge node{\( i \)} node[right, red] {\( \uparrow \)} (2)
   (2) edge node[above]{\( k_{3} \)} node[sloped, below, red]{\( \leftarrow \)} (4)
   (3) edge node[below]{\( k_{4} \)} node[sloped, above, red]{\( \rightarrow \)} (4)
   (input)  edge[red, thick] node[left, black] {\( f_{i} \)} (3)
   (2) edge[red, thick] node[right, black]{\( f_{i} \)} (output);
\end{tikzpicture}
  \caption{Illustration of signs of \( \partial f/\partial w_{i} \): red arrows alongside each link denote the flow direction on the corresponding link under the supply-demand vector $f_i a_i$; for every link $\neq i$, if the red arrow alongside a link aligns with the link direction, then the corresponding component of  \( \partial f/\partial w_{i} \) is negative, and positive otherwise. Correspondingly,  \( \partial f_{k_{1}}/\partial w_{i} > 0 \),  \( \partial f_{k_{4}}/\partial w_{i} < 0 \),  \( \partial f_{k_{2}}/\partial w_{i} > 0 \),  \( \partial f_{k_{3}}/\partial w_{i} > 0 \). We always have \( \partial f_{i}/\partial w_{i} > 0 \).   \label{fig:Jacobian-signs} }
\end{figure}
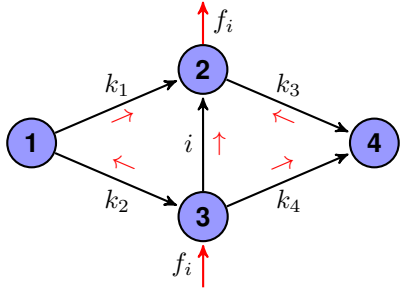

 Recalling Remark~\ref{rem:Jacobian-interpretation} (a) that $f_{i} w_{k}
 \trans{a}_{k} L^{\dagger} a_{i}$ can be interpreted as the flow on link
 $k$ under supply-demand vector $f_i a_i$, for which $\sigma(i)$ (node 3 in
 Fig. \ref{fig:Jacobian-signs}) and $\tau(i)$ (node 2 in
 Fig. \ref{fig:Jacobian-signs}) are the only supply and demand nodes. It is easy
 to see that when a network has only one supply node and only one load node,
 then the phase angles at the supply and the load nodes are largest and
 smallest, respectively, among phase angles associated with all the nodes. This
 implies that for all $k \in \mc E_{\sigma(i)}^{-}$ (link \( k_{2}
 \) in  Fig. \ref{fig:Jacobian-signs}) and $ k\in \mathcal{E}_{\tau(i)}^{+} $ (link \( k_{3} \) in
 Fig. \ref{fig:Jacobian-signs}), \ie, links incoming to $\sigma(i)$ and
 outgoing from $\tau(i)$, the phase angle difference along the direction of such
 links, \ie, $ \trans{a}_{k} L^{\dagger} f_{i} a_{i}$, and hence $w_k
 \trans{a}_{k} L_{\mc G}^{\dagger} f_{i} a_{i}$ is non-positive, and therefore
 \eqref{eq:Jacobian-rewriting} implies that $J_{ki}w_i$, and hence $J_{ki}$, for
 such links is non-negative. Similarly, one can show that or all $k \in \mc E_{\sigma(i)}^{+} \setminus \{i\}$
  (link \( k_{4} \) in Fig. \ref{fig:Jacobian-signs}) and $ k\in \mathcal{E}_{\tau(i)}^{-} \setminus \{i\}$ (link \( k_{1} \) in Fig. \ref{fig:Jacobian-signs}), \ie, links outgoing from $\sigma(i)$ and incoming to $\tau(i)$, $J_{ki}$ is non-positive.

Recalling again that $f_{i} w_{j} \trans{a}_{j} L_{\mc G}^{\dagger} a_{i}$ is the flow on link $j$ under supply-demand vector $f_i a_i$, flow conservation at node $\sigma(i)$ can be written as 
$$
f_i + \sum_{j \in \mathcal{E}_{\sigma(i)}^{-} } f_{i} w_{j} \trans{a}_{j} L_{\mc G}^{\dagger} a_{i} = f_{i} w_{i} \trans{a}_{i} L_{\mc G}^{\dagger} a_{i} + \sum_{j \in \mathcal{E}_{\sigma(i)}^{+}\setminus\{i\}} f_{i} w_{j} \trans{a}_{j} L_{\mc G}^{\dagger} a_{i} 
$$
The discussion in the second paragraph of this proof implies that terms inside the summation in left and right hand side are non-positive and non-negative respectively, implying that $f_i - f_{i} w_{i} \trans{a}_{i} L_{\mc G}^{\dagger} a_{i}$ is non-negative. Therefore, \eqref{eq:Jacobian-rewriting} implies that $J_{ii} \geq 0$.
\end{proof}

\begin{remark}
\leavevmode
\begin{enumerate}[label = (\alph*)]
\item For a given choice of directionality of links in $\mc E$, Proposition~\ref{lem:sign-of-flow-change} implies that, if $f_i \geq 0$, then an infinitesimal increase in the weight of link $i$ will not decrease flow on link $i$ or on links incoming to the tail node of $i$ or outgoing from the head node of $i$, and it will not increase flow on links outgoing from tail node of $i$ or incoming to head node of $i$. The conclusions are opposite when $f_i \leq 0$. We emphasize that these changes in flows are not in terms of absolute values, e.g., a change of $f_j$ from $-3$ to $-2$ is an increase in $f_j$. 
%
  \item Proposition~\ref{lem:sign-of-flow-change} can be interpreted as generalization of existing results, e.g., see  \cite{Lai.Low:Allerton13}, that study the effect of removal of a link on flows in neighboring links. We elaborate on this point further in Section~\ref{subsec:jacobian-multigraph-perspective}.
\item From a weight control perspective, Proposition~\ref{lem:sign-of-flow-change} implies that the direction of change in link flows on neighboring links due to change in weight in link $i$ can be computed in a completely decentralized fashion, which maybe be useful to develop a decentralized weight control heuristic. However, partly because this decentralized computation can be done only for immediately neighboring links, and partly because directions of change in link flow on a given link due to weight changes of other links are not necessarily aligned, such a heuristic can not be expected to be optimal in general. 
\end{enumerate}
\end{remark}

\subsection{A Multigraph Perspective for the Flow-weight Jacobian}
\label{subsec:jacobian-multigraph-perspective}
Definition~\ref{def:reduced-simple-graph} in Appendix~\ref{subsec:lap} describes the notion of a reduced simple digraph corresponding to a multigraph, which allows us to see how distinct networks can have the same Laplacian. We now introduce a one-link extension of a given (possibly multi-) graph to facilitate alternate derivation of the expression of the Jacobian $J(w)$ in Proposition~\ref{prop:Jacobian}. Such a construct will also help us to generalize the notion of the flow-weight Jacobian by allowing to study the change in link flows due to non-infinitesimal changes in weights, \eg, caused by addition or removal of links. 

Given a graph $\mc G=(\mc V, \mc E)$ with link weights $w \in \real_{>0}^{\mc E}$, its one-link extension corresponding to link $i \in \mc E$ and $\triangle w_i \in [0,w_i]$, denoted as $\mathcal{G}^{\mathrm{ex}}(w_i - \triangle w_i,\triangle w_i)$, is 
obtained from $\mc G$ by replacing link $i$ with two parallel links with weights $w_i - \triangle w_i$ and $\triangle w_i$ (see Figure~\ref{fig:multigraph-flow-change} for an illustration). In order to emphasize the dependence on link $i$ and its weight $w_i$, we denote the original graph as $\mc G(w_i)$. Clearly, $\mc G(w_i-\triangle w_i)=\mathcal{G}^{\mathrm{ex}}(w_i-\triangle w_i,0)$, and $\mc G(w_i)$ and $\mathcal{G}^{\mathrm{ex}}(w_i-\triangle w_i,0)$ both have the same reduced simple graph (cf. Definition~\ref{def:reduced-simple-graph}). Therefore, using Lemma~\ref{lem:base-graph-laplacian} in Appendix~\ref{subsec:lap}, we have that
\begin{equation}
\label{eq:simple-expanded-equivalence}
L_{\mc G(w_i-\triangle w_i)}=L_{\mathcal{G}^{\mathrm{ex}}(w_i-\triangle w_i,0)}, \qquad L_{\mc G(w_i)}=L_{\mathcal{G}^{\mathrm{ex}}(w_i-\triangle w_i, \triangle w_i)}
\end{equation}

\begin{figure}[htbp]
\centering
\begin{subfigure}[t]{0.49\textwidth}
        \centering
\begin{tikzpicture}[->,>=stealth',shorten >=1pt,auto,node distance=0.6cm and 1.2cm, thick,main node/.style={circle,draw,font=\sffamily\bfseries, fill=blue!40}]
 \node[main node] (1) {1};
 \node[main node] (2) [above right =of 1] {2};
 \node[main node] (3) [below right =of 1] {3};
 \node[main node] (4) [right = 2cm of 2] {4};
 \node[main node] (5) [right = 2cm  of 3] {5};
 \node[main node] (6) [below right =of 4] {6};

 \path[every node/.style={font=\sffamily\small}]
   (1) edge (2)
   (1) edge (3)
   (2) edge (3)
   (2) edge (4)
   (2) edge [bend left = 20] (5)
   (2) edge [bend right=20] node[sloped, above]{\( w_{i} \)} (5)
   (3) edge (5)
   (5) edge (4)
   (4) edge (6)
   (5) edge (6);
\end{tikzpicture}
\caption{}
    \end{subfigure}
\begin{subfigure}[t]{0.49\textwidth}
        \centering
        \begin{tikzpicture}[->,>=stealth',shorten >=1pt,auto,node distance=0.6cm 
          and 1.2cm, thick,main node/.style={circle,draw,font=\sffamily\bfseries, fill=blue!40}]
 \node[main node] (1) {1};
 \node[main node] (2) [above right =of 1] {2};
 \node[main node] (3) [below right =of 1] {3};
 \node[main node] (4) [right = 2cm of 2] {4};
 \node[main node] (5) [right = 2cm  of 3] {5};
 \node[main node] (6) [below right =of 4] {6}; 

 \path[every node/.style={font=\sffamily\small}] 
   (1) edge (2)
   (1) edge (3)
   (2) edge (3)
   (2) edge (4)
   (2) edge [bend left=35] (5)
   (3) edge (5)
   (5) edge (4)
   (4) edge (6)
   (5) edge (6);
       
\path[every node/.style={font=\sffamily\small}, dashed]
   (2) edge [bend right=35] node[sloped, above]{\( w_{i} - \Delta w_{i}\)} (5)
    (2) edge node[sloped, above]{\( \Delta w_{i} \)} (5); 
\end{tikzpicture}
\caption{}
    \end{subfigure}
\caption{Illustration of one-link extension of a graph. (a) A graph $ \mathcal{G}(w_{i}) $ with weight $ w_{i} $ on lower link $ (2, 5) $. (b) The one-link extension $ \mathcal{G}^{\mathrm{ex}}(w_{i}-\Delta w_{i}, \Delta w_{i}) $ of $ \mathcal{G}(w_{i}) $ corresponding to the lower link $ (2, 5) $. }
\label{fig:multigraph-flow-change} 
\end{figure}
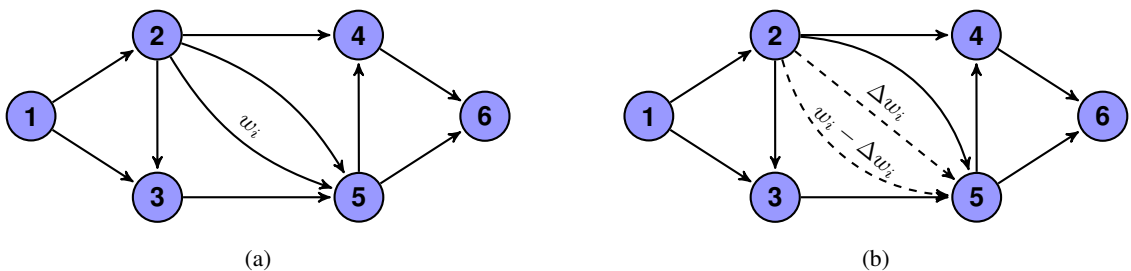

Since $\mathcal{G}^{\mathrm{ex}}(w_i-\triangle w_i,0)$ is obtained from $\mathcal{G}^{\mathrm{ex}}(w_i-\triangle w_i,\triangle w_i)$ by removing the link with weight $\triangle w_i$,  \cite[Lemma 2]{Ba.Savla.CDC16-ext} implies that 
\begin{equation}
\label{eq:pinv-laplacian}
  L_{\mathcal{G}^{\mathrm{ex}}(w_i-\triangle w_i,0)}^{\dagger} - L_{\mathcal{G}^{\mathrm{ex}}(w_i-\triangle w_i,\triangle w_i)}^{\dagger} = \frac{\triangle w_{i}}{1-\theta_{i}} L_{\mathcal{G}^{\mathrm{ex}}(w_i-\triangle w_i,\triangle w_i)}^{\dagger}a_{i}\trans{a_{i}}L_{\mathcal{G}^{\mathrm{ex}}(w_i-\triangle w_i,\triangle w_i)}^{\dagger}
\end{equation}
where \( \theta_{i} := \triangle w_{i} \, \trans{a_{i}}L_{\mathcal{G}^{\mathrm{ex}}(w_i-\triangle w_i,\triangle w_i)}^{\dagger}a_{i} \). Combining \eqref{eq:simple-expanded-equivalence} and \eqref{eq:pinv-laplacian}, and rearranging, we get that 
$$
\frac{1}{\triangle w_i} \left(L_{\mc G(w_i)}^{\dagger} - L_{\mc G(w_i-\triangle w_i)}^{\dagger}\right) = - \frac{1}{1-\theta_{i}} L_{\mc G(w_i)}^{\dagger}a_{i}\trans{a_{i}}L_{\mc G(w_i)}^{\dagger}
$$


Therefore, noting that $\theta_i \to 0^+$ as $\triangle w_i \to 0^+$, we have 
%
\begin{equation*}
  \lim_{\triangle w_{i}\to 0^+} \frac{1}{\triangle w_i} \left(L_{\mc G(w_i)}^{\dagger} - L_{\mc G(w_i-\triangle w_i)}^{\dagger}\right) = - L_{\mc G(w_i)}^{\dagger}a_{i}\trans{a_{i}}L_{\mc G(w_i)}^{\dagger}
\end{equation*}
One can similarly show that the right hand side derivative is the same, thereby giving \eqref{eq:derivative-pinv-laplacian}.

While the above discussion illustrates the utility of a multigraph perspective to find out sensitivity of link flows with respect to link weights, the same perspective can also be used to compute change in link flows due to non-infinitesimal change in the link weights. First note that the change in link flows due to decrease in link weights by $\triangle w_i> 0$ can be computed using \eqref{eq:Jacobian-column-i} as $\triangle f = \int^{w_i - \triangle w_i}_{w_i } J_i(\kappa) \, d\kappa$. However, due to the dependence of $J(w)$ on the pseudo-inverse of $L$, it is not possible to get an explicit expression for this integral in general. Alternately, the multigraph perspective implies that $\triangle f$ is equal to the difference in link flows between $\mathcal{G}^{\mathrm{ex}}(w_i - \triangle w_i, \triangle w_i)$ and $\mathcal{G}^{\mathrm{ex}}(w_i - \triangle w_i,0)$. The latter corresponds to change in link flows due to removal of the link with weight $\triangle w_i$ in $\mathcal{G}^{\mathrm{ex}}(w_i - \triangle w_i,  \triangle w_i)$. Therefore, \eqref{eq:sol} and \eqref{eq:pinv-laplacian} imply that 
\begin{equation*}
  \triangle f  = \frac{\triangle w_{i}\trans{a_{i}}L_{\mc G(w_i)}^{\dagger}p}{1-\theta_{i}} W A_{\mc G(w_i)}L_{\mc G(w_i)}^{\dagger}a_{i}
\end{equation*}
The above equation can also be obtained by using the sensitivity factor of changes in phase angles to flows on removed links in \cite{wollenberg1996power}. 

 

\subsection{A Projected Sub-gradient Algorithm}
\label{subsec:projected-subgradient}
We now utilize the flow-weight Jacobian derived in Section~\ref{sec:jacobian-exact} to design a projected sub-gradient algorithm for solving \eqref{eq:weight-control-param}. 
In order to re-write 
\eqref{eq:weight-control-param} and its counter part for $-p_0$ succinctly, we
consider the following notion of \emph{effective} line capacity. Given flow \(
f \), for all $  i \in \mc E$:  
\begin{equation}
\label{eq:c-def}
\begin{aligned}
c_i:= \left\{ \begin{array}{ll} \phantom{-}\cupper_i   \quad  & \text{if } f_{i} \ge 0   \\    \phantom{-}\clower_i & \text{if } f_{i} < 0   \end{array}\right. \text{ for } \delta = \frac{p_{0}}{\|p_{0}\|_{1}} ; \quad 
c_i:= \left\{ \begin{array}{ll} -\clower_i   \quad \, & \text{if } f_{i} \ge 0   \\ -\cupper_i  &  \text{if } f_{i} < 0   \end{array} \right. \text{ for } \delta = -\frac{p_{0}}{\|p_{0}\|_{1}}.
\end{aligned}
\end{equation}
\eqref{eq:weight-control-param} can then be equivalently written as:
\begin{equation}
\label{eq:opt-grad-descent}
\begin{aligned}
& \underset{ w \in \real_{>0}^{\mc E}}{\text{minimize}}  &&  \max_{i \in \mc E} \frac{f_i(w)}{c_i}  \\
& \text{subject to}  &&   
 \wlower \le w \le \wupper 
\end{aligned}
\end{equation}
Let $\ell(w) := \argmax_{i \in \mc E} { {f_i(w)}/{c_i} }$ be the links corresponding to the maximum value of $f_i(w)/c_i$. 
A projected sub-gradient method, along the lines of \cite[Section 2.1.2]{bertsekas2015convex}, for solving \eqref{eq:opt-grad-descent} is then given by:
\begin{equation}
\label{eq:w-update}
 w(t+1) = \argmin_{w^{l} \le w\le w^{u}} \left( \max_{i\in \ell(w(t))}
   \frac{J_{i}(w(t)) \cdot (w-w(t))}{c_{i}}  + \frac{1}{2\eta_{t}}
    \trans{(w-w(t))}(w-w(t)) \right)
\end{equation}
where $\eta_{t} > 0$ is the step-size. (\ref{eq:w-update}) gives
an unweighted version of projected gradient iteration -- it can be generalized by incorporating an appropriate positive definite weighting matrix into the regularization term, e.g., see 
\cite[Section 2.1.2]{bertsekas2015convex}. 

Convergence analysis for projected sub-gradient algorithms for convex optimization problems is a well-studied topic, e.g., see \cite{Bertsekas:99}. However, extensions to non-convex problems, as is the case with \eqref{eq:weight-control-param}, is still an ongoing work. We report supporting numerical evidence for the convergence of the proposed algorithm in \eqref{eq:w-update} in Section~\ref{sec:simulations}, and postpone formal analysis to future work.

\section{The Nongenerative Disturbance Case: A Multilevel Programming Approach}
\label{sec:bi-level}
In Section~\ref{sec:multiplicative}, we presented results for
  multiplicative disturbances. In this section, we consider a more general setting
  of \emph{nongenerative disturbances}, defined next.
  \begin{definition}
    \label{def:nongenerative-disturbance}
For a network with directed multigraph \( \mathcal{G}= (\mathcal{V},
\mathcal{E}) \) and supply-demand vector \( p\in \real^{\mathcal{V}} \), a balanced
disturbance \( \triangle \in \real^{\mc V}\) is called nongenerative with respect to $p$ if $p_v = 0$ implies $\triangle_v=0$ for all $v \in \mc V$. 
  \end{definition}
 Let the set of all nongenerative disturbances with respect to $ p\in \real^{\mathcal{V}} $ be denoted as $\nongenset(p) \subset \real^{ \mathcal{V}} $. The projected sub-gradient algorithm formulated in
Section~\ref{subsec:projected-subgradient}, besides being restricted to multiplicative disturbances, can not guarantee an optimal solution
because of the non-convexity of \eqref{eq:weight-control-param}. 
 In addition to expanding the set of admissible disturbances, in this section, we also develop a solution methodology with favorable computational properties.   
The
computational complexity of exhaustive search methods for solving \eqref{eq:weight-control-original}-\eqref{eq:weight-control-nu} and 
\eqref{eq:weight-control-param} grows exponential in the number of links. In
this section, we introduce novel notions of network reduction, which when
applied to reducible networks (cf. Definition \ref{def:reducible-net}) gives a multi-level
formulation of \eqref{eq:weight-control-original}-\eqref{eq:weight-control-nu}. While the optimization problem
at each level is still non-convex, the resulting decomposition of the original
monolithic problem in \eqref{eq:weight-control-original}-\eqref{eq:weight-control-nu} yields computational
savings when using exhaustive search for finding solution. 

We start by identifying a sufficient condition under which a certain class of optimization problems admit an equivalent bilevel formulation. 

\subsection{An Equivalent Bilevel Formulation}
Given continuous  maps $\map{q_i}{\real^n}{\real}$, $i \in \until{m}$, a generic feasibility problem can be written as:
\begin{equation}
\label{eq:generic-feasibility}
\text{Find } x \in D \subset \real^n \text{ s.t. } q_i(x) \leq 0, \qquad \forall i \in \until{m}
\end{equation}
where $D$ is the domain of \( n \)-dimensional variable \( x \). 
%

We are interested in $(q_1,\ldots,q_m)$ for which there exist partitions\footnote{That is,  \( \mc I_{1} \cup \mc I_{2} =  \{1, 2, \ldots, n\} \), \( \mc I_{1}\cap
\mc I_{2}=\emptyset \), \( \mc J_{1}\cup \mc J_{2} = \{1, 2, \ldots, m\} \) and \( \mc J_{1}
\cap \mc J_{2} = \emptyset. \)} $\{\mc I_1, \mc I_2\}$ and $\{\mc J_1,\mc J_2\}$
of $\until{n}$ and $\until{m}$, respectively, and continuous maps $\map{h_1}{D^{\mc I_1} \times \real}{\real}$ and $\map{h_2}{D^{\mc I_2}}{\real}$, such that \eqref{eq:generic-feasibility} is equivalent to finding $(x_{\mc I_1},x_{\mc I_2},y_1,y_2) \in D^{\mc I_1} \times D^{\mc I_2} \times \real \times \real$ satisfying:
\begin{equation}
\label{eq:feasibility-expanded-version}
\begin{split}
q_i(x_{\mc I_1},y_2) & \leq 0, \quad \forall i \in \mc J_1 \\
q_i(x_{\mc I_2},y_1) & \leq 0, \quad \forall i \in \mc J_2 \\
y_1 & = h_1(x_{\mc I_1},y_2) \\
y_2 & = h_2(x_{\mc I_2})
\end{split}
\end{equation}

In \eqref{eq:feasibility-expanded-version}, $x_{\mc I_1}$, $x_{\mc I_2}$
and $D^{\mathcal{I}_{1}}$, \(
 D^{\mathcal{I}_{2}} \)
denote subvectors  and domains of $x$ corresponding to indices in $\mc I_1$ and $\mc I_2$,
respectively. The next result gives an equivalent bilevel formulation of \eqref{eq:feasibility-expanded-version}.
\begin{proposition}
\label{thm:equiv-transf-feasibility}
\leavevmode
Let \( q_{i} \), $i \in \until{m}$, \( h_{1} \) and \( h_{2} \) be continuous
functions. Then, the following are true:
\begin{enumerate}[label=(\alph*)]
\item there exists a $(x_{\mc I_1},x_{\mc I_2},y_1,y_2) \in D^{\mc
  I_1} \times D^{\mc I_2} \times \real \times \real $ satisfying
  \eqref{eq:feasibility-expanded-version} if and only if there exists a $(x_{\mc
    I_1},y_2) \in D^{\mathcal{I}_{1}} \times D_{2}
  $ satisfying the following:
%
\begin{equation}
\label{eq:eqvl-problems}
\begin{aligned}
  q_i(x_{\mc I_1},y_{2}) & \leq 0 \quad \forall \, i \in {\mc J_1}  &
\quad \text{where}\quad & & G(y_2) :=  \{ z \in \real \, | \, & 
q_{i}(x_{\mc I_{2}}, z) \le 0 \quad \forall i \in \mc J_2\\
   h_1(x_{\mc I_1},y_2) & \in G(y_{2})  & & & &
   \text{for some }x_{\mathcal{I}_{2}} \in D^{\mathcal{I}_{2}} \text{ satisfying } h_{2}(x_{\mc I_{2}}) =
  y_2 \}
  \end{aligned}
\end{equation} 
and \( D_{2}= \range(h_{2}) \) is the domain of \( y_{2} \). Moreover, for every $y_2 \in D_2$, the set \( G(y_{2})
    \) is closed.
\item  the set \( G(y_{2}) \) is convex for all \( y_{2} \in D_2\) if 
  \begin{itemize}
  \item  for all $x_{\mc I_2} \in D^{\mc I_2}$ and $i \in \mc J_2$, $q_i(x_{\mc I_2},z)$ is quasiconvex with respect to $z$; and
 \item there exists a \( z_{0} \in \real \) such that \(
   q_{i}(x_{\mathcal{I}_{2}}, z_{0}) \le 0 \) for all \( x_{\mathcal{I}_{2}}\in
   D^{\mathcal{I}_{2}} \) and $i \in \mc J_2$. 
  \end{itemize}
\end{enumerate} 
\end{proposition}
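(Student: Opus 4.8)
The two parts are logically independent, so I would prove them separately: part (a) splits into an equivalence of feasibility and a closedness claim, while part (b) is a convexity argument resting on quasiconvexity. The equivalence between \eqref{eq:feasibility-expanded-version} and \eqref{eq:eqvl-problems} is a matter of unfolding the definition of $G(y_2)$ and exhibiting witnesses. In the forward direction, given a tuple $(x_{\mc I_1},x_{\mc I_2},y_1,y_2)$ feasible for \eqref{eq:feasibility-expanded-version}, I would first note that $y_2 = h_2(x_{\mc I_2}) \in \range(h_2) = D_2$, so $y_2$ lies in the admissible domain; the constraints $q_i(x_{\mc I_1},y_2)\le 0$, $i \in \mc J_1$, are inherited verbatim, and the pair $(x_{\mc I_2},y_1)$ certifies $y_1 = h_1(x_{\mc I_1},y_2) \in G(y_2)$, since it satisfies exactly the defining conditions of $G(y_2)$. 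Conversely, given $(x_{\mc I_1},y_2)$ feasible for \eqref{eq:eqvl-problems}, I would set $y_1 := h_1(x_{\mc I_1},y_2)$ and use the membership $y_1 \in G(y_2)$ to extract a witness $x_{\mc I_2} \in D^{\mc I_2}$ with $h_2(x_{\mc I_2}) = y_2$ and $q_i(x_{\mc I_2},y_1)\le 0$ for $i \in \mc J_2$; all four constraints of \eqref{eq:feasibility-expanded-version} then hold by construction. This closes the equivalence.

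For closedness of $G(y_2)$, I would exhibit it as the projection onto the $z$-coordinate of the set $S(y_2) := \{(x_{\mc I_2},z) \in D^{\mc I_2}\times\real : h_2(x_{\mc I_2}) = y_2,\ q_i(x_{\mc I_2},z)\le 0 \ \forall i \in \mc J_2\}$, which is closed by continuity of $h_2$ and of the $q_i$. Closedness of $G(y_2)$ itself I would then establish sequentially: given $z_k \in G(y_2)$ with $z_k \to z^\star$, choose witnesses $x^{(k)} \in D^{\mc I_2}$; extract a convergent subsequence $x^{(k_j)} \to x^\star \in D^{\mc I_2}$, and pass to the limit in $h_2(x^{(k_j)}) = y_2$ and $q_i(x^{(k_j)},z_{k_j})\le 0$ to produce a witness for $z^\star$. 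The decisive ingredient, and the main obstacle, is \emph{compactness} of $D^{\mc I_2}$: a projection of a closed set need not be closed in general, so the argument genuinely requires the admissible set $D$ to be compact (as it is in our setting, where $D$ is the box of weight constraints $[w^l,w^u]$). I would state this hypothesis explicitly.

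For part (b), the key observation is that a subset of $\real$ is convex precisely when it is an interval. For a fixed $x_{\mc I_2}$, quasiconvexity of each $q_i(x_{\mc I_2},\cdot)$ in $z$ makes every sublevel set $\{z : q_i(x_{\mc I_2},z)\le 0\}$ convex, hence their intersection $I(x_{\mc I_2}) := \{z : q_i(x_{\mc I_2},z)\le 0 \ \forall i \in \mc J_2\}$ is an interval, and $G(y_2) = \bigcup_{x_{\mc I_2} :\, h_2(x_{\mc I_2}) = y_2} I(x_{\mc I_2})$. The second hypothesis supplies a \emph{common point}: $z_0 \in I(x_{\mc I_2})$ for every $x_{\mc I_2}$, since $q_i(x_{\mc I_2},z_0)\le 0$ holds uniformly. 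A union of intervals all containing $z_0$ is itself an interval, which I would verify directly: for $a,b \in G(y_2)$ with witnesses $x_a,x_b$ and any $c$ between them, $c$ lies in $[\min(a,z_0),\max(a,z_0)] \cup [\min(b,z_0),\max(b,z_0)] \subseteq I(x_a)\cup I(x_b) \subseteq G(y_2)$, since both bracketed intervals contain $z_0$ and their union is the interval spanned by $a$, $b$, and $z_0$. Thus $G(y_2)$ is convex. The point $z_0$ is exactly what forces connectedness of the union; without it the individual intervals $I(x_{\mc I_2})$ could be disjoint and their union disconnected.
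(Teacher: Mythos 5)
Your handling of the feasibility equivalence in (a) and of convexity in (b) follows essentially the same route as the paper: the equivalence is the same witness-passing argument in both directions, and in (b) both proofs exploit quasiconvexity to connect an arbitrary point of $G(y_2)$ to the common point $z_0$ through a single witness $x_{\mc I_2}$. Your packaging of (b) --- $G(y_2)$ as a union of intervals all containing $z_0$, verified on arbitrary pairs $a,b$ --- is in fact slightly more robust than the paper's, which anchors the argument at $g^l(y_2)=\min G(y_2)$ and $g^u(y_2)=\max G(y_2)$ and therefore needs closedness (and implicitly boundedness, which the paper only patches in a subsequent remark) before convexity can even be formulated; your version needs neither.

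Where you genuinely diverge is the closedness claim, and your treatment is the more careful one. The paper argues by contradiction: if $z^{*}\notin G(y_2)$ then there is a \emph{single} index $k\in\mc J_2$ with $q_k(x,z^{*})>0$ for \emph{every} admissible $x$, and continuity of $q_k$ then forces $q_k(x,z_r)>0$ for large $r$. As written this has two gaps: the violated index may depend on $x$, and the continuity step must be uniform over the fiber $\{x\in D^{\mc I_2}: h_2(x)=y_2\}$, not pointwise. Your sequential argument --- extract a convergent subsequence of witnesses and pass to the limit --- repairs both, but, as you correctly observe, it requires compactness of $D^{\mc I_2}$ (or at least of each fiber); a projection of a closed set is not closed in general. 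That hypothesis is absent from the proposition as stated, although it holds in the only use the paper makes of it (the box $[w^l,w^u]$). You are right to flag it explicitly: without some such assumption the closedness assertion, and hence the paper's own route to part (b), does not go through.
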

\begin{proof} 
\begin{enumerate}[label=(\alph*)]
\item We refer to the feasibility problem on the left side of \eqref{eq:eqvl-problems} as \eqref{eq:eqvl-problems}-L. 

Consider a \( (\tilde{x}_{\mc I_{1}}, \tilde{x}_{\mc
  I_{2}},\tilde{y}_1,\tilde{y}_2)\) which satisfies
\eqref{eq:feasibility-expanded-version}. This implies that the first equation in
(\ref{eq:eqvl-problems})-L is satisfied by $(\tilde{x}_{\mc I_1},\tilde{y}_2)$,
and that \( z=\tilde{y}_{1} = h_{1}(\tilde{x}_{\mathcal{I}_{1}}, \tilde{y}_{2}) \in
G(\tilde{y}_{2}) \) with \( x_{\mathcal{I}_{2}} = \tilde{x}_{\mathcal{I}_{2}}
\). 


Now consider a $(\hat{x}_{\mc I_1},\hat{y}_2)$ which satisfies (\ref{eq:eqvl-problems})-L. Therefore, $(\hat{x}_{\mc I_1},\hat{y}_2)$ readily satisfies the first inequality in \eqref{eq:feasibility-expanded-version}.
Let \(
\hat{y}_{1}:=  h_{1}(\hat{x}_{\mc I_{1}}, \hat{y}_{2}) \), then \( \hat{y}_{1}
\in G(\hat{y}_{2}) \). Therefore, \( G(\hat{y}_{2}) \) is not empty and there
exists at least one \( \hat{x}_{\mathcal{I}_{2}} \) such that \(
h_{2}(\hat{x}_{\mathcal{I}_{2}}) = \hat{y}_{2} \) and \(
q_{i}(\hat{x}_{\mathcal{I}_{2}}, \hat{y}_{1})\le 0 \) for all \( i\in
\mathcal{J}_{2} \). That is to say, 
%
%
\( (\hat{x}_{\mc I_{1}},
\hat{x}_{\mc I_{2}},\hat{y}_1,\hat{y}_2)\) satisfies \eqref{eq:feasibility-expanded-version}.

Now we show that \( G(y_{2}) \) is a closed set for every $y_2 \in D_2$. 
Pick an arbitrary convergent sequence \( \{z_{r}\} \) in the set \(
G(y_{2}) \). It is sufficient to prove that \(
z^{*} = \lim_{r\to +\infty} z_{r} \in G(y_{2})  \). Suppose \( z^{*} \not \in
G(y_{2}) \), then \( \exists\, k \in \mc J_2\) s.t. \( q_{k}(x, z^{*}) > 0, \forall \, x \in
D^{\mc I_2}\) satisfying $h_2(x)=y_2$. Continuity of $q_k$ then implies that $q_{k}(x, z_r) > 0$, and hence implying $z_r \notin G(y_2)$, for all sufficiently large $r$. This leads to a contradiction. 
%

\item The second condition implies that \(z_{0} \in  G(y_{2}) \subset \real \) for all \( y_{2} \in D_{2}
  \). Since $ G(y_{2}) $ is close for  all \( y_{2} \in D_{2} \), let \( g^{l}(y_{2}) := \min G(y_{2})  \) and \( g^{u}(y_{2}) :=
  \max {G(y_{2})} \), then \( g^{l}(y_{2}) \le z_{0} \le g^{u}(y_{2}) \) for
  all \( y_{2} \in D_{2} \). Proving convexity of the set \( G(y_{2}) \) is equivalent to
  proving that \( [g^{l}(y_{2}), z_{0}]\subset G(y_{2}) \) and \( [z_{0},
  g^{u}(y_{2})]\subset G(y_{2}) \). We provide details for the first set; the proof for the 
  second set follows similarly. 

Consider a \( x^{*}_{\mathcal{I}_{2}} \in D^{\mc I_2}\) satisfying \( h_{2}(x^{*}_{\mathcal{I}_{2}}) = y_{2} \) and \(
q_{i}(x^{*}_{\mathcal{I}_{2}}, g^{l}(y_{2})) \le 0 \) for all \( i \in
\mathcal{J}_{2} \); closedness of the set $G(y_2)$ implies well-posedness of  $x^{*}_{\mathcal{I}_{2}}$. We also have \( q_{i}(x^{*}_{\mathcal{I}_{2}},z_{0}) \le 0 \) for all
\( i \in \mathcal{J}_{2} \) by assumption. Since \( q_{i}(x_{\mathcal{I}_{2}},
z) \) is quasiconvex with respect to \( z \) for all \( x_{\mathcal{I}_{2}}\in
D^{\mathcal{I}_{2}} \) and \( i\in \mathcal{J}_{2} \), 
\begin{equation*}
q_{i}(x_{\mathcal{I}_{2}}^{*}, \theta g^{l}(y_{2}) + (1-\theta)z_{0}) \le
\max\{q_{i}(x_{\mathcal{I}_{2}}^{*},  g^{l}(y_{2})),
q_{i}(x_{\mathcal{I}_{2}}^{*}, z_{0})
\} \le 0 \quad \forall \theta \in [0, 1], \forall i \in \mathcal{J}_{2}
\end{equation*}

Since \( y_{2} \) is arbitrary, \( [g^{l}(y_{2}), z_{0}]\subset G(y_{2}) \) for
all \( y_{2} \in D_{2} \). 
\end{enumerate} 
\end{proof}

  \begin{remark}
    \leavevmode
    Proposition~\ref{thm:equiv-transf-feasibility} can be extended along the following directions: 
    \begin{enumerate}
    \item The second condition in Proposition
      \ref{thm:equiv-transf-feasibility}(b) can be
      relaxed as follows: for every \( y_{2} \in D_{2} \), there exists $x^{*, l}_{\mathcal{I}_{2}}, x^{*, u}_{\mathcal{I}_{2}} \in D^{\mc I_2}$ and $z_l, z_u \in \real$ satisfying: 
  \begin{inparaenum}[(i)]
      \item $h(x^{*,s}_{\mc I_1})=y_2$ and $q_i(x^{*,s}_{\mc I_2}, g^s(y_2)) \leq 0$ for $s \in \{l,u\}$, $i \in \mc J_2$; and
	\item $q_i(x^{*,s}_{\mc I_2}, z_s) \leq 0$, for $s \in \{l,u\}$, $i \in \mc J_2$; and
	      \item $z_l \ge z_u$. 
   \end{inparaenum}
\item The set \( G(y_{2}) \) is not necessarily bounded, \ie, it is possible that \( g^{l}(y_{2}) = -\infty \) or \( g^{u}(y_{2}) = + \infty \). For example, if (\ref{eq:feasibility-expanded-version}) is feasible and \( q_{i}(x_{\mathcal{I}_{2}}, z) \) is nondecreasing (respectively, nonincreasing) with
  respect to \( z \) for all \( x_{\mathcal{I}_{2}} \in
  D^{\mathcal{I}_{2}}\) and \( i\in \mathcal{J}_{2} \), then it is straightforward to see that \( g^{l}(y_{2}) = - \infty \) (respectively, \(
  g^{u}(y_{2}) = +\infty \)). In this case, the second condition  in Proposition
      \ref{thm:equiv-transf-feasibility}(b) is trivially satisfied by \( z_{0} = - \infty \) (respectively, \(
  z_{0} = +\infty \)). 
    \end{enumerate}
  \end{remark}


Proposition~\ref{thm:equiv-transf-feasibility} can be straightforwardly extended to optimization problems as follows.
\begin{proposition}
\label{cor-eqvl-transformation}
Let $q_i(x_{\mc I_2},z)$ be quasiconvex with respect to $z$ for all
$x_{\mc I_2} \in \real^{\mc I_2}$, $i \in \mc J_2$ and \(z_{0}\in
  \real \) be such that \(
   q_{i}(x_{\mathcal{I}_{2}}, z_{0}) \le 0 \) for all \( x_{\mathcal{I}_{2}}\in
   D^{\mathcal{I}_{2}} \), $i \in \mc J_2$.
Then, for every $\map{q_0}{D^{\mc I_1} \times \real}{\real}$, 
\begin{equation}
  \label{opt:original}
  \begin{aligned}
  & \, \underset{\substack{\\[0pt]\ds x_{\mc I_{1}} \in D^{{\mc I}_1}, x_{\mc I_{2}} \in D^{{\mc I}_2}\\[5pt]\ds y_{1} \in \real, y_{2} \in \real}}{\max} &&  q_0(x_{\mc I_{1}}, y_{2})  \\
  &\text{subject to} &&  q_{i} (x_{\mc I_{1}}, y_{2}) \le 0, \quad \forall \, i \in \mc J_1 \\
    &&& q_i(x_{\mc I_2},y_1) \leq 0, \quad \forall i \in \mc J_2  \\
  &&&  y_1 = h_1(x_{\mc I_1},y_2) \\
&&& y_2 = h_2(x_{\mc I_2}) 
  \end{aligned}
\end{equation}
is equal to 
\begin{equation}
  \label{opt:bi-level}
  \begin{aligned}
 \underset{x_{\mc I_{1}} \in D^{\mc I_1}, y_{2} \in
   D_{2}}{\max}  &  q_0(x_{\mc I_{1}}, y_{2}) & & &  &    \\
  \text{subject to}\quad &  q_{i} (x_{\mc I_{1}}, y_{2}) \le 0, \quad \forall \, i
  \in \mc J_1  & \quad \text{where} \quad &  & g^{l}(y_{2}) := \min G(y_{2}) \\
    & g^{l}(y_{2})\le  h_{1}(x_{\mc I_{1}}, y_{2}) \le g^{u}(y_{2}) & & &
    g^{u}(y_{2}) := \max G(y_{2}) 
  \end{aligned}
\end{equation}
and, \( D_{2} \), \( G(y_{2}) \) are as defined in
Proposition~\ref{thm:equiv-transf-feasibility}. 
\end{proposition}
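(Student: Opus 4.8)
The plan is to reduce the claimed equality of optimal values to the feasibility equivalence already established in Proposition~\ref{thm:equiv-transf-feasibility}, exploiting the fact that the objective $q_0(x_{\mc I_1},y_2)$ in \eqref{opt:original} depends only on the pair $(x_{\mc I_1},y_2)$ and not on the remaining variables $x_{\mc I_2}$ and $y_1$. Since those latter variables enter only through the constraints, both \eqref{opt:original} and \eqref{opt:bi-level} can be read as maximizing the \emph{same} function $q_0$ over the projection of their respective feasible sets onto the $(x_{\mc I_1},y_2)$-coordinates. Hence it suffices to show that these two projected feasible sets coincide, after which equality of the optimal values is immediate.

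First I would fix an arbitrary $(x_{\mc I_1},y_2) \in D^{\mc I_1}\times D_2$ satisfying $q_i(x_{\mc I_1},y_2)\le 0$ for all $i\in\mc J_1$, and argue that it extends to a point feasible for \eqref{opt:original} if and only if it is feasible for \eqref{opt:bi-level}. Completing $(x_{\mc I_1},y_2)$ to a feasible point of \eqref{opt:original} amounts to finding $x_{\mc I_2}$ and $y_1$ with $y_1=h_1(x_{\mc I_1},y_2)$, $y_2=h_2(x_{\mc I_2})$ and $q_i(x_{\mc I_2},y_1)\le 0$ for all $i\in\mc J_2$; by the definition of $G(y_2)$ this is exactly the requirement $h_1(x_{\mc I_1},y_2)\in G(y_2)$, which is the membership condition appearing in \eqref{eq:eqvl-problems}-L. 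This step is essentially the parametric (in $(x_{\mc I_1},y_2)$) reading of the extension argument used in the proof of Proposition~\ref{thm:equiv-transf-feasibility}(a).

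Next I would invoke Proposition~\ref{thm:equiv-transf-feasibility}(b): under the stated hypotheses---quasiconvexity of each $q_i(x_{\mc I_2},\cdot)$, $i\in\mc J_2$, together with the existence of a common $z_0$ with $q_i(x_{\mc I_2},z_0)\le 0$ for all $x_{\mc I_2}$---the set $G(y_2)$ is convex, and by part (a) it is also closed. A closed convex subset of $\real$ is an interval, so writing $g^{l}(y_2):=\min G(y_2)$ and $g^{u}(y_2):=\max G(y_2)$ (allowing the values $\pm\infty$), membership $h_1(x_{\mc I_1},y_2)\in G(y_2)$ is equivalent to $g^{l}(y_2)\le h_1(x_{\mc I_1},y_2)\le g^{u}(y_2)$, which is precisely the remaining constraint of \eqref{opt:bi-level}. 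Combining the two steps shows the projected feasible sets agree, and since $q_0$ depends only on $(x_{\mc I_1},y_2)$, the two maximization problems share the same value.

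The main obstacle I anticipate is the careful handling of the cases in which $G(y_2)$ is unbounded, so that $g^{l}(y_2)=-\infty$ or $g^{u}(y_2)=+\infty$; there I must ensure that the interval characterization of membership remains correct and that the corresponding bound in \eqref{opt:bi-level} is read as vacuous on the unbounded side. A secondary subtlety is that the equivalence in Proposition~\ref{thm:equiv-transf-feasibility}(a) is phrased as an existence statement, whereas here I need it pointwise for each fixed $(x_{\mc I_1},y_2)$; this requires re-examining that proof to confirm that the extension of a feasible $(x_{\mc I_1},y_2)$ leaves the already-fixed coordinates undisturbed, which it does. Finally I would observe that, because the feasible projections are \emph{literally identical}, the argument delivers equality of the optimal values directly, including the case where the common supremum is not attained.
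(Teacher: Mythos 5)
Your argument is correct and is precisely the route the paper intends: the paper gives no explicit proof, asserting only that Proposition~\ref{thm:equiv-transf-feasibility} "can be straightforwardly extended," and your projection-onto-$(x_{\mc I_1},y_2)$ argument, combined with the closedness/convexity of $G(y_2)$ (nonempty because $z_0\in G(y_2)$ for every $y_2\in D_2$) to replace membership by the two inequalities $g^l(y_2)\le h_1(x_{\mc I_1},y_2)\le g^u(y_2)$, is exactly that extension. Your handling of the unbounded case matches the paper's own remark that the corresponding bound becomes vacuous when $g^l(y_2)=-\infty$ or $g^u(y_2)=+\infty$.
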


\begin{remark}
\label{rem:bilevel}
\begin{enumerate}
\item[(a)] The conditions in Proposition~\ref{thm:equiv-transf-feasibility}(a) do not guarantee that the set $G(y_2)$ is non-empty for every $y_2 \in D_2$. However, under the additional condition of the existence of $z_0$, as in Propositions~\ref{thm:equiv-transf-feasibility}(b) and \ref{cor-eqvl-transformation}, the set $G(y_2)$ is guaranteed to be non-empty for all $y_2 \in D_2$. In particular, this implies that $g^l(y_2)$ and $g^u(y_2)$ in Proposition~\ref{cor-eqvl-transformation} are well-defined. 
\item[(b)] Proposition~\ref{cor-eqvl-transformation} provides an equivalent bilevel formulation in \eqref{opt:bi-level} for a class of optimization problems described in \eqref{opt:original}. $(x_{\mc I_1},y_1)$ and $(x_{\mc I_2},z)$ are the upper and lower level variables, respectively. 
\item[(c)] When solving by exhaustive search, the bilevel formulation in \eqref{opt:bi-level} offers computational advantage over the original formulation in \eqref{opt:original} as follows. The number of non-redundant variables in \eqref{opt:original} is $|\mc I_1| + |\mc I_2|$. Therefore, the computational complexity in solving \eqref{opt:original} by exhaustive search is exponential in $|\mc I_1| + |\mc I_2|$. On the other hand, the computational complexity associated with computing $g^l(y_2)$ and $g^u(y_2)$, for every $y_2$, in the lower level problem in \eqref{opt:bi-level} is exponential in $|\mc I_2|$. Thereafter, the computational complexity of the upper level problem is exponential in $|\mc I_1|+1$. Therefore, the complexity of solving the bilevel problem is exponential in $\max\{|\mc I_1|+1, |\mc I_2| \}$, which is much less than that of \eqref{opt:original}. One can further reduce the computational complexity by extension to multilevel formulation, \eg, by recursive bilevel formulation of the lower level problem in  \eqref{opt:bi-level}. This multilevel extension will be explained in the context of the central problem \eqref{eq:weight-control-original}-\eqref{eq:weight-control-nu} of this paper in Section~\ref{subsec:multilevel}.

\end{enumerate}
\end{remark}

\subsection{A Novel Network Reduction and its Relationship to the Equivalent Bilevel Formulation}
In this subsection, we investigate conditions under which Proposition~\ref{cor-eqvl-transformation} can be applied to  
\eqref{eq:weight-control-original}-\eqref{eq:weight-control-nu}
to get an equivalent bilevel formulation.
We first transform \eqref{eq:weight-control-nu} into the form of
\eqref{opt:original}, where the link weight \( w \) will play the
  role of \( x \) and \( D^{\mathcal{E}} = [w^{l}, w^{u}] \) will be
  its domain.  
The partition and structure of constraints underlying
  \eqref{opt:original} will be made possible if the network and the nodes carrying non-zero demand and supply are relatively sparse. This condition is formalized in the following definition of \emph{reducible networks}. 


\begin{definition}
\label{def:reducible-net}
A network with directed multigraph $ \mathcal{G} = ( \mathcal{V}, \mathcal{E} )$ is called reducible about $ v_{1} \in \mc V$ and $ v_{2} \in \mc V$ under supply-demand vector $ p \in \real^{\mc V}$ if 
there exists a partition: $\mc G=\mc G_1 \cup \mc G_2$, with $\mc G_1=(\mathcal{V}_{1}, \mathcal{E}_{1})$ and $\mc G_2=(\mathcal{V}_{2}, \mathcal{E}_{2}) $, both satisfying Assumption~\ref{ass:connectivity}, and $\mathcal{V}_{1} \cup \mathcal{V}_{2} =
 \mathcal{V}$, $ \mathcal{V}_{1} \cap \mathcal{V}_{2} = \{v_1, v_2\}
 $, $\mathcal{E}_{1} \cup \mathcal{E}_{2} = \mathcal{E} $, $
 \mathcal{E}_{1} \cap \mathcal{E}_{2} = \emptyset $ and $|\mc E_{2}| \geq 2$, such that the supply-demand vector $ p $ is supported only on $\mc V_1$. $ \mathcal{G}_{2} $ is referred to as the reducible component and \( \tilde{\mathcal{G}}_{1}= ( \mathcal{V}_{1}, \tilde{ \mathcal{E}}_{1} ) \) is referred to as a reduction of $ \mathcal{G} $, where $ \tilde{ \mathcal{E}}_{1} = \mathcal{E}_{1}\cup (v_{1}, v_{2}) $, and $(v_1,v_2)$ is an additional (virtual) link, not originally present in $\mc G$.
\end{definition}

\begin{remark}
\label{rem:reducible-net}
In Definition~\ref{def:reducible-net},
\begin{enumerate}[label = (\alph*)]
\item reducibility of a network depends both on its topology as well as the location of the supply and demand nodes;
\item if $(v_1,v_2)$ is a link (or corresponds to several links when $\mc G$ is a multigraph), then it can be assigned arbitrarily to either $\mc E_1$ or $\mc E_2$; in this case the reduction process will result in an additional link $(v_1,v_2)$ in $\tilde{\mc E}_1$;
\item the supply-demand vector $p$ can be non-zero at $v_1$ or $v_2$.
\end{enumerate}
\end{remark}

%

\begin{figure}[htb!]
\begin{center}
\includegraphics[width=0.8\linewidth]{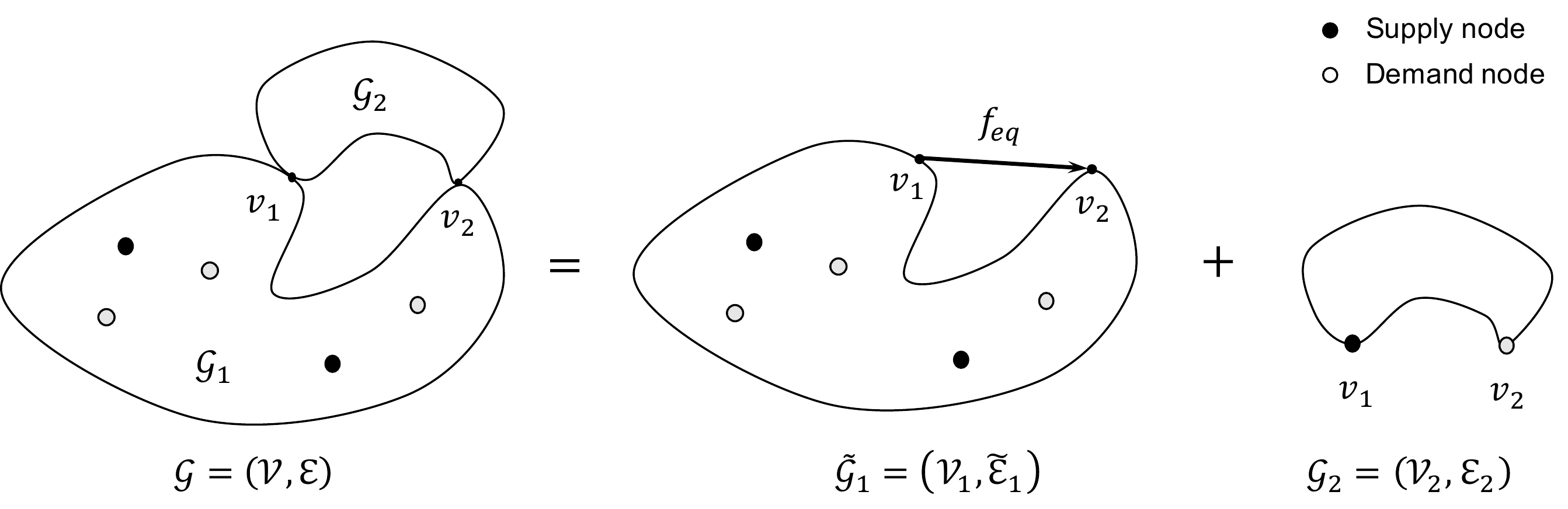}
\end{center}
\caption{Illustration of a reducible network}
\label{fig:network-reduction-illustration}
\end{figure}

We now describe a reduction procedure for a reducible network, \eg, as illustrated in Fig. \ref{fig:network-reduction-illustration}. Specifically, this network will be decomposed into two smaller sub-networks 
\( \tilde{\mathcal{G}}_{1} \) and \( \mathcal{G}_{2} \); $(v_1,v_2)$ is a virtual link with equivalent weight \( \weq = \mathcal{H}( w_{\mathcal{E}_{2}}, \mathcal{G}_{2}, v_{1}, v_{2}) \) as defined in
Definition \ref{def:equivalent-weight}, and $\mc G_2$ has a virtual supply-demand vector supported on nodes $v_1$ and $v_2$. The reduction is equivalent (cf. Lemma~\ref{lem:net-equiv-reduction}) in the sense that, the flows on links in $\mc E$ obtained from Lemma~\ref{lemma:flow-solution}, is the same as the flows on corresponding links in $\mc E_1$ and $\mc E_2$ by applying Lemma~\ref{lemma:flow-solution} to sub-networks $\tilde{\mc G}_1$ and $\mc G_2$, respectively. The flow on the virtual link is given by:
%
%
%
%
\begin{equation}
\label{eq:virtual-supply-demand}
  f_{\mathrm{eq}} = \sum_{i \in \mc E^+_{v_1}
  \cap \mc E_2} f_{i}  -\sum_{i\in \mc E_{v_1}^- \cap \mc E_2} f_{i} =
\sum_{i\in \mc E_{v_2}^- \cap \mc E_2} f_{i}  -\sum_{i \in \mc E_{v_2}^+ \cap
  \mc E_2} f_{i}.
\end{equation}
and the virtual supply-demand on $\mc G_2$ is $f_{\mathrm{eq}} a_{v_1 v_2}$, where  \( a_{v_1 v_2} \in \{-1,0,+1\}^{\mc V_2}\) is such that its $v_1$-th component is $+1$, $v_2$-th component is $-1$, and all the other components are zero.

\begin{lemma}
\label{lem:net-equiv-reduction}
Consider a network with directed multigraph $\mc G=(\mc V, \mc E)$, link weights $w \in \real_{>0}^{\mc E}$ and a supply-demand vector $p \in \real^{\mc V}$.  If $ \mathcal{G} $ is that is reducible (cf. Definition \ref{def:reducible-net}) about $v_1, v_2 \in \mc V$ under $ p $, the link flows $f^{\mc G}$ in $\mc G$, are equal to the corresponding link flows $f^{\tilde{\mc G}_1}$ and $f^{\mc G_2}$ in sub-networks $\tilde{\mc G}_1$ and $\mc G_2$, respectively. Formally, 
\begin{equation}
\label{eq:flow-eqvl}
\begin{aligned}
f_{\mathrm{eq}}&= f^{\tilde{\mathcal{G}}_{1}}_{v_{1}v_{2}}(w_{\tilde{\mathcal{E}}_{1}},
p_{\mathcal{V}_{1}}) \\
f_{i}^{\mathcal{G}}(w, p) &=
f_{i}^{\tilde{\mathcal{G}}_{1}}(w_{\tilde{\mathcal{E}}_{1}},
p_{\mathcal{V}_{1}}) \quad \forall i\in \mathcal{E}_{1}\\
f_{i}^{\mathcal{G}}(w, p) &= f_{i}^{\mathcal{G}_{2}}(w_{\mathcal{E}_{2}},
a_{v_{1}v_{2}}) f_{\mathrm{eq}} \quad \forall i \in \mathcal{E}_{2}
\end{aligned} 
\end{equation}
where \( w_{\tilde{\mathcal{E}}_{1}} \) and \( w_{\mathcal{E}_{2}} \) are the weight
matrices associated with sub-networks \( \tilde{\mathcal{G}}_{1} \) and \(
\mathcal{G}_{2} \), respectively, and \( p_{\mathcal{V}_{1}} \) is the
sub-vector of \( p \) corresponding to nodes in \( \mathcal{V}_{1} \). 
\end{lemma}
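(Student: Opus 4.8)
The plan is to work entirely in the phase-angle domain, exploiting Lemma~\ref{lemma:flow-solution} together with the fact that link flows depend only on phase-angle differences, not on any absolute reference. Let $\phi \in \real^{\mc V}$ satisfy $L_{\mc G}\phi = p$, so that $f^{\mc G} = W\trans{A}\phi$. I would split $\phi$ into its restrictions $\phi_{\mc V_1}$ and $\phi_{\mc V_2}$ to the two node sets, noting that these overlap exactly in the shared entries $\phi_{v_1}$ and $\phi_{v_2}$, and then show that each restriction solves the DC equations of the corresponding sub-network.

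First I would analyze the reducible component $\mc G_2$. Because $p$ is supported only on $\mc V_1$, every interior node $v \in \mc V_2 \setminus \{v_1, v_2\}$ has $p_v = 0$ and is incident only to links in $\mc E_2$; hence the Kirchhoff balance at such a node, read off from $L_{\mc G}\phi = p$, involves only entries of $\phi_{\mc V_2}$ and only weights in $w_{\mc E_2}$. Since an $\mc E_2$-link has both endpoints in $\mc V_2$, the flow $f_i^{\mc G} = w_i(\phi_{\sigma(i)} - \phi_{\tau(i)})$ coincides with the flow computed from $\phi_{\mc V_2}$ through the incidence of $\mc G_2$. Collecting the interior balances together with the definition~\eqref{eq:virtual-supply-demand} of $f_{\mathrm{eq}}$ as the net flow leaving $v_1$ (and, by the second equality in~\eqref{eq:virtual-supply-demand}, entering $v_2$) through $\mc E_2$, I would conclude $L_{\mc G_2}\phi_{\mc V_2} = f_{\mathrm{eq}}\, a_{v_1 v_2}$, whose balanced right-hand side lies in $\range(L_{\mc G_2})$ by connectedness of $\mc G_2$ (Assumption~\ref{ass:connectivity}). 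Applying Lemma~\ref{lemma:flow-solution} to $\mc G_2$ and using linearity of its flow map in the supply-demand vector then gives the third identity in~\eqref{eq:flow-eqvl}, namely $f_i^{\mc G} = f_i^{\mc G_2}(w_{\mc E_2}, a_{v_1 v_2})\, f_{\mathrm{eq}}$ for all $i \in \mc E_2$.

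Next I would turn to the reduction $\tilde{\mc G}_1$. At each node $v \in \mc V_1 \setminus \{v_1, v_2\}$ only $\mc E_1$-links are incident, so the balance in $L_{\tilde{\mc G}_1}\phi_{\mc V_1} = p_{\mc V_1}$ is identical to the one in $L_{\mc G}\phi = p$. At $v_1$ and $v_2$ the combined contribution of all $\mc E_2$-links in $\mc G$ sums, by~\eqref{eq:virtual-supply-demand}, to exactly $\pm f_{\mathrm{eq}}$; in $\tilde{\mc G}_1$ this same net flow is instead carried by the single virtual link of weight $\weq$, whose Ohm's-law flow is $\weq(\phi_{v_1} - \phi_{v_2})$. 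The essential step is to certify, from Definition~\ref{def:equivalent-weight} of $\mathcal{H}$, that $\weq$ is precisely the equivalent susceptance of the two-terminal passive network $\mc G_2$, so that $\weq(\phi_{v_1} - \phi_{v_2}) = f_{\mathrm{eq}}$. Concretely, the relation $\phi_{\mc V_2} = f_{\mathrm{eq}} L_{\mc G_2}^\dagger a_{v_1 v_2}$ (valid up to a harmless additive constant in $\spn{\onebf}$) yields $\phi_{v_1} - \phi_{v_2} = \trans{a}_{v_1 v_2}\phi_{\mc V_2} = f_{\mathrm{eq}}\, \trans{a}_{v_1 v_2} L_{\mc G_2}^\dagger a_{v_1 v_2}$, which matches $f_{\mathrm{eq}} = \weq(\phi_{v_1} - \phi_{v_2})$ exactly when $\weq = 1/(\trans{a}_{v_1 v_2} L_{\mc G_2}^\dagger a_{v_1 v_2})$, the effective susceptance prescribed by $\mathcal{H}$. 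With this identification the node balances of $\tilde{\mc G}_1$ at $v_1$ and $v_2$ reproduce those of $\mc G$, so $L_{\tilde{\mc G}_1}\phi_{\mc V_1} = p_{\mc V_1}$ holds.

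Finally, uniqueness of the flow solution in Lemma~\ref{lemma:flow-solution} lets me identify $\phi_{\mc V_1}$ as the phase-angle solution for $\tilde{\mc G}_1$, from which the first two identities in~\eqref{eq:flow-eqvl} follow directly: the virtual-link flow equals $\weq(\phi_{v_1} - \phi_{v_2}) = f_{\mathrm{eq}}$, and for $i \in \mc E_1$ the flow $f_i^{\tilde{\mc G}_1} = w_i(\phi_{\sigma(i)} - \phi_{\tau(i)})$ coincides with $f_i^{\mc G}$ since both endpoints and the weight are shared. I expect the main obstacle to be the bookkeeping at the shared terminals $v_1, v_2$ and, above all, the clean invocation of Definition~\ref{def:equivalent-weight} to certify that $\mc G_2$ is electrically indistinguishable from a single link of weight $\weq$; once the effective-susceptance identity $f_{\mathrm{eq}} = \weq(\phi_{v_1} - \phi_{v_2})$ is established, the remaining steps are routine linear algebra with the incidence matrices and the pseudo-inverse.
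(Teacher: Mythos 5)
Your proposal is correct and follows essentially the same route as the paper's proof: decompose Kirchhoff's and Ohm's laws using the fact that $p$ is supported on $\mc V_1$, recognize the equations on $\mc G_2$ as DC power flow under the virtual supply-demand $f_{\mathrm{eq}}\,a_{v_1v_2}$, derive $\phi_{v_1}-\phi_{v_2}=f_{\mathrm{eq}}/\weq$ from $\phi_{\mc V_2}=f_{\mathrm{eq}}L_{\mc G_2}^{\dagger}a_{v_1v_2}$ as Ohm's law for the virtual link, and conclude via Lemma~\ref{lemma:flow-solution} and uniqueness. Your explicit remark that the restriction of $\phi$ agrees with $f_{\mathrm{eq}}L_{\mc G_2}^{\dagger}a_{v_1v_2}$ only up to an additive multiple of $\onebf$ is a welcome precision the paper glosses over, but it does not change the argument.
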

\begin{proof}
Noting that the components of the supply-demand vector at nodes in $\mc V_2$ are zero, Ohms's and Kirchhoff's laws for links $\mc E_2$ and nodes $\mc V_2$ in $\mc G_2$ can be written as:
\begin{equation}
\label{eq:kirchhoff-ohm-law-G2}
 w_{i} (\phi_{\sigma(i)} - \phi_{\tau(i)}) =  f_{i}^{ \mathcal{G}}  \quad \forall \, i \in \mathcal{E}_2, \qquad \sum_{ i\in \mathcal{E}_{v}^{+} \cap \mc E_2} f_{i}^{ \mathcal{G}} - \sum_{i\in \mathcal{E}_{v}^{-} \cap \mc E_2} f_{i}^{ \mathcal{G}} = 0
  \quad \forall \, v \in \mathcal{V}_{2}\backslash\{v_1, v_2\}.
\end{equation}
\eqref{eq:kirchhoff-ohm-law-G2}, along with \eqref{eq:virtual-supply-demand}, are the same
equations as one would get by writing Kirchhoff's and Ohm's law for $\mc G_2$
under supply-demand vector $f_{\mathrm{eq}}a_{v_{1}v_{2}}$. Taking this latter
interpretation of \eqref{eq:virtual-supply-demand} and
\eqref{eq:kirchhoff-ohm-law-G2}, Lemma~\ref{lemma:flow-solution} and its proof
then gives the flow solution on links in \( \mathcal{G}_{2} \) in 
\eqref{eq:flow-eqvl}, \ie, $f_{i}^{ \mathcal{G}} =
f_{i}^{\mathcal{G}_{2}}(w_{\mathcal{E}_{2}}, f_{\mathrm{eq}} a_{v_{1}v_{2}}) = f_{\mathrm{eq}}
f_{i}^{\mathcal{G}_{2}}(w_{\mathcal{E}_{2}}, a_{v_{1}v_{2}})$. Moreover, if 
$\phi_{\mc V_2}$ denotes the sub-vector of $\phi$ corresponding to nodes in $\mc
V_2$, then we have $\phi_{\mathcal{V}_{2}} =f_{\mathrm{eq}} L_{\mc G_2}^{\dagger}a_{v_{1}v_{2}}$,
and hence 
\begin{equation}
\label{eq:Ohm-law-eqv-link}
 \phi_{v_1} - \phi_{v_2} = \trans{a_{v_1v_2}} 
\phi_{\mathcal{V}_{2}} = f_{\mathrm{eq}}\trans{a_{v_1 v_2}} L_{\mc G_2}^{\dagger}a_{v_{1}v_{2}} = f_{\mathrm{eq}}/\weq
\end{equation}

$ f_{\mathrm{eq}} $ and (\ref{eq:Ohm-law-eqv-link}) can be seen as the flow on and Ohm's law for the virtual link (\( v_{1} \), \( v_{2} \)), respectively. Now writing Ohm's and Kirchhoff's laws for $\mc G_1$, we get
\begin{equation}
\label{eq:kirchhoff-ohm-law-G1}
\begin{aligned}
& w_{i} (\phi_{\sigma(i)} - \phi_{\tau(i)}) =  f_{i}^{ \mathcal{G}}  \quad \forall \, i \in
 \mathcal{E}_1, & \qquad & \sum_{ i\in \mathcal{E}_{v}^{+}} f_{i}^{ \mathcal{G}} - \sum_{i \in
   \mathcal{E}_{v}^{-}} f_{i}^{ \mathcal{G}}  =p_{v}
  \quad \forall \, v \in \mathcal{V}_{1}\backslash \{v_1, v_2\} \\
& \sum_{j\in \mathcal{E}^{+}_{v_1} \cap \mc E_1} f_{i}^{ \mathcal{G}} - \sum_{i\in \mathcal{E}^{-}_{v_1} \cap \mc E_1}
 f_{i}^{ \mathcal{G}}  + f_{\mathrm{eq}}= 0   & &
 \sum_{i\in \mathcal{E}^{+}_{v_2} \cap \mathcal{E}_{2}} f_{i}^{ \mathcal{G}}  +\sum_{i\in \mathcal{E}^{-}_{v_2}\cap \mathcal{E}_{2}}
 f_{i}^{ \mathcal{G}} -  f_{\mathrm{eq}}= 0  \\
\end{aligned}
\end{equation}
where we use the definition of $f_{\mathrm{eq}}$ in
\eqref{eq:Ohm-law-eqv-link}. As $f_{\mathrm{eq}}$ is interpreted to
be the flow on a virtual link $(v_1,v_2)$, \eqref{eq:Ohm-law-eqv-link} and
\eqref{eq:kirchhoff-ohm-law-G1} become Ohm's and Kirchhoff's laws for
$\tilde{\mc G}_1$. The expression for $f_{i}^{ \mathcal{G}}$, \( i\in \mathcal{E}_{1} \) and $ f_{\mathrm{eq}} $, in \eqref{eq:flow-eqvl} now follows from Lemma~\ref{lemma:flow-solution} and its proof.
%
\end{proof}

\eqref{eq:Ohm-law-eqv-link} motivates the following definition of equivalent weight.
\begin{definition}[Equivalent Weight]
\label{def:equivalent-weight}
Given a network with directed multigraph $\mc G=(\mc V, \mc E)$, link weights $w \in \real_{>0}^{\mc E}$, the equivalent weight between two given nodes $v_1, v_2 \in \mc V$ is defined as:
\begin{equation}
\label{eq:equivalent-weight-def}
\mc H(w, \mc G, v_1,v_2):=\frac{1}{\trans{a}_{v_1v_2}L_{\mc G}^{\dagger}a_{v_1 v_2}}
\end{equation}
where \( a_{v_1v_2} \in \{-1,0,+1\}^{\mc V}\) is such that its $v_1$-th
component is $+1$, the $v_2$-th component is $-1$, and all the other components
are zero.
\end{definition}

\begin{remark}
\label{rem:equivalent-weight}
Definition \ref{def:equivalent-weight} is well-posed, \ie, \( \trans{a}_{v_{1}v_{2}} L^{\dagger}_{\mathcal{G}}
a_{v_{1}v_{2}} > 0\) for all (connected) networks \( \mathcal{G} \) and \( v_{1},
v_{2} \in \mathcal{V} \). This is because, \( L^{\dagger}_{\mathcal{G}} \) is positive definite in space \(
\real^{\mathcal{E}}\setminus
\spn{\onebf} \). 
\end{remark}

At times, when the graph $\mc G$ and nodes $v_1$ and $v_2$ are clear from the context, we shall denote the equivalent weight simply by $\mc H(w)$ for brevity. 
\( \mathcal{H}(w) \) is generalization of rather standard formulae for equivalent resistances for serial and parallel connections from circuit theory, which we briefly state next for completeness.

\begin{example}[Equivalent Weight for Serial and Parallel Networks]
\label{ex:eq-weight}
For a network consisting only of $m$ parallel links from node $v_1$ to node $v_2$, with weights \( w_{i} \) (\( i =
1, 2, \ldots, m\)), the equivalent weight between $v_1$ and $v_2$, as given by \eqref{eq:equivalent-weight-def}, is 
\begin{equation*}
\label{eq:eqvl-weight-para-net}
  \mc H(w) = \left( [1, -1] \left[
      \begin{matrix}
        \sum_{i=1}^{m} w_{i} & -\sum_{i=1}^m w_{i} \\
        -\sum_{i=1}^m w_{i} & \sum_{i=1}^m w_{i}
      \end{matrix}
\right]^{\dagger} \left[\begin{matrix}
  1 \\ -1
\end{matrix} \right] \right)^{-1} = \sum_{i=1}^m w_{i}
\end{equation*}
Similarly, for a network consisting only of $m$ links in series from node $v_1$ to node $v_2$, with weights \( w_{i} \) (\( i =
1, 2, \ldots, m\)), the equivalent weight, as given by \eqref{eq:equivalent-weight-def} is $\left( \sum_{i=1}^{m} {1}/{w_{i}} \right)^{-1}$. 

The expressions for the equivalent weight in these two canonical cases, as given by 
\eqref{eq:equivalent-weight-def}, are the same as standard formulae from circuit theory.
\end{example}

For given $\mc G=(\mc V, \mc E)$, and $v_1, v_2 \in \mc V$, monotonicity of $\mc H(w)$ with respect to components of $w$ follows from Rayleigh's monotonicity law, \eg, see \cite[Section 1.4]{Snell.Doyle:00}.
Nevertheless, for the sake of completeness, and also to describe an alternate short proof based on the techniques developed in Section~\ref{sec:jacobian-exact}, we state this result next.
\begin{lemma}
  \label{lem:equiv-weight-fun-increasing}
 For a network with underlying graph $\mc G=(\mc V, \mc E)$ and weight $ w \in \real^{ \mathcal{E}} $, the equivalent weight function between any two nodes $v_1, v_2 \in \mc V$, as defined in \eqref{eq:equivalent-weight-def}, satisfies the following:
$$
\frac{\partial}{\partial w_i} \mc H(w, \mc G,v_1,v_2) \ge 0, \quad \forall i \in \mc E, \, \, w \in \real_{>0}^{\mc E}
$$
\end{lemma}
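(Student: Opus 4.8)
The plan is to differentiate $\mc H$ directly and exhibit $\partial \mc H/\partial w_i$ as a nonnegative quantity, leveraging the expression for the derivative of the Laplacian pseudo-inverse already established in Section~\ref{sec:jacobian-exact}. First I would write $\mc H(w,\mc G,v_1,v_2)=1/g(w)$ with $g(w):=\trans{a}_{v_1v_2}L^\dagger a_{v_1v_2}$. By Remark~\ref{rem:equivalent-weight} we have $g(w)>0$ for all $w\in\real_{>0}^{\mc E}$ (since $L^\dagger$ is positive definite on $\real^{\mc V}\setminus\spn{\onebf}$ and $a_{v_1v_2}\perp\onebf$), so $\mc H$ is well-defined and differentiable wherever $L^\dagger$ is, and the chain rule gives $\partial\mc H/\partial w_i=-g(w)^{-2}\,\partial g/\partial w_i$.

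The key step is to invoke \eqref{eq:derivative-pinv-laplacian}, namely $\partial L^\dagger/\partial w_i=-L^\dagger a_i\trans{a_i}L^\dagger$, where $a_i$ is the $i$-th column of $A$. Since $a_{v_1v_2}$ does not depend on $w$, and using symmetry of $L^\dagger$ (as $L=AWA^T$ is symmetric), I obtain
\begin{equation*}
\frac{\partial g}{\partial w_i}=\trans{a}_{v_1v_2}\frac{\partial L^\dagger}{\partial w_i}a_{v_1v_2}=-\trans{a}_{v_1v_2}L^\dagger a_i\trans{a_i}L^\dagger a_{v_1v_2}=-\left(\trans{a_i}L^\dagger a_{v_1v_2}\right)^2\le 0.
\end{equation*}
Substituting this back yields
\begin{equation*}
\frac{\partial\mc H}{\partial w_i}=\frac{1}{g(w)^2}\left(\trans{a_i}L^\dagger a_{v_1v_2}\right)^2\ge 0,
\end{equation*}
which is exactly the claimed inequality, valid for every $i\in\mc E$ and $w\in\real_{>0}^{\mc E}$.

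I expect essentially no hard step here: once \eqref{eq:derivative-pinv-laplacian} is in hand, monotonicity falls out because the derivative of $g$ is manifestly a negative perfect square. The only point requiring care is well-posedness — ensuring $g(w)>0$ so that forming $1/g$ and dividing by $g^2$ is legitimate — which is already guaranteed by Remark~\ref{rem:equivalent-weight}. If one prefers to avoid appealing to differentiability of $L^\dagger$ directly, an alternative route is the finite-difference identity \eqref{eq:pinv-laplacian} from the one-link extension: treating an increase in $w_i$ as adding a parallel link and passing to the limit reproduces \eqref{eq:derivative-pinv-laplacian}, and hence the same conclusion; this is the "alternate short proof based on the techniques developed in Section~\ref{sec:jacobian-exact}" alluded to in the statement.
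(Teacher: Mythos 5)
Your proposal is correct and follows essentially the same route as the paper: the paper defines $\hat{\mc H}(w) := \trans{a}_{v_1v_2}L^{\dagger}a_{v_1v_2}$ (your $g$), applies \eqref{eq:derivative-pinv-laplacian} to get $\partial \hat{\mc H}/\partial w_i = -\left(\trans{a}_i L^{\dagger} a_{v_1v_2}\right)^2 \le 0$, and concludes via the reciprocal exactly as you do. Your additional remarks on well-posedness and the finite-difference alternative are consistent with the paper but not needed beyond what its proof already contains.
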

\begin{proof}
  Let \( \hat{\mc H}(w) := 
  \trans{a}_{v_1v_2}L^{\dagger} a_{v_1v_2} \). The lemma then follows from: 
\begin{equation*}
\frac{\partial \hat{\mc H}(w)}{\partial w_{i}} = \trans{a}_{v_1v_2} \frac{\partial
  L^{\dagger}}{\partial w_{i}} a_{v_1v_2} = - \trans{a}_{v_1v_2} L^{\dagger}a_{i}
\trans{a}_{i}L^{\dagger} a_{v_1v_2}  = - \left( \trans{a}_{i} L^{\dagger} a_{v_1v_2}
\right)^{2} \le 0
\end{equation*}
where the second equality is due to \eqref{eq:derivative-pinv-laplacian}.
In the above equation, $a_{v_1v_2}$ has the same meaning as in Definition~\ref{def:equivalent-weight}, and $a_i$ is the $i$-th column of the node-link incidence matrix $A$ associated with $\mc G$.
\end{proof}

\begin{remark}
\label{eq:eqv-weight-monotonicity-implication}
\begin{enumerate}
\item[(a)] Example~\ref{ex:eq-weight} implies that the equivalent weight function is strictly monotone for series and parallel networks.
\item[(b)] Monotonicity of $\mc H$ from Lemma~\ref{lem:equiv-weight-fun-increasing} along with its continuity implies that $\mc H(w)$ is not necessarily a one-to-one map from $[w^l,w^u] \subset \real_{>0}^{\mc E}$ to $[\mc H(w^l), \mc H(w^u)] \subset \real_{>0}$.
\end{enumerate}
\end{remark}

It is easy to see that the equivalent network reduction implied by Lemma~\ref{lem:net-equiv-reduction} reduces computational complexity for computing link flows by decomposing the original network into sub-networks. We now show that such a decomposition approach is naturally aligned with Proposition~\ref{cor-eqvl-transformation}, and leads to reduction in computational complexity of the weight control problem~\eqref{eq:weight-control-original}-\eqref{eq:weight-control-nu} by formulating an equivalent bilevel problem (cf. Remark~\ref{rem:bilevel}(c)). We first note that the network reduction implemented for the nominal supply-demand vector $p_0$ is also valid for nongenerative disturbances (cf. Definition~\ref{def:nongenerative-disturbance}) associated with $p_0$. Hence, Lemma~\ref{lem:net-equiv-reduction} is also applicable for all nongenerative disturbances. %
Therefore, one can rewrite the constraints in
(\ref{eq:weight-control-nu}) as: 
\begin{equation}
\label{opt:weight-control-reformulation}
\begin{aligned}
c^{l}_{i} \le f_{i}^{\tilde{\mathcal{G}}_{1}}(w_{\tilde{\mathcal{E}}_{1}},
p_{\triangle, \mathcal{V}_{1}}) \le c_{i}^{u} \quad \forall\, i \in \mathcal{E}_{1} \\
c^{l}_{i} \le  f_{\mathrm{eq}} f_{i}^{\mathcal{G}_{2}}(w_{\mathcal{E}_{2}},
a_{v_{1}v_{2}}) \le c^{u}_{i} \quad \forall \, i \in \mathcal{E}_{2} \\ 
f_{\mathrm{eq}} =  f_{v_{1}v_{2}}^{\tilde{\mathcal{G}}_{1}}(w_{\tilde{\mathcal{E}}_{1}},
p_{\Delta, \mathcal{V}_{1}}) \\
\weq = \mathcal{H}(w_{\mathcal{E}_{2}})
\end{aligned}
\end{equation}
where we recall that $ p_{\triangle} = p_{0} + \triangle $ is the disturbed supply-demand vector and $ p_{\triangle, \mathcal{V}_{1}}$ is the subvector corresponding to node set $ \mathcal{V}_{1} $ of $ p_{\triangle} $.
The analogy between
\eqref{opt:original} and \{(\ref{eq:weight-control-nu}), \eqref{opt:weight-control-reformulation}\} is more apparent now:
\( \mathcal{J}_{1} \equiv
\mathcal{E}_{1} \) and \( \mathcal{J}_{2} \equiv
\mathcal{E}_{2} \), \( x_{\mathcal{I}_{1}} \equiv w_{\mathcal{E}_{1}} \cup \{ \triangle \} \) and \(
x_{\mathcal{I}_{2}} \equiv w_{\mathcal{E}_{2}} \), \( y_{1} \equiv
f_{\mathrm{eq}} \) and \( y_{2} \equiv \weq \), \(
q_{i}(x_{\mathcal{I}_{1}}, y_{2}) \equiv  f_{i}^{\tilde{\mathcal{G}}_{1}}(w_{\tilde{\mathcal{E}}_{1}},
p_{\triangle, \mathcal{V}_{1}})  \) for all \( i\in \mathcal{E}_{1} \) and \(
q_{i}(x_{\mathcal{I}_{2}}, y_{1})\equiv  f_{\mathrm{eq}}  f_{i}^{\mathcal{G}_{2}}(w_{\mathcal{E}_{2}},
a_{v_{1}v_{2}}) \) for all \( i\in \mathcal{E}_{2} \) \footnote{Every
  constraint in \( \mathcal{E}_{1} \) and \( \mathcal{E}_{2} \)  corresponds to
  two constraints in the formulation of (\ref{opt:original}), \ie,  \(
  c_{i}^{l} \le q_{i}(\cdot)
  \le c^{u}_{i} \) corresponds to \( q_{i}(\cdot)-c_{i}^{u} \le 0 \) and \(
 - q_{i}(\cdot)+c_{i}^{l} \le 0  \). },
 \( h_{1}(x_{\mathcal{I}_{1}}, y_{2}) \equiv
f_{v_{1}v_{2}}^{\tilde{\mathcal{G}}_{1}}(w_{\tilde{\mathcal{E}}_{1}}, p_{\triangle, \mathcal{V}_{1}}) \) and \( h_{2}(x_{\mathcal{I}_{2}}) \equiv
\mathcal{H}(w_{\mathcal{E}_{2}}) \). Since \( q_{i}(\cdot, y_{1}) \) is a linear
function with respect to \(y_{1} \) for all \( w_{\mathcal{E}_{2}}\in
D^{\mathcal{E}_{2}} \) (recall $ D^{\mathcal{E}_{2}} = [w_{ \mathcal{E}_{2}}^{l}, w_{ \mathcal{E}_{2}}^{u}] $) and \( i\in \mathcal{E}_{2} \), both
\( q_{i}(\cdot, y_{1}) \) and \( -q_{i}(\cdot, y_{1}) \) are quasiconvex with
respect to \( y_{1} \) for all \( i\in \mathcal{E}_{2} \). Furthermore, it is
straightforward to see that 
\begin{equation}
\label{eq:eqv-capacity-bound}
c_{i}^{l} \le q_{i}(\cdot, 0) = 0 \le c_{i}^{u} \qquad \forall 
 i\in \mathcal{E}_{2} 
\end{equation}
Therefore, proposition~\ref{cor-eqvl-transformation} can then be applied and gives the following result.

\begin{proposition}
\label{prop:robustness-margin-bilevel}
Consider a network with directed multigraph $\mc G = ( \mathcal{V}, \mathcal{E})$, lower and upper link weights $w^l \in \real_{>0}^{\mc E}$ and $w^u \in \real_{>0}^{\mc E}$ respectively, a supply-demand vector $p \in \real^{\mc V}$. If $ \mathcal{G} $ is reducible (cf. Definition \ref{def:reducible-net}) about $v_1, v_2 \in \mc V$ under $ p $ and the disturbances are nongenerative (cf. \ref{def:nongenerative-disturbance}),   then
(\ref{eq:weight-control-param}) is equal to the following
\begin{equation}
  \label{opt:weight-control-equiv}
  \begin{aligned}
\underset{ \delta \in \nongenset(p_{0}) }{\min} \quad  & \underset{\begin{subarray}{c}  \mu \geq 0 \\  w_{\mathcal{E}_{1}} \in D^{\mc E_1} \\  \weq \in D_{2}   \end{subarray} }{\max} && \mu\\
&    \,\,    \text{subject to} && c_{i}^{l} \le 
        f_{i}^{\tilde{\mathcal{G}}_{1}}(w_{\tilde{\mathcal{E}}_{1}},
        p_0 + \mu \delta ) \leq c^{u}_{i} 
   \quad \forall\, i \in \mathcal{E}_{1} \\
&&& g^{l}(\weq) \le  f_{v_1
  v_2}^{\tilde{\mathcal{G}}_{1}}(w_{\tilde{\mathcal{E}}_{1}},
p_0 + \mu \delta ) \leq g^{u}(\weq) 
  \end{aligned}
  \end{equation}
where $D^{\mc E_1}:=[w_{\mc E_1}^l, w_{\mc E_1}^u]$, \( D_{2} := [\mathcal{H}(w_{\mathcal{E}_{2}}^{l}), \mathcal{H}(w^{u}_{\mathcal{E}_{2}})] \),  \( g^{l}(\weq) := \min G(\weq) \) and \(
g^{u}(\weq) := \max G(\weq) \) with the set \( G(\weq)
\) defined as:
  \begin{equation*}
   G(\weq) := \{z\in \real \,|\, c_{ \mathcal{E}_{2}}^{l} \le z f^{\mathcal{G}_{2}}(w_{\mathcal{E}_{2}},
  a_{v_{1}v_{2}}) \le c_{ \mathcal{E}_{2}}^{u} \text{ for some } w_{\mathcal{E}_{2}}\in
  D^{\mathcal{E}_{2}} \text{ satisfying } \mathcal{H}(w_{\mathcal{E}_{2}}) =
  \weq
  \}
  \end{equation*}
  where $D^{\mc E_2}:=[w_{\mc E_2}^l, w_{\mc E_2}^u]$.
\end{proposition}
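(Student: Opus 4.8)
The plan is to apply the generic bilevel equivalence of Proposition~\ref{cor-eqvl-transformation} to the inner maximization of the weight control problem, after recasting its constraints through the network reduction of Lemma~\ref{lem:net-equiv-reduction}, and then to reinstate the outer minimization over the disturbance direction. First I would fix an arbitrary nongenerative direction $\delta \in \nongenset(p_0)$ and consider the inner problem \eqref{eq:weight-control-nu} with $p_\triangle = p_0 + \mu\delta$. Because $\delta$ is nongenerative, the disturbed supply-demand vector remains supported on $\mc V_1$, so $\mc G$ stays reducible about $v_1, v_2$ and Lemma~\ref{lem:net-equiv-reduction} applies: the capacity constraints $c^l \le f(w, p_\triangle) \le c^u$ over $\mc E = \mc E_1 \cup \mc E_2$ split into the constraints on $\tilde{\mc G}_1$ and $\mc G_2$ displayed in \eqref{opt:weight-control-reformulation}, coupled only through $f_{\mathrm{eq}}$ and $\weq$.

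Next I would make the identifications already recorded just before the statement, namely $\mc J_1 \equiv \mc E_1$, $\mc J_2 \equiv \mc E_2$, $x_{\mc I_1} \equiv (w_{\mc E_1}, \mu)$, $x_{\mc I_2} \equiv w_{\mc E_2}$, $y_1 \equiv f_{\mathrm{eq}}$, $y_2 \equiv \weq$, $h_1 \equiv f_{v_1 v_2}^{\tilde{\mc G}_1}$, $h_2 \equiv \mathcal{H}(w_{\mc E_2})$, and $q_0 \equiv \mu$, so that \{(\ref{eq:weight-control-nu}),~\eqref{opt:weight-control-reformulation}\} takes exactly the form \eqref{opt:original}. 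The two hypotheses of Proposition~\ref{cor-eqvl-transformation} are then immediate: each lower-level constraint function $q_i(x_{\mc I_2}, z) = z\, f_i^{\mc G_2}(w_{\mc E_2}, a_{v_1 v_2})$ is linear, hence quasiconvex, in $z$; and taking $z_0 = 0$ gives $q_i(x_{\mc I_2}, 0) = 0$, which satisfies $c_i^l \le 0 \le c_i^u$ by Assumption~\ref{ass:zero-interior}, \ie,~\eqref{eq:eqv-capacity-bound}. Proposition~\ref{cor-eqvl-transformation} therefore converts the inner problem into the bilevel program whose lower level defines $G(\weq)$ and whose upper-level constraint is $g^l(\weq) \le f_{v_1 v_2}^{\tilde{\mc G}_1}(w_{\tilde{\mc E}_1}, p_0 + \mu\delta) \le g^u(\weq)$.

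It remains to pin down the domain $D_2$ of $\weq$ and to reinstate the outer minimization. Since $\weq = h_2(w_{\mc E_2}) = \mathcal{H}(w_{\mc E_2})$ with $w_{\mc E_2}$ ranging over the box $[w^l_{\mc E_2}, w^u_{\mc E_2}]$, I would invoke continuity together with the coordinatewise monotonicity of $\mathcal{H}$ from Lemma~\ref{lem:equiv-weight-fun-increasing} to conclude, via the intermediate value theorem, that $D_2 = \range(h_2) = [\mathcal{H}(w^l_{\mc E_2}), \mathcal{H}(w^u_{\mc E_2})]$, as asserted. Finally, since the value equivalence of Proposition~\ref{cor-eqvl-transformation} holds for each fixed $\delta \in \nongenset(p_0)$, taking $\min_{\delta \in \nongenset(p_0)}$ on both sides yields \eqref{opt:weight-control-equiv}.

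I would expect the only genuinely delicate point to be the lower-level feasibility bookkeeping that Proposition~\ref{cor-eqvl-transformation} already absorbs: that $G(\weq)$ is guaranteed nonempty and closed for every $\weq \in D_2$, so that $g^l$ and $g^u$ are well defined and attained. This is precisely what the existence of $z_0 = 0$ and the closedness claim in Proposition~\ref{thm:equiv-transf-feasibility}(a) provide. Everything else is a direct instantiation, so the substance of the argument is the verification that the reduced weight control problem matches the quasiconvexity-plus-$z_0$ template, rather than any new estimate.
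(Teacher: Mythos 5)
Your proposal is correct and follows essentially the same route as the paper: recast the constraints via the network reduction of Lemma~\ref{lem:net-equiv-reduction} (valid for nongenerative disturbances since the disturbed supply-demand vector stays supported on $\mc V_1$), make the identification with \eqref{opt:original}, verify quasiconvexity via linearity in $f_{\mathrm{eq}}$ and the $z_0=0$ condition \eqref{eq:eqv-capacity-bound}, and invoke Proposition~\ref{cor-eqvl-transformation} for each fixed $\delta$ before reinstating the outer minimization. Your added remark that $D_2=\range(\mathcal H)$ is the stated interval by continuity and monotonicity of $\mathcal H$ is a small but welcome point of care that the paper leaves implicit.
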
 

Noting the structural similarity between the two inequality constraints in the
upper level problem in \eqref{opt:weight-control-equiv}, it is compelling to
interpret $g^{l}(\weq)$ and \( g^{u}(\weq) \) as the
\emph{equivalent capacities} (lower and upper respectively) of the equivalent virtual link
$(v_1,v_2)$. 


\begin{definition}[Equivalent Capacities]
\label{def:equivalent-capacity}
Consider a network consisting of directed multigraph \(
\mathcal{G}=(\mathcal{V} ,\mathcal{E}) \), lower and upper bounds on link
weights $w^l \in \real_{>0}^{\mc E}$ and $w^u \in \real_{>0}^{\mc E}$,
respectively,  and link lower and upper capacity functions $\map{c_i^{l}}{[w_i^l,w_i^u]}{\real_{<
    0}}$ and $\map{ c_{i}^{u}}{[w_i^l,w_i^u]}{\real_{>
    0}}$, $i \in \mc E$, respectively. Between any two nodes $v_1, v_2 \in \mc V$ and for a given equivalent weight $\weq \in [\mathcal{H}(w_{\mathcal{E}_{2}}^{l}, \mathcal{G}, v_{1}, v_{2}), \mathcal{H}(w^{u}_{\mathcal{E}_{2}}, \mathcal{G}, v_{1}, v_{2})] $, the corresponding equivalent lower capacity $\mc C^{l}(\weq)$ and
upper capacity \( \mathcal{C}^{u}(\weq) \) are defined as:  

\begin{equation}
  \label{eq:eqv-cap-def}
  \mathcal{C}^{l}(\weq, \mc G, v_1,v_2) := \min G(\weq); \quad \mathcal{C}^{u}(\weq, \mc G, v_1,v_2) := \max G(\weq)
\end{equation}
where 
  \begin{equation*}
  G(\weq) := \{z\in \real \,|\, c^{l} \le z f(w,
  a_{v_{1}v_{2}}) \le c^{u} \text{ for some } w \in [w^l, w^u] \text{ satisfying } \mathcal{H}(w, \mathcal{G}, v_{1}, v_{2}) =\weq \}
%
  \end{equation*}
and \( a_{v_1v_2} \in \{-1,0,+1\}^{\mc V}\) is such that its $v_1$-th component is $+1$, the $v_2$-th component is $-1$, and all the other components are zero. 
\end{definition}

For brevity in notations, we drop the dependence of $\mc C^l$ and $\mc C^u$ on $\mc G$, $v_1$ or $v_2$, when clear from the context. 

 \begin{remark}
\leavevmode
\label{rem:eqv-cap-problem}
\begin{enumerate}
\item[(a)] Note that the link capacity functions in Definition~\ref{def:equivalent-capacity} are assumed to be weight-dependent. This general setup allows definition of equivalent capacity to be applicable to networks whose links themselves could be equivalent links for some underlying sub-network. This feature is specifically used in extending the bilevel formulation to a multilevel framework in Section~\ref{subsec:multilevel}.
\item[(b)] Remarkably, the equivalent capacity can be expressed concisely in
terms of the equivalent weight, as opposed to the entire weight vector $w_{\mc
  E_2}$. This considerably reduces the complexity of the weight control
problem (\ref{eq:weight-control-original})-(\ref{eq:weight-control-nu}). 
\item[(c)] Computing the equivalent capacities for a given $ \weq $ between two nodes \( v_{1} \) and \( v_{2} \) of a
  network \( \mathcal{G} \) is equivalent to solving the weight control problem (\ref{eq:weight-control-param}) for \( \mathcal{G} \) with a single supply node \( v_{1} \), a single demand node \( v_{2} \), and under multiplicative disturbances -- however, with the 
additional equality constraint \( \mathcal{H}(\mathcal{G}, w, v_{1}, v_{2}) =\weq \). Therefore, when the network contains only one supply node and one demand node, finding the equivalent capacity functions 
\( \mathcal{C}^{l} \) and \( \mathcal{C}^{u} \) can be considered to be a
generalization of solving \( \alpha_{-}^* \) and \( \alpha_{+}^* \) in 
\eqref{eq:weight-control-param}. More specifically,  
\begin{equation*}
\alpha^{*}_{+} = \max_{\mathcal{H}(w^{l}) \le \weq \le \mathcal{H}(w^{u}) } \mathcal{C}^{u}(\weq); \quad 
\alpha^{*}_{-} = -\min_{\mathcal{H}(w^{l}) \le \weq \le \mathcal{H}(w^{u}) } \mathcal{C}^{l}(\weq)
\end{equation*} 
\end{enumerate}
 \end{remark}

\subsection{A Nested Bilevel Approach for Multilevel Formulation}
\label{subsec:multilevel}
For a reducible network as per Definition~\ref{def:reducible-net}, 
Proposition~\ref{prop:robustness-margin-bilevel} shows that the weight
control problem \eqref{eq:weight-control-original}-\eqref{eq:weight-control-nu} can be transformed into a bilevel optimization
problem \eqref{opt:weight-control-equiv}, in which the lower level problem involves
finding the equivalent lower and upper capacity functions of an appropriate
subnetwork.  We now extend this to a multilevel framework.

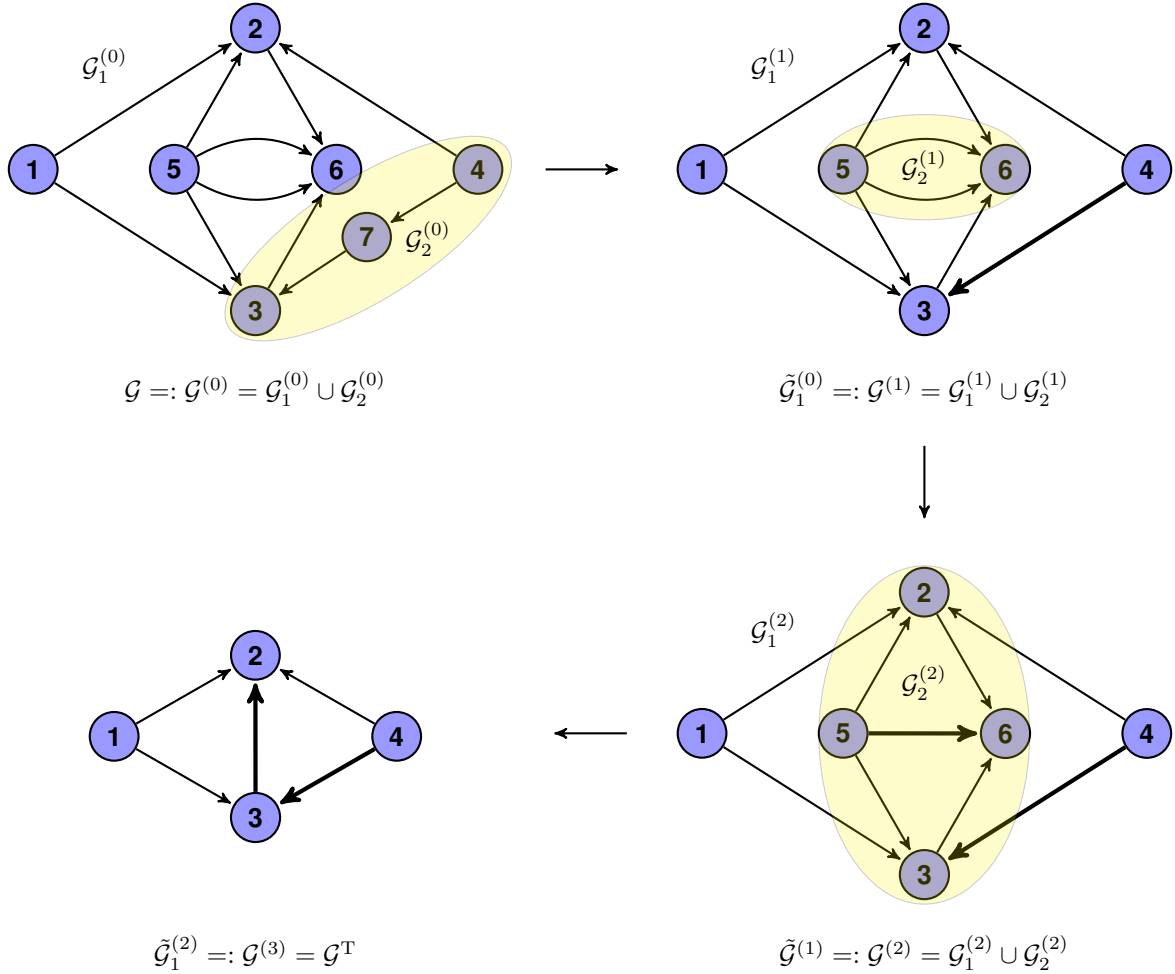
\begin{figure}[ht]
  \centering
\begin{tikzpicture}
 \matrix[row sep = 2cm, column sep = 2cm]
 {
 \begin{scope}[->,>=stealth',shorten >=1pt,auto, thick,every node/.style={circle,draw,font=\sffamily\bfseries,
                     fill=blue!40}]
  \node (2) at (0, 0) {2};
 \node (5) [below left = 1.4 and 0.6 of 2] {5};
 \node (6) [below right = 1.4 and 0.6 of 2] {6};
 \node (1) [left = 1.2  of 5] {1};
 \node (4) [right =1.2 of 6] {4};
 \node (3) [below right = 1.4 and 0.6 of 5] {3};
 \node (7) [above right = 0.5 and 1 of 3] {7};
 
 \path[every node/.style={font=\sffamily}]
   (1) edge (2)    (1) edge (3)    (7) edge (3)   (4) edge (2)    (4) edge (7)  
   (5) edge [bend left] (6)    (5) edge [bend right] (6)
   (5) edge (3)    (5) edge (2)    (2) edge (6)   (3) edge (6);
\end{scope}

\begin{scope}[every node/.style={font=\sffamily}]
\draw[fill=yellow, opacity=0.2, rotate around= {32.5: (1.5, -2.8)}] (1.5, -2.8)  circle [x radius=2.2cm, y radius=0.8cm];
\node (G02) at (2.3, -2.8) {\( \mathcal{G}_{2}^{(0)}\)};
\node (G01) at (-2, -0.5) {\( \mathcal{G}_{1}^{(0)} \)};  
\node (G0) at (0, -4.8) {\( \mathcal{G} =:\mathcal{G}^{(0)}  = \mathcal{G}_{1}^{(0)}\cup \mathcal{G}_{2}^{(0)} \)};  
\end{scope}

& 
 \begin{scope}[->,>=stealth',shorten >=1pt,auto, thick,every node/.style={circle,draw,font=\sffamily\bfseries, fill=blue!40}]
 \node (12) at (0, 0) {2};
 \node (15) [below left = 1.4 and 0.6 of 12] {5};
 \node (16) [below right = 1.4 and 0.6 of 12] {6};
 \node (11) [left = 1.2  of 15] {1};
 \node (14) [right =1.2 of 16] {4};
 \node (33) [below right = 1.4 and 0.6 of 15] {3};
 
 \path[every node/.style={font=\sffamily}]
   (11) edge (12)    (11) edge (33)    (14) edge[ultra thick] (33)    (14) edge (12)    
   (15) edge [bend left] (16)  (15) edge [bend right] (16) 
   (15) edge (33)    (15) edge (12)    (12) edge (16)   (33) edge (16);
\end{scope}

\begin{scope}[every node/.style={font=\sffamily}]
\draw[fill=yellow, opacity=0.2] (0,-1.85) circle [x radius=1.4cm, y radius=0.7cm];
   \node (G12) at (0, -1.85) {\(\mathcal{G}_{2}^{(1)}\)};
\node (G11) at (-2, -0.5) {\( \mathcal{G}_{1}^{(1)} \)};  
\node (G1) at (0, -4.8) {\( \tilde{ \mathcal{G}}^{(0)}_{1} =: \mathcal{G}^{(1)} =  \mathcal{G}_{1}^{(1)}\cup \mathcal{G}_{2}^{(1)} \)};  
\end{scope}

\\  
 \begin{scope}[->,>=stealth',shorten >=1pt,auto, thick,every node/.style={circle,draw,font=\sffamily\bfseries, fill=blue!40}]
 \node (42) at (0, -0.84) {2};
 \node (41) [below left = 0.6 and 1.4 of 42] {1};
 \node (44) [below right =0.6 and 1.4 of 42] {4};
 \node (43) [below right =0.6 and 1.4 of 41] {3};
 
 \path[every node/.style={font=\sffamily}]
   (41) edge (42)    (41) edge (43)    (44) edge [ultra thick] (43)   (44) edge (42)    (43) edge [ultra thick] (42);
\end{scope} 

\begin{scope}[every node/.style={font=\sffamily}]
\node (G1) at (0, -4.8) {\( \tilde{\mathcal{G}}_{1}^{(2)} =: \mathcal{G}^{(3)} = \mathcal{G}^{\mathrm{T}}  \)};  
\end{scope}

&
 \begin{scope}[->,>=stealth',shorten >=1pt,auto, thick,every node/.style={circle,draw,font=\sffamily\bfseries, fill=blue!40}]
 \node (12) at (0, 0) {2};
 \node (15) [below left = 1.4 and 0.6 of 12] {5};
 \node (16) [below right = 1.4 and 0.6 of 12] {6};
 \node (11) [left = 1.2  of 15] {1};
 \node (14) [right =1.2 of 16] {4};
 \node (13) [below right = 1.4 and 0.6 of 15] {3};
 
 \path[every node/.style={font=\sffamily}]
   (11) edge (12)    (11) edge (13)    (14) edge[ultra thick] (13)    (14) edge (12)    
   (15) edge [ultra thick] (16)  
   (15) edge (13)    (15) edge (12)    (12) edge (16)   (13) edge (16);
\end{scope}

\begin{scope}[every node/.style={font=\sffamily}]
\draw[fill=yellow, opacity=0.2] (0,-1.9) circle [x radius=1.4cm, y radius=2.25cm];
   \node (G22) at (0, -1.2) {\(\mathcal{G}_{2}^{(2)}\)};
\node (G21) at (-2, -0.5) {\( \mathcal{G}_{1}^{(2)} \)};  
\node (G2) at (0, -4.8) {\( \tilde{\mathcal{G}}^{(1)}  =: \mathcal{G}^{(2)} =  \mathcal{G}_{1}^{(2)}\cup \mathcal{G}_{2}^{(2)} \)};  
\end{scope}

\\
};

\begin{scope}[->,>=stealth',shorten >=1pt,auto, thick]
 \draw  (4)++(0.9, 0) -> +(1, 0);
 \draw  (11)++(-1, 0) -> +(-1, 0);
 \draw  (33)++(0, -1.8) -> +(0, -1);
\end{scope} 
\end{tikzpicture}
\caption{Illustration of recursive network reduction, where the supply node set is \(\{1, 4\} \) and the
    demand node set is \( \{ 2, 3 \}\); the thick edges denote the equivalent links. The original network $ \mathcal{G} $ is reduced into the terminal network $ {\mathcal{G}}^{\mathrm{T}} $ in three reductions:  (1) subnetwork $ \mathcal{G}_{2}^{(0)} $  $ \rightarrow $ link $ (4, 3) $; (2) subnetwork $ \mathcal{G}_{2}^{(1)} $  $ \rightarrow $ link $ (5, 6) $; (3) subnetwork $ \mathcal{G}_{2}^{(2)} $  $ \rightarrow $ link $ (3, 2) $. $ \mathcal{G}^{k} : = \tilde{\mathcal{G}}^{(k-1)}_{1}$ is the resulting network after $ k $th reduction. 
  Notice the first and the second reduction can be implemented in parallel and the terminal network $ \mathcal{G}^{\mathrm{T}} = \mathcal{G}^{(3)}$ is not reducible.  \label{fig:net-reduction} }
\end{figure}

%

A comparison with \eqref{eq:weight-control-original}-\eqref{eq:weight-control-nu} reveals that the upper
level problem \eqref{opt:weight-control-equiv} is indeed the same as
\eqref{eq:weight-control-original}-\eqref{eq:weight-control-nu} written for the sub-network $\tilde{\mc G}_1$,  where
the equivalent link $ (v_{1}, v_{2}) $ has weight $w_{\mathrm{eq}}=\mc H(w_{\mc E_2})\in
[\mathcal{H}(w^{l}_{\mathcal{E}_{2}}), \mathcal{H}(w^{u}_{\mathcal{E}_{2}})] $
and weight dependent lower and upper capacities $\mc C^{l}(\weq)$ and \(
\mathcal{C}^{u}(\weq) \), respectively. If the reduced sub-network \(
\tilde{\mathcal{G}}_{1} =:\mc G^{(1)} \) is also reducible as per Definition \ref{def:reducible-net}, with its sub-networks $\mc G_{1}^{(1)}$ and $\mc G_{2}^{(1)}$, one can apply Proposition~\ref{cor-eqvl-transformation} to \eqref{opt:weight-control-equiv} to get an equivalent bilevel formulation for $\mc G^{(1)}$ if: (a) $q_i$ for $i \in \mc E_{2}^{(1)} = \tilde{ \mathcal{E}}_{1}$ are quasiconvex, and (b) the equivalent lower and upper capacity functions for links in $\mc G_{2}^{(1)}$ are strictly negative and positive respectively, as in \eqref{eq:eqv-capacity-bound}. (a) is satisfied trivially as before because of linearity of $q_i$, and (b) follows from the next result. 
%
%
\begin{lemma}
Consider a network consisting of directed multigraph \(
\mathcal{G}=(\mathcal{V} ,\mathcal{E}) \),  lower and upper bounds on link
weights $w^l \in \real_{>0}^{\mc E}$ and $w^u \in \real_{>0}^{\mc E}$,
respectively,  and link lower and upper capacity functions $\map{c_i^{l}}{[w_i^l,w_i^u]}{\real_{<
    0}}$ and $\map{ c_{i}^{u}}{[w_i^l,w_i^u]}{\real_{>
    0}}$, $i \in \mc E$, respectively. The equivalent lower capacity $\mc C^{l}(\weq)$ and
upper capacity \( \mathcal{C}^{u}(\weq) \) between two given nodes $v_1, v_2 \in \mc V$ satisfy the following:
\begin{equation*}
  \mathcal{C}^{l}(\weq) < 0 <
  \mathcal{C}^{u}(\weq) \quad \forall \, \weq \in [\mathcal{H}(w^{l}), \mathcal{H}(w^{u})]
\end{equation*}
\end{lemma}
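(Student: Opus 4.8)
The plan is to show that $0$ lies strictly between $\mathcal{C}^l(\weq)$ and $\mathcal{C}^u(\weq)$ by exhibiting, for each admissible $\weq$, a single weight vector $w^\star$ realizing $\weq$ for which the set of feasible $z$ is an interval containing $0$ in its interior. Recall from Definition~\ref{def:equivalent-capacity} that $G(\weq)$ is the union, over all $w \in [w^l, w^u]$ with $\mathcal{H}(w) = \weq$, of the $z$-intervals $\{z : c^l(w) \le z\, f(w, a_{v_1 v_2}) \le c^u(w)\}$; since $\mathcal{C}^l(\weq) = \min G(\weq)$ and $\mathcal{C}^u(\weq) = \max G(\weq)$, it suffices to produce one point of $G(\weq)$ that is strictly negative and one that is strictly positive.

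First I would establish that such a $w^\star$ exists. By Lemma~\ref{lem:equiv-weight-fun-increasing}, $\mathcal{H}$ is non-decreasing in each component, so $\mathcal{H}(w^l) \le \mathcal{H}(w^u)$; being continuous in $w$ on $\real_{>0}^{\mc E}$, the map $t \mapsto \mathcal{H}(w^l + t(w^u - w^l))$ is continuous on $[0,1]$ and runs from $\mathcal{H}(w^l)$ to $\mathcal{H}(w^u)$, so by the intermediate value theorem it attains every value $\weq \in [\mathcal{H}(w^l), \mathcal{H}(w^u)]$ at some $w^\star \in [w^l, w^u]$.

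Next, fixing this $w^\star$, let $f := f(w^\star, a_{v_1 v_2})$ be the flow induced by a unit injection at $v_1$ and withdrawal at $v_2$ (cf. Lemma~\ref{lemma:flow-solution}), and examine the per-link constraint $c_i^l(w_i^\star) \le z f_i \le c_i^u(w_i^\star)$. For links with $f_i = 0$ the constraint reduces to $c_i^l(w_i^\star) \le 0 \le c_i^u(w_i^\star)$, which holds for every $z$ because the capacity functions are strictly sign-definite; for $f_i > 0$ the admissible $z$ form the interval $[c_i^l(w_i^\star)/f_i,\, c_i^u(w_i^\star)/f_i]$, and for $f_i < 0$ the interval $[c_i^u(w_i^\star)/f_i,\, c_i^l(w_i^\star)/f_i]$. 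In either nonzero case the lower endpoint is strictly negative and the upper endpoint strictly positive, since $c_i^l(w_i^\star) < 0 < c_i^u(w_i^\star)$. The feasible $z$-set is the intersection of these intervals, i.e. $[a, b]$ with $a = \max_i(\text{lower endpoints})$ and $b = \min_i(\text{upper endpoints})$. Because $Af = a_{v_1 v_2} \neq 0$ forces a nonzero net flow out of $v_1$, at least one $f_i \neq 0$, so the intersection is over a nonempty finite family of bounded intervals each containing $0$ in its interior; as a maximum of strictly negative numbers $a < 0$, and as a minimum of strictly positive numbers $b > 0$.

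Finally, since $[a,b] \subseteq G(\weq)$, I obtain $\mathcal{C}^l(\weq) = \min G(\weq) \le a < 0$ and $\mathcal{C}^u(\weq) = \max G(\weq) \ge b > 0$ (and should $G(\weq)$ be unbounded, the conclusion is immediate). The only subtlety to watch is the degenerate possibility that the whole flow vector vanishes, which would leave $z$ unconstrained; this is excluded by flow conservation, since $a_{v_1 v_2}$ places a $+1$ at $v_1$, forcing $\sum_{i \in \mc E_{v_1}^+} f_i - \sum_{i \in \mc E_{v_1}^-} f_i = 1$. I expect this bookkeeping — the sign flip in the $f_i < 0$ case together with the non-vanishing-flow argument — to be the only place demanding care; everything else follows directly from the strict sign-definiteness of the capacity functions.
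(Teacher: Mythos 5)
Your proposal is correct, and it reaches the conclusion by a somewhat different (and more self-contained) route than the paper. The paper's proof first notes $0\in G(\weq)$ and then produces a single \emph{uniform} margin $z_{0}=\min_{i\in\mathcal{E}}\{-\max_{w_i}c_{i}^{l}(w_{i}),\ \min_{w_i}c_{i}^{u}(w_{i})\}>0$ valid for every $\weq$ at once; the key ingredient making this work is the bound $|f_{i}(w,a_{v_{1}v_{2}})|\le 1$ from Lemma~\ref{lem:flow-less-than-inflow}, which guarantees $|z_{0}f_{i}|\le z_{0}\le\min\{-c_i^l(w_i),c_i^u(w_i)\}$ so that $\pm z_{0}\in G(\weq)$. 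You instead argue pointwise: for each $\weq$ you exhibit a realizing $w^{\star}$ (via monotonicity of $\mathcal{H}$ from Lemma~\ref{lem:equiv-weight-fun-increasing}, continuity, and the intermediate value theorem — a surjectivity step the paper leaves implicit) and observe that the feasible $z$-set at $w^{\star}$ is a finite intersection of intervals each containing $0$ in its interior, hence itself contains $0$ in its interior. This avoids Lemma~\ref{lem:flow-less-than-inflow} entirely and also sidesteps a small delicacy in the paper's argument, namely that $\min_{w_i}c_i^u(w_i)$ is only guaranteed to be strictly positive when the capacity functions are continuous (which they are in the paper's application, being $\mathcal{S}_1$); the price is that your lower bound on $|\mathcal{C}^{l}|,\ \mathcal{C}^{u}$ is not uniform in $\weq$, which the lemma does not require anyway. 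Your handling of the $f_i=0$ links and the non-vanishing of $f$ via $Af=a_{v_1v_2}\neq 0$ is exactly the right bookkeeping.
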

\begin{proof}
Since \( c_{i}^{l} < 0 < c_{i}^{u} \) for all \( i\in
\mathcal{E} \), we have \( 0 \in G(\weq) \) in Definition
\ref{def:equivalent-capacity}. It is easy to see that, for \( z_{0} = \min_{i\in \mathcal{E}}\{- \max_{w_{i}^{l}\le
  w_{i} \le w_{i}^{u}}
c_{i}^{l}(w_{i}), \min_{w_{i}^{l} \le w_{i}\le w_{i}^{u}} c_{i}^{u}(w_{i}) \}>0
\), we have \( \mathcal{C}^{l}(\weq) \le -z_{0} < 0 <
z_{0} < \mathcal{C}^{u}(\weq) \) for all \( \weq \in [\mathcal{H}(w^{l}),
\mathcal{H}(w^{u})] \). This is true because \( |f_{i}(w, a_{v_{1}v_{2}}) |\le 1
\) for all \( i\in \mathcal{E} \) from Lemma \ref{lem:flow-less-than-inflow} in Appendix~\ref{sec:flow-sol-prop}.
\end{proof} 

A recursive application of this procedure leads to an equivalent multilevel formulation for the  original weight control problem in \eqref{opt:weight-control-equiv}; the process stops when the sub-network corresponding to the upper level problem, referred to as the \emph{terminal network}, is not reducible, as per Definition \ref{def:reducible-net}. The resulting multilevel hierarchy consists of a series of a collection of lower level problems, and an upper level problem corresponding to the last recursion. We appropriately then refer to the former as \emph{reduction problems} and the latter as the \emph{terminal problem}. The reduction problem $\mathbf{P}_r$ is formalized next in Problem \ref{prob:reduction}, and the terminal problem is the generalized weight control problem $\mathbf{P}$ (cf. Problem \ref{prob:weight-control})  on the terminal network.
   
\begin{problem}[Reduction problem $ \mathbf{P}_{r} $]
 \label{prob:reduction}
\begin{center}
\begin{tikzpicture} 
  \node[minimum height = 1cm, minimum width = 2cm, draw,ultra thick] (1) {\( \mathbf{P}_{r} \) };
  \node (in) [left = 1cm of 1] {($ \mathcal{G}, w^{l}, w^{u},  c^{l}, c^{u}, v_{1}, v_{2} $)}; 
  \node (out) [right = 1cm of 1] { $ (w_{v_{1}v_{2}}^{l}, w_{v_{1}v_{2}}^{u}, c_{v_{1}v_{2}}^{l}, c_{v_{1}v_{2}}^{u} )$};
  \draw[->, thick]    (in) -- (1); 
   \draw[->, thick]    (1) -- (out); 
\end{tikzpicture}
\end{center}
\begin{description}
\item[input] network \( \mathcal{G} = (\mc V, \mc E) \) with link weights bounds $w^l \in \real_{>0}^{\mc E}$ and $w^u \in \real_{>0}^{\mc E}$, link capacity functions $\map{c_{i}^l}{[w_{i}^l, w_{i}^u]}{\real_{<0}} $ and $\map{c_{i}^u}{[w_{i}^l, w_{i}^u]}{\real_{>0}} $ for $ i\in \mathcal{E} $, and nodes $v_1, v_2 \in \mc V$.
\item[output] equivalent lower and upper weight bounds: $w_{v_{1}v_{2}}^l=\mc H(w^l)$ and $w_{v_{1}v_{2}}^u=\mc H(w^u)$, where $\mc H$ is as in Definition~\ref{def:equivalent-weight}; equivalent lower and upper capacity functions: $c_{v_{1}v_{2}}^{l} = \map{\mc C^l}{[w_{v_{1}v_{2}}^l, w_{v_{1}v_{2}}^u]}{\real_{<0}}$ and  $c_{v_{1}v_{2}}^{u} = \map{\mc C^u}{[w_{v_{1}v_{2}}^l, w_{v_{1}v_{2}}^u]}{\real_{>0}}$, where $\mc C^l(\weq)$ and $\mc C^u(\weq)$ are as in Definition~\ref{def:equivalent-capacity}.
\end{description}
\end{problem}   

\begin{problem}[Generalized weight control problem  $\mathbf{P}$]
\label{prob:weight-control}
\begin{center}
\begin{tikzpicture} 
  \node[minimum height = 1cm, minimum width = 2cm, draw,ultra thick] (1) {\( \mathbf{P} \) };
  \node (in) [left = 1cm of 1] {($ \mathcal{G}, w^{l}, w^{u},  c^{l}, c^{u}, p_{0} $)}; 
  \node (out) [right = 1cm of 1] { $ \nu^{*}( \mathcal{G})$};
  \draw[->, thick]  (in) -- (1) ; 
  \draw[->, thick]  (1) -- (out); 
\end{tikzpicture}
\end{center}
\begin{description}
\item[input] network \( \mathcal{G} = (\mc V, \mc E) \) with link weights bounds $w^l \in \real_{>0}^{\mc E}$ and $w^u \in \real_{>0}^{\mc E}$, link capacity functions $\map{c_{i}^l}{[w_{i}^l, w_{i}^u]}{\real_{<0}} $ and $\map{c_{i}^u}{[w_{i}^l, w_{i}^u]}{\real_{>0}} $ for $ i\in \mathcal{E} $, and initial supply-demand vector $ p_{0} \in \real^{ \mathcal{V}} $
\item[output]  margin of robustness: $ \nu^{*}( \mathcal{G}) =  \nu^{*}(\mathcal{G}, w^{l}, w^{u}, c^{l}, c^{u}, p_{0})$, which is obtained by solving \eqref{eq:weight-control-original} and (\ref{eq:weight-control-nu}) with weight dependent capacities.
\end{description}
\end{problem}  

Figure~\ref{fig:net-reduction} provides an illustration for a sample network, where the process of replacing $\mc G_2^{(0)}$ with an equivalent link in $\mc G^{(1)}_1:=\tilde{\mc G}^{0}_1$ corresponds to solving the reduction problem $\mathbf{P}_r$ with input comprising of weights and capacities bounds for links associated with $\mc G_2^{(0)}$, and the terminal problem corresponds to the weight control problem $\mathbf{P}$ for \(\mathcal{G}^{\mathrm{T}} =\mathcal{G}^{(3)}\). The formal description of the multilevel programming formulation in terms of recursive solution to reduction problems and solution to the terminal problem is provided in Algorithm~\ref{alg:multilevel}.  
   

\begin{algorithm}[htb!]
\Input {network \( \mathcal{G} = (\mc V, \mc E) \) with link weights bounds $w^l \in \real_{>0}^{\mc E}$ and $w^u \in \real_{>0}^{\mc E}$, link capacity bounds $c^l \in \real_{<0}^{\mc E}$ and $c^u \in \real_{>0}^{\mc E}$, and supply-demand vector $ p_{0} \in \real^{ \mathcal{V}} $ }
\Output{ margin of robustness \( \nu^{*}( \mathcal{G}) \) }
\textbf{initialization}: $ k = 0 $, $ \mathcal{G}^{(0)} = \mc G $, $ \left(w^{l}\right)^{(0)} = w^{l} $, $ \left(w^{u}\right)^{(0)} = w^{u} $, $ \left(c^{l}(\cdot) \right)^{(0)} \equiv c^{l} $, $ \left(c^{u}(\cdot) \right)^{(0)} \equiv c^{u} $ \;
\While{ $\mathcal{G}^{(k)} $ is reducible  under $ p_{0} $ about $ v_{1}^{(k)}, v_{2}^{(k)} \in \mathcal{V}$ }{
 implement network decomposition and obtain subnetworks $ \mathcal{G}_{1}^{(k)} = ( \mathcal{V}^{(k)}, \mathcal{E}_{1}^{(k)} ) $  and $ \mathcal{G}_{2}^{(k)} = ( \mathcal{V}_{2}^{(k)}, \mathcal{E}_{2}^{(k)} ) $ such that  $ \mathcal{G}^{(k)} = \mathcal{G}^{(k)}_{1} \cup \mathcal{G}^{(k)}_{2} $\;
  solve $ \mathbf{P}_{r} $ with input $ \left( \mathcal{G}^{(k)}_{2}, \left(w^{l}\right)^{(k)}, \left(w^{u}\right)^{(k)},  \left(c^{l}\right)^{(k)}, \left(c^{u}\right)^{(k)}, v_{1}^{(k)}, v_{2}^{(k)} \right) $ and obtain output $ \left( (w^{l}_{v_{1}^{(k)}v_{2}^{(k)}})^{(k+1)}, (w^{u}_{v_{1}^{(k)}v_{2}^{(k)}})^{(k+1)},  (c^{l}_{v_{1}^{(k)}v_{2}^{(k)}})^{(k+1)},  (c^{u}_{v_{1}^{(k)}v_{2}^{(k)}})^{(k+1)} \right) $  \;
   $ \mathcal{E}^{(k+1)} =  \mathcal{E}_{1}^{(k)}\cup (v_{1}^{(k)}, v_{2}^{(k)}) $, $ \mathcal{G}^{(k+1)} = \left( \mathcal{V}_{1}^{(k)}, \mathcal{E}^{(k+1)}  \right) $, $ \left(w_{i}^{s}\right)^{(k+1)} = (w_{i}^{s})^{(k)} $, $ \left(c_{i}^{s}\right)^{(k+1)} = (c_{i}^{s})^{(k)} $ for all  $ i \in  \mathcal{E}^{(k)} $ and  $ s \in \{ l, u\} $ \;
   $ k = k+1 $\;
   }
solve $ \mathbf{P} $ and obtain $ \nu^{*}( \mathcal{G}) = \nu^{*}\left( \mathcal{G}^{(k)}, \left(w_{ \mathcal{E}^{(k)}}^{l}\right)^{(k)}, \left(w_{ \mathcal{E}^{(k)} }^{u}\right)^{(k)},  \left(c_{ \mathcal{E}^{(k)}}^{l}\right)^{(k)}, \left(c_{ \mathcal{E}^{(k)} }^{u}\right)^{(k)}, p_{0} \right) $ \;
\Return{$\nu^{*}( \mathcal{G}) $}
\caption{ Multilevel programming formulation. \label{alg:multi-level-formulation}}
\label{alg:multilevel}
\end{algorithm}

\section{An Efficient Solution Methodology for the Multilevel Programming Formulation}
\label{sec:multilevel}
\label{sec:tree-reducible-net}

In this section, we show that the two types of problems in the multilevel formulation for the weight control problem (\ie, reduction and terminal problems) can be solved explicitly for \emph{tree reducible networks}.

\subsection{Tree reducible network}

\begin{definition}[Tree reducible network]
\label{def:tree-reducible-net} 
A network with directed multigraph $\mc G=(\mc V, \mc E)$ and supply-demand vector $p \in \real^{\mc V}$ is called tree reducible (see also \cite{gitler2011terminal}) if there exists a sequence consisting of the following three operations through which the undirected graph $\mc G^{u}=(\mc V, \mc E^u)$ corresponding to $\mc G$ can be reduced to a tree\footnote{An undirected graph is called a tree if any two nodes are connected by at most one path.}:
\begin{enumerate}
\item Degree-one reduction: delete a degree\footnote{In an undirected graph, degree of a node is
    equal to the number of links incident on it.} one vertex with $p_v=0$ and its incident edge. 
\item Series reduction: delete a degree two vertex $ v_{2} $ and its two
incident edges $ \{v_{1}, v_{2}\} $ and $ \{v_{2}, v_{3}\} $, and add a new edge $
\{v_{1}, v_{3}\} $.
\item Parallel reduction: if a node pair has multiple, \ie, two or more, links between them, then remove one of those links. 
\end{enumerate}
In particular, if the terminal network produced from the above three reduction operations contains only one link, then we call the original network $\mc G$ link reducible.
\end{definition}

Same as the definition of reducible network (cf. Definition \ref{def:reducible-net} and Remark \ref{def:reducible-net}), the definition of a tree reducible network involves 
conditions on the graph topology as well as the locations of supply and demands nodes. For example, a network consisting of the graph in 
Fig. \ref{fig:tree-reducible-net} is tree reducible if the supply
and demand nodes only include \( v_{1} \) and \( v_{4} \), while it is not
tree reducible if \( v_{1} \) and \( v_{2} \) are the supply nodes and
\( v_{4} \) is the demand node. 
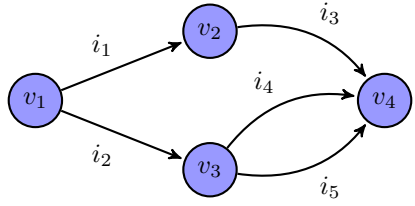
\begin{figure}[htbp!]
\begin{center}
\begin{tikzpicture}[->,>=stealth',shorten >=1pt,auto,node distance=0.4cm and 1.8cm,
                   thick,main node/.style={circle,draw,font=\sffamily\bfseries, fill=blue!40}]
 \node[main node] (1) {$v_1$};
 \node[main node] (2) [above right =of 1] {$v_2$};
 \node[main node] (3) [below right =of 1] {$v_3$};
 \node[main node] (4) [below right =of 2] {$v_4$};

 \path[every node/.style={font=\sffamily}]
   (1) edge node{$i_{1}$} (2)
   (1) edge node[below left]{$i_{2}$} (3)
   (2) edge[bend left] node{\( i_{3} \)} (4)
   (3) edge[bend left] node {\( i_{4} \)} (4)
   (3) edge[bend right] node[below right] {\( i_{5} \)} (4);
\end{tikzpicture}
\caption{A candidate graph topology for tree reducible network}
\label{fig:tree-reducible-net}
\end{center}
\end{figure}

It is straightforward to see, \eg, as in Remark \ref{rem:tree-flow}, that, for a network with tree topology, the link flows are independent of link weights. The next result shows that, for a tree reducible network, the link flow \emph{directions} are independent of link weights.
%
\begin{lemma}
\label{lem:tree-reducible-net-flow-direction}
For a tree reducible network consisting of directed multigraph \( \mathcal{G}=(\mathcal{V}, \mathcal{E}) \) and supply-demand vector \( p \in \real^{\mc V}\), 
\begin{equation*}
\sign \left(f_{i}( w, p) \right) = \sign\left(f_{i}(\tilde{w}, p)\right)
\quad \forall  i\in \mathcal{E} , 
w, \tilde{w} \in \real^{\mathcal{E}}_{> 0}
\end{equation*}
\end{lemma}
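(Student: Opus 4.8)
The plan is to induct on the number $n$ of reduction operations (degree-one, series, or parallel) needed to reduce the undirected graph $\mc G^u$ to a tree, proving the claim simultaneously for \emph{all} weight vectors $w \in \real_{>0}^{\mc E}$. The key preliminary observation is that, by Ohm's law (the second equation in \eqref{model}) together with $w_i > 0$, one has $\sign(f_i(w,p)) = \sign(\phi_{\sigma(i)} - \phi_{\tau(i)})$ for every link $i$, so it is equivalent to track the sign of phase-angle differences across links. For the base case $n=0$ the network is a tree, and Remark~\ref{rem:tree-flow} already gives that $f$ is uniquely determined by $Af=p$ alone, hence is independent of $w$ and a fortiori has weight-independent sign.

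For the inductive step I would let $\mc G'$ denote the network obtained from $\mc G$ by applying the first reduction operation, so that $\mc G'$ is reducible to a tree in $n-1$ steps; by the induction hypothesis the flow signs in $\mc G'$ take the same fixed values for every weight vector. It then remains to relate, for an arbitrary weight $w$ on $\mc G$, the signs of $f^{\mc G}(w,p)$ to the weight-independent signs of the flows in $\mc G'$, treating the three operations in turn. For a degree-one reduction at a node $v$ with $p_v=0$, flow conservation forces the unique incident link to carry zero flow for every $w$, so its sign is identically $0$; since this link carries no flow, the restriction of the $\mc G$-solution to the remaining links solves the DC equations of $\mc G'$, so by uniqueness (Lemma~\ref{lemma:flow-solution}) those flows coincide with the flows of $\mc G'$. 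For a series reduction through a degree-two node $v_2$ with $p_{v_2}=0$, and for a parallel reduction of the links between a node pair $v_1,v_2$, I would invoke the equivalent network reduction of Lemma~\ref{lem:net-equiv-reduction}: the flows on the links \emph{not} involved in the reduction coincide in $\mc G$ and $\mc G'$, while the flows on the reduced links are determined by the flow $f_{\mathrm{eq}}$ on the equivalent link. In the series case the two links carry, up to their fixed orientations, the same through-flow $f_{\mathrm{eq}}$; in the parallel case each link carries $w_i(\phi_{v_1}-\phi_{v_2})$ and hence shares the sign of $f_{\mathrm{eq}} = \bigl(\sum_j w_j\bigr)(\phi_{v_1}-\phi_{v_2})$. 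In both cases the sign of each reduced link is a fixed function of $\sign(f_{\mathrm{eq}})$.

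The one point needing care, and the main obstacle, is that the weight assigned to the equivalent link in $\mc G'$ depends on $w$ (it equals $\sum_j w_j$ for parallel links and $\bigl(\sum_j 1/w_j\bigr)^{-1}$ for the series case, cf.\ Example~\ref{ex:eq-weight}), which could a priori make $\sign(f_{\mathrm{eq}})$ depend on $w$. The resolution is that the induction hypothesis is quantified over \emph{all} weight vectors of $\mc G'$: whatever value the equivalent weight takes, the sign of every flow of $\mc G'$, including $f_{\mathrm{eq}}$, is the same fixed quantity. Composing this weight-independent sign with the fixed sign relations derived above yields that $\sign(f_i^{\mc G}(w,p))$ is independent of $w$ for every $i \in \mc E$, completing the induction. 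I would close by noting that the reduction sequence itself is dictated by the topology and the locations of the supply-demand nodes, not by the specific weights, so the same sequence is valid for every $w$; this is precisely what legitimizes comparing $w$ and $\tilde w$ through the common reduced network.
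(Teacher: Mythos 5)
Your proposal is correct and follows essentially the same route as the paper: the paper's (much terser) proof also reduces to the tree base case and then argues that flow direction is invariant under each of the three reduction operations (zero flow on a removed pendant link, equal signs through a series node, and common sign of parallel links with their equivalent link). Your added care — making the induction hypothesis range over \emph{all} weight vectors of the reduced network so that the $w$-dependence of the equivalent weight is harmless, and noting explicitly that the series-reduced node must carry zero injection — fills in details the paper leaves implicit but does not change the argument.
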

\begin{proof}
 It is clear that the above result holds for a tree, as a special case of tree
 reducible networks. For a general tree reducible network, the result follows from invariance of
 flow direction in the three operations in the definition of tree reducible networks. In degree-one
 reduction, the link removed has flow equal zero. In series reduction, \(
 \sign\left(f_{v_{1}v_{2}}(w, p) \right) = \sign\left( f_{v_{2}v_{3}}(w, p) \right) = \sign\left(
   f_{v_{1}v_{3}}(w, p)\right) \) for all \( w > 0 \). In parallel reduction, the removed link
 has the same direction of flow as the remaining links. 
\end{proof}
\begin{remark}
\label{rem:tree-reducible-net-positive-flow}
Lemma \ref{lem:tree-reducible-net-flow-direction} implies that, for a tree reducible network with a given supply-demand vector $p \in \real^{\mc V}$, one can choose direction convention for links such that \( f(w, p)\ge 0 \) for
  all \( w>0 \). We implicitly adopt this convention for the rest of this section\footnote{We emphasize that the lower and upper capacities $c^l$ and $c^u$, respectively, are defined with respect to chosen direction convention. 
%
%
%
%
    }.
\end{remark}

Recall from Section~\ref{subsec:multilevel} that a reduction problem in the
multilevel formulation is (an equality constrained) weight control problem for a subnetwork of the original network. Since the original network is assumed to be tree reducible, this subnetwork is link reducible. 
Therefore, Remark
\ref{rem:tree-reducible-net-positive-flow} implies that the reduction problem for the network, \ie, a problem of the kind \eqref{eq:eqv-cap-def}, can be simplified as 
\begin{equation}
  \label{opt:tree-reduction-problem-simplified}
  \begin{array}{@{}rc@{\quad}l@{\qquad}rc@{\quad}l@{}}
\mathcal{C}^{l}(\weq ) =  & \underset{z \in \real, w\in \real^{ \mathcal{E}}}{\text{min}} &  z  &
\mathcal{C}^{u}(\weq) = & \underset{z \in \real, w\in \real^{ \mathcal{E}}}{\text{max}} & z \\
    & \text{subject to} &  w^{l} \le w \le w^{u}   & &  \text{subject to} &  w^{l} \le w \le w^{u}  \\
    & & \mathcal{H}(w, \mathcal{G}, v_{1}, v_{2}) = \weq &&& \mathcal{H}(w, \mathcal{G}, v_{1}, v_{2})  = \weq \\ 
    && zf(w, a_{v_{1}v_{2}}) \ge c^{l}   &&& zf(w, a_{v_{1}v_{2}}) \le c^{u} \\
  \end{array}
\end{equation}
where $ \mathcal{G} = (\mathcal{V}, \mathcal{E}) $ is the network's underlying graph and \( \weq \in \range( \mathcal{H}(w, \mathcal{G}, v_{1}, v_{2}))  \). By setting \( z' := -z \) in the problem for \( \mathcal{C}^{l}(\weq) \), it is straightforward to
see that it is the same problem as that for \( \mathcal{C}^{u}(\weq) \). Setting \( c:= -c^{l} \) for \(
\mathcal{C}^{l}(\weq)\), and \( c:=c^{u} \) for \( \mathcal{C}^{u}(\weq) \),
the two problem instances in (\ref{opt:tree-reduction-problem-simplified}) can be uniformly written as follows. 
\begin{equation}
  \label{opt:tree-reduction-problem-uniform}
  \begin{aligned}
 \mathcal{C}(\weq) =  & \underset{z \in \real, w \in \real^{ \mathcal{E}}}{\text{max}} & & z \\
    & \text{subject to}
    & & w^{l}_{i} \le w \le w^{u}_{i} \\
    &&& z \le \frac{c_{i}(w_{i})}{f_{i}(w, a_{v_{1}v_{2}})} \quad \forall \, i \in \mathcal{E} \\
    &&&\mathcal{H}(w, \mathcal{G}, v_{1}, v_{2})  = \weq
  \end{aligned}
\end{equation}

We begin by focusing on solving the following \emph{simplified version of the reduction problem}~\eqref{opt:tree-reduction-problem-uniform}:
%
\begin{equation}
  \label{opt:simple-problem-class-simplified}
  \begin{aligned}
  g(\weq) = & \underset{\alpha \in \real, w \in \real^{\mc E}}{\max} & & \alpha \\
    & \text{subject to}
    & & w_{i}^{l} \le w_{i} \le w_{i}^{u} \quad \forall \, i \in \mc E \\
    &&& \alpha \le \psi_{i}(w_{i}) \quad \forall \, i \in \mc E \\
    &&& \mathcal{H}(w, \mathcal{G}, v_{1}, v_{2})  = \weq 
  \end{aligned}
\end{equation}
for given $\weq \in \range(\mathcal{H}(w, \mathcal{G}, v_{1}, v_{2}) ) $ and functions $\map{\psi_i}{\real_{>0}}{\real_{>0}}$,
$i \in \mc E$, representing the second set of inequalities in \eqref{opt:tree-reduction-problem-uniform}. Note that the second set of inequalities in
\eqref{opt:simple-problem-class-simplified} are separable across links, whereas they are not in
\eqref{opt:tree-reduction-problem-uniform}. This \emph{simplification} will be shown to be lossless. We shall
then devise a methodology that sequentially uses solution to
\eqref{opt:simple-problem-class-simplified} for parallel and serial networks, to obtain an iterative scheme to solve (\ref{opt:tree-reduction-problem-uniform}). 


\subsection{Input-output Properties of the Simplified Version of the Reduction Problem}
\label{sec:simple-prob-class}
In order to develop the sequential procedure, we interpret \eqref{opt:simple-problem-class-simplified} to be defining an \emph{output} function $g(\weq)$ with link level functions $\psi_i(w_i)$, $i \in \mc E$ as \emph{input}. We next introduce a property which will be shown to be invariant from the input functions to the output function, and will be helpful to compute the function $g(\weq)$ specified by \eqref{opt:simple-problem-class-simplified}.


%
%
\begin{definition}[$\mc S_0$ function]
  \label{def:function-class-0}
  A function $\map{\psi}{[x^l,x^u] \subset \real}{\real}$ is called a $\mc S_0$ function if it is continuous, and there exist $\ubar{x} \in [x^l,x^u]$ and $\bar{x} \in [\ubar{x},x^u]$ such that $\psi(x)$ is strictly increasing over $[x^l,\ubar{x}]$, constant over $[\ubar{x},\bar{x}]$, and strictly decreasing over $[\bar{x},x^{u}]$. We shall sometimes refer to $\ubar{x}$ and $\bar{x}$ as first and second transition points (w.r.t. $\mc S_0$ property), respectively, of $\psi(x)$.
\end{definition}

Figure~\ref{fig:S0-function} provides an example of a $\mc S_0$ function. It is easy to see that a $\mc S_0$ function is also quasiconcave, but the converse is not true in general.

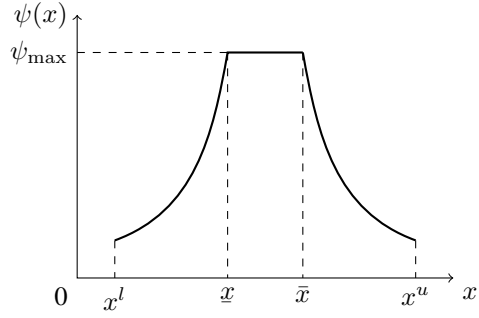
\begin{figure}[ht]
 \centering
\begin{tikzpicture} 
\draw [<->] (0,3.5) node [left] {\( \psi(x) \)}--
(0,0) node [below left] {0} -- (5,0) node [below right] {\( x \)}; 
\draw[thick] (0.5, 0.5) to [out=20, in=260] (2, 3) to [out=0, in= 180] (3, 3);
\draw[thick] (4.5, 0.5) [out = 160, in = -80] to (3, 3);
\draw[dashed] (0.5, 0.5) to (0.5, 0) node[below]{ \( x^{l} \)}; 
\draw[dashed] (2, 3) to(2, 0) node[below]{ \( \ubar{x} \)}; 
\draw[dashed] (3, 3) to (3,0) node[below] {\( \bar{x} \) }; 
\draw[dashed] (4.5, 0.5) to (4.5, 0) node[below] {\( x^{u} \)};
\draw[dashed] (2, 3) to (0, 3);
\node[left] at (0, 3) {\( \psi_{\mathrm{max}} \)};
\end{tikzpicture}
  \caption{A sample \( \mathcal{S}_{0} \) function.}
  \label{fig:S0-function}
\end{figure}



\begin{proposition}
\label{prop:So-invariance}
Consider a network consisting of graph topology \( \mathcal{G}=(\mathcal{V} ,\mathcal{E}) \), lower and upper bounds on link weights  $w^l \in \real_{>0}^{\mc E}$ and $w^u \in \real_{>0}^{\mc E}$ respectively and supply and demand node $ v_{1}, v_{2} \in \mathcal{V}  $ respectively. If the equivalent weight function $\mathcal{H}(w, \mathcal{G}, v_{1}, v_{2}) $ is strictly monotone with respect to $ w $ for this network, and $\psi_i(w_i)$ is a $\mc S_0$ function for all $i \in \mc E$, then the $g(\weq)$ function defined by \eqref{opt:simple-problem-class-simplified} is also a $\mc S_0$ function. 
%
\end{proposition}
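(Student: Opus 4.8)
The plan is to rewrite the optimization in \eqref{opt:simple-problem-class-simplified} through its level sets and then read off the shape of $g$ directly. Since $\max \alpha$ subject to $\alpha \le \psi_i(w_i)$ for all $i$ is just $\min_i \psi_i(w_i)$, we have $g(\weq) = \max\{\min_{i} \psi_i(w_i) : w^l \le w \le w^u, \ \mathcal{H}(w) = \weq\}$, so $g(\weq) \ge \alpha$ if and only if there is a feasible $w$ with $\psi_i(w_i) \ge \alpha$ for every $i$ and $\mathcal{H}(w)=\weq$. First I would fix a threshold $\alpha$ and note that, because each $\psi_i$ is an $\mathcal{S}_0$ function (hence quasiconcave), the set $\{w_i \in [w_i^l,w_i^u] : \psi_i(w_i) \ge \alpha\}$ is a closed interval $[L_i(\alpha),U_i(\alpha)]$, with $L_i$ continuous and non-decreasing and $U_i$ continuous and non-increasing in $\alpha$; the interval is nonempty precisely when $\alpha \le \psi_{i,\max} := \max \psi_i$. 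Thus the weights meeting the threshold form a box $\prod_i [L_i(\alpha),U_i(\alpha)]$, which is nonempty if and only if $\alpha \le \alpha_{\max} := \min_i \psi_{i,\max}$.

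Next I would bring in the hypotheses on $\mathcal{H}$. Since $\mathcal{H}$ is continuous and strictly increasing in each coordinate (which is what strict monotonicity amounts to here, by Lemma~\ref{lem:equiv-weight-fun-increasing}), its range over the box is exactly the interval $[A(\alpha),B(\alpha)]$ with $A(\alpha):=\mathcal{H}(L(\alpha))$ and $B(\alpha):=\mathcal{H}(U(\alpha))$: the minimum is attained at the lower corner, the maximum at the upper corner, and every intermediate value is attained by the intermediate value theorem. Consequently $g(\weq)\ge \alpha$ holds if and only if $A(\alpha)\le \weq \le B(\alpha)$, whence $g(\weq)=\sup\{\alpha\le \alpha_{\max} : A(\alpha)\le \weq\le B(\alpha)\}$. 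Monotonicity of $\mathcal{H}$ together with that of the $L_i,U_i$ gives $A$ continuous and non-decreasing and $B$ continuous and non-increasing in $\alpha$, with $A(\alpha)\le B(\alpha)$ whenever $\alpha\le\alpha_{\max}$. I would then declare the transition points $\ubar{x}:=A(\alpha_{\max})$ and $\bar{x}:=B(\alpha_{\max})$ (so $\ubar{x}\le\bar{x}$) and verify: on $[\ubar{x},\bar{x}]$ the level $\alpha=\alpha_{\max}$ is feasible so $g\equiv \alpha_{\max}$; for $\weq<\ubar{x}$ only the constraint $A(\alpha)\le\weq$ can bind (since $B(\alpha)\ge\bar{x}\ge\ubar{x}>\weq$), so $g$ is the generalized inverse of $A$; and for $\weq>\bar{x}$ only $B(\alpha)\ge\weq$ binds, so $g$ is the generalized inverse of $B$.

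The crux, and the step I expect to be the main obstacle, is upgrading monotonicity of $A$ and $B$ to \emph{strict} monotonicity on the relevant ranges, since this is exactly what separates an $\mathcal{S}_0$ function from a merely quasiconcave one. Here I would argue that $A(\alpha)>\weqlow$ forces some coordinate to satisfy $L_i(\alpha)>w_i^l$, i.e.\ $\alpha>\psi_i(w_i^l)$; for such an $i$ and for $\alpha<\alpha_{\max}\le\psi_{i,\max}$ the value $L_i(\alpha)$ lies strictly on the increasing branch of $\psi_i$ and is therefore strictly increasing in $\alpha$, so strict monotonicity of $\mathcal{H}$ in $w_i$ makes $A$ strictly increasing wherever $A(\alpha)\in(\weqlow,\ubar{x})$. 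The symmetric argument handles $B$ on $(\bar{x},\wequp)$. Inverting the strictly increasing $A$ and the strictly decreasing $B$ then yields that $g$ is strictly increasing on $[\weqlow,\ubar{x}]$ and strictly decreasing on $[\bar{x},\wequp]$, while continuity of $A$ and $B$ gives continuity of $g$ and the matching of the one-sided values $\alpha_{\max}$ at both $\ubar{x}$ and $\bar{x}$. Together these establish that $g$ meets Definition~\ref{def:function-class-0}, i.e.\ $g$ is an $\mathcal{S}_0$ function.
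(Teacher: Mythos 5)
Your argument is correct, and the objects you build are exactly the ones the paper builds: your $L_i(\alpha)$ and $U_i(\alpha)$ are the paper's $\omega_i^+$ and $\omega_i^-$ from \eqref{eq:w-inv-def}, and your $A=\mathcal H\circ L$ and $B=\mathcal H\circ U$ are the paper's $\hat g^+$ and $\hat g^-$ from \eqref{eq:g-inv-def}; your strict-monotonicity step (some coordinate must leave $w_i^l$ once $\alpha$ exceeds $\min_i\psi_i(w_i^l)$, and on the increasing branch $L_i$ is strictly increasing, so strict monotonicity of $\mathcal H$ propagates) is the same as the paper's. Where you genuinely diverge is in how you prove that $A$ and $B$ invert $g$. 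The paper shows directly that $(x,\omega^+(x))$ is the \emph{unique} optimizer of \eqref{opt:simple-problem-class-simplified} at $\weq=\hat g^+(x)$, via a pointwise comparison argument (any other $\tilde w$ with $\mathcal H(\tilde w)=\hat g^+(x)$ must drop some coordinate below $\omega_k^+(x)\le\ubar w_k$, forcing $\psi_k(\tilde w_k)<x$). You instead characterize the superlevel sets: $g(\weq)\ge\alpha$ iff $\weq\in[A(\alpha),B(\alpha)]$, using that $\mathcal H$ maps the box $\prod_i[L_i(\alpha),U_i(\alpha)]$ onto exactly that interval (corners plus IVT), and then read off the three-piece structure from monotonicity of $A$ and $B$. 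Your route is cleaner and avoids the delicate uniqueness argument; what it gives up is precisely that uniqueness byproduct, which the paper records as Remark~\ref{rem:unique-sol-general} and relies on later (in the proofs of Lemmas~\ref{lem:equiv-cap-S1-chain} and \ref{lem:equiv-cap-S1-para-net}), so for the paper's purposes the extra work is not wasted. Two small points you leave implicit and should state: the max in \eqref{opt:simple-problem-class-simplified} is attained (the feasible set is a compact subset of $[w^l,w^u]$ and $\min_i\psi_i$ is continuous), which is what lets you pass from $g(\weq)\ge\alpha$ to an actual witness $w$ in the superlevel box; and the feasible set is nonempty for every $\weq\in[\weqlow,\wequp]$, which follows from your own characterization at $\alpha_0=\min\{\min_i\psi_i(w_i^l),\min_i\psi_i(w_i^u)\}$.
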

\begin{proof}
In general, $\mc H(w)$ is not one-to-one, \ie, there could exist $\tilde{w}, \tilde{\tilde{w}} \in [w^l,w^u]$, $\tilde{w} \neq \tilde{\tilde{w}}$, such that $\mc H(\tilde{w})=\mc H(\tilde{\tilde{w}})$. However, the strict monotonicity of $\mc H(w)$ implies that the only feasible points of \eqref{opt:simple-problem-class-simplified} for \( \weqlow := \mc H(w^{l})\) and \( \wequp := \mc H(w^{u}) \) are \(( \min_{i \in \mc E} \psi_{i}(w_{i}^{l}), w^{l} ) \) and \( (\min_{i \in \mc E} \psi_{i}(w_{i}^{u}), w^{u}) \) respectively, and that $\weq \in [\weqlow, \wequp]$ for all $w \in [w^l,w^u]$. Hence $ g(\weq^{l}) =  \min_{i \in \mc E} \psi_{i}(w_{i}^{l}) $ and $ g(\weq^{u}) =  \min_{i \in \mc E} \psi_{i}(w_{i}^{u}) $. 
Let $\gmax:=\max_{\weq \in [\weqlow,\wequp]} g(\weq)$, then $g(\weqlow)=:g^l \leq g_{max}$ and  $g(\wequp) =:g^u \leq g_{max}$. Motivated by this, and with the objective of ultimately proving $\mc S_0$ property of $g(\weq)$, we construct inverse functions of $g(\weq)$ over $[g^l,\gmax]$ and $[g^u,\gmax]$. We denote these inverse functions as $\map{\hat{g}^+}{[g^l,\gmax]}{[\weqlow,\wequp]}$ and $\map{\hat{g}^-}{[g^u,\gmax]}{[\weqlow,\wequp]}$, respectively. We construct these inverses as compositions: 
\begin{equation}
\label{eq:g-inv-def}
\hat{g}^+(x)=\mc H \circ \omega^+(x) \qquad \hat{g}^-(x)=\mc H \circ \omega^-(x)
\end{equation}
where $\mc H$ is the equivalent weight function from \eqref{eq:equivalent-weight-def}, and $\map{\omega^+}{[g^l,\gmax]}{[w^l,w^u]}$ and $\map{\omega^-}{[g^u,\gmax]}{[w^l,w^u]}$ are defined as: for all $i \in \mc E$,
\begin{equation}
\label{eq:w-inv-def}
\begin{split}
 \omega_{i}^{+}(x) &:= \left\{\begin{array}{ll}
      w_{i}^{l} \quad& \text{if } x \le \psi_{i}(w_{i}^{l}) \\
      \min \{w_{i}: \psi_{i}(w_{i})=x\} \quad& \text{if } x > \psi_{i}(w_{i}^{l})
                 \end{array} \right.   \\
 \omega_{i}^{-}(x) &:=  \left\{\begin{array}{ll}
      w_{i}^{u} \quad& \text{if } x \le  \psi_{i}(w_{i}^{u}) \\
      \max \{w_{i}: \psi_{i}(w_{i})=x\} \quad& \text{if } x > \psi_{i}(w_{i}^{u})
                 \end{array} \right. 
                 \end{split}
                 \end{equation}
It is easy to see that 
\begin{equation}
\label{eq:gmax-expr}
\gmax = \min_{i \in \mc E} \, \max_{w_i \in [w_i^l, w_i^u]} \psi_i(w_i)
\end{equation}
Combining \eqref{eq:gmax-expr} with the fact that $\psi_i$ is a $\mc S_0$ function for all $i \in \mc E$, the definitions in \eqref{eq:w-inv-def} imply that, for all $i \in \mc E$, 
\begin{equation}
\label{eq:w-inv-prop}
\begin{split}
\omega_i^+(x) \in [w_i^l,\ubar{w}_i] \subseteq [w^l_i,w^u_i] \quad \& \quad & x \leq \psi_{i}({\omega^+_i(x)}), \qquad \forall \, x \in [g^l,\gmax] \\
\omega_i^-(x) \in [\bar{w}_i,w_i^u] \subseteq [w_i^l,w_i^u] \quad \& \quad & x \leq \psi_{i}({\omega_i^-(x)}), \qquad \forall \, x \in [g^u,\gmax]
\end{split}
\end{equation}
where we refer to Definition~\ref{def:function-class-0} for notations $\ubar{w}$ and $\bar{w}$. Moreover, since \( \psi_{i}(w_{i}) \in \mathcal{S}_{0} \), for all $i \in \mc E$, 
\( \omega_{i}^{+} \) is nondecreasing and \( \omega_{i}^{-}
\) is nonincreasing, and, it is easy to see that, for every $x \in [g^l,\gmax]$, there exists at least one $i \in \mc E$ such that \( \omega_{i}^{+}(x) \) is strictly increasing, and that, for every $x \in [g^u,\gmax]$, there exists at least one $i \in \mc E$ such that \( \omega_{i}^{-}(x) \) is strictly decreasing. This combined with the strictly  increasing property of $\mc H(w)$ implies that $\map{\hat{g}^+}{[g^l,\gmax]}{[\weqlow,\wequp]}$ and $\map{\hat{g}^-}{[g^u,\gmax]}{[\weqlow,\wequp]}$ are strictly increasing and strictly decreasing bijections, respectively. Moreover, it is easy to see that $\weqlow \leq \hat{g}^+(\gmax) \leq \hat{g}^-(\gmax) \leq \wequp$, where the middle inequality follows from \eqref{eq:g-inv-def}, \eqref{eq:w-inv-def}, and the strict monotonicity of $\mc H$.


In the remainder of the proof, our strategy for proving that $g(\weq)$ is a $\mc S_0$ function is as follows: we show that (i) $\hat{g}^+$ is the inverse of $g(\weq)$ over $\weq \in [\weqlow, \hat{g}^+(\gmax)]$, (ii) $\hat{g}^-$ is the inverse of $g(\weq)$ over $\weq \in [\hat{g}^-(\gmax),\wequp]$, and (iii) $g(\weq) \equiv \gmax$ over $\weq \in [\hat{g}^+(\gmax),\hat{g}^-(\gmax)]$. In particular, $\hat{g}^+(\gmax)$ and $\hat{g}^-(\gmax)$ will play the role of $\ubar{x}$ and $\bar{x}$ (cf. Definition~\ref{def:function-class-0}) in proving that $g(\weq)$ is a $\mc S_0$ function. The proof for (i) and (ii) are similar, and hence we provide details only for (i).

%

In order to show that $\hat{g}^+$ is the inverse of $g(\weq)$ over $\weq \in [\weqlow, \hat{g}^+(\gmax)]$, we show that $g(\hat{g}^+(x))=x$ for all $x \in [g^l,\gmax]$. In order to show this, we show that, for all $x \in [g^l,\gmax]$, $(x,\omega^+(x))$ is the unique optimizer for (\ref{opt:simple-problem-class-simplified}) corresponding to $\weq=\hat{g}^+(x)$. \eqref{eq:g-inv-def} and \eqref{eq:w-inv-prop} readily imply that $(x,\omega^+(x))$ is feasible for (\ref{opt:simple-problem-class-simplified}). Therefore, for all $x \in [g^l,\gmax]$,
\begin{equation}
\label{eq:ineq1}
g(\hat{g}^+(x))\geq x
\end{equation}
Consider an arbitrary $ \tilde{w} \in [w^{l}, w^{u}]  $ such that $ \tilde{w} \neq \omega^+(x) $ and $ \mc H(\tilde{w})=\hat{g}^+(x) = \mc H(\omega^+(x)) $. It is sufficient to show that $ \tilde{\alpha} < x $ for all $ \tilde{ \alpha} $ such that $ (\tilde{\alpha}, \tilde{w}) $ is feasible to \eqref{opt:simple-problem-class-simplified}. For $ x = g^{l} $, by definition $ \omega^{+}(x) = w^{l}$ and $ \mathcal{H}(\omega^{+}(x)) = \weq^{l}$. Strict monotonicity of $\mc H$ implies that $(x,\omega^+(x))$ is the only feasible point and hence the unique optimizer of (\ref{opt:simple-problem-class-simplified}). For all $ x \in ( g^{l}, \gmax ] $\footnote{ It is possible that $ \gmax =g^l $. In this case, considering the case $ x= g^{l} $ is sufficient.}, it is clear from the definition of $ g^{l} $ and $ \gmax $ that the set $ \{ i \in \mc E\, | \, x > \psi_i(w_i^l) \} $ is not empty. Since $ \tilde{w}_{k} \ge w_{k}^{l} = w_{k}^{+}(x) $ for all $ k\in \{i\in \mathcal{E} \, | \, x \le \psi_{i} \} $, if $ \tilde{w}_{k} \ge \omega^{+}(x) $ for all $k \in  \{ i \in \mc E\, | \, x >\psi_i(w_i^l)  \} $,   strict monotonicity of $\mc H$ implies $ \mathcal{H}(\tilde{w})> \mc H(\omega^+(x))  $. That is to say, in order to satisfy $ H(\tilde{w})= \mc H(\omega^+(x)) $ and $ \tilde{w} \neq \omega^+(x) $, there is at least one $k \in  \{ i \in \mc E\, | \, x > \psi_i(w_i^l) \} $ such that $\tilde{w}_k < \omega^+_k(x) \leq \ubar{w}_k$. Using this along with the fact that $\psi_k$ is a $\mc S_0$ function, and hence $ \psi_{k} $ is strictly increasing in $ [w_{k}^{l}, \ubar{w}_{k}] $, we get $\psi_k(\tilde{w}_k) < \psi_k(\omega^+_k(x))=x$, where the equality is due to the implication of $\psi_k(w_k^l)<x$ in \eqref{eq:w-inv-def}. Therefore, the last inequality constraint in \eqref{opt:simple-problem-class-simplified} implies that $\tilde{ \alpha} < x $ for all feasible $ \tilde{\alpha} $. In other words, $ (g(\weq ), \omega^{+}(g(\weq))) $ is the unique solution to (\ref{opt:simple-problem-class-simplified}) for all $ \weq \in [\weqlow, \hat{g}^+(\gmax)] $. Similar result is true for $ \weq \in [\hat{g}^{-}(\gmax), \weq^{u}] $.

Recall that $\gmax$ is the maximum value of $g(\weq)$ over all $\weq$. Therefore, in order to show that \( g(\weq) \equiv \gmax \) for all $\weq \in [\hat{g}^+(\gmax),\hat{g}^-(\gmax)]$, it suffices to show that, for every $\weq \in [\hat{g}^+(\gmax),\hat{g}^-(\gmax)]$, there exists a $\tilde{w} \in [w^l, w^u]$ such that $(\gmax,\tilde{w})$ is feasible for (\ref{opt:simple-problem-class-simplified}). Since, by definition in \eqref{eq:g-inv-def}, $\mc H(\omega^+(\gmax))=\hat{g}^+(\gmax)$ and $\mc H(\omega^-(\gmax))=\hat{g}^-(\gmax)$, continuity and monotonicity of $\mc H$ implies that, for all $\weq \in [\hat{g}^+(\gmax),\hat{g}^-(\gmax)]$, there exists $\tilde{w} \in [\omega^+(\gmax),\omega^-(\gmax)]$ satisfying $\mc H(\tilde{w})=\weq$. 
Moreover, the $ \mathcal{S}_{0} $ property of $ \psi_{i} $ implies that $ \gmax \le \psi_{i}(w_{i}) $ for all $ w_{i} \in   \in [\omega_{i}^+(\gmax),\omega_{i}^-(\gmax)] $. 
This shows that $(\gmax,\tilde{w})$ is feasible for (\ref{opt:simple-problem-class-simplified}).


Finally, the continuity of $g(\weq)$ follows from the continuity of the inverse functions $\hat{g}^+$ and $\hat{g}^-$, which in turn follows from the continuity of $\mc H$ from \eqref{eq:equivalent-weight-def}, and continuity of $\omega^+$ and $\omega^-$ from \eqref{eq:w-inv-def} implied by the continuity of $\psi_i$'s being $\mc S_0$ functions. 
\end{proof}

The solution to (\ref{opt:simple-problem-class-simplified}) is not unique in general for an arbitrary $ \weq $. However, it is unique for $ \weq $ within a certain range, as shown in the above proof and summarized in Remark \ref{rem:unique-sol-general}.
\begin{remark}
\label{rem:unique-sol-general}
\leavevmode
\begin{enumerate}
\item [(a)] (\ref{opt:simple-problem-class-simplified}) has unique solution $ (g(\weq ), \omega^{+}(g(\weq))) $ and  $ (g(\weq ), \omega^{-}(g(\weq))) $ for any $ \weq \in [\weqlow, \hat{g}^+(\gmax)] $ and $ \weq \in [\hat{g}^{-}(\gmax), \weq^{u}] $, respectively. 
\item [(b)] $ \omega^{+}(g(\weq)) $ is nondecreasing \wrt $ \weq $ for $ \weq \in  [\weqlow, \hat{g}^+(\gmax)]  $, since by definition $ \omega^{+} $ is nondecreasing function and $ \mathcal{S}_{0} $ property of $ g(\weq) $ implies that $ g(\weq) $ is strictly increasing for $ \weq \in  [\weqlow, \hat{g}^+(\gmax)] $. $ \omega^{-}(g(\weq)) $ is nondecreasing \wrt $ \weq $ for $  \weq \in [\hat{g}^{-}(\gmax), \weq^{u}]   $ due to similar reason. 
 \end{enumerate}
\end{remark}
The proof of Proposition~\ref{prop:So-invariance} implies that the solution to \eqref{opt:simple-problem-class-simplified} is given by:
\begin{equation}
  \label{eq:fun-g-solution}
  g(\weq) = \left\{ 
\begin{array}{ll}
 \inv \, {\hat{g}^{+}}(\weq)  &\quad \weqlow \le \weq < \hat{g}^+(\gmax)  \\
 \gmax  &\quad \hat{g}^+(\gmax) \le \weq \le \hat{g}^-(\gmax)  \\
  \inv \, {\hat{g}^{-}}(\weq) & \quad \hat{g}^-(\gmax) < \weq \le \wequp
\end{array}
 \right.
\end{equation}
where $\inv \, {\hat{g}^{+}}$ and $\inv \, {\hat{g}^{-}}$ are the inverses of $\hat{g}^{+}$ and $\hat{g}^{-}$, respectively, as defined in \eqref{eq:g-inv-def}, $\gmax$ is defined in \eqref{eq:gmax-expr}.
Proposition~\ref{prop:So-invariance} implies that $g$ is continuous. However, it may not be differentiable in general.
Let
\begin{equation}
\label{eq:der-def}
 g'(\weqminus) := \lim_{\triangle w_{eq} \uparrow 0 }
  \frac{ g(w_{eq}+\triangle w_{eq}) - g(w_{eq})}{\triangle w_{eq}}, \qquad 
  g'(\weqplus) := \lim_{\triangle w_{eq} \downarrow 0 }
  \frac{g(w_{eq}+\triangle w_{eq}) - g(w_{eq})}{\triangle w_{eq}} 
\end{equation}
be the left and right derivatives, respectively. 
We provide derivation for explicit expressions of these derivatives in the appendix. These expressions are used in Sections~\ref{sec:series} and \ref{sec:parallel} to provide an explicit solution for series and parallel networks. 

\subsection{Series Networks}
\label{sec:series}
 In a series network, $|\mc V|=|\mc E|+1$. A series network consisting of three links is shown in Fig. \ref{fig:3-link-chain}. 
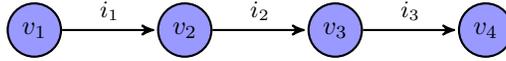
\begin{figure}[htbp]
\begin{center}
\begin{tikzpicture}[->,>=stealth',shorten >=1pt,auto,node distance=2cm,
thick,main node/.style={circle,draw,font=\sffamily\bfseries, fill=blue!40}]
\node[main node] (1) {$v_1$}; 
\node[main node] (2) [right of=1] {$v_2$}; 
\node[main
node] (3) [right of=2] {$v_3$}; 
\node[main node] (4) [right of=3] {$v_4$};
  
  \path[every node/.style={font=\sffamily\small}] 
(1) edge node{$ i_{1} $} (2)
(2) edge node{$i_{2}$} (3) (3) edge node{$i_{3}$} (4) ;
\end{tikzpicture}
\caption{A three link series network}
\label{fig:3-link-chain}
\end{center}
\end{figure}

Consider a series network with $n+1$ nodes numbered $v_1, \ldots, v_{n+1}$ such that $(v_j,v_{j+1}) \in \mc E$ for all $j \in \until{n}$, and link weights $w \in \real_{>0}^{n}$. As already shown in Example~\ref{ex:eq-weight}, the equivalent weight function between $v_1$ and $v_{n+1}$ is given by \( \mc H(w) = \sum_{i=1}^{n} \left(  1/w_{i}\right)^{-1} \). Moreover, the flow on any link $ i \in \until{n} $  is equal to one when a unit flow enters at node $v_1$ and leaves at $v_{n+1}$, \ie, $ f_{i}(w, a_{v_{1}v_{n}}) = 1 $. Therefore, \eqref{opt:tree-reduction-problem-uniform} can be simplified for a series network as (\ref{opt:equiv-cap-chain}), which gives the equivalent capacity function between nodes $v_1$ and $v_{n+1}$. 
\begin{equation}
  \label{opt:equiv-cap-chain}
  \begin{aligned}
  \mc C(\weq) = & \underset{z \in \real, w \in \real_{>0}^n}{\max} & & z \\
    & \text{subject to}
    & & w_{i}^{l}\le w_{i}\le w_{i}^{u} \\
    &&& z \le c_{i}(w_{i}), \quad i \in \until{n} \\
    &&& \left(\sum_{i=1}^{n} \frac{1}{w_{i}}\right)^{-1} =\weq 
  \end{aligned}
\end{equation}
For constant link capacities, \ie, $c_i(w_{i}) \equiv c_{i}$, $i \in \mc E$, then it is easily to see that  \( \mc C(\weq) = \min_{i \in \until{n}} c_{i} \). For weight-dependent capacities, 
we now establish a functional property of $\mc C(\weq)$, which is a stronger version of the $\mc S_0$ property defined in Definition~\ref{def:function-class-0}.

\begin{definition}
\label{def:fun-S-1}
A function $\map{\psi}{[x^l,x^u] \subset \real_{>0}}{\real}$ is called a $\mc S_1$ function if it is a $\mc S_0$ function (cf. Definition~\ref{def:function-class-0}), and if there exists a $x^o \in [x^l,\ubar{x}]$ such that \(
  \min \partial \psi(x) > \psi(x)/x \) for all \(x \in \left[x^{l}, x^{o}\right)\) and  \(
  \min \partial \psi(x) = \psi(x)/x \) for all \(x \in  \left[x^{o}, \ubar{x}\right)\),
 where $\ubar{x}$ is the first transition point, w.r.t. $\mc S_0$ property,  \( \partial \psi(x) \) denotes the set of  subgradients of  \(
  \psi(x) \). We shall sometimes refer to $x^o$ and $\ubar{x}$ as first and second transition points (w.r.t. $\mc S_1$ property), respectively, of $\psi(x)$.
\end{definition}
\begin{remark}
Note that, in Definition~\ref{def:fun-S-1}, we allow $x^o = \ubar{x}$, in which case, the only requirement for a $\mc S_0$ function to be $\mc S_1$ is that \(
  \min \partial \psi(x) > \psi(x)/x \) for all \(x \in \left[x^{l}, \ubar{x}\right)\).
\end{remark}

Definition~\ref{def:fun-S-1} clearly implies that, if $\psi(x)$ is a $\mc S_1$ function, then it is also a $\mc S_0$ function. The next result extends the $\mc S_0$ implication also to $\psi(x)/x$.  

\begin{lemma}
  \label{lem:S0-S1-relation}
  If $\map{\psi}{[x^l,x^u] \subset \real_{>0}}{\real_{>0}}$ is a $\mc S_1$ function, then $\map{\psi(x)/x}{[x^l,x^u] \subset \real_{>0}}{\real_{>0}}$ is a $\mc S_0$ function.
\end{lemma}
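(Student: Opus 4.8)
The plan is to study the function $\phi(x) := \psi(x)/x$ directly and show it exhibits the three-phase shape required by Definition~\ref{def:function-class-0}, with its first and second transition points (w.r.t.\ the $\mc S_0$ property) being exactly $x^o$ and $\ubar{x}$, the two transition points of $\psi$ w.r.t.\ its $\mc S_1$ property. Since $\psi$ is continuous and positive and $x>0$ throughout $[x^l,x^u]$, $\phi$ is continuous, so it remains to establish that $\phi$ is strictly increasing on $[x^l,x^o]$, constant on $[x^o,\ubar{x}]$, and strictly decreasing on $[\ubar{x},x^u]$.

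The key device I would use is the logarithmic transform $g(x):=\ln\psi(x)-\ln x$, so that $\phi=e^{g}$ and $\phi$ inherits the monotonicity (and constancy) pattern of $g$. Because the $\mc S_1$ property is phrased through the subdifferential of $\psi$, the increasing branch of $\psi$ is (locally Lipschitz, hence) absolutely continuous, so $g$ is absolutely continuous and the Fundamental Theorem of Calculus applies. At a.e.\ $x$ (every differentiability point of $\psi$) we have $\partial\psi(x)=\{\psi'(x)\}$, hence $\min\partial\psi(x)=\psi'(x)$, and
\[
g'(x)=\frac{\psi'(x)}{\psi(x)}-\frac1x=\frac{1}{\psi(x)}\Big(\min\partial\psi(x)-\frac{\psi(x)}{x}\Big).
\]
Since $\psi(x)>0$, the sign of $g'(x)$ is exactly the sign of $\min\partial\psi(x)-\psi(x)/x$, and the three phases follow by integrating $g'$. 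On $[x^l,x^o)$ the $\mc S_1$ property gives $\min\partial\psi(x)>\psi(x)/x$, so $g'>0$ a.e.\ and $g(x_2)-g(x_1)=\int_{x_1}^{x_2}g'>0$ for $x_1<x_2$, i.e.\ $\phi$ is strictly increasing. On $[\ubar{x},x^u]$, $\psi$ is non-increasing so $\psi'(x)\le 0$ a.e., whence $g'(x)=\psi'(x)/\psi(x)-1/x\le -1/x<0$ a.e.; here the $-1/x$ term alone forces strictness, so I need not separately track where $\psi$ is flat versus strictly decreasing, and $\phi$ is strictly decreasing.

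The main obstacle is the constant phase $[x^o,\ubar{x}]$, where I must upgrade the inequality argument to an exact equality ($\phi$ constant, not merely non-decreasing). There the $\mc S_1$ property gives $\min\partial\psi(x)=\psi(x)/x$ on $[x^o,\ubar{x})$, so at every differentiability point $\psi'(x)=\psi(x)/x$ and thus $g'(x)=0$ a.e.; absolute continuity of $g$ then yields $g\equiv\mathrm{const}$, i.e.\ $\phi\equiv\psi(x^o)/x^o$ on $[x^o,\ubar{x}]$ (equivalently $\psi(x)=(\psi(x^o)/x^o)\,x$ is linear through the origin, the integrated form of the relation $\psi'=\psi/x$). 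The delicate points I would be careful about are: (i) justifying that the subdifferential machinery of Definition~\ref{def:fun-S-1} indeed delivers absolute continuity of the relevant branch together with the a.e.\ identity $\psi'(x)=\min\partial\psi(x)$, so that the kinks (a null set) do not obstruct the FTC step; and (ii) the degenerate cases $x^o=\ubar{x}$ (no constant phase), $\ubar{x}=\bar{x}$ (no constant phase for $\psi$), and $\bar{x}=x^u$ (no decreasing phase), which must be checked so that the phases glue into a bona fide $\mc S_0$ function. Assembling the three phases and using continuity of $\phi$ at $x^o$ and $\ubar{x}$ then shows that $\psi(x)/x$ is a $\mc S_0$ function with transition points $x^o$ and $\ubar{x}$.
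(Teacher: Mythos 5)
Your proposal is correct and follows essentially the same route as the paper: both arguments reduce to the observation that the sign of the derivative of $\psi(x)/x$ is the sign of $\psi'(x)-\psi(x)/x$, which the $\mc S_1$ property pins down on each of the three phases $[x^l,x^o)$, $[x^o,\ubar{x})$, $(\ubar{x},x^u]$, with $x^o$ and $\ubar{x}$ becoming the $\mc S_0$ transition points of the quotient. The only real difference is packaging: the paper reads off the signs of the two one-sided derivatives $\tilde{\psi}'(x^{\pm}) = \bigl(\psi'(x^{\pm})x-\psi(x)\bigr)/x^2$ pointwise and concludes monotonicity/constancy directly, which sidesteps the absolute-continuity/FTC machinery you invoke (and which you rightly flag as the delicate step, since local Lipschitzness of the increasing branch does not follow immediately from the stated $\mc S_1$ definition).
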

\begin{proof}
  Let \( \tilde{\psi}(x) := \psi(x)/x \). The continuity of $\tilde{\psi}(x)$ follows from that of $\psi(x)$.
  Then, the left and right derivative of $\tilde{\psi}(x)$ are, respectively, given by:
\begin{equation}
\label{eq:subgradient-expr}
\tilde{\psi}'(x^{-}) = \frac{\psi'(x^{-})x - \psi(x)}{x^{2}}, \quad
\tilde{\psi}'(x^{+}) = \frac{\psi'(x^{+})x -  \psi(x)}{x^{2}} 
\end{equation}
Note that these two derivatives completely specify the set of subgradients of $\tilde{\psi}(x)$. Since $\psi(x)$ is a $\mc S_1$ function, we have \(\min \partial \psi(x)>\psi(x)/x \) for all \( x\in [x^{l}, x^{o}) \). Therefore, \eqref{eq:subgradient-expr} implies that $\tilde{\psi}'(x^-)$ and $\tilde{\psi}'(x^+)$ are both strictly positive, and hence $\tilde{\psi}(x)$ is strictly increasing over $[x^{l}, x^{o})$. For \( x \in (x^{o}, \ubar{x}) \), \eqref{eq:subgradient-expr} implies that $\tilde{\psi}'(x^-)=\tilde{\psi}'(x^+)=0$, \ie, $\tilde{\psi}(x)$ is constant. Since $\psi(x)$ is also a $\mc S_0$ function, $\psi'(x^-)$ and $\psi'(x^+)$ are both nonpositive for $x \in (\ubar{x}, x^u]$. Therefore, \eqref{eq:subgradient-expr} implies that $\tilde{\psi}'(x^-)$ and $\tilde{\psi}'(x^+)$ are both strictly negative, and hence $\tilde{\psi}(x)$ is strictly decreasing over $(\ubar{x}, x^u]$. Collecting these facts, we establish that $\tilde{\psi}(x)$ is a $\mc S_0$ function. We conclude the proof by emphasizing that the transition points required for the $\mc S_0$ property of the $\tilde{\psi}$ function are the points corresponding to $x^o$ and $\ubar{x}$ used in specifying the $\mc S_1$ property of $\psi$ (cf. Definition~\ref{def:fun-S-1}).
%
%
%
%
\end{proof}

  \begin{remark}
\label{rem:S0-S1-relation}
 The proof of Lemma~\ref{lem:S0-S1-relation} implies that the first and second transition points, w.r.t. $\mc S_1$ property, of $\psi(x)$  \ie, $ x^{o} $ and $ \ubar{x} $, are the first and second transition points, w.r.t. $\mc S_0$ property, of $\psi(x)/x$, respectively.
%
  \end{remark}

\begin{lemma}
\label{lem:equiv-cap-S1-chain}
Consider a network consisting of series graph topology \( \mathcal{G}=(\mathcal{V} ,\mathcal{E}) \), where $\mc V=\{v_1, \ldots, v_{n+1}\}$ and $\mc E=\{(v_1,v_2), \ldots, (v_{n},v_{n+1})\}$, and lower and upper bounds on link weights $w^l \in \real_{>0}^{n}$ and  $w^u \in \real_{>0}^{n}$ respectively. If the link capacity functions $c_i(w_i)$ are $\mc S_1$ for all $i \in \until{n}$, then the the equivalent capacity function between $v_1$ and $v_{n+1}$, as given by \eqref{opt:equiv-cap-chain}, is also a $\mc S_1$ function. 
\end{lemma}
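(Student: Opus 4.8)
The plan is to first recognize that \eqref{opt:equiv-cap-chain} is precisely an instance of the simplified reduction problem \eqref{opt:simple-problem-class-simplified}. Since a unit of flow injected at $v_1$ and extracted at $v_{n+1}$ must traverse every link, we have $f_i(w,a_{v_1v_{n+1}})=1$ for all $i$, so the per-link constraints decouple into $z\le c_i(w_i)$, matching $\psi_i\equiv c_i$. The series equivalent weight $\mc H(w)=\left(\sum_{i=1}^n 1/w_i\right)^{-1}$ is strictly monotone in each $w_i$ (Example~\ref{ex:eq-weight}), and every $c_i$ is $\mc S_1$ and hence $\mc S_0$. Proposition~\ref{prop:So-invariance} therefore immediately yields that $\mc C(\weq)$ is a $\mc S_0$ function; in particular it is strictly increasing on $[\weqlow,\ubar{\weq})$, constant on $[\ubar{\weq},\bar{\weq}]$, and strictly decreasing thereafter, with $\ubar{\weq}=\hat g^+(\gmax)$ and $\gmax=\min_i c_i(\ubar{w}_i)$. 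It remains only to establish the additional first-order inequalities of Definition~\ref{def:fun-S-1} on the increasing branch $[\weqlow,\ubar{\weq})$.

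On that branch the explicit solution \eqref{eq:fun-g-solution} gives $\mc C=\inv\,{\hat g^+}$ with $\hat g^+=\mc H\circ\omega^+$ (cf. \eqref{eq:g-inv-def}--\eqref{eq:w-inv-def}), so $z=\mc C(\weq)$ satisfies $\sum_{i=1}^n 1/\omega_i^+(z)=1/\weq$. Writing $w_i=\omega_i^+(z)$ and letting $A:=\{i:\,z>c_i(w_i^l)\}$ be the active set (for $i\notin A$, $w_i=w_i^l$ is frozen), I would differentiate this identity using the one-sided derivative expressions referenced in \eqref{eq:der-def} and derived in the appendix: because $c_i(w_i)=z$ along the increasing branch of each active link, implicit differentiation gives
\[
\mc C'(\weqplus)=\left(\weq^2\sum_{i\in A}\frac{1}{w_i^2\,c_i'(w_i^+)}\right)^{-1},
\]
and similarly for $\mc C'(\weqminus)$ with $c_i'(w_i^-)$. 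The crucial per-link fact is the $\mc S_1$ inequality $\min\partial c_i(w_i)\ge c_i(w_i)/w_i$: since the left-hand side is the minimum of the two one-sided derivatives, it forces $c_i'(w_i^\pm)\ge c_i(w_i)/w_i=z/w_i$ for \emph{both} signs, with strict inequality when $w_i<w_i^o$, and equality—where in fact $c_i$ is linear, $c_i(x)=k_ix$, by Lemma~\ref{lem:S0-S1-relation} and Remark~\ref{rem:S0-S1-relation}—when $w_i\in[w_i^o,\ubar{w}_i]$.

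Substituting these bounds and using the series relation $1/\weq=\sum_i 1/w_i\ge\sum_{i\in A}1/w_i$, I obtain $\sum_{i\in A}1/(w_i^2c_i'(w_i^\pm))\le(1/z)\sum_{i\in A}1/w_i\le(1/z)(1/\weq)$, which rearranges to $\min\partial\mc C(\weq)=\min\{\mc C'(\weqminus),\mc C'(\weqplus)\}\ge\mc C(\weq)/\weq$. The inequality is strict exactly when one of the two bounds is strict, i.e. when some link is inactive or some active link has $w_i<w_i^o$; equivalently, when $z<\max_i c_i(w_i^o)$. Defining $\weq^o$ by $\mc C(\weq^o)=\max_i c_i(w_i^o)$ (and setting $\weq^o=\ubar{\weq}$ when $\max_i c_i(w_i^o)\ge\gmax$, which is permitted since Definition~\ref{def:fun-S-1} allows the first transition point to coincide with the second), I get strict inequality on $[\weqlow,\weq^o)$ and equality on $[\weq^o,\ubar{\weq})$—exactly the $\mc S_1$ property, with $\weq^o$ and $\ubar{\weq}$ as the first and second transition points.

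I expect the main obstacle to be the rigorous handling of the one-sided derivatives and subgradients together with the changes in the active set $A$: one must justify the displayed derivative formula at the finitely many points where a link enters $A$ or where some $c_i$ fails to be differentiable, confirm that the per-link bound $\min\partial c_i\ge c_i/w_i$ transfers to $\min\partial\mc C$ (rather than only to one of the two one-sided derivatives of $\mc C$), and verify that the identified transition point $\weq^o$ indeed lies in $[\weqlow,\ubar{\weq}]$. The algebraic comparison itself is routine once the per-link $\mc S_1$ inequality and the identity $1/\weq=\sum_i 1/w_i$ are in hand.
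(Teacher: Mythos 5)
Your proposal is correct and follows essentially the same route as the paper's proof: reduce to the simplified problem with $\psi_i\equiv c_i$, invoke Proposition~\ref{prop:So-invariance} for the $\mc S_0$ part, then use the series identity $\partial\mc H/\partial w_i=\weq^2/w_i^2$ in the derivative formula \eqref{eq:lumped-fun-derivative} and chain the two inequalities $c_i'(w_i^\pm)w_i\ge c_i(w_i)$ and $\sum_{i\in A}1/w_i\le 1/\weq$ to get $\min\partial\mc C(\weq)\ge\mc C(\weq)/\weq$, with the transition point $\weq^o$ located via monotonicity of the active set and of $\omega^+(g(\cdot))$. The paper handles the equality/strictness bookkeeping exactly as you outline.
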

%
\begin{proof}
Since $c_i$ are $\mc S_1$ functions for all $ i \in \until{n} $, by definition, they are also $\mc S_0$ functions. Therefore, Proposition~\ref{prop:So-invariance} implies that $\mc C(\weq)$, as given by \eqref{opt:equiv-cap-chain}, is also a $\mc S_0$ function. In order to prove that \( \mc C(\weq)\) is a $\mathcal{S}_{1}$ function, we need to show that
there exists \( \weq^{o} \in [\weqlow, \ubar{w}_{\mathrm{eq}}]\) such that  \(\min \partial \mc C(\weq) >
\mc C(\weq)/\weq \) for  \( \weq \in [\weqlow, \weq^{o}) \) and
\(\min \partial \mc C(\weq) = \mc C(\weq)/\weq \) for \( \weq \in 
[\weq^{o}, \ubar{w}_{\mathrm{eq}}) \).  

We now show that there exist \( \weq^{o} \in [\weqlow, \ubar{w}_{\mathrm{eq}}]\)  such that \(
\mc C'(\weqplus) > \mc C(\weq)/\weq \) for \( \weq \in [\weqlow, \weq^{o}) \) and \( \mc C'(\weqplus) = \mc C(\weq)/\weq \) for \( \weq \in 
[\weq^{o}, \ubar{w}_{\mathrm{eq}}) \). Similar results hold true for \(
\mc C'(\weqminus) \). Since \( \min \partial \mc C(\weq)
= \min \{\mc C'(\weqminus), \mc C'(\weqplus) \} \), this then completes the proof.   

Noting the expression for the equivalent weight function in \eqref{opt:equiv-cap-chain}, we get that 
\begin{align*}
\frac{\partial \mc H(w)}{\partial w_{i}}  &= \frac{1}{w_{i}^{2}} \left(\sum_{i=1}^{n}
  \frac{1}{w_{i}}\right)^{-2}  = \frac{\weq^{2}}{w_{i}^{2}}. 
\end{align*}

Substituting into (\ref{eq:lumped-fun-derivative}), we get 
\begin{align*}
 \mc C'(\weqplus) & = \left(\sum_{i\in \tilde{\mc{K}}^+(\mc C(\weq))} \frac{\weq^2}{c_i'(w_i^+) w_i^2}\right)^{-1} \Big|_{w=\omega^+(\mc C(\weq))} \geq \frac{\mc C(\weq)}{\weq^2} \left(\sum_{i\in \tilde{\mc{K}}^{+}(\mc C(\weq))} \frac{1}{w_i}\right)^{-1} \Big|_{w=\omega^+(\mc C(\weq))} \\ & \geq \frac{\mc C(\weq)}{\weq^2} \left(\frac{1}{\weq} \right)^{-1} = \frac{\mc C(\weq)}{\weq}
\end{align*}
where the first inequality follows from the fact that, since \( c_{i}(w_{i}) \in \mathcal{S}_{1} \), \( c'_{i}(w_{i}^{+}) w_{i} \ge
\min \partial c_{i}(w_{i}) w_{i} \ge c_{i}(w_{i}) \), and by definition, \(
c_{i}(\omega^+_i(\mc C(\weq))) = \mc C(\weq) \) for all \( i\in \tilde{\mc{K}}^{+}(\mc C(\weq)) \), and it is equality if and only if $ \omega_{i}(\mc C(\weq)) \ge w_{i}^{o} $ for all $ i \in  \tilde{\mc{K}}^{+}(\mc C(\weq))$. The second inequality is equality if and only if \( \tilde{\mc{K}}^{+}(\mc C(\weq)) = \mc E \). If for some \( \tilde{w}_{\mathrm{eq}} \in [\weqlow, \ubar{w}_{\mathrm{eq}}]\), both inequalities are equalities, \ie, $ \omega_{i}(\mc C(\weq)) \ge w_{i}^{o} $ for all $ i\in \mathcal{E} $ and $\tilde{\mc{K}}^{+}(\mc C(\weq)) = \mc E  $, then $ \mathcal{C}'( \weq^{+}) = \mathcal{C}(\weq)/\weq $ holds for all $ \weq \in [ \tilde{w}_{\mathrm{eq}}, \ubar{w}_{\mathrm{eq}} ] $. This is because of the nondecreasing property of function $  \omega(\mc C( \cdot)) $ (cf. Remark \ref{rem:unique-sol-general}(b)) and $\tilde{\mc{K}}^{+}(\mc C(\cdot))$ (by definition).
Therefore, there exists $\weq^o \in [\weqlow,\ubar{w}_{\mathrm{eq}}]$ such that the both the inequalities are strict for $\weq \in [\weqlow, \weq^o)$ and is equality for $\weq \in [\weq^o,\ubar{w}_{\mathrm{eq}})$.
%
\end{proof}

\subsection{Parallel Networks}
\label{sec:parallel}
We now focus on networks with parallel graph topology, \ie, when $\mc G= (\mc V, \mc E)$, where $\mc V=\{v_1,v_2\}$, and all the links in $\mc E$ are from $v_1$ to $v_2$. An example is shown in
Fig. \ref{fig:3-link-para-net}. 
\begin{figure}[htbp]
\begin{center}
\begin{tikzpicture}[->,>=stealth',shorten >=1pt,auto,node distance=3cm,
thick,main node/.style={circle,draw,font=\sffamily\bfseries, fill=blue!40}]
\node[main node] (1) {$v_1$}; 
\node[main node] (2) [right of=1] {$v_2$}; 
  
  \path[every node/.style={font=\sffamily\small}] 
(1) edge[bend left] node{$ i_{1} $} (2)
(1) edge[bend right] node{$i_{2}$} (2);
\end{tikzpicture}
\caption{A two link parallel network}
\label{fig:3-link-para-net}
\end{center}
\end{figure}
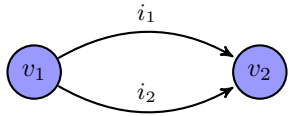

Consider a parallel network with \( n \) links from node $v_1$ to node $v_2$,
and link weights $w \in \real_{>0}^{n}$. 
As already shown in Example~\ref{ex:eq-weight}, the equivalent weight function
between $v_1$ and $v_2$ is given by $\mc H(w) = \sum_{i=1}^{n} w_i$. With unit
supply and demand on \( v_{1} \) and \( v_{2} \), the flow on link \( i \) is \(
f_{i} = w_{i}/\weq \). 
Substituting \( f_{i} = w_{i}/\weq \) into
\eqref{opt:tree-reduction-problem-uniform} and letting \( \tilde{z} = z/\weq \), 
%
the equivalent capacity function between nodes $v_1$ and $v_2$ takes the following simple form:
\begin{equation}
  \label{opt:equiv-cap-para-net}
  \begin{aligned}
  \frac{\mc C(\weq)}{\weq}=\, & \underset{\tilde{z} \in \real, w \in \real^n_{>0}}{\max} & & \tilde{z} \\
    & \text{subject to}
    & & w_{i}^{l} \le w_{i} \le w_{i}^{u} \\
    &&& \tilde{z} \le c_{i}(w_{i})/w_{i} \quad \forall\, i \in \until{n} \\
    &&& \sum_{i=1}^{n} w_{i} = \weq
  \end{aligned}
\end{equation}

The following result is the equivalent of Lemma~\ref{lem:equiv-cap-S1-chain} for parallel networks. 
\begin{lemma}
  \label{lem:equiv-cap-S1-para-net}
Consider a network consisting of parallel graph topology \( \mathcal{G}=(\mathcal{V} ,\mathcal{E}) \), where $\mc V=\{v_1, v_{2}\}$ and all the links in $\mc E$ are from $v_1$ to $v_2$, and lower and upper bounds on link weights are $w^l \in \real_{>0}^{n}$ and $w^u \in \real_{>0}^{n}$ respectively. If the link capacity functions $c_i(w_i)$ are $\mc S_1$ for all $i \in \until{n}$, then the the equivalent capacity function $\mc C(\weq)$ between $v_1$ and $v_{2}$, as given by \eqref{opt:equiv-cap-para-net}, is also a $\mc S_1$ function. 
\end{lemma}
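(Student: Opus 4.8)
The plan is to exploit the factorization $\mc C(\weq) = \weq\, g(\weq)$ that is implicit in \eqref{opt:equiv-cap-para-net}: after substituting $f_i = w_i/\weq$ and rescaling, $g(\weq) := \mc C(\weq)/\weq$ is exactly the output of the simplified reduction problem \eqref{opt:simple-problem-class-simplified} with per-link inputs $\psi_i(w_i) := c_i(w_i)/w_i$ and equivalent-weight map $\mc H(w) = \sum_{i=1}^n w_i$. First I would record that $\mc H$ is strictly monotone for a parallel network (Remark~\ref{eq:eqv-weight-monotonicity-implication}(a)) and that each $\psi_i = c_i/w_i$ is a $\mc S_0$ function by Lemma~\ref{lem:S0-S1-relation}. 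Proposition~\ref{prop:So-invariance} then immediately yields that $g=\mc C/\weq$ is a $\mc S_0$ function, with transition points $\hat{g}^+(\gmax)$ and $\hat{g}^-(\gmax)$ in the notation of its proof. The task is then to upgrade ``$\mc C/\weq$ is $\mc S_0$'' to ``$\mc C$ is $\mc S_1$''.

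The core observation is that the defining inequalities of the $\mc S_1$ property for $\mc C$ translate into sign conditions on the one-sided derivatives of $g$. Differentiating $\mc C = \weq\, g$ gives $\mc C'(\weqplus) = g(\weq) + \weq\, g'(\weqplus)$ (and similarly for the left derivative), so that $\min\partial\mc C(\weq) - \mc C(\weq)/\weq = \weq\,\min\partial g(\weq)$; hence $\min\partial\mc C > \mc C/\weq$ (resp.\ $=$) exactly when $\min\partial g>0$ (resp.\ $=0$). To make these explicit I would invoke the derivative formula \eqref{eq:lumped-fun-derivative}, which for a parallel network simplifies because $\partial\mc H/\partial w_i = 1$, giving $g'(\weqplus) = \bigl(\sum_{i\in\tilde{\mc K}^+} 1/\psi_i'(w_i^+)\bigr)^{-1}$ evaluated at the optimizer $w=\omega^+(g(\weq))$, together with its $\omega^-$/left-derivative analogue. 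Since $c_i$ is $\mc S_1$, a short computation via \eqref{eq:subgradient-expr} shows $\psi_i'(w_i) = (c_i'(w_i)w_i - c_i(w_i))/w_i^2$ is strictly positive for $w_i$ below the first $\mc S_1$-transition $x_i^o$ of $c_i$ and equal to zero on $[x_i^o,\ubar{x}_i)$ (cf.\ Remark~\ref{rem:S0-S1-relation}).

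With these tools I would first establish that $\mc C$ is $\mc S_0$. On $[\weqlow,\hat{g}^-(\gmax)]$ the function $g$ is nondecreasing (its increasing and flat parts), so $\mc C'=g+\weq g'\ge g>0$ and $\mc C$ is strictly increasing. On the decreasing branch $[\hat{g}^-(\gmax),\wequp]$ the relevant optimizer is $\omega^-$, whose components lie beyond the peak of $c_i$, so $c_i'(\omega_i^-)\le 0$ and hence, for every active link, $\psi_i'(\omega_i^-) \le -c_i(\omega_i^-)/(\omega_i^-)^2 = -g/\omega_i^-$; feeding this per-link bound into the reciprocal-sum expression for $g'$ yields $g'\le -g/\weq$, i.e.\ $\mc C' = g+\weq g'\le 0$. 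Thus $\mc C$ is nonincreasing past $\hat{g}^-(\gmax)$, its first $\mc S_0$-transition point is $\ubar{w}_{\mathrm{eq}}=\hat{g}^-(\gmax)$, and $\mc C$ is $\mc S_0$. The $\mc S_1$ refinement then follows from the previous paragraph: on $[\weqlow,\hat{g}^-(\gmax))$ one has $\min\partial g\ge 0$, strictly positive while every active link satisfies $\omega_i^+<x_i^o$ and equal to zero once some active link reaches $x_i^o$; the nondecreasing property of $\omega^+(g(\cdot))$ and of the active set (Remark~\ref{rem:unique-sol-general}(b)) guarantees that the equality region is an interval $[\weq^o,\hat{g}^-(\gmax))$, supplying the transition point $\weq^o$ demanded by Definition~\ref{def:fun-S-1}.

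The hard part will be the $\mc S_0$ claim for $\mc C$ on the decreasing branch of $g$, i.e.\ pinning the peak of $\mc C$ exactly at $\hat{g}^-(\gmax)$. Unlike the series case (Lemma~\ref{lem:equiv-cap-S1-chain}), the extra factor $\weq$ can keep the product $\weq g(\weq)$ increasing after $g$ has begun to fall, and merely knowing $g\in\mc S_0$ does not rule this out; the converse of Lemma~\ref{lem:S0-S1-relation} is false in general, so the specific optimization structure must be used. The decisive estimate is $g'(\weqplus)\le -g/\weq$, which genuinely requires the full $\mc S_1$ hypothesis on each $c_i$ (so that $c_i$ is nonincreasing beyond its peak and the bound $\psi_i'(\omega_i^-)\le -g/\omega_i^-$ holds), rather than just the $\mc S_0$ property of $\psi_i=c_i/w_i$. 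The routine-but-delicate bookkeeping then lies in handling inactive links (where $\weq>\sum_{i\in\tilde{\mc K}^-}\omega_i^-$, which only strengthens the bound) and the endpoint and non-differentiability cases, exactly as in the closing argument of the series lemma.
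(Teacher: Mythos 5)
Your proposal is correct and follows essentially the same route as the paper's own proof: factor $\mc C(\weq)=\weq\, g(\weq)$, obtain $g\in\mc S_0$ from Lemma~\ref{lem:S0-S1-relation} and Proposition~\ref{prop:So-invariance}, use $\mc C'(\weqplus)=g+\weq g'(\weqplus)$ on the increasing and flat branches of $g$, and on the decreasing branch derive the key bound $g'(\weqplus)\le -g(\weq)/\weq$ from \eqref{eq:lumped-fun-derivative} via $c_i'\le 0$ at $\omega^-$, exactly as in \eqref{eq:gprime-plus-expr-long}. The remaining bookkeeping you defer (monotonicity of the active sets and of $\omega^{\pm}(g(\cdot))$ to locate the transition points) is precisely how the paper closes the argument.
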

\begin{proof}
Since $c_i(w_i)$ are $\mc S_1$ functions for all $i \in \until{n}$, Lemma~\ref{lem:S0-S1-relation} implies that $c_i(w_i)/w_i$ are $\mc S_0$ functions and Remark~\ref{rem:S0-S1-relation} implies that the second transition point of $ c_{i}(w_{i})/w_{i} $ \wrt $ \mathcal{S}_{0} $ property is the first transition point $ \ubar{w}_{i} $ of $ c_{i}(w_{i}) $ \wrt $ \mathcal{S}_{0} $ property and $ \max_{w_{i}^{l} \le w_{i}\le w_{i}^{u}} c_{i}(w_{i})/w_{i} = c_{i}(\ubar{w}_{i})/\ubar{w}_{i} $. Proposition~\ref{prop:So-invariance} and its proof then implies that $g(\weq):=\mc C(\weq)/\weq$ is a $\mc S_0$ function and $ g(\weq^{l}) = \min_{i\in \mathcal{E}}  c_{i}(w_{i}^{l})/w_{i}^{l}$, $ g(\weq^{u}) = \min_{i\in \mathcal{E}} c_{i}(w_{i}^{u})/w_{i}^{u} $, and $ g_{\max} = \min_{i\in \mathcal{E}} c_{i}(\ubar{w}_{i})/\ubar{w}_{i}$. Let $\ubar{w}_{\mathrm{eq}}$ and $\bar{w}_{\mathrm{eq}}$ denote the first and second transition points, respectively, w.r.t. $\mc S_0$ property, for $g(\weq)$. In order to establish $\mc S_1$ property of $\mc C(\weq)$, we look at its left and right derivatives:
\begin{equation}
\label{eq:left-right-derivatives}
\mc C'(\weqplus) = g(\weq) + \weq g'(\weqplus), \qquad 
\mc C'(\weqminus) = g(\weq) + \weq g'(\weqminus)
\end{equation}
Therefore, combining \eqref{eq:left-right-derivatives} with $\mc S_0$ property of $g(\weq)$, we get that: 
$\mc C'(\weqplus) > g(\weq) = \mc C(\weq)/\weq$ for all $\weq \in [\weqlow, \ubar{w}_{\mathrm{eq}})$; \( g'(\weq) \equiv 0 \), and hence \(
\mc C'(\weq) = g(\weq) = \mc C(\weq)/\weq \), for $\weq \in \left(\ubar{w}_{\mathrm{eq}}, \bar{w}_{\mathrm{eq}} \right)$. 
Moreover, using \eqref{eq:lumped-fun-derivative}, for $\weq \in (\bar{w}_{\mathrm{eq}},\wequp]$, 
\begin{equation}
\label{eq:gprime-plus-expr-long}
\begin{split}
g'(\weqplus) & = \left(\sum_{i\in \mathcal{K}^-(g(\weq))} \frac{1}{(c_i(w_i^+)/w_i)'}
 \right)^{-1}\Big|_{w=\omega^-(g(\weq))}
  = \left(\sum_{i\in \mathcal{K}^-(g(\weq))}  \frac{w_{i}}{{c'_{i}(w_{i}^{+})}
    -c_{i}(w_{i})/w_{i}}\right)^{-1}\Big|_{w=\omega^-(g(\weq))}  \\ & \le  - g(\weq) \left(\sum_{i\in \mathcal{K}^-(g(\weq))} w_{i} \right)^{-1}\Big|_{w=\omega^-(g(\weq))} \le -g(\weq) \left(\sum_{i=1}^n w_i \right)^{-1}\Big|_{w=\omega^-(g(\weq))}
  = -g(\weq)/\weq
  \end{split}
   \end{equation}
where the first inequality follows from the fact that, by definition of $\mc
K^-$, \(c_{i}(\omega^-_i(g(\weq)))/\omega^-_i(g(\weq)) = g(\weq)\) for all
\( i\in \mathcal{K}^-(g(\weq)) \), and  
\begin{equation}
\label{eq:S1-parallel-proof-intermediate}
 c'_{i}(w_{i}^{+}) \Big|_{w=\omega^-(g(\weq))}
\le 0 \qquad \forall \weq \in  (\bar{w}_{\mathrm{eq}}, \wequp]  
\end{equation}
\eqref{eq:S1-parallel-proof-intermediate} is because of the following. Due to the $ \mathcal{S}_{0} $ property of function $ g(\weq) $, $ g(\weq) \in [g(\weq^{u}, g_{\max}) $ for $ \weq \in  (\bar{w}_{\mathrm{eq}}, \wequp] $. Remark~\ref{rem:S0-S1-relation} implies that the second transition point, \wrt $\mc S_0$ property, of $c_i(w_i)/w_i$ is equal to the first transition point, $\ubar{w}_i$, w.r.t. $\mc S_0$ property, of $c_i(w_i)$. This, combined with the second equation in \eqref{eq:w-inv-prop}, further implies that \( \omega^-_{i}(g(\weq)) \ge \ubar{w}_{i} \) for $ g(\weq) \in [g(\weq^{u}, g_{\max}) $ \ie,  \( \weq \in  (\bar{w}_{\mathrm{eq}}, \wequp] \). Thereafter, the $\mc S_0$ property of $c_i$ implies \eqref{eq:S1-parallel-proof-intermediate}. 

Now we consider conditions for \eqref{eq:S1-parallel-proof-intermediate} taking equalities. The second inequality in \eqref{eq:gprime-plus-expr-long} takes equality if and only if \( \mathcal{K}^- =  \mathcal{E} \). Considering \( \mathcal{K}^- =  \mathcal{E} \), the first inequality in \eqref{eq:gprime-plus-expr-long} takes equality for \( \bar{w}_{\mathrm{eq}} \le \weq \le
\hat{g}^{-}\left( \max_{i\in \mathcal{E}} c_{i}(\bar{w}_{i})/\bar{w}_{i} \right) \). 
Furthermore, \( \mathcal{K}^-(g(\cdot)) \) is nonincreasing and Remark \ref{rem:unique-sol-general} (b) implies that $ \omega^-_{i}(g(\weq))  $ is nondecreasing for $ \weq \in (\bar{w}_{\mathrm{eq}}, \weq^{u} ] $. 
Therefore, there exists \( \tilde{w}_{\mathrm{eq}}\in [\bar{w}_{\mathrm{eq}}, \wequp] \) such that
for \( \bar{w}_{\mathrm{eq}} \le \weq \le \tilde{w}_{\mathrm{eq}} \), \( g'(\weqplus)  =
-g(\weq)/\weq\) and hence \( \mc C'(\weqplus)=0  \) from \eqref{eq:left-right-derivatives}; and for \( \tilde{w}_{\mathrm{eq}} <
\weq \le \wequp \), \( g'(\wequp)  < -g(\weq)/\weq \) and
hence \(\mc C'(\weqplus) < 0 \) from \eqref{eq:left-right-derivatives}.
One can show similar properties also for  \( \mc C'(\weqminus) \), thereby proving that \( \mc C(\weq)
\) is a $\mathcal{S}_{1}$ function.
%
\end{proof}

We now provide a characterization of the equivalent capacity function for a parallel network whose links have constant, \ie, weight-independent, capacities, in Example~\ref{eg:para-net-constant-cap}. This example generalizes our earlier work~\cite{Ba.Savla:ACC16}, where we compute only the maximum of the equivalent capacity function for parallel networks as solution to an optimization problem.


\begin{example}[Equivalent capacity for parallel networks with
  weight-independent link capacities]
\label{eg:para-net-constant-cap}
Consider a parallel network with \( n \) links from node $v_1$ to node
$v_2$. Let the lower and upper bounds on link weights be $w^l \in \real_{>0}^n$
and $w^u \in \real_{>0}^n$ respectively, and let the link capacities be $c_i>0$,
$i \in \until{n}$. Then, for every $i \in \until{n}$, $c_{i}$ is a $\mc
S_1$ function, with $w_i^l = w_{i}^{o} = \ubar{w}_{i}$. 
Let $ \mathcal{C}(\weq) $ be the equivalent capacity function and hence $g(\weq):=\mc C(\weq)/\weq$ is the solution to
\eqref{opt:equiv-cap-para-net} for this network. Lemma \ref{lem:equiv-cap-S1-para-net} implies that 
$ \mathcal{C}(\weq) $ and $ g(\weq) $ are $ \mathcal{S}_{1} $ and $ \mathcal{S}_{0} $ functions, respectively. Let $ \weq^{o} $, $  \ubar{w}_{\mathrm{eq}} $ and $ \bar{w}_{\mathrm{eq}} $ be the first and second transition points, \wrt $ \mathcal{S}_{1} $ property, and the second transition point, \wrt $ \mathcal{S}_{0} $ property, of $ \mathcal{C}(\weq) $, respectively. Remark \ref{rem:S0-S1-relation} implies that $ \weq^{o} $ and $ \ubar{w}_{\mathrm{eq}} $ are the first and second transition points, \wrt $ \mathcal{S}_{0} $ property, of $ g(\weq) $, respectively. 

 With \( \weqlow= \sum_{i=1}^{n} w_{i}^l  \)
and \( \wequp =   \sum_{i=1}^{n} w_{i}^u \), it is easy to see that  $ \weq^{o} = \weq^{l} $, 
  \begin{equation}
  \label{eq:gmax-def-parallel-example}
 \gmax =  g(\weqlow) =\min_{i \in \until{n}} c_{i}/w_{i}^{l} 
  \end{equation}
  and \( g(\wequp)= \min_{i \in \until{n}} c_{i}/w_{i}^{u} \).   Since  $\mc H(w)=\sum_{i=1}^n w_i$, the inverse function in \eqref{eq:g-inv-def}
  satisfies $\hat{g}^-(x)=\sum_{i=1}^n \omega^-_i(x)$ for all $x \in
  \left[\min_{i \in \until{n}} {c_i}/{w_i^u}, \min_{i \in
      \until{n}} {c_i}/{w_i^l} \right]$. Indeed, $\omega_i^-(x)$ can be
  explicitly written as \(  \omega_{i}^{-} (x) = \min\{ c_{i}/x, w_{i}^{u} \}
  \). Therefore, $\hat{g}^-(x)$ can be written as:  
  \begin{equation}
  \label{eq:ghat-parallel-example}
  \weq = \hat{g}^-(x)= \frac{1}{x} \sum_{i: w_i^u > c_i/x} c_{i} + \sum_{i: w_i^u \leq c_i/x} w_i^u 
  \end{equation}
Note $ \hat{g}^{-}(x) $ is decreasing.  By definition, $ \ubar{w}_{\mathrm{eq}} = \hat{g}^-(\gmax) \in [\weq^{l}, weq^{u}] $. It is straightforward that $  \hat{g}^-(\gmax) \le \weq^{u}  $, and \eqref{eq:gmax-def-parallel-example} implies that \( c_{i}/\gmax \ge w_{i}^{l}\) for all \( i \in \until{n}\), and hence \(  \hat{g}^-(\gmax)  \ge \sum_{i=1}^n w_{i}^{l} =\weqlow\). For $\weq \in \left[\weqlow, \ubar{w}_{\mathrm{eq}} \right]$, $g(\weq)=\gmax$. For \( \weq \in \left[ \ubar{w}_{\mathrm{eq}}, \wequp \right]\), the monotonicity of $ \hat{g}^{-} $ implies that $ \{i \, |\, w_i^u > c_i/g(\weq) \} = \{ i \, |\,  \weq < \hat{g}^{-}(c_{i}/w_{i}^{u}) \} $. Therefore, \eqref{eq:ghat-parallel-example} implies that 
\begin{equation}
\label{eq:g-parallel-example}
g(\weq) = \inv \, \hat{g}^{-}(g(\weq)) = \frac{\sum_{i:  \weq < \hat{g}^{-}(c_{i}/w_{i}^{u}) } c_{i}}{ \weq - \sum_{i:  \weq \ge \hat{g}^{-}(c_{i}/w_{i}^{u}) } w_{i}^{u} }
\end{equation}
Based on these calculations, the equivalent
capacity function is characterized as follows:  

For $\weq \in \left[\weqlow, \ubar{w}_{\mathrm{eq}} \right]$, 
\begin{equation}
\label{eq:eq-cap-parallel-first}
 \mc C(\weq) =  \weq \, \gmax = \weq \min_{i \in \until{n}} {c_{i}}/{w_{i}^{l}}
\end{equation}  
%
which is a linear function with slope \(\min_{i \in \until{n}} c_{i}/w_{i}^{l}\). 

If \( w_{i}^{u} > c_{i}/\gmax  \) for all \( i \in \until{n}\), \ie, \(
\gmax > \max_{i \in \until{n}} c_{i}/w_{i}^{u} \), then
\eqref{eq:ghat-parallel-example} implies that  \( \hat{g}^-(x) 
= \sum_{i=1}^{n} {c_i}/{x} \) for all $x \in \left[\max_{i \in \until{n}}
  {c_i}/{w_i^u}, \gmax\right]$. Equivalently, for all $\weq \in
\left[ \ubar{w}_{\mathrm{eq}}, \hat{g}^-(\max_{i \in \until{n}} c_i/w_i^u) \right]$, we
get $g(\weq) = \sum_{i=1}^n c_i  /\weq $, and
hence 
\begin{equation}
\label{eq:eq-cap-parallel-second}
  \mc C(\weq) =  \weq \, g(\weq) = \sum_{i=1}^{n} c_{i}
\end{equation}
It is straightforward to see that $ \bar{w}_{\mathrm{eq}} =   \max\{\hat{g}^{-}(\gmax), \hat{g}^{-}(\max_{i \in \until{n}} c_i/w_i^u)\}\in  [\ubar{w}_{\mathrm{eq}}, \weq^{u} ]$. 

Finally, for \( \weq \in \left[ \bar{w}_{\mathrm{eq}}, \wequp \right]\)
\begin{equation}
\label{eq:eq-cap-parallel-third}
  \mc C(\weq) = \weq g(\weq) = \frac{ \weq }{ \weq - \sum_{i:  \weq \ge \hat{g}^{-}(c_{i}/w_{i}^{u}) } w_{i}^{u} } \sum_{i:  \weq < \hat{g}^{-}(c_{i}/w_{i}^{u}) } c_{i}
\end{equation}

In summary, \eqref{eq:eq-cap-parallel-first}, \eqref{eq:eq-cap-parallel-second}
and \eqref{eq:eq-cap-parallel-third} completely characterize the equivalent
capacity function for parallel networks, and an illustration is provided in
Fig.~\ref{fig:equiv-cap-para-net}. Every point in the curve in Fig. \ref{fig:equiv-cap-para-net} $ (\weq, \mathcal{C}(\weq)) $ corresponds to an optimal solution of weight $ w $ to (\ref{opt:tree-reduction-problem-uniform}) for a parallel network with constant capacities on all the links and equivalent weight $ \weq $. In general, this optimal solution is not unique. However, since $ \ubar{w}_{\mathrm{eq}} $ is the second transition point of function $ c_{i}/w_{i} $ \wrt  $ \mathcal{S}_{0} $ property in this case, Remark \ref{rem:unique-sol-general} implies that the optimal solution is unique and nondecreasing for $ \weq \in [\ubar{w}_{\mathrm{eq}}, \weq^{u}] $. This is summarized in Remark \ref{rem:para-net-unique-sol}. As shown in Fig.~\ref{fig:equiv-cap-para-net}, $ \alpha^{*} $, being the maximum of function $ \mathcal{C}(\weq) $, can be computed explicitly, which in turn implies that the margin of robustness for parallel networks can be computed explicitly. 
%
\begin{figure}[ht]
 \centering
\begin{tikzpicture} 
\draw [<->] (0,3.5) node [left] {\( \mathcal{C}(w_{\mathrm{eq}}) \)}--
(0,0) node [below left] {0} -- (5,0) node [below right] {\( w_{\mathrm{eq}} \)}; 
\draw[dashed] (0, 0) to (0.5, 1); 
\draw[thick] (0.5, 1) to (1.5, 3) to (2.5, 3); 
\draw[thick] (2.5, 3) [out = -80, in = 170] to (4.5, 1);
\draw[dashed] (0.5, 1) to (0.5, 0) node[below]{ \( w_{\mathrm{eq}}^{l} \)}; 
\draw[dashed] (1.5, 3) to(1.5, 0) node[below]{ \(\ubar{w}_{\mathrm{eq}} \)}; 
\draw[dashed] (2.5, 3) to (2.5,0) node[below] {\(\bar{w}_{\mathrm{eq}} \) }; 
\draw[dashed] (4.5, 1) to (4.5, 0) node[below] {\( w_{\mathrm{eq}}^{u} \)};
\draw[dashed] (1.5, 3) to (0, 3);
\node[left] at (0, 3) {\( \alpha^{*} \)};
\end{tikzpicture}
  \caption{Equivalent capacity function for a parallel network consisting of links with constant capacities.
  \label{fig:equiv-cap-para-net} }
\end{figure}
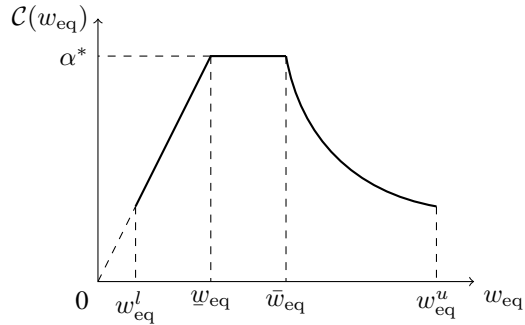
\end{example}

\begin{remark}
\label{rem:para-net-unique-sol}
For a parallel network with constant capacity on each link, $ \omega^{-}(g(\weq) ) = \min\{ c_{i}/g(\weq), w_{i}^{u}\} $ is the unique optimal solution to (\ref{opt:equiv-cap-para-net}) and is nondecreasing for $ \weq \in [\ubar{w}_{\mathrm{eq}}, \weq^{u}]  $, where $ g(\weq) $ is shown in (\ref{eq:g-parallel-example}). 
\end{remark}

\subsection{Computing Margin of Robustness for Tree Reducible Networks}
\label{subsec:tree-reducible}
Using Lemmas~\ref{lem:equiv-cap-S1-chain} and
\ref{lem:equiv-cap-S1-para-net}, and Definition~\ref{def:equivalent-capacity}, one sees that, for parallel and series networks, the equivalent capacity functions are $\mc S_1$ functions. Indeed, one can use Lemmas~\ref{lem:equiv-cap-S1-chain} and
\ref{lem:equiv-cap-S1-para-net} recursively to show $\mc S_1$ property for the equivalent capacity function for a broader class of networks. In order to see this, consider the network illustrated in Figure~\ref{fig:tree-reducible-net} where $p_{v_1}=-p_{v_4}>0$, and $p_{v_2}=p_{v_3}=0$.

Lemma~\ref{lem:equiv-cap-S1-para-net} (and Example~\ref{eg:para-net-constant-cap}) imply that the capacity of an equivalent link, say $i_{4,5}$ corresponding to links $i_4$ and $i_5$, is weight-dependent, and the capacity function for the equivalent link $i_{4,5}$ is a $\mc S_1$ function. Lemma~\ref{lem:equiv-cap-S1-chain} then implies that the equivalent capacity function for the equivalent link $i_{2,4,5}$ corresponding to links $i_2$ and $i_{4,5}$ is also a $\mc S_1$ function. The same property also holds true for equivalent link $i_{1,3}$ corresponding to $i_1$ and $i_3$. Finally, the equivalent capacity function between nodes $v_1$ and $v_4$ corresponding to links $i_{1,3}$ and $i_{2,4,5}$ can also be shown to be $\mc S_1$ by Lemma~\ref{lem:equiv-cap-S1-para-net}. Specific numerical examples are provided in Section~\ref{sec:eqv-cap}.
In summary, for the network in Figure~\ref{fig:tree-reducible-net}, the $\mc S_1$ property is
invariant from the capacities at individual link to equivalent capacity functions associated with
intermediate equivalent parallel and series reductions, finally to the one associated with the
equivalent link corresponding to the entire network. Since $\mc S_1$ implies $\mc S_0$,
\eqref{eq:fun-g-solution} then gives a computationally efficient recursive procedure to compute the
equivalent capacity function of the entire network in terms of capacities of individual links. Recalling that, for a given network, computing the equivalent capacity is the same as solving the reduction problem, the above procedure can also be used to solve the reduction problem for tree reducible network.

\begin{theorem}
\label{theorem:tree-reducible}
Consider the reduction problem \eqref{opt:tree-reduction-problem-uniform} on a link reducible
network $\mc G=(\mc V, \mc E)$, with lower and upper bounds on link weights as $w^l \in \real_{>0}^{n}$ and $w^u \in
\real_{>0}^{n}$ respectively. Then, its solution function $\mathcal{C}(\weq)$ is a $\mc S_1$ function.
\end{theorem}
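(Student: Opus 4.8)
The plan is to prove the claim by structural induction on the series–parallel decomposition of the link reducible network $\mc G$, using the two atomic invariance results (Lemmas~\ref{lem:equiv-cap-S1-chain} and \ref{lem:equiv-cap-S1-para-net}) as the inductive step. First I would dispose of the degree-one reductions appearing in Definition~\ref{def:tree-reducible-net}: a link pruned by a degree-one reduction carries zero flow under the unit supply-demand vector $a_{v_1 v_2}$, so it enters neither $\mc H(w, \mc G, v_1, v_2)$ nor the flow on any other link, and hence leaves $\mathcal{C}(\weq)$ unchanged. Thus it suffices to treat $\mc G$ as a two-terminal series–parallel graph between $v_1$ and $v_2$; since $\mc G$ is link reducible, the classical equivalence between series–parallel reducibility and the recursive series–parallel composition lets me describe $\mc G$ as either a single link, a series composition $\mc G_1 \cdot \mc G_2$, or a parallel composition $\mc G_1 \parallel \mc G_2$ of two strictly smaller such graphs.

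For the base case, a single physical link $i$ has $f_i(w, a_{v_1 v_2}) = 1$ and $\mc H(w) = w_i$, so \eqref{opt:tree-reduction-problem-uniform} yields $\mathcal{C}(\weq) = c_i(\weq)$. Since the physical link capacities are constant, they are $\mc S_1$ functions (with $w_i^l = w_i^o = \ubar{w}_i$, as recorded in Example~\ref{eg:para-net-constant-cap}), which settles the base case.

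For the inductive step, I would invoke Lemma~\ref{lem:net-equiv-reduction} to replace each child sub-network $\mc G_j$, $j \in \{1,2\}$, by a single equivalent link whose weight ranges over $[\mc H(w^l_{\mc E_j}), \mc H(w^u_{\mc E_j})]$ and whose weight-dependent capacity function is the equivalent capacity $\mathcal{C}_j$ of $\mc G_j$ in the sense of Definition~\ref{def:equivalent-capacity}. The point that makes this substitution legitimate is that, by Remark~\ref{rem:eqv-cap-problem}(b), the equivalent capacity of a sub-network depends on its internal weights only through the equivalent weight, so the reduced network is exactly a two-link series (resp.\ parallel) network whose two link capacity functions are $\mathcal{C}_1$ and $\mathcal{C}_2$. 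By the induction hypothesis these are $\mc S_1$, and the strict monotonicity of $\mc H$ for series and parallel networks (Remark~\ref{eq:eqv-weight-monotonicity-implication}(a)) allows me to apply Lemma~\ref{lem:equiv-cap-S1-chain} in the series case and Lemma~\ref{lem:equiv-cap-S1-para-net} in the parallel case; each concludes that the equivalent capacity function $\mathcal{C}(\weq)$ of $\mc G$ is again $\mc S_1$, closing the induction.

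The main obstacle is the \emph{compositionality} underlying the inductive step: one must be certain that computing $\mathcal{C}(\weq)$ for $\mc G$ by first reducing $\mc G_1, \mc G_2$ to equivalent links carrying weight-dependent $\mc S_1$ capacities, and then applying the two-link series/parallel lemma, returns the same function as solving the monolithic reduction problem \eqref{opt:tree-reduction-problem-uniform} directly on $\mc G$. This is precisely where the weight-only dependence of the equivalent capacity (Remark~\ref{rem:eqv-cap-problem}(b)) and the flow-equivalence of the reduction (Lemma~\ref{lem:net-equiv-reduction}) must be combined with care; note that Lemmas~\ref{lem:equiv-cap-S1-chain} and \ref{lem:equiv-cap-S1-para-net} are deliberately stated for links with weight-dependent $\mc S_1$ capacities so that they can be reused on the non-physical equivalent links produced by earlier reductions.
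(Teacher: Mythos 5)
Your proposal is correct and follows essentially the same route as the paper: the paper establishes the theorem by recursively applying Lemma~\ref{lem:equiv-cap-S1-chain} and Lemma~\ref{lem:equiv-cap-S1-para-net} through the sequence of series/parallel reductions (illustrated on the network of Figure~\ref{fig:tree-reducible-net}), relying on Lemma~\ref{lem:net-equiv-reduction} and the weight-only dependence of equivalent capacities to justify each substitution. Your version merely formalizes this as a structural induction on the series--parallel composition, with the constant link capacities as the $\mc S_1$ base case and an explicit disposal of degree-one reductions, which is a faithful (and slightly more careful) rendering of the paper's argument.
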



Finally, the margin of robustness for a tree reducible network can be computed using the multilevel approach from Section~\ref{sec:bi-level} as follows. Recall from Section~\ref{subsec:multilevel} that the multilevel formulation consists of multiple reduction problems, and a single terminal problem. Theorem~\ref{theorem:tree-reducible} along with \eqref{eq:fun-g-solution} provides an explicit solutions to the reduction problems. Since the original network is tree reducible, the terminal problem is over a tree. Even though this tree has weight dependent capacity functions on the links, Proposition
\ref{prop:margin-min-cut-upper-bound} can be used to solve the terminal problem, and hence gives the margin of robustness. Specifically, for a tree network \( \mathcal{G}= (\mathcal{V, \mathcal{E}}) \) with link capacity functions \(
\mathcal{C}^{l}_{i}(w_{i}) \) and \( \mathcal{C}^{u}(w_{i}) \), \( i\in \mathcal{E} \), one can use Proposition
\ref{prop:margin-min-cut-upper-bound} with \( c_{i}^{l} := \min_{w_{i}} \mathcal{C}^{l}(w_{i}) \) and \( c_{i}^{u}
:= \max_{w_{i}} \mathcal{C}^{u}(w_{i})\) to compute the margin of robustness.

\section{Decentralized Control Policies}
\label{sec:decentralized}
In Sections~\ref{sec:problem-formulation}-\ref{sec:multilevel}, we described various approaches to compute the margin of robustness for a centralized control policy (which has information about link flows and weights, disturbances, as well as link flow capacities and operational range of weights), and we recall that this is an upper bound for any control policy. In this section, we analyze the robustness of decentralized policies, for parallel networks, that do not require information about the disturbance or link capacities, and moreover weight bounds information is private to each link. 

Consider a parallel network consisting of $n$ links from the supply node to the demand node. Let the magnitude of supply/demand be equal to $\alpha \geq 0$. We first specialize the margin of robustness computation to this setting. Since Remark~\ref{rem:eqv-cap-problem} (d) implies that the margin of robustness for a parallel network is related to the maximum of equivalent capacity over all feasible equivalent weights, Example~\ref{eg:para-net-constant-cap} implies that the margin of robustness for a parallel network is given by:
\begin{equation}
\label{eq:para-net-rob-sol}
  \marginparold = \max_{\weq^{l}\le w \le \weq^{u}} \mathcal{C}(\weq) = \gmax
  \hat{g}^{-}(\gmax) =  \gmax \sum_{i: w_i^u < c_i/\gmax} w_{i}^{u}   +  \sum_{i: w_i^u \ge c_i/\gmax}
  c_{i} 
\end{equation}
where we recall $\gmax=\min_{i \in \until{n}} c_i/w_i^l=1/ (\max_i w_i^l/c_i)$ and other notations used in  \eqref{eq:para-net-rob-sol} from Example~\ref{eg:para-net-constant-cap}. 
Moreover, an optimizer in \eqref{eq:para-net-rob-sol} is \( \weqopt = \hat{g}^{-}(\gmax) = \sum_{i=1}^{n}
\min\{c_{i}/\gmax, w_{i}^{u}\} \), with the corresponding link weights given by 
\begin{equation}
\label{eq:para-net-opt-sol}
 \wopt_i = w_{i}^{-}(\gmax) =
\min\{c_{i}  \max_i w_i^l/c_i , w_{i}^{u}\}.
\end{equation}

Indeed, for a parallel network, since all disturbances are of multiplicative type, and the link flows for a parallel network are explicitly given by $f_i=\alpha w_i/(\sum_{j=1}^n w_j)$, it is easy to see from \eqref{eq:weight-control-param}, as is also shown in \cite[Section III-B]{Ba.Savla:ACC16}, that the margin of robustness for a parallel network is equal to the following: 
\begin{equation}
  \label{opt:weight-control-para-net}
  \begin{aligned}
    & \underset{\alpha\in \real, w \in \real^{n}}{\text{max}} & & \alpha \\
    & \text{subject to}
    & & w^{l} \le w \le w^{u} \\
    &&& f_{i} = \frac{w_{i}}{\sum_{j=1}^n w_{j}} \alpha \quad \forall
    \, i \in \{1, \ldots, n\}     \\
    &&& 0 \le f \le c 
  \end{aligned}
\end{equation}

Remark \ref{rem:para-net-unique-sol} implies that $  \wopt $ defined in (\ref{eq:para-net-opt-sol}) is the minimal optimal solution to (\ref{opt:weight-control-para-net}), as summarized in Remark \ref{rem: para-net-minimal-sol}.

\begin{remark}
\label{rem: para-net-minimal-sol}
For a $ n $ link parallel network with constant capacities, $  \wopt $ defined in (\ref{eq:para-net-opt-sol}) is the minimal optimal solution to (\ref{opt:weight-control-para-net}), \ie, $ \tilde{w}_{i} \ge \wopt_{i} $ for all $ i \in \until{n}$ and all optimal solution $ \tilde{w} $ of  (\ref{opt:weight-control-para-net}). 
\end{remark}

The decentralized control policies considered in this paper are partially inspired by the implication of Proposition~\ref{lem:sign-of-flow-change} for a parallel network that, the decrease in the weight of a link leads to a decrease in flow on that link but an increase in flow on the parallel links connecting the same nodes. While this implication of Proposition~\ref{lem:sign-of-flow-change} does not necessarily extend to the case when multiple links change weights simultaneously, we identify conditions under which the decentralized control policies considered in this paper are provably robust, \ie, their margin of robustness is equal to the quantity computed in \eqref{eq:para-net-rob-sol}, or equivalently the optimal value of \eqref{opt:weight-control-para-net}. 

We now state two control policies and analyze their robustness within the dynamical framework of \eqref{eq:ss-model-decent-control}.


\subsection{A Memoryless Controller}
Consider the following control policy: for all $i \in \until{n}$
\begin{equation}
\label{eq:controller1}
u^1_{i}( w_{i}(t),f_i(t) )  = \left\{ \begin{array}{l@{\quad}l}
- \lambda_{i}  & f_{i}(t) > c_{i} \, \, \& \, \, w_i(t) > \wlower_i\\
0 & \text{otherwise}
\end{array}\right.
\end{equation}
where $ \lambda_{i}>0 $ is an arbitrary constant denoting the rate of decrease of $w_i$. 

Since $w(t)$ is nonincreasing under $u^1$ and is lower bounded by $w^l$, the dynamics in (\ref{eq:ss-model-decent-control}) always converges to an equilibrium under $u^1$. This is formally stated next. 


\begin{lemma}
\label{lemma:convergence}
Consider a network consisting of a directed multigraph $\mc G=(\mc V, \mc E)$, with lower and upper bounds on link weights as $w^l \in \real^{n}_{\ge 0}$ and $w^u \in \real^{n}_{>0}$, respectively. Then, for every $ \lambda \in \real^{\mc E}_{>0}$, and $ w(0) \in [w^{l}, w^{u}] $, there exists $\weqm \in [w^l,w(0)] \subseteq [w^l,w^u]$, such that, under the dynamics in \eqref{eq:ss-model-decent-control} with the controller $u^1$ in \eqref{eq:controller1}, $\lim_{t \to +\infty} w(t) = \weqm$.\footnote{Notice that Lemma \ref{lemma:convergence} is stated for a general, \ie, not necessarily parallel, networks.}
\end{lemma}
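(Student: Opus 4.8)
The plan is to establish convergence coordinatewise by showing that each scalar trajectory $w_i(t)$ is nonincreasing and bounded below, and then to invoke the monotone convergence theorem. Since the controller $u^1$ in \eqref{eq:controller1} is memoryless, the dynamics \eqref{eq:ss-model-decent-control} reduces to $\dot w_i(t) = u^1_i(w_i(t),f_i(t))$ for each $i \in \until{n}$, where $f_i(t)=f_i(w(t),p_\Delta)$ is given by \eqref{eq:sol}. Although the flow $f_i(t)$ couples all coordinates through the full weight vector $w(t)$, the argument I will use depends only on the sign of $u^1_i$, so this coupling is immaterial.

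First I would observe that $u^1_i(\cdot,\cdot) \in \{-\lambda_i, 0\}$, and in particular $u^1_i \le 0$ for all arguments. Hence $\dot w_i(t) \le 0$ for almost every $t$, so $t \mapsto w_i(t)$ is nonincreasing and $w_i(t) \le w_i(0)$ for all $t \ge 0$. Next I would establish the lower barrier $w_i(t) \ge w^l_i$. The key point is that, by \eqref{eq:controller1}, the decreasing branch $u^1_i=-\lambda_i$ is active only when $w_i(t) > w^l_i$; equivalently, $\dot w_i(t)=0$ whenever $w_i(t) \le w^l_i$. Defining the absolutely continuous function $g_i(t) := \max\{w^l_i - w_i(t),\,0\}$, one has $g_i(0)=0$ since $w(0)\in[w^l,w^u]$. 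At almost every $t$ with $w_i(t) < w^l_i$ the decreasing branch is inactive, so $\dot w_i(t)=0$ and hence $\dot g_i(t)=0$; at almost every $t$ with $w_i(t) \ge w^l_i$ one has $\dot g_i(t)=0$ as well. Thus $\dot g_i = 0$ almost everywhere, which forces $g_i\equiv 0$, i.e. $w_i(t) \ge w^l_i$ throughout.

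Combining the two bounds, each $w_i(t)$ is nonincreasing in $t$ and bounded below by $w^l_i$, hence converges to $w^*_i := \inf_{t \ge 0} w_i(t)$ with $w^l_i \le w^*_i \le w_i(0)$. Since convergence in $\real^{n}$ is coordinatewise, $\lim_{t\to\infty} w(t) = w^*$ with $w^* \in [w^l, w(0)] \subseteq [w^l,w^u]$, the last inclusion being the hypothesis $w(0)\in[w^l,w^u]$. The only delicate point is well-posedness of \eqref{eq:ss-model-decent-control} under the discontinuous right-hand side of \eqref{eq:controller1}: I would interpret solutions in the Carath\'eodory / absolutely-continuous sense, so that the ``almost everywhere'' sign conditions and the barrier computation for $g_i$ are justified. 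This interpretive step, rather than the monotone-convergence conclusion itself, is where I expect the main technical care to be required.
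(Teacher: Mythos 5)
Your argument is correct and is exactly the reasoning the paper relies on: the text preceding the lemma justifies it in one line by observing that $w(t)$ is nonincreasing under $u^1$ and bounded below by $w^l$, which is precisely your coordinatewise monotone-convergence argument. Your additional care with the barrier function $g_i$ and the Carath\'eodory interpretation of solutions only makes explicit what the paper leaves implicit.
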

The flow $f(w^*)$ at the equilibrium $w^*$ established in Lemma~\ref{lemma:convergence} may not necessarily satisfy $f(w^*) \in [0,c]$ under all supply/demand $\alpha$. We next characterize the upper limit on this quantity and compare it with respect to the upper bound $\marginparold$.

Unless otherwise stated explicitly, in this section, we adopt the shorthand notation $\min_i$ and $\max_i$ to imply minimum and maximum, respectively, over $\until{n}$. 
Let 
\begin{equation}
\label{eq:initial-congestion-degree}
r_{i} := w_{i}(0)/c_{i}, \quad i \in \until{n}
\end{equation}
%
Without loss of generality, label the links in increasing order of $ r_{i} $, \ie, $ r_{1} \le r_{2} \le \ldots \le r_{n} $. 
Let 
\begin{equation}
\label{eq:r_star}
r^{*} := \max_{i} \frac{\wlower_{i}}{c_{i}} = \frac{1}{\gmax}
\end{equation}
Since $ w(0) \ge w^{l} $, $ r_{n} = \max_{i} w_{i}(0) /c_{i}  \ge \max_{i} w_{i}^{l} /c_{i}  $ and therefore $  r^{*} \le r_{n} $. 
Let $\bar{k}:=\min \setdef{j \in \until{n}}{r_j \ge r^*}$. This implies that 
\begin{equation}
\label{eq:rstar-ineq}
r_{\bar{k}-1} < r^* \le r_{\bar{k}}
\end{equation}
Consider the following functions: 
\begin{equation}
\label{eq:vk}
V_{k} := \frac{1}{r_{k}}\sum_{i=1}^{{k-1}} w_{i}(0) + \sum_{i={k}}^{n} c_{i}, \quad k \in \until{n}, \qquad  V^{*} := \frac{1}{r^{*}}\sum_{i=1}^{\bar{k}-1} w_{i}(0) + \sum_{i=\bar{k}}^{n} c_{i} 
\end{equation}
%
\eqref{eq:vk} implies that $ V_{1} = \sum_{i=1}^{n} c_{i} $ and, when $ r^{*} \le r_{1} $, $ \bar{k} = 1 $ and
 $ V^{*} = \sum_{i=1}^{n} c_{i} = V_{1}$.  Since $ r_{k}\le r_{k+1} $, $ V_{k} $ is nonincreasing in $ k $: 
\begin{equation*}
V_{k+1} =\frac{1}{r_{k+1}}\sum_{i=1}^{{k}} w_{i}(0) + \sum_{i={k+1}}^{n} c_{i}  
	 \le \frac{1}{r_{k}}\sum_{i=1}^{k-1} w_{i}(0) + \frac{w_{k}(0)}{r_{k}}+ \sum_{i={k+1}}^{n} c_{i} 
	=   V_{k}
	\end{equation*}
Similarly, we can show that 
\begin{equation}
\label{eq:Vstar-ineq}
V_{\bar{k}} \le V^{*} < V_{\bar{k}-1}
\end{equation}

\begin{theorem}
\label{thm:convergence}
Consider a parallel network consisting of $n$ links, with lower and upper bounds on link weights as $w^l \in \real^{n}_{>0}$ and $w^u \in \real^{n}_{>0}$, respectively, link capacities $c \in \real_{>0}^n$, and supply/demand with magnitude $\alpha \geq 0$. Then, for every $ \lambda \in \real^{n}_{>0}$ and $ w(0) \in [w^{l}, w^{u}] $, there exists $\weqm \in [w^l,w(0)] \subseteq [w^l,w^u]$, such that, under the dynamics in \eqref{eq:ss-model-decent-control} with the controller $u^1$ in \eqref{eq:controller1}, $w(t)$ monotonically converges to $\weqm \in [w^l,w(0)] \subseteq [w^l,w^u]$. 
Moreover, 
\begin{enumerate}[label=(\roman*)]
\item if $\alpha \in [0,V_n]$, then $\weqm = w(0)$ and $f(\weqm) \in [0,c]$;
\item if $\alpha \in (V_n,V^*]$ then
\begin{equation}
\label{eq:weqm-expr}
\weqm_i = \left\{ \begin{array}{ll} w_{i}(0) \quad& 1 \le i \le \hat{k}-1 \\  \hat{r} c_{i} & \hat{k} \le i \le n  \end{array} \right. 
\end{equation}
where $\hat{k}:=\min \setdef{j \in \until{n}}{\alpha \ge V_j}$, 
\begin{equation}
\label{eq:rhat-def}
\hat{r} := \left\{ 
\begin{array}{ll} 
\displaystyle{\frac{\sum_{i=1}^{\hat{k}-1} w_{i}(0)}{ \alpha - \sum_{i=\hat{k}}^{n} c_{i} } } \quad& \alpha < V_1 \\ 
 r_1 & \alpha = V_1 = V^* 
\end{array} \right. 
\end{equation}
and $f(\weqm) \in [0,c]$;
\item if $\alpha > V^*$ then $f(\weqm) \notin [0,c]$
\end{enumerate}
where $V_j$ and $V^*$ are as defined in \eqref{eq:vk}.
\end{theorem}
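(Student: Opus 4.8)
The plan is to exploit that, under $u^1$, $w(t)$ is non-increasing and bounded below by $w^l$, together with the closed-form parallel flow $f_i(w)=\alpha w_i/\sum_{j=1}^n w_j$, reading off the limiting configuration from a limiting ``congestion threshold''. Monotone convergence to a limit $\weqm\in[w^l,w(0)]$ and the equilibrium condition follow from Lemma~\ref{lemma:convergence}: at $\weqm$ the control is inactive, so for every $i$ either $f_i(\weqm)\le c_i$ or $\weqm_i=w^l_i$. I would introduce the current congestion $r_i(t):=w_i(t)/c_i$ and threshold $\theta(t):=\sum_j w_j(t)/\alpha$, so that $f_i(t)/c_i=r_i(t)/\theta(t)$ and link $i$ is overloaded iff $r_i(t)>\theta(t)$; since $w(t)$ is non-increasing, $\theta(t)$ is non-increasing and converges to $\theta^*=\sum_j \weqm_j/\alpha$. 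Case (i) is then immediate: as link $n$ carries the largest initial congestion $r_n$, one has $f_n(w(0))/c_n=\alpha r_n/\sum_j w_j(0)=\alpha/V_n\le 1$ whenever $\alpha\le V_n$, hence $f_i(w(0))\le c_i$ for all $i$, the controller never fires, and $\weqm=w(0)$ with $f(\weqm)\in[0,c]$.

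The core of cases (ii)--(iii) is a structural claim about $\weqm$: every throttled link that remains interior ($\weqm_i>w^l_i$) satisfies $r_i^*=\theta^*$, i.e.\ $f_i(\weqm)=c_i$, whereas every un-throttled link satisfies $\weqm_i=w_i(0)$ and $r_i\le\theta^*$. I expect the equality $r_i^*=\theta^*$ for interior throttled links to be the \emph{main obstacle}, and the argument hinges on monotonicity of $\theta(t)$: if such a link last stopped decreasing at some time $t_0$, it did so with $r_i(t_0)=\theta(t_0)\ge\theta^*$, while not being overloaded at the limit forces $r_i^*\le\theta^*$, giving $r_i^*=\theta^*$. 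With this, interior throttled links equalize at $\weqm_i=\theta^* c_i$, and, because the links are labeled by increasing $r_i$, the throttled set is an upper segment $\{\bar\imath,\dots,n\}$.

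For case (ii) ($\alpha\le V^*$) I would set $\hat r:=\theta^*$, identify the throttled set as $\{\hat k,\dots,n\}$ with $\hat k=\min\{j:\alpha\ge V_j\}$ by verifying $r_{\hat k-1}<\hat r\le r_{\hat k}$ from the inequalities $V_{\hat k}\le\alpha<V_{\hat k-1}$ (this is a direct manipulation of the definition of $V_k$ in \eqref{eq:vk}), and then solve $\sum_j \weqm_j=\alpha\hat r$ to recover the expression \eqref{eq:rhat-def}, which yields \eqref{eq:weqm-expr}. Feasibility $f(\weqm)\in[0,c]$ reduces to showing no lower bound binds, i.e.\ $\weqm_i=\hat r c_i\ge w^l_i$; since $r^*=\max_i w^l_i/c_i$, it suffices that $\hat r\ge r^*$. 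I would obtain this from $\alpha\le V^*$ by observing that $\hat r$ is non-increasing in $\alpha$ and equals $r^*$ exactly at $\alpha=V^*$, where $\hat k=\bar k$ by \eqref{eq:Vstar-ineq}; the non-throttled links then satisfy $f_i(\weqm)/c_i=r_i/\hat r\le 1$, completing feasibility.

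For case (iii) ($\alpha>V^*$) I would argue by contradiction: if $f(\weqm)\in[0,c]$, then $\weqm\in[w^l,w(0)]$ is a weight configuration feasible for demand magnitude $\alpha$. Applying Example~\ref{eg:para-net-constant-cap} and \eqref{eq:para-net-rob-sol} with the upper weight bound $w^u$ replaced by $w(0)$—legitimate because $u^1$ keeps $w(t)\le w(0)$ throughout—the maximal feasible demand over $[w^l,w(0)]$ is precisely $V^*$ (the substitution turns $\marginparold$ into $V^*$, since $w_i(0)<c_i r^*\iff r_i<r^*\iff i<\bar k$). Hence $\alpha\le V^*$, contradicting $\alpha>V^*$, so some link must be overloaded at $\weqm$, i.e.\ $f(\weqm)\notin[0,c]$. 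The remaining care throughout is the boundary bookkeeping between strict and non-strict congestion comparisons at the transition indices $\bar k,\hat k$, which I would dispatch using the already-verified inequalities \eqref{eq:Vstar-ineq} and the monotonicity of $V_k$ in $k$.
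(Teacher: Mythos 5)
Your route is genuinely different from the paper's: you analyze the dynamics forward in time through the congestion threshold $\theta(t)=\sum_j w_j(t)/\alpha$ and read off the equilibrium from where the throttled links equalize, whereas the paper guesses $\weqm$ via \eqref{eq:weqm-expr}, proves that $[\weqm,w(0)]$ is positively invariant, and then shows $\weqm$ is the unique equilibrium in that box. Your case (i) coincides with the paper's. Your case (iii) — bounding the maximal feasible demand over weights in $[w^l,w(0)]$ by $V^*$ via \eqref{eq:para-net-rob-sol} with $w^u$ replaced by $w(0)$ — is correct and arguably cleaner than the paper's computation: $\gmax=1/r^*$ depends only on $w^l$, and $\{i:\,w_i(0)\ge c_i r^*\}=\{\bar{k},\dots,n\}$, so the substitution does produce exactly $V^*$, and any feasible $\weqm\in[w^l,w(0)]$ would force $\alpha\le V^*$.

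The gap is in case (ii), and it sits precisely where the theorem's content lies: links pinned at their lower bounds. Your structural claim covers interior throttled links ($f_i(\weqm)=c_i$, hence $\weqm_i=\theta^*c_i$) and never-overloaded links ($\weqm_i=w_i(0)$), but is silent about a link that is overloaded while sitting at $w_i^l$ — either because $w_i(0)=w_i^l$ (so $u^1$ never fires on it even though $r_i>\theta^*$) or because it is driven down to $w_i^l$ before its flow drops to $c_i$. For such a link $\weqm_i=w_i^l\neq\theta^*c_i$, the balance $\sum_j\weqm_j=\alpha\theta^*$ no longer yields $\theta^*=\hat{r}$, and feasibility fails. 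Your derivation of $\theta^*=\hat{r}$ therefore presupposes that every overloaded link remains interior, and you then invoke $\hat{r}\ge r^*$ to justify that presupposition — a circularity. Two repairs are available: (a) strengthen the structural claim to $\weqm_i=\max\{\theta^*c_i,\,w_i^l\}$ for every link with $r_i>\theta^*$ (a link decreases only while $w_i(t)>\theta(t)c_i\ge\theta^*c_i$, so it can never cross below $\theta^*c_i$), then show that the resulting fixed-point equation $\alpha\theta^*=\sum_{i:r_i\le\theta^*}w_i(0)+\sum_{i:r_i>\theta^*}\max\{\theta^*c_i,w_i^l\}$ has a unique root, which equals $\hat{r}\ge r^*$ exactly when $\alpha\le V^*$; or (b) adopt the paper's invariance step: for $w\ge\weqm$ with $w_i=\weqm_i$ one has $f_i(w)\le\alpha\weqm_i/\sum_j\weqm_j=\weqm_i/\hat{r}\le c_i$, so no trajectory exits $[\weqm,w(0)]$, after which your equalization argument identifies $\weqm$ as the only possible limit. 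Without one of these, the step "identify the throttled set as $\{\hat{k},\dots,n\}$ and solve $\sum_j\weqm_j=\alpha\hat{r}$" does not exclude equilibria with binding lower bounds even when $\alpha\le V^*$.
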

\begin{proof}
Monotonic convergence of $w(t)$ follows from Lemma~\ref{lemma:convergence}. If $ \alpha \in [0,V_{n}]$, then the initial flow on link $i \in \until{n}$ is given by: 
\begin{equation*}
f_{i}(0) = \frac{w_{i}(0)}{\sum_{j=1}^{n} w_{j}(0)} \alpha \le  \frac{w_{i}(0)}{\sum_{j=1}^{n} w_{j}(0)} \frac{ \sum_{j=1}^{n} w_{j}(0)}{r_{n}}\le c_{i}
\end{equation*}
\ie, the system is feasible at $t=0$. Therefore, if $ \alpha \le V_{n} $, then $u^1(t) \equiv 0$, and hence the equilibrium is $\weqm=w(0)$. This establishes part (i) in the theorem.  

If $ \alpha > V_{1} = \sum_{i=1}^{n} c_{i} $, then it is trivially $ f(w) \notin [0, c]$ for any $ w $. Hence $ \alpha \le V_{1} $ is considered in the following proof. 
Moreover, we emphasize that since $V^* \leq V_1$, $\alpha < V_1$ is satisfied in case (ii) if $V^* < V_1$. The definition of $\hat{k}$ implies that $ V_{\hat{k}} \le \alpha < V_{\hat{k}-1} $. 
This, combined with \eqref{eq:rhat-def} and \eqref{eq:vk}, implies that $\hat{r} \geq 0$, and hence $\weqm \geq 0$, for all $\alpha \in (V_n,V_1)$. \eqref{eq:vk} and \eqref{eq:initial-congestion-degree} imply that 
\begin{equation*}
\frac{1}{r_{\hat{k}}}\sum_{i=1}^{\hat{k}-1} w_{i}(0) \le \alpha - \sum_{i = \hat{k}}^{n} c_{i} < \frac{1}{r_{\hat{k}-1}}\sum_{i=1}^{\hat{k}-1} w_{i}(0) 
\end{equation*}
Therefore, the definition of $\hat{r}$ in \eqref{eq:rhat-def} implies that,   
\begin{equation}
\label{eq:rhat-bound}
\hat{r} \le r_{\hat{k}}, \qquad \forall \, \alpha \in (V_n,V_1]
\end{equation}
In writing \eqref{eq:rhat-bound}, we used the fact that, when $\alpha=V_1$, then $\hat{k}=1$, and hence $r_{\hat{k}}=r_1=\hat{r}$. Additionally, 
\begin{equation}
\label{eq:rhat-lower-bound}
\hat{r} > r_{\hat{k}-1}, \qquad \forall \, \alpha \in (V_n,V_1)
\end{equation}

We now establish the following claims: with $\weqm$ as given in \eqref{eq:weqm-expr},
\begin{enumerate}
\item[(I)] for $V_{n} < \alpha \le V_1$ $[\weqm,w(0)]$ is positively invariant under (\ref{eq:ss-model-decent-control}) with controller $u^{1}$;
\item[(II)] for $\alpha \in (V_n, V^*]$,
\begin{enumerate}
\item[(a)] $\weqm \in [w^l, w(0)] $,
\item[(b)] $\weqm$ is the only equilibrium in $[\weqm,w(0)]$,
\item[(c)] $f(\weqm) \in [0,c]$
\end{enumerate}
\item[(III)] for $ V^{*} < \alpha \le V_{1}$, $f(w) \notin [0,c]$ for all $w \in [\weqm,w(0)] \cap [w^{l}, w(0)] $.
\end{enumerate}

(I) and (II) establish part (ii) of the theorem, whereas (I) and (III) establish part (iii).

\underline{Proof of (I)}:
Since $w(t) \leq w(0)$ for all $t \geq 0$ under controller $u^1$, it suffices to show that $ w(t) \ge \weqm$ for all $t \geq 0$ under $u^1$. Assume by contradiction that this is not true. Continuity of $w(t)$ then implies that there exists $t_1 >0$ and $\hat{i} \in \until{n}$ such that $w(t) \geq \weqm$ for all $t \in [0,t_1]$, $w_{\hat{i}}(t_1)=\weqm_{\hat{i}}$ and $\dot{w}_{\hat{i}}(t_1)<0$. The latter implies that $f_{\hat{i}}(t_1) > c_{\hat{i}}$. 
%
However, 
\begin{equation}
\label{eq:contradiction}
f_{\hat{i}}(t_{1} ) = \frac{w_{\hat{i}}(t_{1} )}{ \sum_{j=1}^{n} w_{j}(t_{1} ) }  \alpha \le \frac{ \weqm_{\hat{i}} }{ \sum_{j=1}^{n} \weqm_{j} } \alpha 
\end{equation}

If $\alpha < V_1$, then \eqref{eq:weqm-expr}, \eqref{eq:rhat-def} and \eqref{eq:contradiction} imply $f_{\hat{i}}(t_{1} ) \leq {\weqm_{\hat{i}}}/{\hat{r}}$.  
For $ j \in \until{\hat{k}-1}$, \eqref{eq:initial-congestion-degree}, \eqref{eq:rhat-def}, \eqref{eq:rhat-bound} and \eqref{eq:rhat-lower-bound} imply $ \weqm_{j}/\hat{r} = {w}_{j}(0)/\hat{r} = c_{j} r_{j}/\hat{r} \le c_{j} $. For $ j \in \{\hat{k}, \ldots, n\} $, $ \weqm_{j}/\hat{r} = c_{j} $. These together imply $f_{\hat{i}}(t_1) \leq c_{\hat{i}}$, giving a contradiction.

If $\alpha=V_1$, then $\hat{k}=1$, and therefore \eqref{eq:weqm-expr} and \eqref{eq:rhat-def} imply $\weqm = r_1 c$. Using this with \eqref{eq:contradiction} implies $f_{\hat{i}}(t_1) \leq c_{\hat{i}} \alpha/(\sum_{j=1}^n c_i) = c_{\hat{i}}$, again giving a contradiction.

\underline{Proof of (II-a)}:
Following \eqref{eq:weqm-expr}, we only need to show that $\weqm_i \in [w_i^l, w_i(0)] $ for $i \in \{\hat{k}, \ldots, n\} $. It is sufficient to show that $r^* \leq \hat{r} \leq r_{\hat{k}}$. This is because $\hat{r} \geq r^*$ combined with \eqref{eq:r_star} implies that $\hat{r} \geq w_i^l/c_i$, and hence $\weqm_i \geq w_i^l$, for all $i \in \{\hat{k}, \ldots, n\}$;  and $\hat{r} \leq r_{\hat{k}}$, which has already been established in \eqref{eq:rhat-bound}, combined with the non-decreasing property of the sequence $\{r_k\}_{k=1}^n$ implies $\hat{r} \leq r_i$, and hence $\weqm_i \leq w_i(0)$ for all $i \in \{\hat{k}, \ldots, n\}$, from \eqref{eq:initial-congestion-degree}. 
Since $\alpha \in (V_{n}, V^{*}]$,  \eqref{eq:Vstar-ineq} implies $\hat{k} \ge \bar{k} $.  If $ \hat{k} = \bar{k} $, then $ \alpha - \sum_{i= \hat{k}}^{n} c_{i} \le V^{*} - \sum_{i= \bar{k}}^{n} c_{i} =  \left( \sum_{i=1}^{\bar{k}-1} w_{i}(0) \right) /r^{*}$. \eqref{eq:rhat-def} then implies $ \hat{r} \ge r^{*} $. If $\hat{k} > \bar{k}$, \ie,  $\hat{k}-1 \geq \bar{k}$, then the non-decreasing property of $\{r_k\}_{k=1}^n$ implies $r_{\hat{k}-1} \geq r_{\bar{k}}$, which when combined with \eqref{eq:rhat-lower-bound} and \eqref{eq:rstar-ineq} implies $ \hat{r} > r^{*}$ if $\alpha < V_1$. On the other hand, if $\alpha=V_1$, then $\hat{r}=r^*=r_1$. This completes the proof for $\weqm \in [w^l,w(0)]$
Combining this with the definition of $\weqm$ in \eqref{eq:weqm-expr} implies that 
\begin{equation}
\label{eq:w-invariance}
w_i(t) \equiv w_i(0), \qquad i \in \until{\hat{k}-1}
\end{equation}
If $ \alpha = V_{1} $,  then $ \hat{k}=1 $, the set $ \until{\hat{k}-1} $ is empty. However, in this case, $ w_{1}^{*} = r_{1}c_{1} = w_{1}(0) $. Therefore, 
\begin{equation}
\label{eq:w1-invariance}
w_{1}(t) \equiv w_{1}(0) , \quad \forall\, \alpha \in (V_{n} , V_{1}].
\end{equation}

\underline{Proof of (II-b)}: By contradiction, suppose $\tilde{w} \in [\weqm,w(0)]\setminus \{ \weqm \}$ is also an equilibrium. \eqref{eq:w-invariance} and \eqref{eq:w1-invariance} imply there exists $ \mathcal{E}' \subset \{ \max\{2, \hat{k}\}, \ldots, n \} $ such that $ \tilde{w}_{i} > \weqm_{i} $ for all $ i \in \mathcal{E}' $, and $ \tilde{w}_{i}=\weqm_{i} $ for $ i \not \in \mathcal{E}' $ (we have already proven $w(t) \geq \weqm$ for all $t \geq 0$). 
Therefore, 
\begin{equation*}
\sum_{i \in \mathcal{E}'} f_{i}(\tilde{w}) = \frac{ \sum_{ i \in \mathcal{E}'} \tilde{w}_{i} }{\sum_{ j \in \mathcal{E}'} \tilde{w}_{j} + \sum_{j \not \in \mathcal{E}'} \weqm_{j}  }  \alpha > \frac{ \sum_{i\in \mathcal{E}'} \weqm_{i} }{ \sum_{j=1}^{n} \weqm_{j} } \alpha = \sum_{i\in \mathcal{E}'} c_{i} 
\end{equation*}
where the inequality is due to the fact that $ \until{n} \setminus \mathcal{E}' $ is nonempty, and the equality follows from the same argument used in the proof of (I): if $\alpha<V_1$, then $ \alpha/(\sum_{j=1}^n \weqm_j)=1/\hat{r}$, and hence $(\sum_{i\in \mathcal{E}'} \weqm_{i}) \alpha/(\sum_{j=1}^n \weqm_j) = \sum_{i\in \mathcal{E}'} \weqm_i/\hat{r}=\sum_{i\in \mathcal{E}'} c_i$; if $\alpha=V_1=\sum_{i=1}^n c_i$, then $\hat{k}=1$, $\weqm = r_1 c$, and hence $(\sum_{i\in \mathcal{E}'} \weqm_{i}) \alpha/(\sum_{j=1}^n \weqm_j) = \sum_{i\in \mathcal{E}'} c_i$. Therefore, there exists at least one $ j \in \mathcal{E}' $ such that $ f_{j}(\tilde{w}) > c_{j} $. This, combined with the fact that $ \tilde{w}_{i} > \weqm_{i} \ge w^{l}_{i} $ for all $i \in \mc E'$, implies that $ \tilde{w} $ can not be an equilibrium under $u^1$. 

\underline{Proof of (II-c)}: For any $i \in \until{n}$, $f_i(\weqm_i)=\weqm_i \alpha/(\sum_{j=1}^n \weqm_j)$. Along the same argument used in the proof of (I), we have: 

If $\alpha<V_1$, then $ \alpha/(\sum_{j=1}^n \weqm_j)=1/\hat{r}$, and therefore $f_i(\weqm_i)=\weqm_i/\hat{r}$. This is equal to $c_i$ for $ i\in \{\hat{k} , \ldots, n \}$, from \eqref{eq:weqm-expr}. For $ i \in \until{\hat{k}-1}$, since $ \hat{r} > r_{\hat{k}-1} \ge r_{i} $ from \eqref{eq:rhat-bound} and non-decreasing property of $\{r_k\}_{k=1}^n$, we have, $ f_i(\weqm) = w_{i}(0)/\hat{r} \le w_{i}(0) /r_{i} = c_{i} $ from \eqref{eq:initial-congestion-degree}. 

If $\alpha=V_1$, then $\hat{k}=1$, and hence $\weqm = r_1 c$. Therefore, $f_i(\weqm_i)=c_i$ for all $i \in \until{n}$.

\underline{Proof of (III)}:
If $\alpha \in (V^*, V_1)$, then $ 2 \le \hat{k} \le \bar{k} $.  If $ \alpha = V_{1} $, then $ \hat{k} =1 $, $ \hat{r} = r_{1} $ and $ \weqm = r_{1}c$. In particular, $\weqm_1 = r_{1}c_{1} = w_{1}(0)$. Therefore, for convenience, we can set $ \hat{k}  = 2$ for $ \alpha = V_{1} $ and (\ref{eq:weqm-expr}) remains valid. In summary, we set the convention that $ \bar{k} \ge \hat{k} \ge 2 $ for all $ \alpha \in (V^{*}, V_{1}] $. Consequently, the set $ \{1, \ldots, \hat{k}-1\}  $ is not empty,
and, using similar argument as in the proof of (II-b), it can be shown that $ \sum_{i = \hat{k}}^{n} f_{i}(w)  >  \sum_{i = \hat{k}}^{n} c_{i} $ for any $ w \in [\weqm, w(0)] \setminus \{\weqm\}$, \ie, $f(w) \notin [0,c]$ for all $w$ in $  [\weqm, w(0)] $ other than $ \weqm $. Furthermore, $\weqm \notin [w^l, w(0)]$. This is because of the following:
Since $r^{*} > r_{\bar{k}-1} $ from \eqref{eq:rstar-ineq}, maximum of $\{w_{i}^{l}/c_{i}\}_{i=1}^n$ occurs in $ \{ \bar{k},\ldots, n \} $. If $\tilde{k}$ denotes one such maximizer, then $\tilde{k} \geq \bar{k} \geq \hat{k}$. 
%
%
Therefore, $ w_{\tilde{k}}^{l} > c_{\tilde{k}} \hat{r} = \weqm_{\tilde{k}} $, where the inequality is due to $ r^{*} > \hat{r} $, which can be shown using argument similar to the one in the proof of (II-a),  and the equality is due to the definition of $ \weqm_{\tilde{k}} $ for $ \tilde{k} \ge \hat{k}$  from \eqref{eq:weqm-expr}. 
\end{proof}  


Theorem \ref{thm:convergence} implies that the margin of robustness of $ u^{1} $ is equal to $ V^{*} $. The following proposition states sufficient conditions for $ u^{1} $ to be maximally robust, \ie, sufficient conditions for $V^* = \alpha^*$.
\begin{proposition}
\label{prop:controller1-nec-suff-cond}
Consider a parallel network consisting of $n$ links, with lower and upper bounds
on link weights as $w^l \in \real^{\mc E}_{>0}$ and $w^u \in \real^{\mc
  E}_{>0}$, respectively, link capacities $c \in \real_{>0}^n$. Then, for every
$ \lambda \in \real^{n}_{>0}$ and $ w(0) \in [w^{l}, w^{u}] $, we have $
R(u^1)=V^{*}= \alpha^{*} $ (cf. (\ref{eq:para-net-rob-sol}) and
(\ref{eq:vk})) if and only if  \( w(0) \ge \wopt \),  
where \( \wopt \) is an optimal solution to (\ref{opt:weight-control-para-net}), 
as defined in (\ref{eq:para-net-opt-sol}).  
\end{proposition}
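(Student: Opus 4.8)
The plan is to reduce the statement to a purely algebraic comparison. By Theorem~\ref{thm:convergence}, the dynamics under $u^1$ converges monotonically to an equilibrium $\weqm$ whose flow is feasible precisely when $\alpha \le V^*$ and infeasible when $\alpha > V^*$, so that $R(u^1) = V^*$; consequently, proving the proposition amounts to showing
\begin{equation*}
V^* = \alpha^* \iff w(0) \ge \wopt .
\end{equation*}
The central idea is to express both $V^*$ and $\alpha^*$ as sums of one and the same per-link function, evaluated at $w(0)$ and $w^u$ respectively. Recalling from \eqref{eq:r_star} that $\gmax = 1/r^*$, I would define $\phi_i(x) := \min\{x/r^*,\, c_i\}$ for each $i \in \until{n}$; this function is continuous and nondecreasing, strictly increasing on $[0, c_i r^*]$, and constant (equal to $c_i$) on $[c_i r^*, \infty)$.

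Next I would verify the two representations $V^* = \sum_{i} \phi_i(w_i(0))$ and $\alpha^* = \sum_i \phi_i(w_i^u)$. For $V^*$, I use the ordering $r_1 \le \cdots \le r_n$ together with \eqref{eq:rstar-ineq}: under this sorting, $\{1,\dots,\bar k -1\}$ coincides with $\{i: w_i(0) < c_i r^*\}$ and $\{\bar k,\dots,n\}$ with $\{i: w_i(0) \ge c_i r^*\}$, so splitting \eqref{eq:vk} term by term and using that $w_i(0)/r^* < c_i$ on the former set and $\ge c_i$ on the latter yields $V^* = \sum_i \min\{w_i(0)/r^*, c_i\}$. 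For $\alpha^*$, substituting $\gmax = 1/r^*$ into \eqref{eq:para-net-rob-sol} turns $c_i/\gmax$ into $c_i r^*$ and each summand into $\min\{w_i^u/r^*, c_i\}$, giving $\alpha^* = \sum_i \phi_i(w_i^u)$. Since $w(0) \le w^u$ and each $\phi_i$ is nondecreasing, this immediately re-derives the known bound $V^* \le \alpha^*$, with equality holding if and only if $\phi_i(w_i(0)) = \phi_i(w_i^u)$ for every $i$.

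Finally I would carry out the per-link equality analysis, which is the only place demanding care. Because $w_i(0) \le w_i^u$ and $\phi_i$ is strictly increasing up to $c_i r^*$ and flat thereafter, the equality $\phi_i(w_i(0)) = \phi_i(w_i^u)$ holds exactly when $w_i(0) \ge \min\{c_i r^*, w_i^u\}$: if $w_i^u \le c_i r^*$, then $\phi_i$ is strictly increasing on $[0,w_i^u]$ and equality forces $w_i(0) = w_i^u = \wopt_i$; if $w_i^u > c_i r^*$, then $\phi_i(w_i^u) = c_i$ and equality holds iff $w_i(0) \ge c_i r^* = \wopt_i$. In both cases the threshold is exactly $\wopt_i = \min\{c_i r^*, w_i^u\}$ from \eqref{eq:para-net-opt-sol}, so conjoining over $i$ delivers $V^* = \alpha^*$ iff $w(0) \ge \wopt$. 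I expect the main obstacle to be bookkeeping rather than analysis: confirming that the index partition induced by $\bar k$ matches the sets $\{i: w_i(0) < c_i r^*\}$ and $\{i: w_i(0) \ge c_i r^*\}$ under the sorting convention, and tracking that the standing constraint $w(0) \in [w^l, w^u]$ is used consistently (it supplies $w(0) \le w^u$ and, via $c_i r^* \ge w_i^l$, the fact that $\wopt \ge w^l$, so the condition $w(0) \ge \wopt$ is compatible with feasibility).
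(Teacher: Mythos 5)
Your proposal is correct and follows essentially the same route as the paper: both reduce the claim (via Theorem~\ref{thm:convergence}) to showing $V^*=\alpha^*$ iff $w(0)\ge \wopt$, and both compare the two quantities link by link against the common threshold $c_i r^*$, concluding that equality forces $w_i(0)=w_i^u$ on the links with $w_i^u< c_i r^*$ and $w_i(0)\ge c_i r^*$ on the rest. Your packaging of both sums as $\sum_i \phi_i(\cdot)$ with $\phi_i(x)=\min\{x/r^*,c_i\}$ is merely a tidier bookkeeping of the paper's argument via the index set $\mathcal{E}_0$ and the partition at $\bar{k}$.
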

\begin{proof}
Let \( \mathcal{E}_{0} := \{i\in \until{n}\, |\,
{w_{i}^{u}}/{c_{i}} < r^{*} = \max_{i}
{w_{i}^{l}}/{c_{i}}  \} \). (\ref{eq:para-net-rob-sol}) and (\ref{eq:para-net-opt-sol}) then imply that \( \alpha^{*} = \sum_{i\in \mathcal{E}_{0}}
w_{i}^{u} /r^{*} + \sum_{i\notin \mathcal{E}_{0}} c_{i} \)
and  
\begin{equation}
\label{eq:wopt-simple}
\wopt_i = w_{i}^{u}, \quad \forall \, i\in
\mathcal{E}_{0}; \qquad \wopt_i = c_{i}r^{*}, \quad \forall \, i \in \until{n} \setminus \mathcal{E}_{0} 
\end{equation}
We need to show that \( w(0)\ge \wopt \) is necessary and sufficient
condition for:
\begin{equation}
\label{eq:v_star-alpha}
V^{*} = \frac{1}{r^{*}} \sum_{i=1}^{\bar{k}-1} w_{i}(0) + \sum_{i=\bar{k}}^{n}
c_{i} = \frac{1}{r^{*}} \sum_{i\in \mathcal{E}_{0}}
w_{i}^{u}  + \sum_{i\notin \mathcal{E}_{0}} c_{i} = \alpha^{*}
\end{equation}


Since \( w(0)\le w^{u} \), by definition  \( \mathcal{E}_{0} \subseteq \until{\bar{k}-1} \). 
Moreover, by definition \(w_{i}(0)/r^{*}< c_{i} \) for \( i\le \bar{k}-1 \). 
Therefore, it is straightforward to see that (\ref{eq:v_star-alpha}) is true if and only if: 
\begin{inparaenum}[(i)]
  \item   \(\mathcal{E}_{0} = \{1, \ldots, \bar{k}-1\} \); and
    \item \( w_{i}(0) = w_{i}^{u} \) for all \( i\in \mathcal{E}_{0} \). 
\end{inparaenum}
Since \( \mathcal{E}_{0} \subseteq \{1, \ldots, \bar{k}-1\} \), 
(i) is equivalent to \(\{1, \ldots, n\}\setminus
 \mathcal{E}_{0} \subseteq \{\bar{k}, \ldots, n\}  \), which is
 further equivalent to \( w_{i}(0) \ge c_{i}r^{*}\)
for all \( i\in \{1, \ldots, n \}\setminus \mathcal{E}_{0} \). Therefore, considering the definition of \( \wopt \) from \eqref{eq:wopt-simple}, conditions (i) and (ii) can be succinctly written as
\( w(0) \ge \wopt \). 
\end{proof} 

\begin{remark}
\label{rem:controller1}
\leavevmode
\begin{enumerate}[label = (\alph*)]
\item Since the controller  $ u^{1} $ only decreases weights, the initial weight must be greater than at least one optimal solution of weight in order for the controller $ u^{1} $ to be maximally robust. Remark \ref{rem: para-net-minimal-sol} implies that the condition $ w(0) \ge \wopt $ is not conservative because $ \wopt $ is the minimal optimal solution. 
\item Since $ w^u \ge \wopt $, Proposition~\ref{prop:controller1-nec-suff-cond} implies that $u^1$ is maximally robust for parallel networks if $ w(0) = w^{u} $. 
\item Referring to Theorem~\ref{thm:convergence}, the weights on links in set $\{1, \ldots, \max\{1, \hat{k}-1\} \}$ does not change under $u^1$, whereas the weights on links in set $\{ \max\{2, \hat{k}\}, \ldots, n\}$ potentially changes, and indeed these links become capacitated at the equilibrium $\weqm$. 
\item  $ \lambda_{i} $ in \eqref{eq:controller1} can be arbitrary and time varying. 
\end{enumerate}
\end{remark}



\subsection{Controller with Memory}
We now present a control policy which augments $u^1$ by increasing weight on link $i$ when the flow $f_i$ is increasing. The control policy is formally stated as follows: for all $i \in \until{n}$, 
\begin{equation}
\label{eq:altru-control}
u^2_{i}( w_{i}(t),f_i(t) )  = \left\{ \begin{array}{l@{\quad}l}
- \lambda_i  & f_{i}(t) > c_{i} \, \, \& \, \, w_i(t) > \wlower_i \\ 
 \lambda_i & f_{i}(t) < c_{i} \, \, \& \, \, \dot{f}_i(t^-) > 0 \\
& \& \, \, w_i(t) < \wupper_i \\
0 & \text{otherwise}
\end{array}\right.
\end{equation}
where $\lambda_i >0$ is an arbitrary constant, and $\dot{f}_i(t^-):=\lim_{\triangle t \to 0^-}(f_i(t + \triangle t) - f_i(t))/\triangle t$ is the left derivative of $f_i(t)$.
The control policy in \eqref{eq:altru-control} has a natural altruistic interpretation as follows: 
the controller on link $i$ takes an action when either the flow on link $i$ exceeds its capacity, or it sees an increase in the flow on link $i$. In particular, in the latter case, controller $i$ increases weight on link $i$ in order to further increase the flow on link $i$, and thereby possibly avoiding infeasibility on other links. For parallel networks, if the disturbance at $t=0$ leads to increase in supply/demand, then it leads to increase in flows on all links. In such a case, under $u^2$, 
\begin{equation}
\label{eq:fdot-zero-pos}
\dot{f}_i(0^-)>0, \qquad  \forall \, i \in \until{n} 
\end{equation}
The maximal robustness of $u^2$ for $n=2$ links is proven next. 


\begin{proposition}
\label{pro:orderone-lowerbound}
Consider a parallel network consisting of $2$ links, with lower and upper bounds on link weights as $w^l \in \real^{2}_{>0}$ and $w^u \in \real^{2}_{>0}$, respectively, link capacities $c \in \real_{>0}^2$, and supply/demand with magnitude $\alpha \geq 0$. Then, for every $ \lambda \in \real^{2}_{>0}$ and $ w(0) \in [w^{l}, w^{u}] $, under the dynamics in \eqref{eq:ss-model-decent-control} with the controller $u^2$ in \eqref{eq:altru-control}, if $\alpha < \alpha^*$ (cf. \eqref{eq:para-net-rob-sol}), then $\lim_{t \to + \infty} f(w(t)) \in [0,c]$.
\end{proposition}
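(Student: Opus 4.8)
The plan is to exploit the one-dimensional nature of the flow split for a two-link parallel network: since $f_1 = \alpha w_1/(w_1+w_2)$ and $f_2 = \alpha w_2/(w_1+w_2)$ (cf.\ \eqref{opt:weight-control-para-net}), one has $f_1+f_2=\alpha$ identically, so feasibility of a configuration is governed entirely by the weight ratio. First I would record two structural consequences of $\alpha<\alpha^*$. Writing $r^*=\max\{w_1^l/c_1,w_2^l/c_2\}$ and recalling from Example~\ref{eg:para-net-constant-cap} and \eqref{eq:para-net-rob-sol} that $\alpha^*=\min\{c_1,w_1^u/r^*\}+\min\{c_2,w_2^u/r^*\}$, one immediately gets $\alpha^*\le c_1+c_2$; hence $\alpha<c_1+c_2$, and therefore at most one link can be overloaded at any time (if $f_1>c_1$ then $f_2=\alpha-f_1<c_2$). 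The second consequence is a \emph{corner-feasibility} estimate: since $r^*\ge w_1^l/c_1$,
\[
\alpha^* \le c_1 + \frac{w_2^u}{r^*} \le c_1 + \frac{c_1 w_2^u}{w_1^l} = c_1\,\frac{w_1^l+w_2^u}{w_1^l},
\]
so $\alpha<\alpha^*$ forces $f_1(w_1^l,w_2^u)=\alpha w_1^l/(w_1^l+w_2^u)<c_1$; the symmetric bound gives $f_2(w_1^u,w_2^l)<c_2$. Thus both ``corners'' of the weight box that could sustain an overload are in fact strictly feasible for the relevant link.

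Next I would analyze the trajectory $w(t)$ of \eqref{eq:ss-model-decent-control} under $u^2$ in \eqref{eq:altru-control}, which stays in $[w^l,w^u]$ by construction, organizing the argument by the overload status of the two links. Consider the nontrivial case where, just after the disturbance, link $1$ is overloaded ($f_1>c_1$, and hence $f_2<c_2$ by the previous paragraph). Controller $1$ then decreases $w_1$, which decreases $f_1$ and increases $f_2$; the resulting $\dot f_2(t^-)>0$ together with $f_2<c_2$ activates controller $2$, which increases $w_2$ and further decreases $f_1$. So the two actions cooperate and $f_1$ strictly decreases as long as link $1$ is overloaded and at least one of $w_1>w_1^l$, $w_2<w_2^u$ holds. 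Crucially, once $w_1$ reaches $w_1^l$ the increase of $w_2$ becomes self-sustaining: with $w_1$ frozen, $\dot w_2>0$ gives $\dot f_2=\alpha w_1\dot w_2/(w_1+w_2)^2>0$, so the $\dot f_2(t^-)>0$ guard stays on. Hence $f_1$ keeps decreasing until either $f_1=c_1$ (feasibility attained) or the state reaches the corner $(w_1^l,w_2^u)$; but that corner is strictly feasible for link $1$ by the estimate above, contradicting $f_1>c_1$ there. Therefore the overload is eliminated in finite time, and the symmetric argument covers an initial overload of link $2$.

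It then remains to show that once both links are feasible the trajectory stays feasible and converges. In the all-feasible region the decrease rule is inactive; the increase rule can raise only the weight of the link whose flow is momentarily increasing, and since $f_1+f_2=\alpha$ that flow is driven up only until it meets its capacity, at which point the guard $f_i<c_i$ switches the rule off, so no new overload is created and the all-feasible region is forward invariant. Since each active controller moves the corresponding weight at a fixed rate $\lambda_i>0$ within the compact box $[w^l,w^u]$, the weights converge to an equilibrium $w^*$ with $f(w^*)\in[0,c]$, giving $\lim_{t\to+\infty}f(w(t))\in[0,c]$. The main obstacle I anticipate is not the geometry but the rigorous handling of the switched, history-dependent dynamics: one must argue existence and continuity of solutions of \eqref{eq:ss-model-decent-control} under the discontinuous, left-derivative-dependent law $u^2$, rule out chattering and Zeno behavior at the switching surfaces $f_i=c_i$ and at the weight bounds, and make precise the (intuitively clear) claim that equilibria are exactly the configurations in which every link is either feasible or pinned at $w_i^l$ --- so that the only infeasible equilibria are the corners already excluded.
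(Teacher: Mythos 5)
Your proposal is correct and follows essentially the same route as the paper's proof: a case analysis on the overload status under the switched dynamics, combined with the observation that $\alpha<\alpha^*$ rules out the only configurations (the corners $(w_1^l,w_2^u)$, $(w_1^u,w_2^l)$ of the weight box, and total load $c_1+c_2$) at which an overload could survive. The one presentational difference is that you derive the corner-feasibility bound $\alpha w_1^l/(w_1^l+w_2^u)<c_1$ directly from the closed form \eqref{eq:para-net-rob-sol}, whereas the paper reads it off the three configurations in Table~\ref{tab:sol-para-net} during its event-by-event enumeration (e1)--(e4); your derivation is cleaner, and the well-posedness caveats you flag for the discontinuous, left-derivative-dependent law are likewise left implicit in the paper's argument.
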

\begin{proof}
Assumption~\ref{ass:initial-flow-feasible} implies that, if the disturbance decreases the
supply/ demand $\alpha$, then the flow on each link decreases, and hence $u^2(t) \equiv 0$. Therefore, the  system is feasible. Hence, we only consider disturbances that increase \(\alpha\), in which case (\ref{eq:fdot-zero-pos}) applies. 

For $n=2$, the optimal solution characterized in \eqref{eq:para-net-rob-sol} and \eqref{eq:para-net-opt-sol} can be explicitly written as shown in Table~\ref{tab:sol-para-net}.
\begin{table}[htbp]
   \centering
   \begin{tabular}{ccc} 
      \toprule
           Configuration    & $ (\wopt_1, \wopt_2) $ & $  \alpha^{*} $ \\
      \midrule
       $ c_{1}/c_{2} < \wlower_{1}/\wupper_2$       & $ (\wlower_{1}, \wupper_{2})$ & $ c_{1}(1+ \wupper_2/\wlower_{1})$  \\
       $ \wlower_{1}/\wupper_2 \le  c_{1}/c_{2}  \le \wupper_1/\wlower_{2} $      &$ w_{1}/w_{2} = c_{1}/c_{2} $ & $ c_{1} + c_{2}$  \\
      $  c_{1}/c_{2} >  \wupper_1/\wlower_{2}  $     & $ ( \wupper_{1}, \wlower_{2}) $ & $ c_{2}(1+ \wupper_1/\wlower_{2}) $ \\
      \bottomrule
   \end{tabular}
      \caption{Explicit characterization of $\alpha^*$ and $\wopt$ from \eqref{eq:para-net-rob-sol} and \eqref{eq:para-net-opt-sol}, respectively, for $n=2$. }
      \label{tab:sol-para-net}
\end{table}

\begin{enumerate}
\item[(I)] If \(\alpha < (w_{1}(0)+w_{2}(0) ) \min\{c_{1}/w_{1}(0),
    c_{2}/w_{2}(0)\}  \), then it is
  straightforward to see that \( f(0)< c \). Note that, due to \eqref{eq:fdot-zero-pos}, this does not imply $u^2(0) = 0$. Accordingly, we consider the following three cases. 
  \begin{enumerate}
  \item[(I-A)] If \( w(0) = w^{u} \), then $u^2(t) \equiv 0$, and hence \( \lim_{t \to +
    \infty} f(w(t)) =f(0) \in [0,c] \). 
  \item[(I-B)] If \( w_{1}(0) <
  w_{1}^{u} \) and \( w_{2}(0) = w_{2}^{u} \), then \( u_{1}(0) = \lambda_{1} >0 \)
  and \( u_{2}(0) = 0 \). \( w_{1}(t) \) keeps increasing and \( w_{2}(t) \)
  stays unchanged, and consequently \( f_{1}(t) \) and \( f_{2}(t) \) keep increasing and
  decreasing, respectively, until either one of the following happens at some time \( \bar{t} \):
   \( w_{1}(\bar{t}) = w_{1}^{u} \) or \( f_{1}(\bar{t}) = c_{1} \).
The weights do not change thereafter, and hence $f_1(t) \leq c_1$ and $f_2(t)<c_2$, for all $t \geq \bar{t}$. The argument for the other scenario $w_1(0)=w_1^u$ and $w_2(0)<w_2^u$ is symmetrical.
\item[(I-C)] If \( w(0) < w^{u} \), then \( u^2(0) =
\lambda \) and hence 
$\dot{f}_{1}(0^{+}) = -\dot{f}_{2}(0^{+})= 
  \alpha  \left(\lambda_{1}w_{2}(0) - \lambda_{2}  w_{1}(0) \right) /(w_{1}+w_{2})^{2}$. 
  \begin{enumerate}
  \item[(I-C-i)] If \( w_{1}(0))/w_{2}(0) = \lambda_{1}/\lambda_{2} \), then \( \dot{f}_{1}(0^{+}) =
\dot{f}_{2}(0^{+}) = 0 \), and hence  $u^2(t) \equiv 0$.
  \item[(I-C-ii)] If \( w_{1}(0))/w_{2}(0) < \lambda_{1}/\lambda_{2} \), then \(
\dot{f}_{1}(0^{+}) > 0 \) and \( \dot{f}_{2}(0^{+})< 0 \). This implies that \( w_{1}(t) \) keeps
increasing and \( w_{2}(t) \) stays unchanged at $t=0$, and hence the asymptotic behavior is the same as in Case (I-B). 
Similar argument can be made for the other scenario when \( w_{1}(0))/w_{2}(0) > \lambda_{1}/\lambda_{2} \). 
  \end{enumerate}
  \end{enumerate}

\item[(II)] If \( \alpha = \left(w_{1}(0)+w_{2}(0))\min\{c_{1}/w_{1}(0),
    c_{2}/w_{2}(0)\} \right) \), then $f_1(0)=c_1$ or $f_2(0)=c_2$. Without loss of generality, assume \( f_{2}(0) = c_{2} \) and \( f_{1}(0) < c_{1} \), in which case, $w_1(t)$ keeps increasing and $w_2(t)$ remains unchanged at $t=0$, and the asymptotic behavior is the same as in Case (I-B). 

\item[(III)] If \( \left(w_{1}(0)+w_{2}(0))\min\{c_{1}/w_{1}(0),
    c_{2}/w_{2}(0)\} \right) < \alpha \le \alpha^{*} \), then either $f_1(0)>c_1$ and $f_2(0)<c_2$, or $f_1(0)<c_1$ and $f_2(0)>c_2$. Without loss of generality, assume \(
  f_{1}(0) > c_{1} \) and \( f_{2}(0) < c_{2} \). This implies \( c_{1}/c_{2} <
  w_{1}(0)/w_{2}(0) \le w_{1}^{u}/w_{2}^{l} \). 
 Then \( u_{1}(0) =
  -\lambda_{1}< 0 \) and \( u_{2}(0) = \lambda_{2} > 0 \). Thereafter, \( w_{1}(t) \) and \(
  f_{1}(t) \) keep decreasing until either one of the following happens at \( t_{1} \):  (e1) \( w_{1}(t_{1}) = w_{1}^{l} \) or (e2) \( f_{1}(t_{1}) = c_{1} \); and 
%
%
\( w_{2}(t) \) and \( f_{2}(t) \) keep increasing until either one of the following happens at \( t_{2} \):  (e3) \(  w_{2}(t_{2}) = w_{2}^{u} \) or (e4) \( f_{2}(t_{2}) = c_{2} \). 
We now consider the two cases: $t_1<t_2$ and $t_2<t_1$ separately (ties are broken arbitrarily).
%
\begin{enumerate}
\item[(III-A)] \underline{$t_1<t_2$}: we consider two sub-cases depending on which of (e1) or (e2) happens first.
\begin{enumerate}
\item[(e1)] \( w_{1}(t_1) = w_{1}^{l} \), \( f_{1}(t_1) \ge c_{1} \), \(
  f_{2}(t_1) < c_{2} \) and \( w_{2}(t_1) < w_{2}^{u} \). In this case,
  \( w_{1} \) stops decreasing  at \( t_1  \), but \(w_{2} \) keeps
  increasing until \( t_{2} \) when (e3) or (e4) happens. 
  
  If (e3) happens before (e4), then \( f_{1}(t_{2}) = \alpha w_{1}^{l}/\left(w_{1}^{l} +
    w_{2}^{u}\right) \le c_{1} \). Since (e4) has not occurred, then $f_2(t_2) \leq c_2$, and since the weights are at the boundary at $t_2$, they do not change thereafter.
  
  If (e4) happens before (e3), then $f_2(t_2)=c_2$, and therefore, $f_1(t_2) = \alpha -f_2(t_2) \leq c_1 + c_2 - f_2(t_2) \leq c_1$. 
\item[(e2)] \( f_{1}(t_1) = c_{1} \),  \( w_1(t_1) \ge w_1^l \), \(
  f_{2}(t_1) < c_{2} \) and \( w_{2}(t_1) < w_{2}^{u} \). In this case,
 \( w_{1} \) stops decreasing  at \( t_1  \), but \(w_{2} \) keeps
  increasing until \( t_{2} \) when (e3) or (e4) happens. It is straightforward to see that the system is feasible under both of these scenarios.
\end{enumerate}
\item[(III-B)] \underline{$t_2<t_1$}: we consider two sub-cases depending on which of (e3) or (e4) happens first.
\begin{enumerate}
\item[(e3)]  \( w_{2}(t_2) = w_{2}^{u} \), \( w_{1}(t_2) > w_{1}^{l} \),
  \( f_{1}(t_2) > c_{1} \) and \( f_{2}(t_2) <  c_{2} \). In this case, \( w_{2} \) stops
  increasing at $t_2$, but \( w_{1} \) keeps decreasing until \( t_{1} \) when (e1) or (e2) happens.
Indeed, in this case, (e2) always precedes (e1). This is because, \( \alpha \le \alpha^{*}
  \) implies that, in the all the relevant (\ie, first and second) configurations in Table \ref{tab:sol-para-net}, \(\alpha
  w_{1}^{l}/\left(w_{1}^{l} + w_{2}^{u}\right) \le c_{1} \). When (e2) happens, $f_1(t_1)=c_1$, and $f_2(t_1) = \alpha -f_1(t_1) \leq c_1 + c_2 - f_1(t_1) \leq c_2$. 
\item[(e4)] This is not possible because (e4) never precedes (e2). This is because, by contradiction, if it does, then $f_2(t_2)=c_2$ and $f_1(t_2)>c_1$ implying $\alpha=f_1(t_2)+f_2(t_2)>c_1 + c_2 \geq \alpha^*$.
\end{enumerate}
\end{enumerate}
\end{enumerate}

\end{proof}  

\begin{remark}
\leavevmode
\begin{enumerate}[label = (\alph*)]
\item Proposition~\ref{pro:orderone-lowerbound} implies that $u^2$ is maximally robust for parallel networks with 2 links. Moreover, this maximal robustness property of $u^2$, unlike $u^1$, does not require extra conditions on $w(0)$.
\item For parallel networks with 2 links, the action of the controller $u^2$ can
  be shown to be a descent algorithm to solve
(\ref{eq:opt-grad-descent}).
\item Note that the proof of Proposition~\ref{pro:orderone-lowerbound} implies that, under any disturbances, the asymptotic link weights under $u^2$ are on the boundary in many scenarios. It is possible to address this feature by proper selection of $\lambda_i$, $i \in \until{n}$, and by extending the criterion for a link to increase the weight. Robustness analysis under such extensions to general parallel networks will be reported in future.  
\end{enumerate}
\end{remark}

\begin{remark}
For both $ u^{1} $ and $ u^{2} $, it can be shown that the results of Theorem \ref{thm:convergence},  Proposition \ref{prop:controller1-nec-suff-cond} and Proposition \ref{pro:orderone-lowerbound} hold true for the case when $ w^{l} = 0 $. 
\end{remark}

\section{Simulations}
\label{sec:simulations}

We report numerical estimates of margin of robustness obtained from the various optimization methods proposed in this paper, along with the decentralized control policy $u^1$ on a standard IEEE benchmark network, as well as equivalent capacity functions for the network shown in Figure~\ref{fig:tree-reducible-net}. All the simulations were performed using Matlab 2015b on a desktop with the following
configurations: Intel(R) Core(TM) i7-6700K CPU 4.00GHz and 16GB RAM. 

\subsection{Margin of Robustness Estimates}
\begin{figure}[htb!]
\centering
\begin{minipage}[b]{.5\textwidth}
  \centering
   \includegraphics[width=\linewidth]{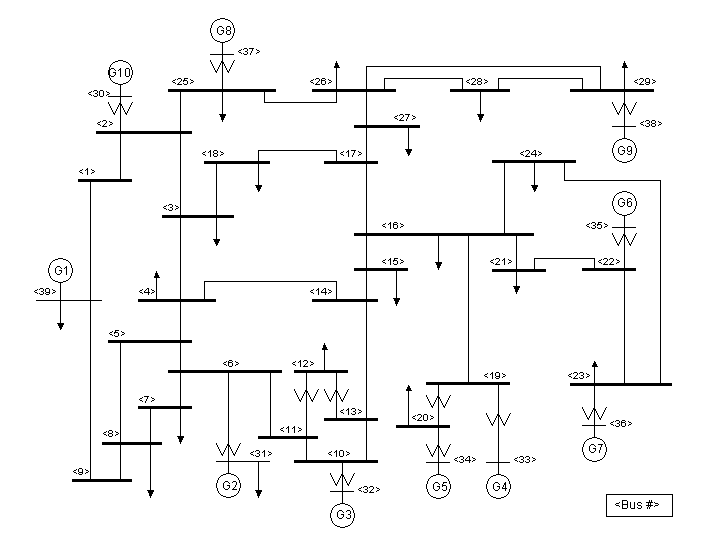}
   \caption{IEEE 39 bus system}
   \label{fig:ieee39}
\end{minipage}%
\begin{minipage}[b]{.5\textwidth}
  \centering
  \resizebox{0.8\linewidth}{!}{
\begin{tikzpicture}[->,>=stealth',shorten >=1pt,auto,node distance=0.6cm and 1.5cm,
                   thick,main node/.style={circle,minimum size=0.8cm, draw,font=\sffamily\bfseries, fill=blue!40}]
 \node[main node] (39) {39};
 \node[main node] (2) [above right =2.3cm of 39] {2};
 \node[main node] (3) [above right =of 2] {3};
 \node[main node] (17) [below right =of 2] {17};

 \node[main node] (8) [below right =2.3cm of 39] {8};
 \node[main node] (5) [below right =of 8] {5};
 \node[main node] (6) [above right = of 8] {6};
 
 \node[main node] (14) [below right = of 17] {14};
 \node[main node] (4) [right = of 14] {4};

 \path[every node/.style={font=\sffamily\small}]
   (39) edge  (2) 
   (39) edge  (8)
   (2) edge  (3) 
   (2) edge  (17)
   (3) edge  (17) 
     
    (8) edge (5)   
    (8) edge  (6)   
    (5) edge (6)   

    (17) edge  (14)   
    (6) edge (14) 
    
    (3) edge  (4) 
    (14) edge (4) 
    (5) edge (4) ;
      
\end{tikzpicture}
}
\caption{The terminal network for IEEE 39 bus system}
\label{fig:ieee39-reduction} 
\end{minipage}
\end{figure}

Consider the IEEE 39 bus
system shown in Figure~\ref{fig:ieee39} with the supply-demand vector  
$p_0$ chosen to be such that \( p_{0, 39} = 1 \), \(
p_{0, 4} = -1 \) and \( p_{0, v} = 0 \) for every other node \( v \). The corresponding terminal network obtained by the multilevel formulation is shown in Figure~\ref{fig:ieee39-reduction}. The flow capacities on every link were chosen to be symmetrical: $\cupper = -
\clower = 2.600 \, \onebf$. $\wupper$ was selected to be the value of susceptances
for this network provided by \cite{zimmerman2011matpower}. We consider multiplicative disturbances, \ie,  disturbances of the form $\alpha p_0$, $\alpha \in \real$. For this case, $\alpha^*$ is obtained by solving (\ref{eq:weight-control-param}). The margin of robustness, $\nu_M^*$ in this case can be obtained from $\alpha^*$ using \eqref{eq:nu-alpha-relationship}. Therefore, we present our results in this section in terms of $\alpha^*$.

Without weight control, \ie, when \( w^{l} = w^{u} \), $\alpha^* = 
4.725$. Under weight control, Proposition~\ref{prop:margin-min-cut-upper-bound} implies that $\alpha^* \leq 5.200$. We compared the solution to \eqref{eq:weight-control-param} obtained from the following three methods:
\begin{enumerate}
\item exhaustive search method for the original as well as the multilevel formulation as described in Algorithm~\ref{alg:multilevel};
\item random search method for the original as well as the multilevel formulation as described in Algorithm~\ref{alg:multilevel};
\item sub-gradient projection method described in Section~\ref{subsec:projected-subgradient}. 
\end{enumerate} 
For the exhaustive search method, the set $[w^l,w^u]$ is discretized with resolution $0.5$, and the cost function is evaluated at each of these discrete points according to a natural lexicographical order. In the random search method, the points for evaluation of the cost function are chosen random according to a uniform distribution over $[w^l,w^u]$. For both these methods, we choose \( w^{l} = 0.95 w^{u} \). 

For the exhaustive search method, the average time for evaluation of a single feasible for the original and the multilevel formulation was \( 1.24\times 10^{-4} \) and \( 7.68\times 10^{-5} \) seconds respectively, illustrating the computational gains per evaluation from the network reduction procedure underlying the multilevel formulation. For the original formulation, due to the large number of feasible discrete points, it was found to be impractical to exhaustively evaluate the cost function at each of these discrete points. However, for the multilevel formulation, the exhaustive search method terminated in about $59.3$ hours yielding $\alpha^* \approx 4.806$. 

For the random search method, we performed 10 runs, each for 30 minutes. The average and the maximum values of $\alpha^*$ obtained for the original formulation are $4.825$ and $4.822$ respectively, and for the multilevel formulation are $4.831$ and $4.830$ respectively. These values also illustrate computational advantage of the multilevel formulation.

For the projected sub-gradient method, a larger controllable weight range was used by setting
\( w^{l} = 0.5 w^{u} \) and the step size of the descent method was chosen to be
$0.2$. The estimates of $\alpha^*$ using this method for different initial points were found to be $5.200$, which matches the upper bound. 
This suggests convergence of the projected sub-gradient method to an optimal solution. This is to be contrasted with possible theoretical results which only ensure convergence to a critical point. These observations, along with better performance of random search in comparison to exhaustive search suggest that optimal solutions are dense. Further analysis of this aspect is left to future work. 
%
%

Under controller $u^1$ with \( w(0) = w^{u} \), the estimate of $\alpha^*$ was also found to be $5.200$, suggesting optimality of $u^1$ for this setting. This is to be contrasted with point 2) in Remark~\ref{rem:controller1} (b), which guarantees optimality of $u^1$ only for parallel networks.


\subsection{Equivalent Capacities for Tree Reducible Networks}
\label{sec:eqv-cap}
Consider the network shown in Fig. \ref{fig:tree-reducible-net} with nodes \( v_{1} \) and \(
v_{4} \) being the supply and demand nodes, respectively, and the weights bounds and  link capacities are selected as follows: $ w^{l} = [4\,\; 3\,\; 4\,\; 1\,\; 2]^{T} $, $ w^{u} = [9\,\; 10\,\; 18\,\; 5\,\; 8]^{T} $, and $ c = [16\,\; 18\,\;  20\,\; 10\,\; 10]^{T} $. The equivalent capacities for sub-networks formed during the sequential reduction process described in Section~\ref{sec:multilevel}, are illustrated in Figure~\ref{fig:tree-reducible-capFunPlot}. Note that each of the equivalent capacity function is $\mc S_1$ function. 

\begin{figure}[htb!]
  \centering
\includegraphics[width=0.6\linewidth]{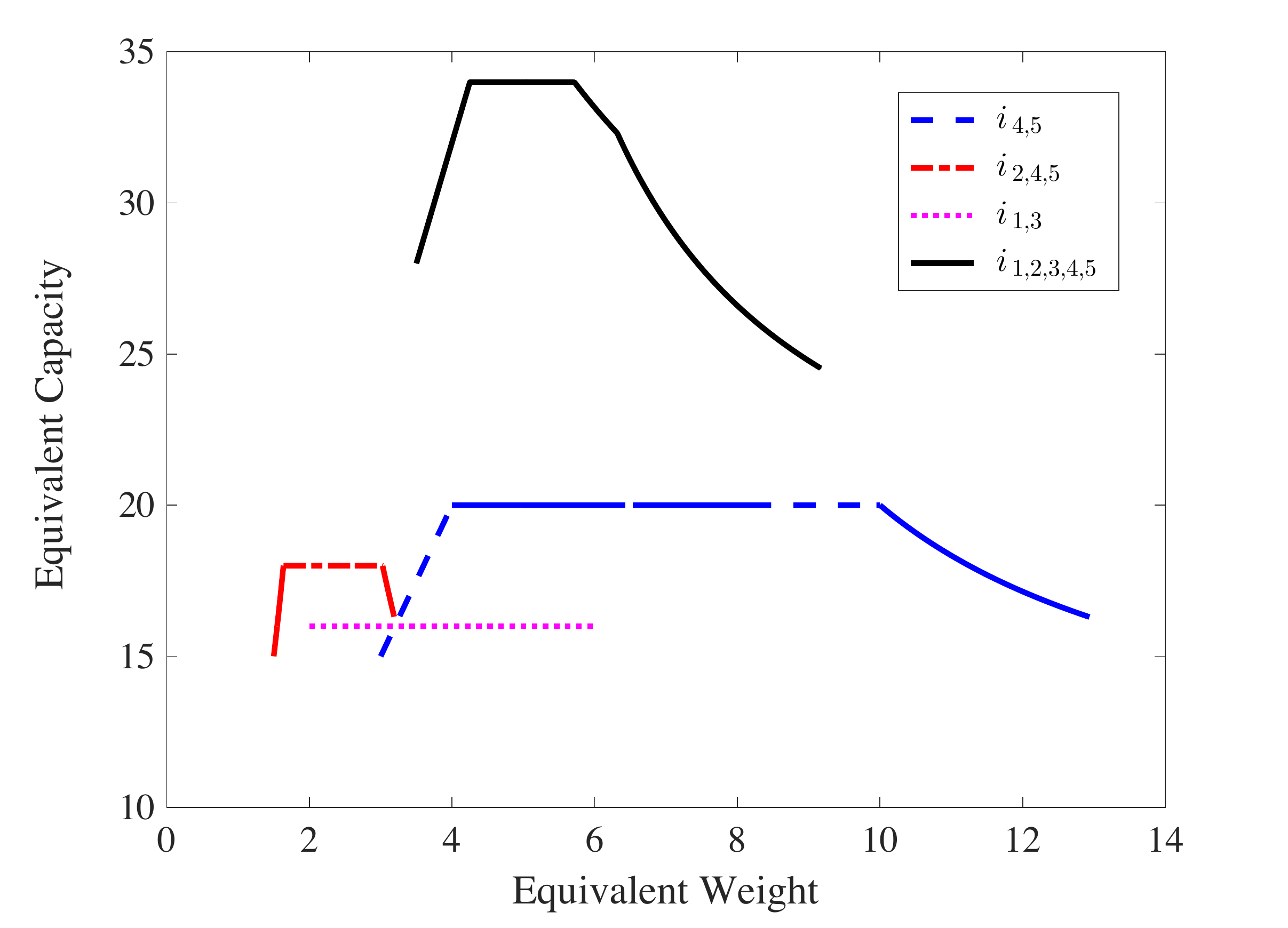}
  \caption{The equivalent capacity functions in the process of tree reduction
    for the network shown in Fig. \ref{fig:tree-reducible-net}.}
  \label{fig:tree-reducible-capFunPlot}
\end{figure}

\section{Conclusion and Future Work}
\label{sec:conclusions}
In this paper, we studied robustness of control policies for DC power networks, that use information about link flows and weights, and disturbance to change line weights in order to ensure that the line flows remain within prescribed limits. Robust control design in the centralized case can be cast as an optimization problem, which is non-convex in general. We proposed a gradient descent algorithm for multiplicative perturbations, and a multilevel programming approach, which lead to substantial computational savings when adopting exhaustive search solution technique for reducible networks. We also presented robustness analysis of natural decentralized control policies. Beyond the robust weight control problem, the paper makes a few contributions which are of independent interest, including exact derivation of the flow-weight Jacobian, characterization of a class of decomposable non-convex network optimization problems, and formalization of the notion of equivalent transmission capacity for a DC power network. The results of this paper collectively provide a new set of analytical tools for DC power networks in general, and for online susceptance control in particular.

This paper opens up several directions for future research. We plan to investigate extensions of the proposed methodologies, possibly under suitable approximations, when key assumptions in this paper are relaxed. This includes generalization to the case of additive disturbances and to networks which are not reducible. Designing distributed control policies for non-parallel networks with provable robustness guarantees is also an important direction of research. Finally, we plan to evaluate the performance of the proposed control policies on AC power flow models, possibly under linear approximations, \eg, as proposed in \cite{Coffrin.VanHentenryck:14}. 

\bibliographystyle{ieeetr}
\bibliography{ksmain,savla}

\appendix 
\label{sec:appendix}

\subsection{Laplacian Matrix of Reduced Simple Graphs}
\label{subsec:lap}
Consider the following notion of a simple graph corresponding to a given multigraph. 
\begin{definition}[Reduced Simple Graph]
\label{def:reduced-simple-graph}
Given a multigraph $\mc G=(\mc V, \mc E)$, the corresponding reduced simple graph is  denoted as $\mc G^s=(\mc V^s, \mc E^s)$, where $\mc V^s=\mc V$, and $\mc E^s \subseteq \mc E$ is constructed as follows. For every node pair $\{v_1, v_2\} \in \mc V \times \mc V$, for all the links from $v_1$ to $v_2$ in $\mc E$, there exists only one link from $v_1$ to $v_2$ in $\mc E^s$; if there is no link from $v_1$ to $v_2$ in $\mc E$, then there is no link from $v_1$ to $v_2$ in $\mc E^s$. For every $i \in \mc E^s$, let $\mc M_{i}$ be the corresponding links in $\mc E$. The weight matrix for $\mc G^s$, denoted as $W^s \in \real_{>0}^{\mc E^s \times \mc E^s}$, is defined as $w^s_i :=\sum_{j \in \mc M_i} w_j$ for all $i \in \mc E^s$.
\end{definition}
Let $A_{\mc G^s}$ denote the node-link incidence matrix of $\mc G^s$. The next result states that the weighted Laplacians of $\mc G$ and $\mc G^s$ are equal.

 \begin{lemma}
\label{lem:base-graph-laplacian}
Let $L_{\mc G}$ and $L_{\mc G^s}$ be the weighted Laplacian matrices associated with a multigraph $\mc G$ and its reduced simple graph $\mc G^s$ (cf. Definition~\ref{def:reduced-simple-graph}), respectively. Then, $L_{\mc G}=L_{\mc G^s}$.
 \end{lemma}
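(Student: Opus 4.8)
The plan is to exploit the rank-one decomposition of the weighted Laplacian over links. Writing $a_i \in \{-1,0,+1\}^{\mc V}$ for the $i$-th column of the incidence matrix $A$ (the signed indicator of the endpoints of link $i$), the definition $L_{\mc G}(W)=AWA^T$ with $W=\diag(w)$ expands into a sum of rank-one terms,
\[
L_{\mc G}=AWA^T=\sum_{i\in\mc E} w_i\, a_i\trans{a_i},
\]
and the same identity applied to $\mc G^s$ gives $L_{\mc G^s}=\sum_{i\in\mc E^s} w^s_i\, a^s_i\trans{(a^s_i)}$, where $a^s_i$ is the $i$-th column of $A_{\mc G^s}$. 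The whole argument then reduces to matching these two sums term by term.

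First I would record the orientation invariance of each rank-one summand. For a link $i$ with endpoints $\{v_1,v_2\}$, the column $a_i$ has $+1$ at the tail and $-1$ at the head, so reversing the orientation replaces $a_i$ by $-a_i$; since $(-a_i)\trans{(-a_i)}=a_i\trans{a_i}$, the matrix $a_i\trans{a_i}$ depends only on the unordered pair $\{v_1,v_2\}$ and not on the direction assigned to the link (this is precisely the content of the remark, following Definition~\ref{def:laplacian}, that the Laplacian is independent of the choice of directionality). Denote this common matrix by $M_{\{v_1,v_2\}}$: it carries $+1$ in the diagonal positions $(v_1,v_1)$ and $(v_2,v_2)$ and $-1$ in the off-diagonal positions $(v_1,v_2)$ and $(v_2,v_1)$.

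Next I would regroup the sum for $L_{\mc G}$ according to the partition of $\mc E$ induced by the sets $\mc M_i$, $i\in\mc E^s$ (cf. Definition~\ref{def:reduced-simple-graph}): every multigraph link lies in exactly one $\mc M_i$, and by construction all links in $\mc M_i$ share the endpoint pair $\{\sigma(i),\tau(i)\}$ of the simple link $i$. Using the orientation invariance above, each $j\in\mc M_i$ contributes $w_j\, a_j\trans{a_j}=w_j\, M_{\{\sigma(i),\tau(i)\}}$, so
\[
L_{\mc G}=\sum_{i\in\mc E^s}\sum_{j\in\mc M_i} w_j\, M_{\{\sigma(i),\tau(i)\}}
=\sum_{i\in\mc E^s}\Bigl(\sum_{j\in\mc M_i} w_j\Bigr) M_{\{\sigma(i),\tau(i)\}}
=\sum_{i\in\mc E^s} w^s_i\, a^s_i\trans{(a^s_i)}=L_{\mc G^s},
\]
where the third equality uses the definition $w^s_i=\sum_{j\in\mc M_i} w_j$ together with $a^s_i\trans{(a^s_i)}=M_{\{\sigma(i),\tau(i)\}}$.

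This is essentially a bookkeeping argument once the rank-one expansion is in place, so I do not anticipate a serious obstacle; the only point requiring genuine care is the orientation invariance, which is what guarantees that the independently chosen orientations of the parallel links in $\mc G$ and of the single collapsed link in $\mc G^s$ all produce the identical rank-one matrix $M_{\{v_1,v_2\}}$, allowing the weights to be summed cleanly.
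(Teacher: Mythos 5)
Your proof is correct and follows essentially the same route as the paper's: expand $L_{\mc G}=\sum_{i\in\mc E}w_i\,a_i\trans{a_i}$ and regroup the rank-one terms over the partition $\{\mc M_i\}$ using $w^s_i=\sum_{j\in\mc M_i}w_j$. The only difference is that you explicitly justify the identification of summands via orientation invariance of $a_j\trans{a_j}$, whereas the paper simply asserts $a_j=a^s_i$ for $j\in\mc M_i$; your version is a slightly more careful rendering of the same bookkeeping argument.
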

 \begin{proof}
Definition~\ref{def:laplacian} of the Laplacian implies that
%
\begin{equation*}
  L_{\mc G^s} = A_{\mc G_s} W^s A_{\mc G_s}^T = \sum_{i \in \mc E^s} a^s_{i} \, w^s_{i} \, 
          {a^s}^T_{i}
  = \sum_{i \in \mc E^s} a^s_{i} (\sum_{j \in \mathcal{M}_{i}} w_{j} )
   {a^s}^T_i  = \sum_{j \in \mc E} a_{j} w_{j} \trans{a}_{j} = L_{\mc G}
\end{equation*}
where $a^s_i$ is the \( i \)-th column of \( A_{\mc G^s} \), $a_{j}$ is the $j$-th column of $A$, \(
w^s_{i} \) is the \( i \)-th diagonal element of \( w^s \), and the fourth equality is due to the fact that $a_{j} = a^s_{i}$ for all $j \in \mc M_{i}$, $i \in \mc E^s$. 
 \end{proof}

\subsection{Flow Solution for DC Power Network}
\label{sec:flow-sol-prop}
Lemma~\ref{lemma:flow-solution} gives the link flows for a DC power network for given link weights and supply-demand vector. One can alternately obtain these link flows as solution to a quadratic program, as formalized next. 
%
\begin{lemma}
  \label{lem:same-opt-flow}
Consider a network with graph topology $\mc G=(\mc V, \mc E)$, link weights $w \in \real_{>0}^{\mc E}$ and supply-demand vector $p \in \real^{\mc V}$. The unique solution $f \in \RR^{\mc E}$ satisfying \eqref{model} is the solution to the following:
\begin{equation}
  \label{opt:flow-sol-quad}
  \begin{aligned}
    & \underset{z \in \real^{\mc E}}{\text{min}} & & \trans{z}W^{-1}z \\
    & \text{subject to}
    & & Az = p
  \end{aligned}
\end{equation}
\end{lemma}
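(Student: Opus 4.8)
The statement to prove is that the unique flow $f$ satisfying the DC power flow equations \eqref{model} coincides with the unique minimizer of the weighted-norm quadratic program \eqref{opt:flow-sol-quad}. My plan is to proceed by the standard Lagrangian/KKT characterization of equality-constrained convex quadratic programs, and then to match the resulting optimality conditions directly with Ohm's and Kirchhoff's laws.

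First I would observe that \eqref{opt:flow-sol-quad} is a strictly convex program: the objective $\trans{z}W^{-1}z$ is strictly convex because $W = \diag(w)$ with $w \in \real_{>0}^{\mc E}$, so $W^{-1}$ is positive definite, and the feasible set $\{z : Az = p\}$ is a nonempty affine subspace (nonempty since $\onebf^T p = 0$ by Assumption~\ref{ass:balanced} places $p$ in the range of $A$). Hence \eqref{opt:flow-sol-quad} has a unique minimizer, and the KKT conditions are both necessary and sufficient for optimality. Introducing a Lagrange multiplier $\phi \in \real^{\mc V}$ for the constraint $Az = p$, the Lagrangian is $\mathcal{L}(z,\phi) = \trans{z}W^{-1}z - 2\trans{\phi}(Az - p)$, and stationarity in $z$ gives $2 W^{-1} z - 2 \trans{A}\phi = 0$, i.e. $z = W \trans{A}\phi$, together with primal feasibility $Az = p$.

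Next I would note that these two KKT conditions, $z = W\trans{A}\phi$ and $Az = p$, are \emph{exactly} the system \eqref{model} with $z$ playing the role of $f$ and the multiplier $\phi$ playing the role of the phase-angle vector. Since Lemma~\ref{lemma:flow-solution} already establishes that \eqref{model} has a unique solution $f = WA^TL^\dagger p$, the unique KKT point of \eqref{opt:flow-sol-quad} must have $z = f$. Conversely, the unique minimizer of the strictly convex program satisfies the KKT conditions, so it equals $f$. This closes the equivalence in both directions. A clean way to present this is to verify that $f$ from \eqref{eq:sol} together with $\phi = L^\dagger p$ satisfies the KKT system, and then invoke uniqueness of the minimizer.

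The only subtlety — and the step I expect to require the most care — is the treatment of the multiplier in the presence of the rank deficiency of $A$ (equivalently, of $L$). The multiplier $\phi$ is not unique (it is determined only up to adding multiples of $\onebf$, since $\trans{A}\onebf = 0$), but this nonuniqueness of $\phi$ does not affect $z = W\trans{A}\phi$ because $W\trans{A}\onebf = 0$; hence the primal optimizer $z$ is well-defined despite the multiplier ambiguity. I would make this point explicitly so that the stationarity condition is seen to pin down $z$ uniquely even though it does not pin down $\phi$. With that observation in place, the identification of the KKT system with \eqref{model} and the appeal to Lemma~\ref{lemma:flow-solution} complete the argument.
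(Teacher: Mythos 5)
Your proof is correct, but it follows a genuinely different route from the paper's. The paper proves the lemma by the change of variables $z_w := W^{-1/2}z$, $A_w := AW^{1/2}$, which turns \eqref{opt:flow-sol-quad} into the problem of finding the minimum $2$-norm solution of $A_w z_w = p$; it then invokes the pseudo-inverse characterization $f_w^* = A_w^\dagger p = \trans{A_w}(A_w\trans{A_w})^\dagger p$ and undoes the scaling to land directly on the formula $f = W\trans{A}L^\dagger p$ of Lemma~\ref{lemma:flow-solution}. You instead write the Lagrangian, observe that stationarity ($z = W\trans{A}\phi$) plus primal feasibility ($Az = p$) is literally the system \eqref{model} with the multiplier playing the role of the phase angles, and appeal to strict convexity for uniqueness of the minimizer. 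Your handling of the rank deficiency — the multiplier is determined only up to $\spn{\onebf}$, but $W\trans{A}\onebf = 0$ so the primal optimizer is unaffected — is exactly the care the argument needs, and mirrors the $\bar{\phi}\onebf$ ambiguity already resolved in the proof of Lemma~\ref{lemma:flow-solution}. The trade-off: the paper's route is shorter and more computational, producing the explicit closed form in one line of pseudo-inverse algebra; yours makes the physical content transparent (Ohm's law is the optimality condition, phase angles are dual variables) at the cost of a few more words on constraint qualification and multiplier non-uniqueness. Note that the paper's own Remark following the lemma attributes precisely the Lagrange-multiplier method to \cite{Lai.Low:Allerton13}, so your proposal reconstructs that cited alternative rather than the proof printed here.
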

\begin{proof}
With \( z_{w} := W^{-1/2} z \) and \( A_{w} := A W^{1/2} \), \eqref{opt:flow-sol-quad} can be rewritten as $\text{min}_{z_w \in \real^{\mc E}} \, \, z_w^T z_w$ subject to $A_w z_w = p$, \ie, finding the minimum $2$-norm solution to $A_w z_w=p$. Since $p$ is balanced, and the network is connected, the minimum $2$-norm solution is given by $f_w^*=A_{w}^{\dagger}p= \trans{A}_{w}(A_{w} \trans{A}_{w})^{\dagger}p=A_w^TL^{\dagger} p $. Reversing the scaling by $W^{1/2}$, this can be rewritten as $W^{-1/2} f^* = W^{1/2} A^T L^{\dagger} p$, \ie, $f^*=WA^T L^{\dagger} p$, which has been shown in Lemma~\ref{lemma:flow-solution} to be the unique $f \in \real^{\mc E}$ satisfying \eqref{model}.
%
\end{proof}

\begin{remark}
Lemma \ref{lem:same-opt-flow} is proved in
\cite{Lai.Low:Allerton13} through a different method, using Lagrange multipliers.
\end{remark}

The following result shows that the flow on every link, under a DC power flow model, is no greater than the total supply/demand. The latter is equal to $\|p\|_1/2$.
 
\begin{lemma}
\label{lem:flow-less-than-inflow}
Consider a network with graph topology $\mc G=(\mc V, \mc E)$, link weights $w \in \real_{>0}^{\mc E}$ and supply-demand vector $p \in \real^{\mc V}$. The unique solution $f \in \RR^{\mc E}$ to \eqref{model} satisfies 
\begin{equation*}
|f_i(w,p)| \leq \|p\|_1/2, \qquad \forall \, i \in \mc E
\end{equation*}
\end{lemma}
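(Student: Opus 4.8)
The plan is to avoid working with the closed-form $f = WA^{T}L^{\dagger}p$ directly and instead exploit two structural facts: that $f$ satisfies Ohm's law, and that any flow satisfying Ohm's law is circulation-free. First I would observe that the unique solution $f$ of \eqref{model} satisfies $f = W\trans{A}\phi$ with $\phi = L^{\dagger}p$, so the argument in the proof of Proposition~\ref{prop:feasible-flow-relation} applies verbatim (it uses only Ohm's law, not the capacity or weight-bound constraints). Hence $f \in \mathcal{F}_{0}$, i.e. $f$ contains no circulation.

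Next I would pass to a reoriented graph in which every link is oriented along its flow direction, so that the flow is nonnegative on every link; this does not change $|f_i|$ and turns the balanced vector $p$ into a source/sink structure with total supply $\sum_{v:\,p_v>0}p_v = \|p\|_1/2$. The key point is that ``$f$ has no circulation'' is exactly the statement that the support of the reoriented flow is acyclic (a directed cycle in the reoriented positive-flow subgraph is precisely a circulation in the sense of the paper). Then I would invoke the standard flow-decomposition theorem: a flow whose support is acyclic decomposes into finitely many \emph{simple} source-to-sink paths $P_1,\dots,P_k$ with positive weights $\lambda_1,\dots,\lambda_k$ such that $f_i = \sum_{j:\,i\in P_j}\lambda_j$ for every link $i$, and $\sum_{j}\lambda_j = \|p\|_1/2$ because each path emanates from a supply node and the path weights leaving the sources reconstruct the total supply. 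Since each $P_j$ is simple it traverses link $i$ at most once, so
\begin{equation*}
|f_i| = \sum_{j:\,i\in P_j}\lambda_j \;\le\; \sum_{j}\lambda_j = \frac{\|p\|_1}{2},
\end{equation*}
which is the claimed bound.

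I expect the main obstacle to be the careful bookkeeping around the reorientation and the flow-decomposition step: I must justify that the paper's combinatorial notion of ``circulation'' coincides with the existence of a directed cycle in the reoriented support, and that the resulting decomposition contains paths only (no cycles) with weights summing to exactly the total supply. An alternative that sidesteps explicit path decomposition would be to use the variational characterization of Lemma~\ref{lem:same-opt-flow}: compare $f$ against any feasible network flow $\tilde f$ with $A\tilde f = p$ that respects $|\tilde f_i|\le \|p\|_1/2$ (such a flow always exists, e.g. routing all supply along source-sink paths), but since Lemma~\ref{lem:same-opt-flow} controls the weighted $2$-norm rather than the $\ell_\infty$ norm this route does not immediately yield a per-link bound, so I would keep the circulation-free path-decomposition argument as the primary approach.
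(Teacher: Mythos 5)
Your proof is correct, and its overall skeleton matches the paper's: reorient every link along its flow, show the resulting positive-flow subgraph is acyclic, and then bound $|f_i|$ by decomposing the flow into source-to-sink paths whose weights sum to the total supply $\|p\|_1/2$. The one genuine difference is the lemma you use to establish acyclicity. You derive it from Ohm's law via the phase-angle telescoping argument of Proposition~\ref{prop:feasible-flow-relation} (summing $f_i/w_i = \phi_{\sigma(i)}-\phi_{\tau(i)}$ around a putative circulation gives a strictly positive quantity equal to zero), which applies verbatim to $f=W\trans{A}L^{\dagger}p$ since that flow satisfies Ohm's law with $\phi=L^{\dagger}p$. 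The paper instead invokes the variational characterization of Lemma~\ref{lem:same-opt-flow}: if the reoriented support contained a directed cycle $\mc C$, subtracting $\min_{j\in\mc C}\tilde f_j\,\onebf_{\mc C}$ would preserve flow conservation while strictly decreasing the weighted energy $\trans{f}W^{-1}f$, contradicting optimality. Both routes are sound and both rest on results already proved in the paper; yours has the small advantage of not needing the quadratic-program characterization at all (only Ohm's law and connectivity), while the paper's energy argument generalizes more readily to settings where one only knows $f$ is the minimum-energy flow. Your final step is also slightly more careful than the paper's: you invoke the flow-decomposition theorem explicitly and note that acyclicity of the support forces every path in the decomposition to be simple, whereas the paper compresses this into the assertion that every path through link $i$ is a supply-demand path. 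Your concluding remark that Lemma~\ref{lem:same-opt-flow} controls a weighted $2$-norm and hence cannot directly give the per-link $\ell_\infty$ bound is accurate; the paper indeed uses that lemma only for the acyclicity step, not for the bound itself.
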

\begin{proof}
Let $\tilde{\mc E}$ be the union of links in $\mc E$ with positive flows and reverse of links in $\mc E$ with negative flows. Note that $\tilde{\mc E}$ does not contain links with zero flow, and that the flows on links in $\tilde{\mc E}$ is positive, \ie, $\tilde{f} >0$. Therefore, in order to show the lemma, we need to show that $\tilde{f}_i \leq \|p\|_1/2$ for all $i \in \tilde{\mc E}$. 

It is easy to see that $\tilde{\mc G}:=(\mc V, \tilde{\mc E})$ does not contain cycles. This is because, for every cycle $\mc C \in \tilde{\mc E}$, one can construct a different flow $\tilde{f}' :=\tilde{f}-\onebf_{\mc C} \min_{j \in \mc C}\tilde{f}_j$ for $\tilde{\mc E}$, and hence the corresponding flow $f'$ for the original graph $\mc G$. This construction of $\tilde{f}'$ implies that $|f'| \leq |f|$, with the inequality being strict on at least one component, and hence ${f'}^TW^{-1}f' < f^T W^{-1} f$, and that $f'$ also satisfies flow conservation, \ie, it is a feasible point for \eqref{opt:flow-sol-quad}. Lemma~\ref{lem:same-opt-flow} then leads to a contradiction that $f$ is the solution to \eqref{model}.

Since $\tilde{\mc G}$ does not contain cycles, every path in $\tilde{\mc G}$ containing $i \in \tilde{\mc E}$ is a supply-demand path. Therefore, for all $i \in \tilde{\mc E}$, $\tilde{f}_i$ is no greater than the sum of supply/demand associated with paths containing $i$, which in turn is no greater than the sum of total supply/demand in the network, \ie, $\|p\|_1/2$.
%
%
\end{proof}

\subsection{Minimizing A Quasi-concave Function over A Polytope}
A \emph{polytope} is the convex hull of finitely many points \( \{ b_{1},  \ldots, b_{m}\} \)~\cite[p. 12]{Rockafellar:70}.
\begin{lemma}
\label{lem:min-quasi-concave}
Let \( \map{h}{\real^n}{\real} \) be a quasi-concave function and \( S \subset \real^n\) be a polytope whose elements can be expressed as convex combinations of \( b_{1} \ldots, b_{m} \).  Then, $\min_{x\in S} h(x)=\min_{i \in \until{m}} h(b_i)$.
\end{lemma}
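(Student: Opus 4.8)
The plan is to prove the statement $\min_{x \in S} h(x) = \min_{i \in \until{m}} h(b_i)$ by establishing two inequalities. First, since each vertex $b_i \in S$ (as $b_i$ is a trivial convex combination of $b_1, \ldots, b_m$), we immediately have $\min_{x \in S} h(x) \leq \min_{i \in \until{m}} h(b_i)$. The substantive direction is the reverse inequality, namely that $h(x) \geq \min_{i \in \until{m}} h(b_i)$ for every $x \in S$, which would give $\min_{x \in S} h(x) \geq \min_{i \in \until{m}} h(b_i)$ and hence equality.

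For the reverse inequality, I would pick an arbitrary $x \in S$. Since $S$ is a polytope whose elements are convex combinations of $b_1, \ldots, b_m$, we can write $x = \sum_{i=1}^m \theta_i b_i$ with $\theta_i \geq 0$ and $\sum_{i=1}^m \theta_i = 1$. The key step is to show, by induction on the number of vertices with nonzero weight, that the quasiconcavity of $h$ implies $h\left(\sum_{i=1}^m \theta_i b_i\right) \geq \min_{i \in \until{m}} h(b_i)$. The base case of a single point is trivial. For the inductive step, I would isolate one vertex, say writing $x = \theta_1 b_1 + (1-\theta_1) y$ where $y = \sum_{i=2}^m \frac{\theta_i}{1-\theta_1} b_i$ is itself a convex combination of the remaining vertices (assuming $\theta_1 < 1$; the case $\theta_1 = 1$ is trivial). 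Quasiconcavity of $h$ means that $-h$ is quasiconvex, so by the defining inequality for quasiconvexity applied to $-h$ we get $h(x) = h(\theta_1 b_1 + (1-\theta_1) y) \geq \min\{h(b_1), h(y)\}$. By the inductive hypothesis $h(y) \geq \min_{i \in \{2,\ldots,m\}} h(b_i)$, and therefore $h(x) \geq \min\{h(b_1), \min_{i \geq 2} h(b_i)\} = \min_{i \in \until{m}} h(b_i)$.

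I anticipate that the main obstacle is primarily bookkeeping rather than conceptual: one must carefully handle the reweighting step $\frac{\theta_i}{1-\theta_1}$ so that the residual coefficients still sum to one and remain nonnegative, and correctly dispatch the degenerate cases $\theta_1 = 1$ and $\theta_1 = 0$. A secondary subtlety is invoking the precise definition of quasiconcavity as stated in the excerpt (via quasiconvexity of $-h$), ensuring the two-point inequality $h(\theta x_1 + (1-\theta) x_2) \geq \min\{h(x_1), h(x_2)\}$ is applied to the pair $(b_1, y)$ with parameter $\theta = \theta_1$. Once the induction is set up correctly, the argument closes cleanly, and combining the two inequalities yields the claimed identity.
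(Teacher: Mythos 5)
Your proposal is correct and uses essentially the same argument as the paper: peel off one vertex at a time, apply the two-point quasiconcavity inequality $h(\theta b_1+(1-\theta)y)\ge\min\{h(b_1),h(y)\}$, and recurse over the remaining convex combination. The paper dresses this up as a proof by contradiction (assuming no $b_i$ attains the minimum), but the underlying recursive decomposition and inequality chain are identical to your induction, and your direct two-inequality framing is if anything slightly cleaner.
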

\begin{proof}
We prove by contradiction. Suppose \( \argmin_{x\in S} h(x) \cap \{b_1, \ldots, b_m\} = \emptyset\). Since \( S \)
  is the convex hull of \( \{b_{1} \ldots, b_{m}\} \), for any \( x^{*}
  \in \argmin_{x\in S} h(x) \), there exist \(\eta_{k} \geq 0, k \in \until{m} \), with  \( \sum_{k=1}^{m} \eta_{k} =1 \) such that \( x^{*} = \sum_{k=1}^{m}
  \eta_{k} b_{k} \). Since \( x^{*} \notin \{b_1, \ldots, b_m\}\) by assumption, we have \( \eta_{k} < 1 \) for all \( k \in \until{m} \). Quasi-concavity of $h(x)$ then implies:
%
%
\begin{equation}
\label{eq:quasi-concave-ineq}
h(x^{*}) = h(\sum_{k=1}^{m} \eta_{k} b_{k}) = h(\eta_{1}b_{1} +
(1-\eta_{1}) \sum_{k=2}^{m} \frac{\eta_{k}}{1-\eta_{1}} b_{k} ) \ge
\min\{ h(b_{1}), h( \sum_{k=2}^{m} \frac{\eta_{k}}{1-\eta_{1}} b_{k})\}
\end{equation}
where, in the second equality, it is easy to see that, due to $\sum_{k=2}^m \eta_k = 1 - \eta_1$, we have $\sum_{k=2}^m \frac{\eta_k}{1-\eta_1} b_k \in S$. Recursive application of \eqref{eq:quasi-concave-ineq} then implies \( h(x^{*}) \ge \min_{k \in \until{m}} h(b_{k})\)\footnote{If \(
\eta_{k} = 0 \) for some \( k \in \until{m}\), then we exclude \( h(b_{k}) \) for that $k$ from the
minimization.}, giving a contradiction.  
\end{proof}

\subsection{Derivative of Pseudoinverse of Laplacian Matrix}
\label{sec:laplacian-derivative}
The following is an adaptation of the result from \cite{golub1973differentiation} on the derivative of the pseudo-inverse of a matrix. 
\begin{theorem}
\label{th:derivative_pseudoinverse}
Let $ \mc X \subset \real $ be an open set, and $ P(x) \in \RR^{m\times n}$,  $x \in \mc X$, be a Fr\'echet differentiable matrix function with local constant rank in $ \mc X $. Then for any $x \in \mc X$, 
\begin{equation*}
\frac{d P^\dagger(x)}{d x} = - P^\dagger \frac{d P}{d x} P^\dagger + P^\dagger \trans{{P^\dagger}} \frac{d \trans{P}}{d x}(I - PP^\dagger) + (I - P^\dagger P) \frac{d \trans{P}}{d x}  \trans{ {P^\dagger}} P^\dagger 
\end{equation*}
where $ P^\dagger $ is the pseudo-inverse of $P$.
\end{theorem}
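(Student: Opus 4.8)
The plan is to reduce everything to algebraic manipulation of the four Moore--Penrose identities, once differentiability of $P^\dagger$ has been secured. For brevity I would write $Q:=P^\dagger$, $\dot P:=\frac{dP}{dx}$, $\dot Q:=\frac{dP^\dagger}{dx}$, and introduce the two orthogonal projectors $\Pi_1:=PQ$ (onto $\range(P)$) and $\Pi_2:=QP$ (onto $\range(P^{T})$). The Penrose conditions give (P1) $PQP=P$, (P2) $QPQ=Q$, (P3) $\Pi_1^{T}=\Pi_1$, (P4) $\Pi_2^{T}=\Pi_2$, together with $\Pi_1^{2}=\Pi_1$, $\Pi_2^{2}=\Pi_2$; in particular $Q\Pi_1=Q$, $\Pi_2 Q=Q$, $(I-\Pi_1)P=0$, and $\Pi_2 P^{T}=P^{T}$. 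The strategy is to decompose $\dot Q$ into its four ``blocks'' relative to these projectors, $\dot Q=\Pi_2\dot Q\Pi_1+\Pi_2\dot Q(I-\Pi_1)+(I-\Pi_2)\dot Q\Pi_1+(I-\Pi_2)\dot Q(I-\Pi_1)$, and to identify each block with a term of the claimed formula (the $(I-\Pi_2)\,\cdot\,(I-\Pi_1)$ block being shown to vanish).

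For the corner blocks I would differentiate the ``weak'' Penrose identities. Differentiating (P2) yields $\dot Q PQ+Q\dot PQ+QP\dot Q=\dot Q$; right-multiplying by $(I-\Pi_1)$ and using $Q\Pi_1=Q$ (so $Q(I-\Pi_1)=0$) and $PQ(I-\Pi_1)=0$ collapses this to $\Pi_2\dot Q(I-\Pi_1)=\dot Q(I-\Pi_1)$, i.e.\ $(I-\Pi_2)\dot Q(I-\Pi_1)=0$. Differentiating (P1), $\dot P QP+P\dot QP+PQ\dot P=\dot P$, and then multiplying on the left and right by $Q$ and simplifying with $\Pi_2 Q=Q$ and $Q\Pi_1=Q$, isolates the diagonal block as
\[
\Pi_2\dot Q\Pi_1=-Q\dot PQ=-P^\dagger\,\dot P\,P^\dagger,
\]
which is exactly the first term of the statement.

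The two off-diagonal blocks come from the symmetry conditions. Differentiating (P3) gives $\dot PQ+P\dot Q=Q^{T}\dot P^{T}+\dot Q^{T}P^{T}$; left-multiplying by $Q$ (to produce $QP\dot Q=\Pi_2\dot Q$) and right-multiplying by $(I-\Pi_1)$, the terms containing $Q(I-\Pi_1)$ and $P^{T}(I-\Pi_1)=((I-\Pi_1)P)^{T}$ vanish, leaving $\Pi_2\dot Q(I-\Pi_1)=QQ^{T}\dot P^{T}(I-\Pi_1)=P^\dagger(P^\dagger)^{T}\dot P^{T}(I-PP^\dagger)$, the second term. Symmetrically, differentiating (P4), right-multiplying by $Q$ and left-multiplying by $(I-\Pi_2)$, and discarding the terms with $(I-\Pi_2)Q=0$ and $(I-\Pi_2)P^{T}=0$, produces $(I-\Pi_2)\dot Q\Pi_1=(I-P^\dagger P)\dot P^{T}(P^\dagger)^{T}P^\dagger$, the third term. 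Summing the four blocks assembles the asserted identity.

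The main obstacle is not the algebra above but the prerequisite in the hypothesis: I must first justify that $P^\dagger$ is genuinely Fr\'echet differentiable at $x$. This is precisely where the \emph{local constant rank} assumption is indispensable, since $P^\dagger$ can fail to be even continuous at points where the rank jumps. I would establish differentiability by exhibiting locally smooth full-rank factors of $P$ (e.g.\ via a smooth rank factorization $P=P_1P_2$ with $P_1$ of full column rank and $P_2$ of full row rank on a neighborhood, using constant rank), for which $P^\dagger=P_2^{T}(P_2P_2^{T})^{-1}(P_1^{T}P_1)^{-1}P_1^{T}$ is manifestly differentiable; alternatively one appeals directly to \cite{golub1973differentiation}. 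With differentiability in hand, every manipulation above is valid and the identity follows. In our application (Proposition~\ref{prop:Jacobian}) this hypothesis is automatic, since $L(w)=AWA^{T}$ has constant rank $|\mc V|-1$ for all $w\in\real_{>0}^{\mc E}$.
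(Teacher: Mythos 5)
Your proof is correct, and it takes a genuinely different route from the paper's. The paper differentiates the single identity $PP^\dagger P=P$ to extract the derivatives of the two orthogonal projectors $PP^\dagger$ and $P^\dagger P$ (using their idempotency and symmetry), then writes down three expansions of $\frac{d}{dx}(P^\dagger PP^\dagger)$ and assembles the formula from the somewhat unmotivated linear combination (first) $+$ (second) $-$ (third). You instead decompose $\dot{Q}$ into its four blocks with respect to the projectors $\Pi_1=PQ$ and $\Pi_2=QP$, and differentiate each of the four Moore--Penrose identities separately to identify each block: (P2) kills the $(I-\Pi_2)\,\cdot\,(I-\Pi_1)$ corner, (P1) yields the diagonal block $-P^\dagger\dot{P}P^\dagger$, and the two symmetry conditions (P3), (P4) yield the two off-diagonal blocks. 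I checked your block computations; all the annihilation identities you invoke ($Q(I-\Pi_1)=0$, $(I-\Pi_2)Q=0$, $P^{T}(I-\Pi_1)=0$, $(I-\Pi_2)P^{T}=0$) are correct, and the four blocks sum to the asserted formula. What your approach buys is transparency: each of the three terms in the statement is traced to a specific Penrose condition and a specific range/null-space block, and the vanishing of the fourth block is made explicit rather than implicit in a cancellation. What the paper's approach buys is economy --- it never introduces the block decomposition and reuses the projector-derivative formulas \eqref{eq:dier_a}--\eqref{eq:dier_b} as intermediate objects. On the analytic prerequisite, both arguments presuppose differentiability of $P^\dagger$ before manipulating it; the paper dispatches this with a one-line remark that local constant rank ensures it, whereas you sketch an actual mechanism (a locally differentiable full-rank factorization $P=P_1P_2$ with $P^\dagger=\trans{P_2}(P_2\trans{P_2})^{-1}(\trans{P_1}P_1)^{-1}\trans{P_1}$), which is the more self-contained justification and correctly identifies constant rank as the hypothesis that prevents discontinuity of the pseudo-inverse.
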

\begin{proof}
The local constant rank condition ensures that $P^\dagger(x)$ and $P(x)$ are continuous and differentiable in the calculations to follow. 
Since $ P^\dagger $ is the pseudo-inverse of matrix $P$, we have $ P  P^\dagger P = P$ and $  P^\dagger P  P^\dagger =  P^\dagger $. Then 
\begin{equation*}
\frac{dP}{d x} = \frac{d(P  P^\dagger P) }{d x} =  \frac{d(P  P^\dagger) }{d x} P + P  P^\dagger \frac{dP }{d x} 
\end{equation*}
Multiplying from the right by $ P^\dagger $ and re-arranging, we get 
\begin{equation*}
\frac{d(P  P^\dagger) }{d x} (PP^\dagger)  = \frac{dP}{d x} P^\dagger- P  P^\dagger \frac{dP }{d x} P^\dagger = (I - P P^\dagger) \frac{dP }{d x} P^\dagger
\end{equation*}
Since $ (P  P^\dagger) (P  P^\dagger)  = P  P^\dagger $ and  $ P  P^\dagger $ is symmetric, 
\begin{align}
\label{eq:dier_a}
  \frac{d(P  P^\dagger) }{d x}  & =   \frac{d(P  P^\dagger)^2 }{d x}  = \frac{d(P  P^\dagger) }{d x} (PP^\dagger)  +  (PP^\dagger) \frac{d(P  P^\dagger) }{d x} \nonumber    \\
  & =  \frac{d(P  P^\dagger) }{d x} (PP^\dagger)  + \trans{\left[ \frac{d(P  P^\dagger) }{d x} (PP^\dagger) \right]} \nonumber \\
  & =  (I - P P^\dagger) \frac{dP }{d x} P^\dagger + \trans{{P^\dagger} }  \frac{d\trans{P} }{d x}(I - P P^\dagger) 
\end{align}
Likewise, we can get 
\begin{equation}
\label{eq:dier_b}
  \frac{d( P^\dagger P) }{d x} = P^\dagger \frac{dP }{d x}   (I -  P^\dagger P) + (I - P^\dagger P)  \frac{d\trans{P} }{d x}\trans{{P^\dagger} }
\end{equation}

Since $ P^\dagger = P^\dagger P  P^\dagger  $, we have following identities. 
\begin{subequations}
\begin{align}
\frac{d P^\dagger}{d x} &= \frac{d (P^\dagger P  P^\dagger)}{d x} =  \frac{d P^\dagger }{d x} P P^\dagger + P^\dagger \frac{d (P  P^\dagger)}{d x} \label{eq:deri1} \\
\frac{d P^\dagger}{d x} &= \frac{d (P^\dagger P  P^\dagger)}{d x} =  \frac{d (P^\dagger P)}{d x} P^\dagger + P^\dagger P \frac{d P^\dagger}{d x} \label{eq:deri2} \\
\frac{d P^\dagger}{d x} & = \frac{d (P^\dagger P  P^\dagger)}{d x} =  \frac{d P^\dagger }{d x} P P^\dagger + P^\dagger \frac{d P}{d x}   P^\dagger+   P^\dagger P \frac{P^\dagger}{d x} \label{eq:deri3}
\end{align}
\end{subequations}
Computing \eqref{eq:deri1} $+$ \eqref{eq:deri2} $-$ \eqref{eq:deri3}, and substituting the resulting expression in \eqref{eq:dier_a} and \eqref{eq:dier_b}, gives the theorem.
\end{proof}

\subsection{Derivatives of \texorpdfstring{$g(\weq)$}{g(x)}}
In this section, we provide explicit expression for the derivatives defined in \eqref{eq:der-def}. 
The derivatives depend on active links, which for the left derivative of $g$, are defined as $\mc K^+(x):=\setdef{i \in \mc E}{\psi_i(w_i^l)<x}$ for $x \in [g^l,\gmax]$, and $\mc K^-(x):=\setdef{i \in \mc E}{\psi_i(w_i^u)<x}$ for $x \in [g^u,\gmax]$. The left derivative, for $\weq\in (\weqlow, \hat{g}^+(\gmax)]$, is then given by:
\begin{equation*} 
  \begin{aligned}
    g'(\weqminus) &= \frac{1}{{\hat{g}^+}'(x^{-})} \Big|_{x=g(\weq)}
    = \left(\sum_{i\in\mathcal{K}^+(g(\weq))} \frac{\partial \mc H(w)/\partial
        w_{i}}{\psi'_{i}(w_i^-)} \right)^{-1}\Big|_{w=\omega^+(g(\weq))}
%
  \end{aligned}
\end{equation*}
where \( {\hat{g}^+}'(x^{-}) \) and \( \psi'_{i}(w_{i}) \) denote left derivatives, similar to \(  g'(\weqminus)  \); the first equality is because $\hat{g}^+$ is inverse of $g$, and the second equality follows from chain rule. Following along the same lines, all the left and right derivatives of $g$ are gathered as:
\begin{equation}
\label{eq:lumped-fun-derivative}
\begin{aligned}
g'(\weqminus) &=  \left\{ 
\begin{array}{cl}
  {\displaystyle   \left(\sum_{i\in\mathcal{K}^+(g(\weq))} \frac{\partial \mc H(w)/\partial
        w_{i}}{\psi'_{i}(w_i^-)}\right)^{-1} \Big|_{w=\omega^+(g(\weq))} } &\; \weq \in \left(\weqlow, \hat{g}^+(\gmax)\right]  \\
  {\displaystyle 0}  &\;  \weq \in \left(\hat{g}^+(\gmax), \hat{g}^-(\gmax)\right]
  \\
 {\displaystyle  \left(\sum_{i\in\tilde{\mathcal{K}}^-(g(\weq))}  \frac{\partial \mc H(w)/\partial
        w_{i}}{\psi'_{i}(w_{i}^{-})}\right)^{-1} \Big|_{w=\omega^-(g(\weq))} } &\;  \weq \in \left(\hat{g}^-(\gmax),\wequp\right]
 \end{array} \right.     \\
g'(\weqplus) &=  \left\{ 
\begin{array}{cl}
  {\displaystyle  \left(\sum_{i\in\tilde{\mathcal{K}}^+(g(\weq))} \frac{\partial \mc H(w)/\partial
        w_{i}}{\psi'_{i}(w_i^+)}\right)^{-1} \Big|_{w=\omega^+(g(\weq))} }  &\; \weq \in \left[\weqlow, \hat{g}^+(\gmax)\right)   \\
          {\displaystyle 0 } &\; \weq \in \left[\hat{g}^+(\gmax), \hat{g}^-(\gmax)\right) 
          \\
   {\displaystyle  \left(\sum_{i\in\mathcal{K}^-(g(\weq))}  \frac{\partial \mc H(w)/\partial
        w_{i}}{\psi'_{i}(w_{i}^{+})}\right)^{-1} \Big|_{w=\omega^-(g(\weq))} } &\; \weq \in \left[\hat{g}^-(\gmax),\wequp\right)
\end{array} \right.   
\end{aligned}
\end{equation}
where $\tilde{\mc K}^+(x):=\setdef{i \in \mc E}{\psi_i(w_i^l) \leq x}$ for $x \in [g^l,\gmax]$, and $\tilde{\mc K}^-(x):=\setdef{i \in \mc E}{\psi_i(w_i^u) \leq x}$ for $x \in [g^u,\gmax]$. 


\end{document}